\newtheorem{theorem}{Theorem} 	      	      	                              
\newtheorem{corollary}[theorem]{Corollary}     	      	      	      	      
\newtheorem{lemma}[theorem]{Lemma}     	       	      	      	      	      
\newtheorem{proposition}[theorem]{Proposition} 	      	      	      	      
\newtheorem{definition}[theorem]{Definition}                                  
\newtheorem*{remark}{Remark}                                                  
\newtheorem*{ackn}{Acknowledgments}                                           
\numberwithin{equation}{section}                                              
\numberwithin{theorem}{section}                                               
\newcommand{\ul}[1]{\underline{#1}}                                           
\newcommand{\mf}[1]{\mathfrak{#1}}                                            
\newcommand{\mc}[1]{\mathcal{#1}}                                             
\newcommand{\R}{\mathbb{R}}                                                   
\newcommand{\Sph}{\mathbb{S}}                                                 
\newcommand{\paren}[1]{\left(#1\right)}                                       
\newcommand{\brak}[1]{\left[#1\right]}                                        
\newcommand{\g}{{\bf g}}                                                      
\newcommand{\trase}{/\mspace{-14mu}\trace}                                    
\newcommand{\sorm}[1]{/\mspace{-11mu}#1}   
\DeclareMathOperator{\real}{Re}                                               
\DeclareMathOperator{\imag}{Im}                                               
\DeclareMathOperator{\trace}{tr}                                              
\newcommand{\nasla}{\slashed{\nabla}}                                         
\newcommand{\gras}{\slashed{\operatorname{grad}}}                             
\newcommand{\lapl}{\Delta}                                                    
\newcommand{\cint}{{\begingroup\textstyle\smallint\endgroup}}                 
\newcommand{\mind}{/\mspace{-9mu}\gamma}                                      
\newcommand{\vind}{/\mspace{-9mu}\epsilon}                                    
\newcommand{\Samma}{/\mspace{-9mu}\Gamma}                                     
\newcommand{\ass}[2]{{\bf (#1)}${}_{#2}$}                                     
\newcommand{\fol}[2]{#1^{#2}}                                                 
\begin{document}

\title{Bounds on the Bondi Energy by a Flux of Curvature}
\author{Spyros Alexakis and Arick Shao}

\address{Department of Mathematics, University of Toronto, Toronto, ON, Canada M5S 2E4}
\email{alexakis@math.utoronto.ca, ashao@math.utoronto.ca}

\subjclass[2010]{83C30 (Primary) 35Q75, 83C05, 53C12, 53C21 (Secondary)}

\begin{abstract}
We consider smooth null cones in a vacuum spacetime that extend to future null infinity.
For such cones that are perturbations of shear-free outgoing null cones in Schwarzschild spacetimes, we prove bounds for the Bondi energy, momentum, and rate of energy loss.
The bounds depend on the closeness between the given cone and a corresponding cone in a Schwarzschild spacetime, measured purely in terms of the differences between certain weighted $L^2$-norms of the spacetime curvature on the cones, and of the geometries of the spheres from which they emanate.
This paper relies on the results in \cite{alex_shao:nc_inf}, which uniformly control the geometry of the given null cone up to infinity, as well as those of \cite{shao:stt}, which establish machinery for dealing with low regularities.
A key step in this paper is the construction of a family of asymptotically round cuts of our cone, relative to which the Bondi energy is measured.
\end{abstract}

\maketitle

\tableofcontents

\section{Introduction} \label{sect:intro}

This paper deals with smooth null hypersurfaces extending to null infinity in a four-dimensional vacuum spacetime, $(M, \g)$.
Our primary aim is to control the Bondi mass associated with such a hypersurface $\mc{N}$ by the $L^2$-norms of certain suitably weighted spacetime curvature components over $\mc{N}$.
Furthermore, we control the rate of energy loss through $\mc{N}$, as well as the linear momentum associated with $\mc{N}$.
Our result applies when $\mc{N}$ is sufficiently close, at the above $L^2$-curvature level, 
to a standard shear-free outgoing null cone in a Schwarzschild exterior.

Our result has two essential features.
The first is the low regularity of our setting: we operate purely at the level of the (weighted) spacetime curvature lying in $L^2$ over $\mc{N}$, that is, at the same regularity as in \cite{kl_rod:bdc, kl_rod_szf:blc}.
Thus, we inherit some of the same difficulties present in these works.
The other feature is that our result depends only on the geometries of the null cone $\mc{N}$ and the sphere from which $\mc{N}$ emanates.
In other words, we make no assumptions on the global structure of the ambient spacetime $(M, \g)$, besides that it is vacuum.
In particular, we impose no conditions on the existence or the structure of a null infinity on the spacetime.

This paper relies heavily on \cite{alex_shao:nc_inf}, which proved, in the aforementioned setting, that the instrinsic and extrinsic geometry of $\mc{N}$ is controlled uniformly up to infinity.
Moreover, several techniques and estimates in this paper depend on \cite{shao:stt}, which developed many of the tools needed for working with $\mc{N}$ at the $L^2$-curvature level.

The main new ingredient that we introduce here is the construction of an appropriate 1-parameter family of spherical cuts of $\mc{N}$ which become asymptotically round near infinity.
We show that the Bondi energy, momenta, and rate of energy loss, defined relative to these spherical cuts, can be controlled by this weighted curvature flux through $\mc{N}$.
More specifically, to control the Bondi energy associated with $\mc{N}$, we show that the Hawking masses of these 
spherical cuts converge to a limit at infinity that remains close to the corresponding Schwarzschild mass.

\subsection{Main Quantities} \label{sect:intro.quantities}

For the reader's convenience, we first briefly present the definitions and the physical significance of the quantities under consideration.

\subsubsection{Curvature Flux}

Recall the Bel-Robinson tensor, a symmetric divergence-free 4-tensor which is quadratic in the Weyl curvature $W$ of $(M, \g)$:
\footnote{In fact one can form the Bel-Robinson tensor out of any Weyl field.}
\[ \mc{Q}_{abcd} = W_a{}^e{}_b{}^f W_{cedf} + {}^\star W_a{}^e{}_b{}^f {}^\star W_{cedf} \text{.} \]
Note since $(M, \g)$ is vacuum, $W$ coincides with the Riemann curvature tensor.
A well-known application of the Bel-Robinson tensor is toward energy estimates: by contracting $\mc{Q}_{abcd}$ against three future-directed causal vector fields $A, B, C$, one obtains a current, $\mc{J}_d := \mc{Q}_{abcd} A^a B^b C^c$, to which one can apply the divergence theorem on any bounded domain $\Omega \subset M$ with a partially smooth boundary. 
If $\partial \Omega$ contains a portion of a null hypersurface $\mc{N}$ whose affine tangent null vector field is $L$, then the corresponding boundary term, i.e., the \emph{curvature flux} through $\mc{N}$, is
\[ \mc{F} = \int_{\mc{N}} \mc{J}_d L^d d V_{ \tilde{\g} } \text{,} \]
where $d V_{ \tilde{\g} }$ is the canonical volume form on $\mc{N}$ associated to $L$.
One can then see that, for any choice of $A$, $B$, and $C$, the curvature flux $\mc{F}$ will be comparable to the $L^2$-norm on $\mc{N}$ of some, but not all, of the independent components of $W$.
\footnote{In particular, these components include all those listed in \eqref{list}, except for $\ul{\alpha}$.}

Here, we wish to consider not the curvature flux of $\mc{N}$ itself, but rather a measure of how much $\mc{N}$ deviates from a standard shear-free null cone, $\mc{N}_{\rm S}$, in a Schwarzschild spacetime $(M_{\rm S}, \g_{\rm S})$ of mass $m_{\rm S} \geq 0$.
Therefore, the relevant \emph{curvature flux deviation} will be a weighted $L^2$-norm on $\mc{N}$ of the \emph{difference between the flux components of $W$ and the corresponding components for $\mc{N}_{\rm S}$}.
\footnote{In terms of the curvature decomposition of \eqref{list}, all these components for the shear-free Schwarzschild null cone $\mc{N}_{\rm S}$ vanish, except for $\rho_{\rm S}$, which is precisely $-2 m_{\rm S} r^{-3}$.
Here, $r$ is the usual radial coordinate in $(M_{\rm S}, \g_{\rm S})$; note $r$ is also an affine parameter for $\mc{N}_{\rm S}$.}
In fact, our main assumption will be that this curvature flux deviation is sufficiently small.
Under such assumptions, we will show that the Bondi energy of $\mc{N}$ is close to $m_{\rm S}$, with the closeness being controlled by this curvature flux deviation.

We also mention that in the case $m_{\rm S}=0$ (that is, when $\mc{N}$ is close to the standard affine-parametrized null cones in Minkowski spacetime), our weighted curvature flux deviation arises naturally in asymptotically Minkowskian spacetimes, in the case $A={\bf K}$, $B={\bf K}$, and $C={\bf T}$.
Here, ${\bf K}$ and ${\bf T}$ are suitable adaptations of the Morawetz and the time-translation ($\partial_t$) vector fields in the Minkowksi spacetime $(M_0, \g_0)$.  
These weighted curvature fluxes appear in \cite{bie_zip:stb_mink, chr_kl:stb_mink, kl_nic:stb_mink}.
Moreover, this is the main motivation for our choice of weights in \cite{alex_shao:nc_inf} and in this paper.

\subsubsection{Bondi Mass and Energy}

The Bondi mass measures the amount of gravitational mass remaining in an isolated system as measured at null infinity, at a given retarded time.
To be more specific, in the context of a spacetime $(M, \g)$ with a smooth enough and complete future null infinity $\mc{I}^+$, with the topology of $\R \times \Sph^2$, we consider an infinite outgoing null cone $\mc{N}$ which intersects $\mc{I}^+$ along a spherical cut, $\mc{S}_\infty \subset \mc{I}^+$.
The Bondi mass $m_{\rm B} = m_{\rm B}( \mc{N} )$ then measures the amount of mass remaining in the system after radiation emitted through $\mc{I}^+$ up to this cut $\mc{S}_\infty$. 

This quantity was originally defined in \cite{bondi_burg_metz:bondi_mass} by stipulating the existence of a system of Bondi coordinates near $\mc{S}_\infty$.
An alternative definition of the Bondi momentum 4-vector, using a conformal compactification of spacetime, can be found in \cite{pen_rind:spinor}; the reader may also refer to \cite{sauter:penrose}, where the notation adopted here is presented. 

The Bondi momentum $4$-vector $( E_B^{\gamma_\infty}, \vec{P}_{\rm B}^{\gamma_\infty} )$, where $\gamma_\infty$ refers to the round metric induced on $\mc{S}_\infty$ by the above conformal compactification, is conformally covariant.
(See the discussion below by equation \eqref{Bondi.def} for a more precise description of $\gamma_\infty$.)
This reflects the action of the conformal group on $\gamma_\infty$.
Moreover, the $3$-vector $\vec{P}^{\gamma_\infty}_{\rm B}$ corresponds to the linear momentum at $\mc{S}_\infty$ (relative to $\gamma_\infty$), while the number $E^{\gamma_\infty}_{\rm B}$ corresponds to the Bondi energy.
Nonetheless, the Minkowski norm of this 4-vector is \emph{invariantly} defined and corresponds to the Bondi mass of $\mc{S}_\infty$:
\footnote{In other words, it is invariant under the action of the conformal group.}
\begin{equation} \label{Bondi.all} m_{\rm B}^{\gamma_\infty} = \sqrt{ ( E^{\gamma_\infty}_{\rm B} )^2 - | \vec{P}^{\gamma_\infty}_{\rm B} |^2 } \text{.} \end{equation}

In fact, the choice of round metric $\gamma_\infty$ corresponds to a choice of a family of asymptotically round $2$-spheres for which the area-normalized induced metrics converge to $\gamma_\infty$.
This can be thought of as a frame of reference relative to which the Bondi energy $E^{\gamma_\infty}_B$ and the Bondi linear momentum $\vec{P}^{\gamma_\infty}_B$ are measured.

Thus, to properly extract the Bondi energy, we consider a family $\fol{\Sigma}{y}$, $y \in [1, \infty)$, of spherical cuts in $\mc{N}$.
Let $\mind_{ \fol{\Sigma}{y} }$ denote the metric on $\fol{\Sigma}{y}$ induced by the spacetime metric $\g$, and let $r_{ \fol{\Sigma}{y} }$ denote the area radius of $( \fol{\Sigma}{y}, \mind_{ \fol{\Sigma}{y} } )$,
\[ r_{ \fol{\Sigma}{y} } := \sqrt{ \frac{{\rm Area} ( \fol{\Sigma}{y}, \mind_{ \fol{\Sigma}{y} } )}{ 4 \pi } } \text{.} \]
Suppose the corresponding Gauss curvatures $\mc{K} ( \fol{\Sigma}{y}, \mind_{ \fol{\Sigma}{y} } )$ satisfy
\begin{equation} \label{lim} \lim_{y \nearrow \infty} \mc{K} ( \fol{\Sigma}{y}, r_{ \fol{\Sigma}{y} }^{-2} \mind_{ \fol{\Sigma}{y} } ) = \lim_{y \nearrow \infty} r_{ \fol{\Sigma}{y} }^2 \mc{K} ( \fol{\Sigma}{y}, \mind_{ \fol{\Sigma}{y} } ) = 1 \text{,} \end{equation}
so that the area-normalized metrics $r_{ \fol{\Sigma}{y} }^{-2} \mind_{ \fol{\Sigma}{y} }$ converge to a round metric $\gamma_\infty$.
Then, according to \cite{pen_rind:spinor, sauter:penrose}, the Bondi energy $E^{\gamma_\infty}_B$ associated with $(\mc{S}_\infty, \gamma_\infty)$ corresponds to the limit of the Hawking masses of the $\fol{\Sigma}{y}$'s,
\begin{equation} \label{Bondi.def} E^{\gamma_\infty}_{\rm B} = \lim_{y \nearrow \infty} m_{\rm H} ( \fol{\Sigma}{y} ) \text{,} \end{equation}
where the Hawking mass of $\fol{\Sigma}{y}$ is defined as
\begin{equation} \label{hawk.mass} m_{\rm H} (\fol{\Sigma}{y}) = \frac{ r_{ \fol{\Sigma}{y} } }{2} \left( 1 + \frac{1}{16 \pi} \int_{ \fol{\Sigma}{y} } \trase{\chi} \cdot \trase\ul{\chi} \cdot d V_{ \mind_{ \fol{\Sigma}{y} } } \right) \text{.} \end{equation}
The functions $\trase \chi, \trase \ul{\chi}$ are the expansions of $\fol{\Sigma}{y}$ relative to two future-directed orthogonal null vector fields over $\fol{\Sigma}{y}$---see \eqref{sff} below.

\begin{remark}
In view of \eqref{Bondi.def}, a bound on the limit of the Hawking masses for a family of spherical cuts satisfying \eqref{lim} would yield a bound on the Bondi mass (which is invariantly defined) and Bondi linear momentum (defined relative to the foliation).
\end{remark}
 
\subsubsection{Angular Momentum}

As present, there exists no universally accepted notion of angular momentum for sections of null infinity; examples of proposed definitions include the works of Rizzi \cite{riz:ang_mom} and Moreschi \cite{mores:intr_angmom}.

In the context of our setting, it seems that to control any such reasonable notion of angular momentum on $(\mc{S}_\infty, \gamma_\infty)$, a quantity that must be controlled is the torsion $\zeta$ of the $\Sigma_y$'s; see \eqref{torsion} for the precise definition.
One consequence of our main result is that we can obtain bounds for the quantities
\begin{equation} \label{ang.mom} \mf{A}^{\gamma_\infty} (X) = \int_{ \mc{S}_\infty } \mc{Z}_a X^a \cdot d V_{ \gamma_\infty } \text{,} \end{equation}
where $\mc{Z}$ corresponds to the renormalization
\begin{equation} \label{lim.Z} \mc{Z} = \lim_{y \nearrow \infty} r_{ \fol{\Sigma}{y} } \cdot \zeta ( \fol{\Sigma}{y} ) \text{,} \end{equation}
of $\zeta$ on $\mc{S}_\infty$, and where $X$ is a rotational Killing vector field on $(\mc{S}_\infty, \gamma_\infty)$.

\subsubsection{Rate of Energy Loss}

In the specific settings discussed thus far, in particular that of \cite{chr_kl:stb_mink}, the Bondi energy is in fact a non-increasing function along $\mc{I}^+$.
More specifically, consider a foliation of $\mc{I}^+$ by a $1$-parameter family of $2$-spheres, $(\mc{S}_{\infty, u}, \gamma_{\infty,u})$, with $u$ increasing in the future direction.
These correspond to a local foliation of $M$ near $\mc{I}^+$ by a family of outgoing null cones $\mc{N}_u$.
Then, the Bondi energies of the sections $(\mc{S}_{\infty, u}, \gamma_{\infty, u})$ evolve according to the \emph{energy loss formula}:
\begin{equation} \label{mass.loss} \dot{E}^{\gamma_\infty}_{\rm B} := \frac{d}{d u} E_{\rm B}^{\gamma_\infty} (u) = - \frac{1}{8 \pi} \int_{ \mc{S}_{\infty, u} } | \hat{\Xi} |^2_{ \gamma_{\infty, u} } dV_{ \gamma_{\infty, u} } \text{.} \end{equation} 

$\hat{\Xi}$ is the ($\gamma_{\infty, u}$-)traceless part of the symmetric 2-tensor $\Xi$ over $(\mc{S}_{\infty, u}, \gamma_{\infty, u})$, and $\Xi$ is defined, for a family of asymptotically round spherical cuts $\fol{\Sigma}{y}_u$ of $\mc{N}_u$, by
\footnote{Again, the limit refers to components of $\ul{\chi}$ with respect to transported systems of coordinates; see Section \ref{SphCuts} and Definition \ref{def.conv_inf}.}
\[ \Xi = \lim_{y \nearrow \infty} r_{ \fol{\Sigma}{y} }^{-1} \cdot \ul{\chi} ( \fol{\Sigma}{y}_u ) \text{.} \]
Here, $\ul{\chi}$ denotes the  second fundamental forms of the $\fol{\Sigma}{y}_u$'s in the incoming null direction.
\footnote{$\ul{\chi}$ is defined more precisely in \eqref{sff} and in \cite{alex_shao:nc_inf}.}
In our main theorem, we will control on a null cone $\mc{N}$ the right-hand side of \eqref{mass.loss}, which describes the \emph{rate of energy loss} across $\mc{N}$.

\subsection{The Results} \label{intro:thm}

The next task is to describe more precisely the results that we wish to prove.
Throughout this discussion, we assume $(M, \g)$ to be an arbitrary Einstein-vacuum spacetime, and $\mc{N} \subseteq M$ an infinite smooth null hypersurface beginning from a Riemannian $2$-sphere $\mc{S} \subseteq M$.

\subsubsection{Geodesic Foliations}

While there are many natural foliations of null cones of $\mc{N}$ by $2$-spheres, in this paper, we will only be considering geodesic foliations. 
These correspond to arc-length parametrizations of the null geodesics that rule $\mc{N}$.

Consider any null vector field $L$ which is both tangent to $\mc{N}$ and parallel (i.e., $D_L L=0$, where $D$ is the spacetime Levi-Civita connection).
\footnote{Note that $L$ on $\mc{N}$ is uniquely determined by its values on the initial sphere $\mc{S}$.}
We let $s$ be the arc-length parameter along the the null geodesics that are the integral curves of $L$, normalized such that $s = 1$ on $\mc{S}$.
Our foliation is then by the level sets $\mc{S}_s$ of $s$.
Furthermore, we let $\ul{L}$ denote the null vector field on $\mc{N}$ that is conjugate to $L$, that is, satisfying the conditions $\ul{L} \perp \mc{S}_s$ and $\g (L, \ul{L}) \equiv -2$.

\begin{remark}
Note that different choices of $L$ (and hence $s$) yield different affine parameters and hence different foliations.
We will be making use of this freedom in choosing geodesic foliations extensively in later sections.
\end{remark}

Next, we recall the quantities that define the intrinsic and extrinsic geometry of $\mc{N}$, in terms of the above foliation $\mc{S}_s$, $s \geq 1$:
\begin{itemize}
\item Let $\mind_s$ be the induced metric on $\mc{S}_s$, i.e., the restriction of $\g$ to $\mc{S}_s$.

\item Let $\nasla$ be the Levi-Civita connection for $\mind_s$.

\item The \emph{null second fundamental forms} $\chi$ and $\ul{\chi}$ are defined
\begin{equation} \label{sff} \chi (X, Y) = \g (D_X L, Y) \text{,} \qquad \ul{\chi} (X, Y) = \g (D_X \ul{L}, Y) \text{,} \end{equation}
where $X$ and $Y$ are arbitrary vector fields tangent to the $\mc{S}_s$'s.

\item The \emph{torsion} $\zeta$ is defined, for $X$ as before, by
\begin{equation} \label{torsion} \zeta (X) = \frac{1}{2} g (D_X L, \ul{L}) \text{.} \end{equation}
\end{itemize}
We will be decomposing the 2-tensors $\chi$ and $\ul{\chi}$ into their trace ($\trase \chi$, $\trase \ul{\chi}$) and traceless ($\hat{\chi}$, $\hat{\ul{\chi}}$) parts. 
Here, $\trase$ refers the trace operator with respect to the metrics $\mind_s$.
Thus, $\trase \chi$ represents the \emph{expansions} of the $\mc{S}_s$'s, and $\hat{\chi}$ their \emph{shears}.

We also recall the independent components of the Riemann/Weyl curvature.
Letting $R$ denote the Riemann curvature tensor associated with $\g$, we recall that $R$ is fully determined by the following components:
\begin{align}
\label{list} \alpha (X, Y) = R (L, X, L, Y) \text{,} &\qquad \ul{\alpha} (X, Y) = R (\ul{L}, X, \ul{L}, Y) \text{,} \\
\notag \beta (X) = \frac{1}{2} R (L, X, L, \ul{L}) \text{,} &\qquad \ul{\beta} (X) = \frac{1}{2} R (\ul{L}, X, \ul{L}, L) \text{,} \\
\notag \rho = \frac{1}{4} R (L, \ul{L}, L, \ul{L}) \text{,} &\qquad \sigma = \frac{1}{4} {}^\star R (L, \ul{L}, L, \ul{L}) \text{,}
\end{align}
where $X$ and $Y$ are as before, and where ${}^\star R$ denotes the left Hodge dual of $R$.

Finally, we recall the \emph{mass aspect function}, $\mu$, on the $\mc{S}_s$'s, cf. \cite{chr_kl:stb_mink}.
This is a scalar-valued function on the $\mc{S}_s$'s, defined by
\begin{equation} \label{maf} \mu := - \mind^{ab} \nasla_a \zeta_b - \rho + \frac{1}{2} \mind^{ac} \mind^{bd} \hat{\chi}_{ab} \ul{\hat{\chi}}_{cd} \text{.} \end{equation}
This quantity is closely related to the Hawking masses of the $\mc{S}_s$'s, via the formula
\[ \int_{\mc{S}_s} \mu \cdot dV_{ \mind_s } = \frac{8 \pi}{ r_{ \fol{\Sigma}{y} } } \cdot m_{\rm H} (\mc{S}_s) \text{.} \]

\subsubsection{Analysis of Infinite Null Cones}

To properly state our main result, we must quantitatively capture the deviation of our null cone $\mc{N}$ from a corresponding Schwarzschild cone, $\mc{N}_{\rm S}$.
The first measure of this deviation translates to a weighted $L^2$-norm over $\mc{N}$ of \emph{suitably weighted} differences between the components in \eqref{list} (excluding $\ul{\alpha}$) and their corresponding Schwarzschild values.
With this intuition in mind, we define the \emph{weighted curvature flux deviation} of $\mc{N}$ from $\mc{N}_{\rm S}$ by
\begin{equation} \label{flux_dev} \mc{F} = \int_{ \mc{N} } \left[ | s^2 \alpha |_{\mind_s}^2 + | s^2 \beta |_{\mind_s}^2 + \left| s \left( \rho + \frac{2 m}{s^3} \right) \right|_{\mind_s}^2 + | s \sigma |_{\mind_s}^2 + | \ul{\beta} |_{\mind_s}^2 \right] dV_{\tilde{\g}} \text{,} \end{equation}
where $| \dots |_{\mind_s}$ denotes the pointwise tensor norm with respect to the metrics $\mind_s$.

The other meausre  of the deviation between $\mc{N}$ and $\mc{N}_{\rm S}$ involves the geometries of their initial spheres, $\mc{S}$ and $\mc{S}_{\rm S}$;
this is captured in  the differences between the connection coefficients \eqref{sff}, \eqref{torsion} on $\mc{S}$ and their corresponding Schwarzschild values.
Assuming for convenience that $\mc{S}$ has unit area radius, then the above translates to measuring the following quantities on $\mc{S} = \mc{S}_1$:
\footnote{See \cite[Sect. 4.3]{alex_shao:nc_inf}; in particular, we assume $r_{\mc{S}} = r_{\mc{S}_{\rm S}} = 1$.}
\[ \chi - \mind_1 \text{,} \qquad \zeta \text{,} \qquad \ul{\chi} + (1 - 2 m_{\rm S}) \mind_1 \text{,} \qquad \mu - 2 m_{\rm S} \text{.} \]
Moreover, the norms with respect to which we measure these quantities must be compatible with the $L^2$-curvature regularity level.
The specific norms we use are listed in Theorem \ref{the.thm} below and justified in detail in \cite{alex_shao:nc_inf, kl_rod:cg}.

The main result of this paper states that, if the deviation between $\mc{N}$ and $\mc{N}_{\rm S}$, as described above, is sufficiently small, then the Bondi energy of $\mc{N}$, expressed via \eqref{Bondi.def} with respect to a family of spherical cuts $\fol{\Sigma}{y}$ satisfying \eqref{lim}, will be comparably close to the Schwarzschild mass $m_{\rm S}$ associated with $\mc{N}_{\rm S}$.
Similar estimates hold for the rate of energy loss; see Theorem \ref{the.thm} below.

To obtain these conclusions, we rely crucially on the main results of our previous article \cite{alex_shao:nc_inf}, which can roughly summarized as follows.
\emph{Assume that the deviation between $\mc{N}$, with a given geodesic foliation, and $\mc{N}_{\rm S}$ are small}, in the sense that:
\begin{itemize}
\item The curvature flux deviation $\mc{F}$ defined in \eqref{flux_dev} is sufficiently small.

\item The deviation quantities $\chi - \mind_1$, $\zeta$, $\ul{\chi} + (1 - 2 m_{\rm S}) \mind_1$, and $\mu - 2 m_{\rm S}$, corresponding to the connection coefficients and the mass aspect function, are sufficiently small on the initial sphere $\mc{S}$ of $\mc{N}$ in the appropriate norms.
\footnote{See \eqref{initial.smooth} below for the precise norms that are required to be small.}
\end{itemize}
\emph{Then, the geometry of $\mc{N}$ remains uniformly close to that of $\mc{N}_{\rm S}$:}
\begin{itemize}
\item The deviations of the connection coefficients $\chi$, $\ul{\chi}$, and $\zeta$ from their values on $\mc{N}_{\rm S}$ remain uniformly small (in suitable weighted norms) on all of $\mc{N}$.

\item Similarly, the deviation of the mass aspect function $\mu$ from its value $2 m_{\rm S} s^{-3}$ on $\mc{N}_{\rm S}$ also remains uniformly small on all of $\mc{N}$.
\end{itemize}
The above comprise the contents of \cite[Thm.~1.1, Thm.~5.3]{alex_shao:nc_inf}.
A precise statement of these results is introduced later in Theorem \ref{thm.nc_renorm}.

For our present paper, the most important conclusion from \cite{alex_shao:nc_inf} is that \emph{the uniform estimates for $\chi$, $\ul{\chi}$, $\zeta$, and $\mu$ imply that 
suitable renormalizations of these  quantities have limits at infinity.}
To be more precise, in \cite[Thm.~1.2, Cor.~5.2]{alex_shao:nc_inf} we derived:
\begin{itemize}
\item The renormalized metrics $s^{-2} \mind_s$ converge to a limiting metric $\gamma_\infty$ as $s \nearrow \infty$.

\item There exist limits for $\chi$, $\ul{\chi}$, $\zeta$, and $\mu$, in the appropriate spaces and with the appropriate weights, as $s \nearrow \infty$.
In particular, the limits as $s \nearrow \infty$ of
\begin{equation} \label{limits} \int_{ \mc{S}_s } s \mu \cdot d V_{ \mind_s } \text{,} \qquad \int_{ \mc{S}_s } s^{-1} \zeta (X) \cdot d V_{ \mind_s } \text{,} \qquad \int_{ \mc{S}_s } | \hat{\ul{\chi}} |_{ \mind_s }^2 \cdot d V_{ \mind_s } \text{,} \end{equation}
where $X$ is a rotational Killing field on $\Sph^2$, exist and are uniformly small.
\end{itemize}
The specific limits of \eqref{limits} are directly related to the limits \eqref{Bondi.def}, \eqref{lim.Z}, and \eqref{mass.loss} for the physical quantities of interest.

These results in particular imply that under the above assumptions, the limit of the Hawking masses $m_{\rm H} (\mc{S}_s)$ as $s \nearrow \infty$ exists and is close to $m_{\rm S}$.
However, this limit does \emph{not} yield a bound on the Bondi energy of $\mc{N}$ (nor for the other quantities of interest), since the spheres $(\mc{S}_s, \mind_s)$ need not become asymptotically round, in the sense of \eqref{lim}.
The main novel challenge of this paper, then, is to extract the Bondi energy and the other physical quantities in a manner such that they can be controlled using the results of \cite{alex_shao:nc_inf} described above.

To accomplish this, we note an additional degree of freedom in our setup: \emph{the results from \cite{alex_shao:nc_inf} outlined above hold for any geodesic foliation of $\mc{N}$ for which the deviation from $\mc{N}_{\rm S}$ is sufficiently small.}
In particular, {\it any other} geodesic foliation of $\mc{N}$ that is ``sufficiently close" to the current one would satisfy the small deviation condition.
The idea, then, is to \emph{find}  a {\it new}  geodesic foliation that is ``nearby'' the original one and moreover fulfils the asymptotic roundness property  \eqref{lim}.
We will then be able to apply the main result of \cite{alex_shao:nc_inf} to this new geodesic foliation 
to control the physical quantities of interest.
\footnote{In practice, we obtain the asymptotically round family of spheres using not a single change of foliation, but rather a one-parameter family of new foliations; see the discussion in Section \ref{sect:intro.outline}.}

\subsubsection{The Main Theorem}
 
The main theorem of this paper is the following: 

\begin{theorem} \label{the.thm}
Let $0 \leq m_{\rm S} < 1/2$, and let $\mc{N} \subset M$ be an infinite smooth null hypersurface emanating from $\mc{S}$, with ${\rm Area} (\mc{S}) = 4 \pi$.
\footnote{In particular, if $M$ is the Schwarzschild spacetime with mass $m_{\rm S}$, and if $\mc{N}$ is a canonical shear-free null cone in $M$, then the initial sphere $\mc{S}$ would lie in the outer region.}
Also, fix a geodesic foliation of $\mc{N}$, with associated affine parameter $s$.
Assume the following hold on $\mc{N}$:
\begin{itemize}
\item Curvature flux deviation bound on $\mc{N}$:
\begin{equation} \label{CFsmall} \int_{ \mc{N} } \left[ | s^2 \alpha |_{\mind_s}^2 + | s^2 \beta |_{\mind_s}^2 + \left| s \left( \rho + \frac{2 m_{\rm S}}{s^3} \right) \right|_{\mind_s}^2 + | s \sigma |_{\mind_s}^2 + | \ul{\beta} |_{\mind_s}^2 \right] dV_{\tilde{\g}} \leq \Gamma^2 \text{.} \end{equation}

\item Initial value bounds on $\mc{S}$:
\footnote{The $\sorm{L}^q$-, $\sorm{H}^a$-, and $\sorm{B}^a$-norms refer to Lebesgue, Sobolev, and Besov norms on $(\mc{S}, \mind_1)$.
For more precise definitions of these norms, see Section \ref{sect:prelim.norms}, as well as \cite{alex_shao:nc_inf, shao:stt}.}
\begin{align}
\label{initial.smooth} \| \trase \chi - 2 \|_{ \sorm{L}^\infty_x (\mc{S}, \mind_1) } + \| \chi - \mind_1 \|_{ \sorm{H}^{1/2}_x (\mc{S}, \mind_1) } + \| \zeta \|_{ \sorm{H}^{1/2}_x (\mc{S}, \mind_1) } &\leq \Gamma \text{,} \\
\notag \| \ul{\chi} + (1 - 2 m_{\rm S}) \mind_1 \|_{ \sorm{B}^0_x (\mc{S}, \mind_1) } + \| \nasla ( \trase \chi ) \|_{ \sorm{B}^0_x (\mc{S}, \mind_1) } + \| \mu - 2 m_{\rm S} \|_{ \sorm{B}^0_x (\mc{S}, \mind_1) } &\leq \Gamma \text{.}
\end{align}
\end{itemize}
Then, if $\Gamma$ is sufficiently small, there is a family of spherical cuts $\fol{\Sigma}{y}$, $y \in [1, \infty)$, of $\mc{N}$ going to infinity, with corresponding induced metrics $\mind_{ \fol{\Sigma}{y} }$ and areas
\[ {\rm Area} (\fol{\Sigma}{y}, \mind_{ \fol{\Sigma}{y} }) = 4 \pi r^2_{ \fol{\Sigma}{y} } \text{,} \]
such that:
\begin{itemize}
\item The $(\fol{\Sigma}{y}, r_{ \fol{\Sigma}{y} }^{-2} \mind_{ \fol{\Sigma}{y} })$'s, that is, the $\fol{\Sigma}{y}$'s with the area-normalized induced metrics, become round in a weak sense.
More specifically,
\begin{equation} \label{rounding} \lim_{y \nearrow \infty} \| r_{ \fol{\Sigma}{y} }^2 \mc{K} ( \fol{\Sigma}{y}, \mind_{ \fol{\Sigma}{y} } ) - 1 \|_{ \sorm{H}^{-1/2} (\fol{\Sigma}{y}, r_{ \fol{\Sigma}{y} }^{-2} \mind_{ \fol{\Sigma}{y} } ) } = 0 \text{.} \end{equation}

\item The Bondi energy, defined in \eqref{Bondi.def} in terms of the $\fol{\Sigma}{y}$'s, exists, and
\begin{equation} \label{conclns.bondi} \lim_{y \nearrow \infty} | m_H ( \fol{\Sigma}{y} ) - m_{\rm S} | \lesssim \Gamma \text{.} \end{equation}

\item The rate of energy loss, defined in \eqref{mass.loss} in terms of the $\fol{\Sigma}{y}$'s, exists, and
\begin{equation} \label{conclns.loss} | \dot{E}_{\rm B}^{\gamma_\infty} | \lesssim \Gamma \text{.} \end{equation}
\end{itemize}
\end{theorem}

\begin{remark}
We note that the requirement of Theorem \ref{the.thm} with $m_{\rm S} = 0$ will hold for truncated null cones in the perturbations of the Minkowski spacetime in \cite{chr_kl:stb_mink}.
This follows from the decay of the curvature components stated in the last chapter of \cite{chr_kl:stb_mink}.
For $m_{\rm S} > 0$, Theorem \ref{the.thm} is designed to apply to truncated null surfaces in perturbations of Schwarzschild spacetimes.
In all settings, these perturbations are required to be small at the $L^2$-curvature level.
\end{remark}

\begin{remark}
Although Theorem \ref{the.thm} deals only with the specific case $m_{\rm S} < 1/2$ and ${\rm Area} (\mc{S}) = 4 \pi$, it can be extended to general cases of arbitrary mass and initial area due to the dilation invariance of the Einstein-vacuum equations.
To see how the assumptions and norms transform under such rescalings, see \cite[Thm. 5.1]{alex_shao:nc_inf}.
\end{remark}

\begin{remark}
That the convergence of the Gauss curvatures in \eqref{rounding} to $1$ is in weak $\sorm{H}^{-1/2}$-norms is a consequence of the low regularity of our setting.
One cannot expect a stronger norm of convergence without additional assumptions, in view of the Sobolev trace theorem.
If one were to assume extra regularity for $\mc{N}$ (for example, analogous control for derivatives of $R$), then the convergence for the Gauss curvatures would be in correspondingly smooth norms.
\end{remark}

\begin{remark}
In Theorem \ref{the.thm} and in \cite{alex_shao:nc_inf}, we elected to work with shear-free null hypersurfaces in Schwarzschild spacetimes primarily because the values of the connection and curvature quantities on these hypersurfaces are explicitly given and well-known.
It is possible that analogous results can be proved for null cones near Kerr spacetimes, or for some other null hypersurfaces near Schwarzschild spacetimes.

On the other hand, Theorem \ref{the.thm} includes null hypersurfaces in Kerr spacetimes with small angular momentum, which admit a smooth conformal compactification up to $\mc{I}^+$ and (in this compactified setting) can be continuously deformed to Schwarzschild solutions in the $\mc{C}^2$-norm.
Since the weights in Theorem \ref{the.thm} are weaker than required by the Sachs peeling (which holds for the Kerr solutions), it follows that Theorem \ref{the.thm} applies to these slowly rotating Kerr spacetimes as well as to their perturbations (in the weak norms of Theorem \ref{the.thm}).
\end{remark}

\begin{remark}
In view of recent works by Luk and Rodnianski, \cite{luk_rod:grav_wave, luk_rod:grav_wave_inter}, on gravitational waves, one may ask whether a version of Theorem \ref{the.thm} can be established without assumptions on the curvature component $\alpha$ in \eqref{CFsmall}.
To our knowledge, this condition on $\alpha$ seem to be necessary, since the lack of regularity in our setting forces us to utilize \emph{all} of the structure equations available for $\mc{N}$.
\end{remark}

\begin{remark}
The proof of Theorem \ref{the.thm} also yields bounds for the torsion $\zeta$ at infinity.
More specifically, we can obtain a bound for the quantity \eqref{ang.mom} of the form
\begin{equation} \label{conclns.ang} | \mf{A}^{\gamma_\infty} (X) | \lesssim \Gamma \text{.} \end{equation}
\end{remark}
 
\subsection{Ideas of the Proof} \label{sect:intro.outline}

We conclude the introduction with a brief and informal discussion of the main ideas of the proof of Theorem \ref{the.thm}.

\subsubsection{The Renormalized Setting}

The following intuitions regarding $\mc{N}$ arise from the assumed closeness of $\mc{N}$ to the Schwarzschild null cone $\mc{N}_{\rm S}$:
\begin{itemize}
\item The chosen affine parameter $s$, from the assumptions of Theorem \ref{the.thm}, should approximate the area radii $r (s)$ of the level spheres $\mc{S}_s$.

\item The rescaled metrics $s^{-2} \mind_s$ should be close to the round metric.
\end{itemize}
These heuristics suggest it may be more natural to work with the metrics $s^{-2} \mind_s$, rather than $\mind_s$ itself.
This was an essential idea in the analysis throughout \cite{alex_shao:nc_inf} and will play the same fundamental role in this paper.
Furthermore, these intuitions were rigorously justified within the main results of \cite{alex_shao:nc_inf}.

For even more convenience, we make an additional change of parameter: from the affine parameter $s \in [1, \infty)$ to a finite parameter $t = 1 - s^{-1} \in [0, 1)$.
Combined, the above two transformations result in the so-called \emph{renormalized system}, on which all of our serious analysis will take place.
Because of the near-uniform geometries of the level spheres in this setting, it is much easier to consider limits in terms of the renormalized picture.
In particular, we will generate the limiting metric $\gamma_\infty$ at infinity as limits of these rescaled metrics $s^{-2} \mind_s$.

\subsubsection{Changes of Foliations}

As noted before, different choices of the affine vector field $L$ lead to different affine parameters and geodesic foliations.
Given our original geodesic foliation, we can rescale our null tangent vector field $L$ by
\[ L^\prime = e^v L \text{,} \]
where $v$ is a smooth function on $\mc{N}$ that is constant on the null geodesics that generate $\mc{N}$.
Notice in particular that $L^\prime$ is once again parallel.
Thus, we can consider an affine parameter $s^\prime$ associated with $L^\prime$ (again with the normalization $s^\prime = 1$ on $\mc{S}$).
One can proceed from here to compute how the metrics, Ricci coefficients, and curvature components transform under this change of geodesic foliation.
The results are discussed in detail in Section \ref{sect:prelim.cfol} and Appendix \ref{sect:cfol}.

Since our goal is to construct asymptotically round spherical cuts of $\mc{N}$, our greatest interest lies in how the limiting metric
\[ \gamma_\infty = \lim_{s \nearrow \infty} s^{-2} \mind_s \]
transforms.
Without delving into details (these are presented in Section \ref{sect:prelim.cfol}), we can guess the result via a heuristic argument.
From the results in \cite{alex_shao:nc_inf}, we know $\mc{N}$ is asymptotic at infinity to the cone $\mc{N}_\infty \simeq [1, \infty) \times \Sph^2$, with the degenerate metric
\[ \g_\infty = 0 \cdot ds^2 + s^2 \gamma_\infty \text{.} \]
Furthermore, we observe that for a spherical cut of $\mc{N}_\infty$ given by
\[ \omega \in \Sph^2 \mapsto ( \lambda (\omega), \omega ) \text{,} \]
the metric induced by $\g_\infty$ on this cut is precisely $\lambda^2 \gamma_{\infty}$.

This suggests that \emph{a change of geodesic foliation results in a corresponding conformal transformation of the limiting metric at infinity.}
\footnote{In fact, this equivalence between conformal transformations and cuts of conical pseudo-Riemannian metrics has been used in many contexts; see, for instance, \cite{fef_gra:conf_inv}.}
Consequently, the \emph{problem of generating asymptotically round spherical cuts should be closely related to the uniformization problem for the limiting sphere $\mc{S}_\infty$ at infinity.}
The rigorous implementation of this idea will be presented in Section \ref{sect:distortion}.

In practice, the asymptotically round cuts will be obtained by constructing a special $1$-parameter family of changes of foliations, characterized by the functions $\fol{v}{y}$, $y \in [0, 1)$.
The $\fol{v}{y}$'s will be uniformly small, in particular so that the new foliations will also be controlled via the results of \cite{alex_shao:nc_inf}.
Moreover, the $\fol{v}{y}$'s can be chosen to converge in the appropriate norms to a limiting function $\fol{v}{1}$ that satisfies a uniformization-type equation at infinity.
Finally, we construct the cuts $\fol{\Sigma}{y} := \{ \fol{s}{y} = (1 - y)^{-1} \} \subset \mc{N}$ from these $\fol{v}{y}$-foliations and prove that they are asymptotically round.
This construction of the cuts $\fol{\Sigma}{y}$ is the crux of this paper.

We remark that certain extra layers of complexity required by our proof are  due to the low regularity of our setting.
If one assumed a priori the limiting function $\fol{v}{1}$ is smooth, then one could bypass entirely the approximating $\fol{v}{y}$'s.
Indeed, in this case, the $\fol{v}{1}$-foliation of $\mc{N}$ would be the desired asymptotically round spheres.
However, since $\fol{v}{1}$ is only in $H^2$ in our setting and the convergence of the Gauss curvatures is in $H^{-1/2}$, 
this approach would lead to undesirable technical difficulties.

\subsubsection{Limits at Infinity}

With the family $\fol{\Sigma}{y}$ in hand, it remains to show that the limits from \eqref{conclns.bondi}-\eqref{conclns.loss} exist and are sufficiently controlled.
While the results of \cite{alex_shao:nc_inf} suggest that this is indeed the case, they unfortunately do not directly apply here, since we are now working with an infinite family of foliations of $\mc{N}$.
Thus, in order to generate the desired limits, we have the additional task of comparing quantities on different foliations with each other.

While this adds its share of technical baggage to the process (for example, it is a priori unclear how to compare tensor fields living in different foliations), the issues are not fundamental.
The problem of comparing objects in separate foliations can be resolved by a natural identification of frames in these foliations; see Section \ref{sect:prelim.cfol}.
Once this convention is clear, the ensuing analysis resembles that found in \cite{alex_shao:nc_inf} (although the estimates are messier due to the changes of foliations).
The bulk of this argument will be carried out in Section \ref{sect:convergence}.

\begin{remark}
We note that this family $\fol{\Sigma}{y}$ is not the unique one with the asymptotic roundness property.
Indeed, any other construction obtained from functions $v^y$ solving the appropriate uniformization problem at infinity would also suffice and would yield another bound on the Bondi mass.
While it is not clear which refoliation results in the best bound, we do note that, up to the universal constant implied by the ``$\lesssim$", the bound \eqref{conclns.bondi} is in fact optimal in terms of the powers of $\Gamma$, in view of nearly Schwarzschild solutions with mass $m \neq m_{\rm S}$.
\end{remark}

\begin{ackn}
The first author was supported by NSERC grants 488916 and 489103, as well as a Sloan Fellowship.
The authors also wish to thank Mihalis Dafermos, Sergiu Klainerman, and Lydia Bieri for helpful discussions and insights.
\end{ackn}

\section{Preliminaries} \label{sect:prelim}

We discuss various preliminary notions needed to prove Theorem \ref{the.thm}:
\begin{itemize}
\item A brief discussion of our basic setting of analysis.

\item Changes of (geodesic) foliations of null cones.

\item The main theorem of \cite{alex_shao:nc_inf}, which uniformly controls the geometry of an infinite null cone by its curvature flux (with respect to a geodesic foliation).
\end{itemize}
As before, we assume $(M, g)$ to be a vacuum spacetime and $\mc{N} \subset M$ to be an infinite null cone emanating from a Riemannian $2$-sphere $\mc{S} \subset M$.

\subsection{Spherical Foliations} \label{sect:prelim.fol}

We briefly summarize the basic formalism, developed in \cite{alex_shao:nc_inf, shao:stt}, that we will use in this paper; see \cite[Sect. 3]{alex_shao:nc_inf} and \cite{shao:stt} for more detailed discussions.
The abstract setting is a one-parameter spherical foliation of $\mc{N}$,
\[ \mc{N} = \bigcup_{ \tau_- \leq \tau < \tau_+ } \Sigma_\tau \text{,} \qquad \Sigma_\tau \simeq \Sph^2 \text{,} \]
with $\Sigma_{\tau_-} = \mc{S}$.
The basic objects of analysis are \emph{horizontal tensor fields}, i.e., tensor fields on $\mc{N}$ which are everywhere tangent to the $\Sigma_\tau$'s.

On each sphere $\Sigma_\tau$, we impose a Riemannian metric $\eta_\tau$, so that the $\eta_\tau$'s vary smoothly with respect to $\tau$.
Let $\eta$ denote the resulting \emph{horizontal metric} on $\mc{N}$, representing the aggregation of all the $\eta_\tau$'s on $\mc{N}$.
\footnote{In particular, $\eta$ is a horizontal tensor field.}
Similarly, the volume forms $\upsilon_\tau$ associated with the $\eta_\tau$'s can be combined into the \emph{horizontal volume form} $\upsilon$ for $\eta$.
Combined, these objects form what we refer to as a \emph{horizontal covariant system},
\[ (\mc{N}, \eta) = \bigcup_{ \tau_- \leq \tau < \tau_+ } ( \Sigma_\tau, \eta_\tau ) \text{.} \]

In addition, let $\nabla$ denote the usual Levi-Civita connections for the $\eta_\tau$'s, which represent covariant derivatives of horizontal tensor fields in directions tangent to the $\Sigma_\tau$'s.
We can also define an analogous covariant derivative operator $\nabla_\tau$ in the remaining $\tau$-direction.
First, given a horizontal tensor field field $\Psi$, we let $\mf{L}_\tau \Psi$ denote the Lie derivative of $\Psi$ in the direction $d/d\tau$, along the null generators of $\mc{N}$.
\footnote{See \cite[Sect. 4.1]{shao:stt} for a more detailed characterization of $\mf{L}_\tau \Psi$.}
Of particular interest is the \emph{second fundamental form},
\[ k = \frac{1}{2} \mf{L}_\tau \eta \text{,} \]
which indicates how the geometries of the $\Sigma_\tau$'s evolve.
We then define
\[ \nabla_\tau \Psi_{u_1 \dots u_l}^{v_1 \dots v_r} = \mf{L}_\tau \Psi_{u_1 \dots u_l}^{v_1 \dots v_r} - \sum_{i = 1}^l \eta^{cd} k_{u_i c} \Psi_{u_1 \hat{d}_i u_l}^{v_1 \dots v_r} + \sum_{j = 1}^r \eta^{c v_j} k_{cd} \Psi_{u_1 \dots u_l}^{v_1 \hat{d}_j v_r} \text{,} \]
where the notation $u_1 \hat{d}_i u_l$ means $u_1 \dots u_l$, but with $u_i$ replaced by $d$.

In particular, one can show that both $\nabla$ and $\nabla_\tau$ annihilate the metric $\eta$, its dual $\eta^{-1}$, and the associated volume form $\upsilon$.
Moreover, we note the following commutation formula, which played a crucial role in many of the estimates in \cite{shao:stt}:
\begin{align}
\label{eq.comm} [ \nabla_\tau, \nabla_a ] \Psi_{u_1 \dots u_l}^{v_1 \dots v_r} &= - \eta^{cd} k_{ac} \nabla_d \Psi_{u_1 \dots u_l}^{v_1 \dots v_r} - \sum_{i = 1}^l \eta^{cd} ( \nabla_{u_i} k_{ac} - \nabla_c k_{a u_i} ) \Psi_{u_1 \hat{d}_i u_l}^{v_1 \dots v_r} \\
\notag &\qquad + \sum_{j = 1}^r \eta^{c v_j} ( \nabla_d k_{ac} - \nabla_c k_{ad} ) \Psi_{u_1 \dots u_l}^{v_1 \hat{d}_j v_r} \text{.}
\end{align}

\subsubsection{The Physical Setting}

We now formulate the geometry of $\mc{N}$, as a null hypersurface of $(M, \g)$, in terms of the above general framework.
Assume a \emph{geodesic foliation} of $\mc{N}$, via an \emph{affine parameter} $s: \mc{N} \rightarrow \R$, that is, $s$ acts as an affine parameter of every null geodesic generator of $\mc{N}$.
In addition, we normalize $s$ so that the initial sphere $\mc{S}$ is precisely the level set $s = 1$.
Using the symbol $\mc{S}_\varsigma$ to denote the level sphere $s = \varsigma$, we can describe our geodesic foliation as $\mc{N} = \bigcup_{s \geq 1} \mc{S}_s$.

On each $\mc{S}_s$, we have the (Riemannian) metric $\mind_s$ induced by the spacetime metric $\g$, as well as the Levi-Civita connection $\nasla$ associated with $\mind_s$.
The aggregation $\mind$ of all the $\mind_s$'s defines the horizontal metric for this system.
To maintain consistency with \cite{alex_shao:nc_inf}, we will use the symbol $\vind$ to denote the horizontal volume form associated with $\mind$.
We will refer to the resulting horizontal covariant system,
\[ (\mc{N}, \mind) \simeq \bigcup_{s \geq 1} (\mc{S}_s, \mind_s) \text{,} \]
as the \emph{physical system}.
For further details regarding this setting, see \cite[Sect. 4]{alex_shao:nc_inf}.

As in the introduction, we let $L$ be the tangent null vector field on $\mc{N}$ satisfying $L s \equiv 1$, that is, $L$ is the normalized tangent field for the $s$-parametrized null generators of $\mc{N}$.
One can show that the associated $s$-covariant derivative on horizontal fields, $\nasla_s$, is precisely the projection of the \emph{spacetime} covariant derivative $D_L$ to the $\mc{S}_s$'s.
In fact, this was the definition used for null covariant derivatives in \cite{kl_rod:cg}.

The objects of analysis in the physical setting---the connection coefficients $\chi$, $\zeta$, $\ul{\chi}$ and the curvature components $\alpha$, $\beta$, $\rho$, $\sigma$, $\ul{\rho}$ defined in the introduction---can now be treated as horizontal tensor fields in the physical system.
Moreover, these quantities are related to each other via the \emph{null structure equations}, which, in this foliation, can be found in \cite[Prop. 4.1]{alex_shao:nc_inf}.
Finally, we note that in the physical setting, the second fundamental form $\slashed{k}$ is precisely $\chi$.

\subsubsection{The Renormalized Setting}

Both in this paper and in \cite{alex_shao:nc_inf}, it is easier to work with a different horizontal covariant system in which:
\begin{itemize}
\item The metrics on the level spheres $\mc{S}_s$ are nearly identical.

\item The null parameter ranges over a finite, rather than infinite, interval.
\end{itemize}
As a result, we transform our physical system into one which achieves the above.

First, one rescales the $\mind_s$'s by defining $\gamma | \mc{S}_s = s^{-2} \mind_s$.
Since heuristically, $s$ corresponds roughly to the area radii of the $\mc{S}_s$'s, this has the effect of transforming the infinite near-cone $(\mc{N}, \mind)$ to an infinite near-cylinder.
Next, we apply the change of parameter $t = 1 - s^{-1}$, which transforms the infinite interval $s \in [1, \infty)$ to a finite interval $t \in [0, 1)$.
In particular, the initial sphere $s = 1$ corresponds now to $t = 0$, while the limit $s \nearrow \infty$ at infinity corresponds to $t \nearrow 1$.

We will use the symbol $S_\tau$ to denote the level set $t = \tau$.
\footnote{Note in particular that $S_\tau = \mc{S}_{ (1 - \tau)^{-1} }$.}
Moreover, we let $\gamma_t$ denote the rescaled metric $\gamma$ on $S_t$, and we let $\epsilon_t$ denote the volume form associated with $\gamma_t$.
We will refer to resulting horizontal covariant system,
\[ (\mc{N}, \gamma) = \bigcup_{0 \leq t < 1} (S_t, \gamma_t) \text{,} \]
as the \emph{renormalized system}.
For analytical purposes, this renormalized setting is often the more natural structure to work with.
For example, the general estimates developed in \cite{shao:stt} apply to the renormalized, but not the physical, system.
\footnote{However, estimates in the renormalized system (in particular, the main renormalized estimates in \cite{alex_shao:nc_inf}) can be directly translated to corresponding estimates in the physical system.}
For a more detailed construction of this renormalized setting, see \cite[Sect. 4.4]{alex_shao:nc_inf}.

Remaining with the conventions from \cite{alex_shao:nc_inf}, we define the following:
\begin{itemize}
\item \emph{Renormalized Ricci coefficients:}
\begin{equation} \label{eq.renorm_ricci} H = \chi - s^{-1} \mind \text{,} \qquad Z = s \zeta \text{,} \qquad \ul{H} = s^{-1} \ul{\chi} + s^{-2} \paren{ 1 - \frac{2m}{s} } \mind \text{.} \end{equation}

\item \emph{Renormalized curvature components:}
\begin{equation} \label{eq.renorm_curv} A = s^2 \alpha \text{,} \qquad B = s^3 \beta \text{,} \qquad R = s^3 (\rho + i \sigma) + 2 m_{\rm S} \text{,} \qquad \ul{B} = s \ul{\beta} \text{.} \end{equation}

\item \emph{Renormalized mass aspect function:}
\begin{equation} \label{eq.renorm_maf} M = s^3 \mu - 2 m_{\rm S} = - \gamma^{ab} \nabla_a Z_b - R + \frac{1}{2} \gamma^{ac} \gamma^{bd} \hat{H}_{ab} \hat{\ul{H}}_{cd} \text{.} \end{equation}
Here, $\hat{H}$ and $\hat{\ul{H}}$ represent the ($\gamma$-)traceless parts of $H$ and $\ul{H}$.
\end{itemize}
These quantities will be treated as horizontal fields in the renormalized system.

Let $\nabla$, $\lapl$, and $\mc{K}$ denote the Levi-Civita connections, the (Bochner) Laplacians, and the Gauss curvatures, respectively, for the $(S_t, \gamma_t)$'s.
Following earlier conventions, we let $\nabla_t$ denote the corresponding $t$-covariant derivative.
Note that since $\mf{L}_t = s^2 \mf{L}_s$ by our defined relation between $s$ and $t$, then a direct computation shows that the second fundamental form $k$ for the renormalized system is given by
\[ k = \frac{1}{2} \mf{L}_t \gamma = H \text{.} \]
As a result, we can write
\begin{align*}
\nabla_t \Psi_{u_1 \dots u_l}^{v_1 \dots v_r} &= \mf{L}_t \Psi_{u_1 \dots u_l}^{v_1 \dots v_r} - \sum_{i = 1}^l \gamma^{cd} H_{u_i c} \Psi_{u_1 \hat{d}_i u_l}^{v_1 \dots v_r} + \sum_{j = 1}^r \gamma^{c v_j} H_{cd} \Psi_{u_1 \dots u_l}^{v_1 \hat{d}_j v_r} \text{,}
\end{align*}

\begin{remark}
In contrast to the physical system, $\nabla_t$ is not characterized as a projection of a spacetime covariant derivative to the $S_t$'s.
But, in both \cite{alex_shao:nc_inf} and this paper, $\nabla_t$ is a more natural evolution operator to consider than the projection $\nasla_s$.
\end{remark}

From a series of rather tedious computations, one can convert the null structure equations in the physical setting to corresponding equations in the renormalized settings (in terms of $H$, $A$, $\gamma$, $\nabla$, $\nabla_t$, etc.).
For the full list of renormalized structure equations, the reader is referred to \cite[Prop. 4.2]{alex_shao:nc_inf}.

Finally, as we will be working exclusively with renormalized settings in our analysis, we make the following assumptions regarding notations:
\begin{itemize}
\item From now on, objects will be stated in terms of the renormalized rather than the physical setting.
The lone exception is that we may sometimes refer to the affine parameter $s$ when convenient.

\item By ``$\trace$", we mean the trace with respect to $\gamma$, e.g., $\trace H = \gamma^{ab} H_{ab}$.

\item For a horizontal tensor field $\Psi$ on $\mc{N}$, we will generally use the symbol $\Psi_\tau$ to refer to the restriction of $\Psi$ to the level set $S_\tau$.
\end{itemize}

\subsubsection{Spherical Cuts}
\label{SphCuts}
In addition to the above foliations of $\mc{N}$, we will occasionally deal with more general spherical hypersurfaces of $\mc{N}$.
Here, we introduce the relevant terminology that will prove to be useful throughout the paper.

First, we will use the term \emph{spherical cut} of $\mc{N}$ to refer to any codimension-$1$ submanifold $\Sigma$ of $\mc{N}$ that intersects each null generator of $\mc{N}$ exactly once; note that any such $\Sigma$ is necessarily spacelike.
Basic examples of cuts include the level sets of $t$ and $s$.
Moreover, for each spherical cut $\Sigma$, we define the \emph{transport map} $\Phi_\Sigma: \Sigma \rightarrow S_0$, which sends each $P \in \Sigma$ to the corresponding point on $S_0 = \mc{S}$ along the same null generator as $P$.
Since $\Phi_\Sigma$ is a diffeomorphism, it also induces a push-forward $\Phi_\Sigma^\ast$, which identifies tensor fields on $\Sigma$ with tensor fields on $S_0$.

A basic construction that will be useful on occasion is \emph{transported coordinates}.
Given a coordinate system $x^1, x^2$ on the initial sphere $S_0$, we define corresponding coordinates on a spherical cut $\Sigma$ of $\mc{N}$ by transport along the null generators of $\mc{N}$.
In other words, we define these transported coordinates on $\Sigma$ by $x^a \circ \Phi_\Sigma$.
In particular, this construction can be done with $\Sigma$ being any level sphere $S_t$.

Finally, as we will deal with ``limits at infinity" in future sections, it will be convenient to treat this more concretely.
For this purpose, we formally introduce a limiting sphere $S_1$ ``at infinity", that is, we attach an upper spherical boundary $S_1$ to $\mc{N}$, which we can think of as the level set $t = 1$.
Like for finite cuts, we can once again define a transport map $\Phi_{S_1}: S_1 \rightarrow S_0$.
At a heuristic level, $S_1$ represents the spherical cut of future null infinity created by $\mc{N}$.

\subsection{Changes of Foliation} \label{sect:prelim.cfol}

In order to obtain the relevant physical limits for Theorem \ref{the.thm}, we will need to consider transformations from our system in Section \ref{sect:prelim.fol} to other geodesic foliations of $\mc{N}$.
Such a change of foliation is generally described by a \emph{constant} rescaling of the tangent vector field of each null generator of $\mc{N}$ (though different null generators may be scaled by different factors).
These rescalings can be represented by a \emph{distortion function} $v: \mc{S} \rightarrow \R$, with $e^v$ as the rescaling factor for the null generators.
$v$ is then extended to all of $\mc{N}$ so that it is constant on each null generator of $\mc{N}$ (i.e., $\nabla_t v \equiv 0$).
In particular, whenever $v$ is small, one obtains a new foliation that is very close to the original.

Here, we will adopt the following convention: objects defined with respect to the new geodesic foliation will be denoted with a $\prime$; objects without this $\prime$ are presumed to be with respect to the original foliation.
By definition, the tangent vector field $L^\prime$ for the new foliation is related to the original vector field $L$ via the relation
\begin{equation} \label{eq.cf_L} L^\prime = e^v L \text{.} \end{equation}
Furthermore, because of our normalization, $\mc{S}^\prime_1$ (i.e., the set $s^\prime = 1$) should coincide with the initial sphere $\mc{S} = \mc{S}_1$ of $\mc{N}$.
Consequently, we have that
\begin{equation} \label{eq.cf_s} s^\prime - 1 = e^{-v} (s - 1) \text{.} \end{equation}

\subsubsection{Identification of Horizontal Fields}

If $X$ and $Y$ are vector fields on $\mc{N}$ tangent to the $\mc{S}_s$'s (i.e., horizontal in the $s$-foliation), then we define
\begin{equation} \label{eq.cf_X} X^\prime = X + (s - 1) \nasla_X v \cdot L \text{,} \end{equation}
and analogously for $Y^\prime$ and $Y$.
If $\mind^\prime$ denotes the induced metrics on the level sets $\mc{S}^\prime_{s^\prime}$ of $s^\prime$, then $X^\prime$ and $Y^\prime$ are everywhere tangent to the $\mc{S}^\prime_{s^\prime}$'s, and
\begin{equation} \label{eq.cf_mind} \mind^\prime ( X^\prime, Y^\prime ) = \mind (X, Y) \text{,} \qquad \g ( X^\prime, L^\prime ) \equiv 0 \text{.} \end{equation}
To be more precise, given any $P \in \mc{N}$, then $X^\prime |_P$ and $Y^\prime |_P$ are the (metric-preserving) natural projections of $X |_P$ and $Y |_P$, which are tangent to the level set of $s$ through $P$, to the corresponding level set of $s^\prime$ through $P$.

The correspondence \eqref{eq.cf_X} provides a natural method for identifying and comparing horizontal tensors from different foliations of $\mc{N}$.
Suppose $\Psi^\prime$ is an $s^\prime$-horizontal covariant tensor field, i.e., $\Psi^\prime$ is tangent to the $\mc{S}^\prime_{s^\prime}$'s.
Then, $\Psi^\prime$ naturally induces an $s$-horizontal field $\Psi^\dagger$ in the following manner: given $t$-horizontal vector fields $X_1, \dots, X_k$, with $k$ the rank of $\Psi^\prime$, we define
\[ \Psi^\dagger (X_1, \dots, X_l) := \Psi^\prime (X_1^\prime, \dots, X_l^\prime) \text{.} \]
In other words, at each $P \in \mc{N}$, one projects via \eqref{eq.cf_X} from the $s$-tangent space at $P$ to the corresponding $s^\prime$-tangent space at $P$.
To reduce notational baggage, when the context is clear, the induced $t$-horizontal field $\Psi^\dagger$ will also be denoted by $\Psi^\prime$.

\begin{remark}
The above also allows us to make sense of the difference of two horizontal fields living in different foliations.
This point will become important in Section \ref{sect:convergence}.
\end{remark}

It will be convenient to adjust our index notations to reflect the above correspondence.
Henceforth, \emph{given an equation with quantities in both foliations, identical indices for primed and unprimed quantities will always refer to frame elements related via \eqref{eq.cf_X}.}
With this convention, the first identity in \eqref{eq.cf_mind} can be restated
\[ \mind^\prime_{ab} = \mind_{ab} \text{,} \qquad \mind^{\prime ab} = \mind^{ab} \text{.} \]
More generally, with $\Psi^\prime$ as before, the induced $t$-horizontal field $\Psi^\dagger$ is defined
\[ \Psi^\dagger_{ u_1 \dots u_l } := \Psi^\prime_{ u_1 \dots u_l } \text{.} \]

\begin{remark}
This indexing convention is also compatible with the transported coordinate systems described in Section \ref{sect:prelim.fol}.
Consider a coordinate system on $S_0$, which yields transported coordinates on both the $\mc{S}_s$'s and the $\mc{S}^\prime_{s^\prime}$'s.
In this case, the associated coordinate vector fields on the $\mc{S}_s$'s and $\mc{S}^\prime_s$'s are related to each other via \eqref{eq.cf_X}.
Thus, we can equivalently define $\Psi^\dagger$ by requiring that $\Psi^\dagger$ and $\Psi^\prime$ act the same way on corresponding transported coordinate vector fields.
\end{remark}

\subsubsection{Changes of Physical Systems}

We now have two physical systems,
\[ \mc{N} = \bigcup_{s \geq 1} (\mc{S}_s, \mind_s) = \bigcup_{s^\prime \geq 1} (\mc{S}^\prime_{s^\prime}, \mind^\prime_{s^\prime}) \text{,} \]
in the sense of Section \ref{sect:prelim.fol}.
In the transformed $s^\prime$-foliation, we again have the usual Ricci coefficients $\chi^\prime$, $\zeta^\prime$, $\ul{\chi}^\prime$ and the curvature components $\alpha^\prime$, $\beta^\prime$, $\rho^\prime$, $\sigma^\prime$, $\ul{\beta}^\prime$ on $\mc{N}$.
Moreover, we can derive \emph{change of foliation formulas} relating these quantities to the corresponding quantities $\chi$, $\alpha$, etc., in the original $s$-foliation.

These transformation laws are listed in the following proposition.
Throughout, we always use the indexing conventions described above.

\begin{proposition} \label{thm.cfol.phys}
Consider the geodesic $s$- and $s^\prime$-foliations of $\mc{N}$, related via the distortion function $v$, as described above.
Then:
\begin{itemize}
\item The following relations hold for the Ricci coefficients:
\begin{align}
\label{eq.cf_chi} \chi^\prime_{ab} &= e^v \chi_{ab} \text{,} \\
\label{eq.cf_zeta} \zeta^\prime_a &= \zeta_a + (s - 1) \mind^{bc} \nasla_b v \cdot \chi_{ac} - \nasla_a v \text{,} \\
\label{eq.cf_chibar} \ul{\chi}^\prime_{ab} &= e^{-v} \ul{\chi}_{ab} + 2 (s - 1) e^{-v} \nasla_{ab} v - 2 (s - 1) e^{-v} ( \nasla_a v \cdot \zeta_b + \nasla_b v \cdot \zeta_a ) \\
\notag &\qquad + (s - 1)^2 \mind^{cd} e^{-v} \nasla_c v ( \nasla_d v \cdot \chi_{ab} - 2 \nasla_a v \cdot \chi_{bd} - 2 \nasla_b v \cdot \chi_{ad} ) \\
\notag &\qquad + 2 (s - 1) e^{-v} \nasla_a v \nasla_b v \text{.}
\end{align}

\item The following relations hold for the curvature coefficients:
\footnote{The symbol $\star$, in \eqref{eq.cf_betabar}, refers to the Hodge dual, i.e., ${}^\star \nasla_a v = \vind_{ac} \mind^{cb} \nasla_b v$.}
\begin{align}
\label{eq.cf_alpha} \alpha^\prime_{ab} &= e^{2v} \alpha_{ab} \text{,} \\
\label{eq.cf_beta} \beta^\prime_a &= e^v \beta_a + (s - 1) \mind^{bc} e^v \nasla_b v \cdot \alpha_{ac} \text{,} \\
\label{eq.cf_rho} \rho^\prime &= \rho + 2 (s - 1) \mind^{ab} \nasla_a v \cdot \beta_b + (s - 1)^2 \mind^{ac} \mind^{bd} \nasla_a v \nasla_b v \cdot \alpha_{cd} \text{,} \\
\label{eq.cf_sigma} \sigma^\prime &= \sigma - 2 (s - 1) \vind^{ab} \nasla_a v \cdot \beta_b - (s - 1)^2 \vind^{ac} \mind^{bd} \nasla_a v \nasla_b v \cdot \alpha_{cd} \text{,} \\
\label{eq.cf_betabar} \ul{\beta}^\prime_a &= e^{-v} \ul{\beta}_a + 3 (s - 1) e^{-v} \nasla_a v \cdot \rho + (s - 1) e^{-v} {}^\star \nasla_a v \cdot \sigma \\
\notag &\qquad + (s - 1)^2 e^{-v} \nasla_b v ( 4 \mind^{bc} \nasla_a v \cdot \beta_c - \mind^{bc} \nasla_c v \cdot \beta_a + 2 \vind^{bc} {}^\star \nasla_a v \cdot \beta_c ) \\
\notag &\qquad + (s - 1)^3 e^{-v} \mind^{bd} \mind^{ce} \nasla_b v \nasla_c v ( 2 \nasla_a v \cdot \alpha_{de} - \nasla_e v \cdot \alpha_{ad} ) \text{.}
\end{align}

\item Suppose $\Psi^\prime$ is a horizontal tensor field in the $s^\prime$-foliation.
Then,
\begin{align}
\label{eq.cf_nasla} \nasla^\prime_a \Psi^\prime_{ u_1 \dots u_l } &= \nasla_a \Psi^\prime_{u_1 \dots u_l} + (s - 1) \nasla_a v \cdot \nasla_s \Psi^\prime_{u_1 \dots u_l} \\
\notag &\qquad - (s - 1) \mind^{cd} \sum_{i = 1}^l ( \nasla_{u_i} v \cdot \chi_{da} - \nasla_d v \cdot \chi_{a u_i} ) \Psi^\prime_{u_1 \hat{c}_i u_l} \text{,}
\end{align}
where in the right-hand side, $\Psi^\prime$ refers to the induced $s$-horizontal field.
\end{itemize}
\end{proposition}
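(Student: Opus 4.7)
The proof is a direct (if lengthy) computation from the definitions \eqref{sff}, \eqref{torsion}, \eqref{list}, organized around three preparatory ingredients: the relation $L' = e^v L$ from \eqref{eq.cf_L}, the identification of horizontal fields via \eqref{eq.cf_X}, and the parallelism $D_L L = 0$ together with $Lv = 0$. The plan is to carry out the computations in four stages.

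\textbf{Stage 1: expression for $\ul{L}'$.} Before computing any connection or curvature quantity, I would first write the conjugate null field $\ul{L}'$ in terms of the original frame. Since $\ul{L}'$ is the unique null field with $\g(L', \ul{L}') = -2$ and $\ul{L}' \perp \mc{S}'_{s'}$, I make the ansatz $\ul{L}' = a \ul{L} + b L + W$ with $W$ tangent to $\mc{S}_s$ and solve the three resulting scalar conditions. The normalization yields $a = e^{-v}$; applying $\g(\ul{L}', X') = 0$ for $X'$ as in \eqref{eq.cf_X} forces $W_a = 2(s-1) e^{-v} \nasla_a v$; and the null condition $\g(\ul{L}', \ul{L}') = 0$ then gives $b = (s-1)^2 e^{-v} |\nasla v|_{\mind}^2$. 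This explicit formula is the engine for everything that follows.

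\textbf{Stage 2: the Ricci coefficients \eqref{eq.cf_chi}--\eqref{eq.cf_chibar}.} For $\chi'$ I expand $\g(D_{X'} L', Y')$ via the Leibniz rule. Using $D_L L = 0$ one has $D_{X'} L = D_X L$, and using $\g(L, L) = 0$ together with $\g(L, Y') = 0$ most cross terms vanish, leaving $\chi'_{ab} = e^v \chi_{ab}$. The formula for $\zeta'$ arises by contracting $D_{X'} L'$ against the Stage~1 expression for $\ul{L}'$ and collecting the term $\tfrac12 X'(e^v) \g(L, \ul{L}')$ (which produces the $-\nasla_a v$), the $\zeta$-contribution from $\ul{L}$, and the $\chi$-contribution from $W$. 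The $\ul{\chi}'$ identity is the longest: expanding $\g(D_{X'} \ul{L}', Y')$ with $\ul{L}' = e^{-v} \ul{L} + bL + W$ produces (i) a principal term $e^{-v} \ul{\chi}_{ab}$, (ii) a Hessian term $2(s-1)e^{-v} \nasla_{ab} v$ coming from differentiating $W$, (iii) mixed $\zeta \cdot \nasla v$ terms coming from $D_X \ul{L}$ projected against $W$ and from $D_X W$ projected against $L$, and (iv) genuinely cubic $\chi \cdot |\nasla v|^2$ terms coming from $X'$ vs.\ $X$ and $Y'$ vs.\ $Y$ substitutions along with derivatives of $b$. Careful bookkeeping, using $\g(L, Y') = 0$ and $L(v) = 0$ to kill the many would-be extra contributions, reproduces \eqref{eq.cf_chibar}.

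\textbf{Stage 3: the curvature components \eqref{eq.cf_alpha}--\eqref{eq.cf_betabar}.} These are purely algebraic once Stages~1--2 are in hand. I substitute $L' = e^v L$ and the Stage~1 formula for $\ul{L}'$ into the definitions \eqref{list}, and expand using multilinearity of $R$ (and ${}^\star R$). The antisymmetry $R(L, L, \cdot, \cdot) = 0$ annihilates any term that picks up two factors of $L$ from the primed inputs; the remaining terms collapse to a polynomial in $\nasla v$ whose coefficients are precisely $\alpha, \beta, \rho, \sigma, \ul{\beta}$, as recorded in \eqref{eq.cf_alpha}--\eqref{eq.cf_betabar}. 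The Hodge-dual factors in \eqref{eq.cf_sigma} and \eqref{eq.cf_betabar} arise from $\vind^\prime_{ab} = \vind_{ab}$ under the volume-preserving projection \eqref{eq.cf_mind}.

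\textbf{Stage 4: the connection formula \eqref{eq.cf_nasla}.} I work in transported coordinates from Section~\ref{sect:prelim.fol}: in these coordinates $\Psi^\dagger$ and $\Psi'$ have identical components, while the coordinate vector fields in the two foliations differ precisely by the horizontal shift \eqref{eq.cf_X}. Writing $\nasla'_a \Psi' = \Pi'(D_{X'_a} \Psi')$ and $\nasla_a \Psi^\dagger = \Pi(D_{X_a} \Psi^\dagger)$, where $\Pi$, $\Pi'$ are the horizontal projections, one decomposes $D_{X'_a} = D_{X_a} + (s-1) \nasla_a v \cdot D_L$ (hence the $\nasla_s$-term in \eqref{eq.cf_nasla}) and accounts for the mismatch between $\Pi$ and $\Pi'$, which is driven by the difference between the $s$- and $s'$-horizontal subspaces and therefore picks up $\chi$-factors in each contravariant/covariant index slot. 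Reassembling these corrections yields \eqref{eq.cf_nasla}.

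I expect the main obstacle to be Stage~2 for $\ul{\chi}'$: it mixes a genuine Hessian contribution, $\zeta$-type terms, and cubic $|\nasla v|^2$-$\chi$ terms, and several plausible-looking cancellations only materialize after substituting the precise Stage~1 value of $b$ and using $L(v) = 0$. Once that is pinned down, Stages~3 and 4 are bookkeeping, while Stage~1 is essentially three scalar equations.
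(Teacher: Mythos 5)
Your plan follows the paper's own proof essentially step for step: first deriving the expression for $\ul{L}^\prime$ (the paper's \eqref{eq.cf_Lbar}), then expanding the Ricci coefficients with precisely the Hessian/$\zeta$/cubic bookkeeping for $\ul{\chi}^\prime$, then expanding the curvature components by multilinearity of $R$, and finally obtaining \eqref{eq.cf_nasla} via transported coordinates and the mismatch between the two horizontal structures. The only details you leave implicit---the commutation identity $\nasla_s \nasla_b v = -\mind^{cd}\chi_{bc}\nasla_d v$ needed to handle the $D_L \gras v$ terms in the $\ul{\chi}^\prime$ computation, and the use of the vacuum condition to re-express mixed curvature terms like $R(e_1,e_2,L,\gras v)$ through $\beta$, $\rho$, $\sigma$ in \eqref{eq.cf_sigma} and \eqref{eq.cf_betabar}---are exactly the ones the paper supplies, so the sketch is sound as written.
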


\begin{proof}
See Appendix \ref{sect:cfol.phys}.
\end{proof}

\subsubsection{Changes of Renormalized Systems}

Next, we apply the renormalization from Section \ref{sect:prelim.fol} to both physical systems to produce two renormalized systems,
\[ \mc{N} = \bigcup_{0 \leq t < 1} (S_t, \gamma_t) = \bigcup_{0 \leq t^\prime < 1} (S^\prime_{t^\prime}, \gamma^\prime_{t^\prime}) \text{,} \]
with respect to the finite parameters $t = 1 - s^{-1}$ and $t^\prime = 1 - s^{\prime -1}$.

For convenience, we define, for any integer $k$, the coefficients
\footnote{Intuitively, the $\mc{B}_k$'s will remain uniformly close to $1$, while the $\mc{C}_k$'s will remain uniformly small; for specifics, see the remark after Proposition \ref{thm.renorm_cfol}.}
\begin{equation} \label{eq.Ck} \mc{B}_k = [ 1 + s^{-1} ( e^v - 1 ) ]^k \text{,} \qquad \mc{C}_k = s (\mc{B}_k - 1) \text{,} \end{equation}
which will be present in several upcoming computations.
For example, by \eqref{eq.cf_s},
\begin{equation} \label{eq.cf_sk} s^{\prime k} = e^{-k v} \mc{B}_k \cdot s^k \text{,} \qquad \mc{B}_k = (1 + s^{-1} \mc{C}_k) \text{.} \end{equation}
Recalling the definitions of $t$ and $t^\prime$, we also have the identity
\begin{equation} \label{eq.cf_t} t^\prime = t - s^{-1} ( e^v - 1 ) - s^{-2} e^v \mc{C}_{-1} \text{.} \end{equation}
Moreover, since $\gamma^\prime = s^{\prime -2} \mind^\prime$ and $\gamma = s^{-2} \mind$, then \eqref{eq.cf_sk} implies
\begin{equation} \label{eq.cf_gammainv} \gamma^{\prime ab} = e^{-2v} \mc{B}_2 \gamma^{ab} \text{,} \qquad \gamma_{ab} = e^{-2v} \mc{B}_2 \gamma^\prime_{ab} \text{,} \end{equation}
where we use the same indexing conventions mentioned earlier.

Next, the quantities $H$, $Z$, $\ul{H}$, $A$, $B$, $R$, $\ul{B}$, $M$ also have counterparts in the $t^\prime$-foliation.
Using Proposition \ref{thm.cfol.phys}, we can derive identities comparing them.

\begin{proposition} \label{thm.cfol.renorm}
Consider the renormalized $t$- and $t^\prime$-foliations of $\mc{N}$, related via the distortion function $v$, as described above.
Then:
\begin{itemize}
\item The following relations hold for the renormalized Ricci coefficients:
\begin{align}
\label{eq.cf_H} H^\prime_{ab} &= e^v ( H_{ab} - \mc{C}_{-1} \gamma_{ab} ) \text{,} \\
\label{eq.cf_Z} Z^\prime_a &= e^{-v} \mc{B}_1 ( Z_a + t \gamma^{bc} \nabla_b v \cdot H_{ac} - \nabla_a v ) \text{,} \\
\label{eq.cf_Hbar} \ul{H}^\prime_{ab} &= \mc{B}_{-1} [ \ul{H}_{ab} + 2 t \nabla_{ab} v + ( e^{2v} - 1 + s^{-1} e^{2v} \mc{C}_{-1} ) \gamma_{ab} ] \\
\notag &\qquad + 2 m_{\rm S} s^{-1} e^{3v} \mc{B}_{-1} [ (e^{-3v} - 1) - s^{-1} \mc{C}_{-1} (1 + \mc{B}_{-1}) ] \gamma_{ab} \\
\notag &\qquad + \mc{B}_{-1} [ t^2 \gamma^{cd} \nabla_c v \nabla_d v \cdot \gamma_{ab} - 2 t (1 - 3 s^{-1}) \nabla_a v \nabla_b v ] \\
\notag &\qquad + \mc{B}_{-1} s^{-1} t^2 \gamma^{cd} \nabla_c v ( \nabla_d v \cdot H_{ab} - 2 \nabla_a v \cdot H_{bd} - 2 \nabla_b v \cdot H_{ad} ) \\
\notag &\qquad - 2 \mc{B}_{-1} s^{-1} t ( \nabla_a v \cdot Z_b + \nabla_b v \cdot Z_a ) \text{.}
\end{align}

\item The following relations hold for the renormalized curvature coefficients:
\begin{align}
\label{eq.cf_A} A^\prime_{ab} &= \mc{B}_2 A_{ab} \text{,} \\
\label{eq.cf_B} B^\prime_a &= e^{-2v} \mc{B}_3 ( B_a + t \gamma^{bc} \nabla_b v \cdot A_{ac} ) \text{,} \\
\label{eq.cf_R} R^\prime &= e^{-3v} \mc{B}_3 R + 2 m e^{-3v} ( e^{3v} - 1 - s^{-1} \mc{C}_3 ) \\
\notag &\qquad + e^{-3v} \mc{B}_3 s^{-1} t ( \gamma^{ab} - i \epsilon^{ab} ) \nabla_a v ( 2 B_b + t \gamma^{cd} \nabla_c v \cdot A_{bd} ) \text{,} \\
\label{eq.cf_Bbar} \ul{B}^\prime_a &= e^{-2v} \mc{B}_1 \ul{B}_a + e^{-2 v} \mc{B}_1 s^{-1} t [ 3 \nabla_a v \cdot ( \real R - 2 m ) + {}^\star \nabla_a v \cdot \imag R ] \\
\notag &\qquad + e^{-2 v} \mc{B}_1 s^{-2} t^2 \nabla_b v [ \gamma^{bc} ( 4 \nabla_a v \cdot B_c - \nabla_c v \cdot B_a ) + 2 \epsilon^{bc} {}^\star \nabla_a v \cdot B_c ] \\
\notag &\qquad + e^{-2 v} \mc{B}_1 s^{-2} t^3 \gamma^{bd} \gamma^{ce} \nabla_b v \nabla_c v ( 2 \nabla_a v \cdot A_{de} - \nabla_e v \cdot A_{ad} ) \text{.}
\end{align}

\item Suppose $\Psi^\prime$ is a $t^\prime$-horizontal tensor field.
Then,
\begin{align}
\label{eq.cf_nabla} \nabla^\prime_a \Psi^\prime_{ u_1 \dots u_l } &= \nabla_a \Psi^\prime_{u_1 \dots u_l} + s^{-1} t \nabla_a v \cdot \nabla_t \Psi^\prime_{u_1 \dots u_l} + l t \nabla_a v \cdot \Psi^\prime_{u_1 \dots u_l} \\
\notag &\qquad - s^{-1} t \gamma^{cd} \sum_{i = 1}^l ( \nabla_{u_i} v \cdot H_{da} - \nabla_d v \cdot H_{a u_i} ) \Psi^\prime_{u_1 \hat{c}_i u_l} \\
\notag &\qquad + t \sum_{i = 1}^l ( \nabla_{u_i} v \cdot \Psi^\prime_{u_1 \hat{a}_i u_l} - \gamma^{cd} \nabla_d v \cdot \gamma_{a u_i} \Psi^\prime_{u_1 \hat{c}_i u_l} ) \text{,}
\end{align}
where $\Psi^\prime$ on the right-hand side refers to the induced $t$-horizontal field.
\end{itemize}
\end{proposition}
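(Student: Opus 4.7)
The plan is to prove Proposition~\ref{thm.cfol.renorm} by translating each of the physical transformation laws from Proposition~\ref{thm.cfol.phys} into the renormalized language. There are no new analytic ideas; the proof is a careful bookkeeping exercise. For each primed renormalized quantity, I write its definition in the primed physical variables (for instance $H^\prime_{ab} = \chi^\prime_{ab} - s^{\prime -1}\mind^\prime_{ab}$ or $R^\prime = s^{\prime 3}(\rho^\prime + i\sigma^\prime) + 2m_{\rm S}$), substitute the relevant formula from Proposition~\ref{thm.cfol.phys}, and then convert back to the unprimed renormalized setting using the dictionary
\[ s^{\prime k} = e^{-kv}\mc{B}_k s^k \text{,} \qquad \mind^{cd} = s^{-2}\gamma^{cd} \text{,} \qquad s - 1 = st \text{,} \qquad \mc{B}_k - 1 = s^{-1}\mc{C}_k \text{,} \qquad \chi = H + s^{-1}\mind \text{.} \]
For $H^\prime$ this gives $H^\prime_{ab} = e^v\chi_{ab} - e^v\mc{B}_{-1}s^{-1}\mind_{ab}$; writing $s^{-1}\mind = s\gamma$ and $\chi = H + s^{-1}\mind$ converts the right-hand side into $e^v(H_{ab} - \mc{C}_{-1}\gamma_{ab})$ after a single use of $\mc{B}_{-1}-1 = s^{-1}\mc{C}_{-1}$, matching \eqref{eq.cf_H}. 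The corresponding calculations for $Z^\prime$, $A^\prime$, and $B^\prime$ are entirely analogous: each $(s-1)^j\nasla v$-product in Proposition~\ref{thm.cfol.phys} becomes a $t^j\nabla v$-product (sometimes with an additional $s^{-1}$ coming from raising an index with $\gamma^{cd}$ rather than $\mind^{cd}$), while the overall conformal prefactor $e^{-kv}\mc{B}_k$ is fixed by the $s$-homogeneity of the quantity.

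The only genuinely subtle step, and the most error-prone, is isolating the Schwarzschild background in the transformation laws for $\ul{H}^\prime$ and $R^\prime$. For $\ul{H}^\prime$, after substituting \eqref{eq.cf_chibar} into $\ul{H}^\prime_{ab} = s^{\prime -1}\ul{\chi}^\prime_{ab} + s^{\prime -2}(1 - 2m_{\rm S}s^{\prime -1})\mind^\prime_{ab}$, one must compare
\[ s^{\prime -2}\Big( 1 - \tfrac{2 m_{\rm S}}{s^\prime} \Big)\mind^\prime_{ab} = e^{2v}\mc{B}_{-2}\gamma_{ab} - 2 m_{\rm S}\, s^{-1} e^{3v}\mc{B}_{-3}\gamma_{ab} \]
against the background $\mc{B}_{-1}(1 - 2m_{\rm S}s^{-1})\gamma_{ab}$ that emerges when one extracts $\ul{H}$ from the leading $\mc{B}_{-1}s^{-1}\ul{\chi}$ term. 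Using the multiplicative identities $\mc{B}_{-2} = \mc{B}_{-1}^2$ and $\mc{B}_{-3} = \mc{B}_{-1}\mc{B}_{-2}$, together with $\mc{B}_{-2} - 1 = s^{-1}\mc{C}_{-1}(1 + \mc{B}_{-1})$, the mismatch reorganizes exactly into the two $m_{\rm S}$-dependent lines of \eqref{eq.cf_Hbar}. The identical manoeuvre applied to $R^\prime$, now expanding $\rho^\prime + 2m_{\rm S}s^{\prime -3}$ rather than $\rho^\prime$ alone, produces the $2m_{\rm S}(e^{3v} - 1 - s^{-1}\mc{C}_3)$ correction in \eqref{eq.cf_R}. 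The formula for $\ul{B}^\prime$ carries no background term, but its sheer length (cubic in $\nabla v$ and contracting against all of $\rho$, $\sigma$, $\beta$, $\alpha$) makes it the most tedious; each contraction is handled by the same substitution scheme.

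For the covariant derivative formula \eqref{eq.cf_nabla} the starting point is \eqref{eq.cf_nasla}. The key observation is that since $\gamma_t = s^{-2}\mind_s$ on each leaf with $s$ constant along the leaf, the intrinsic Levi-Civita connections coincide: $\nabla = \nasla$ and $\nabla^\prime = \nasla^\prime$ as operators acting on horizontal tensors within a single leaf. It then suffices to convert $\nasla_s$ into $\nabla_t$ using $\mf{L}_L = s^{-2}\mf{L}_{d/dt}$ (valid on horizontal fields since the extra Lie-rescaling terms involve $\Psi^\prime$ evaluated at a vertical slot and hence vanish), together with $\chi = H + s^{-1}\mind$ and $\mind^{cd} = s^{-2}\gamma^{cd}$ in the $\chi$-contraction terms. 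The $H$-piece of $\chi$ contributes the $s^{-1}t\gamma^{cd}H$-sum in \eqref{eq.cf_nabla}, while the $s^{-1}\mind$-piece generates the zeroth-order $t\nabla v$-contractions; the factor of $l$ on $lt\nabla_a v\cdot\Psi^\prime$ arises by counting the covariant indices of $\Psi^\prime$ that participate in the passage from $\nasla_s$ to $\nabla_t$. The main obstacle throughout is purely notational: tracking, line by line, which powers of $s$ are absorbed into $\gamma$, which into the $\mc{B}_k$'s, and which into explicit $m_{\rm S}$-dependent corrections, without losing or duplicating terms.
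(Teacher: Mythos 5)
Your proposal is essentially the paper's own proof: for \eqref{eq.cf_H}--\eqref{eq.cf_Bbar} the paper likewise writes each primed renormalized quantity in terms of the primed physical ones, substitutes Proposition \ref{thm.cfol.phys}, and converts with $s^{\prime k} = e^{-kv}\mc{B}_k s^k$, $\mind = s^2\gamma$, $s-1 = st$, $\chi = H + s^{-1}\mind$; your handling of the $m_{\rm S}$-terms via $\mc{B}_{-2} = \mc{B}_{-1}^2$ and $\mc{B}_{-2}-1 = s^{-1}\mc{C}_{-1}(1+\mc{B}_{-1})$ is exactly the bookkeeping that produces \eqref{eq.cf_Hbar} and \eqref{eq.cf_R}, so this part is correct and the same route.

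For \eqref{eq.cf_nabla} your path (start from \eqref{eq.cf_nasla}, use that $\nabla = \nasla$ and $\nabla^\prime = \nasla^\prime$ leafwise, then convert $\nasla_s$ to $\nabla_t$) is equivalent to the paper's, which instead notes $\Gamma^c_{ab} = \Samma^c_{ab}$ and $\Gamma^{\prime c}_{ab} = \Samma^{\prime c}_{ab}$ and repeats the Christoffel computation in renormalized variables; either works. One concrete warning, though: for a covariant rank-$l$ horizontal field one has $\nasla_s \Psi = s^{-2}\nabla_t \Psi - l\, s^{-1}\Psi$, and if you run your conversion of \eqref{eq.cf_nasla} faithfully (equivalently, expand $-\sum_i \Gamma^{\prime c}_{a u_i}\Psi^\prime_{u_1 \hat{c}_i u_l}$ with the stated Christoffel relation), the zeroth-order terms come out as $-\,l t\,\nabla_a v\cdot\Psi^\prime_{u_1\dots u_l}$ and $-\,t\sum_i\big(\nabla_{u_i}v\cdot\Psi^\prime_{u_1\hat{a}_i u_l} - \gamma^{cd}\nabla_d v\cdot\gamma_{a u_i}\Psi^\prime_{u_1\hat{c}_i u_l}\big)$, i.e.\ with the opposite signs to those displayed in \eqref{eq.cf_nabla}. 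A quick consistency check with $\Psi^\prime = \gamma^\prime$ (whose induced $t$-horizontal field is $e^{2v}\mc{B}_{-2}\gamma$, and for which the left-hand side must vanish since $\nabla^\prime\gamma^\prime = 0$) confirms the minus sign: with it the surviving terms cancel exactly, whereas the printed $+\,l t\,\nabla_a v$-term leaves a nonzero remainder proportional to $t e^{2v}\mc{B}_{-2}\nabla_a v\cdot\gamma$. So your method is sound, but do not fix the sign of the $l t$-term merely ``by counting covariant indices'': track the overall minus coming from the Christoffel contraction, and expect to land on the sign-corrected version of \eqref{eq.cf_nabla} rather than the formula as printed.
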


\begin{proof}
See Appendix \ref{sect:cfol.renorm}.
\end{proof}

In particular, we examine these formulas on the initial sphere $S_0 = S^\prime_0$.

\begin{corollary} \label{thm.cfol.init}
Consider the renormalized $t$- and $t^\prime$-foliations of $\mc{N}$, related via the distortion function $v$, as described above.
Then, on $S_0 = S^\prime_0$:
\begin{align}
\label{eq.cf_H_init} H^\prime_{ab} &= e^v H_{ab} + ( e^v - 1 ) \gamma_{ab} \text{,} \\
\label{eq.cf_Z_init} Z^\prime_a &= Z_a + \nabla_a v \text{,} \\
\label{eq.cf_Hbar_init} \ul{H}^\prime_{ab} &= e^{-v} \ul{H}_{ab} + ( 1 - 2 m_{\rm S} ) ( 1 - e^{-v} ) \gamma_{ab} \text{,} \\
\label{eq.cf_H_deriv_init} \nabla^\prime_a ( \trace^\prime H^\prime ) &= e^v \nabla_a ( \trace H ) + e^v \nabla_a v \cdot \trace H + 2 e^v \nabla_a v \text{,} \\
\label{eq.cf_M_init} M^\prime &= M - \lapl v \text{.}
\end{align}
\end{corollary}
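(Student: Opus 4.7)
The proof is a specialization of Proposition \ref{thm.cfol.renorm} to the initial sphere $S_0 = S^\prime_0$, where $t = 0$ and $s = 1$. At this point the coefficients in \eqref{eq.Ck} collapse to $\mc{B}_k = e^{kv}$ and $\mc{C}_k = e^{kv} - 1$; together with \eqref{eq.cf_gammainv} this gives $\gamma^\prime_{ab} = \gamma_{ab}$ and $\gamma^{\prime ab} = \gamma^{ab}$ on $S_0$. The identities \eqref{eq.cf_H_init}, \eqref{eq.cf_Z_init}, and \eqref{eq.cf_Hbar_init} then follow by direct substitution of $t = 0$ into \eqref{eq.cf_H}, \eqref{eq.cf_Z}, and \eqref{eq.cf_Hbar}: every polynomial-in-$t$ correction drops out, and in the $\ul{H}$ case the residual mass coefficient simplifies, after expanding $\mc{C}_{-1} = e^{-v} - 1$, to a factor of $(1 - 2 m_{\rm S})(1 - e^{-v})$ multiplying $\gamma_{ab}$.

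For \eqref{eq.cf_H_deriv_init}, the first observation is that, for a scalar function $f$, the transformation rule \eqref{eq.cf_nabla} reduces to $\nabla^\prime_a f = \nabla_a f + s^{-1} t\, \nabla_a v \cdot \nabla_t f$ and hence coincides with $\nabla_a f$ at $t = 0$. I would then apply this to $f = \trace^\prime H^\prime = \gamma^{\prime ab} H^\prime_{ab}$: using $\gamma^{\prime ab} = \gamma^{ab}$ on $S_0$ together with \eqref{eq.cf_H_init}, one obtains the pointwise identity $\trace^\prime H^\prime = e^v \trace H + 2(e^v - 1)$ on $S_0$. Differentiating this scalar relation with $\nabla_a$ produces \eqref{eq.cf_H_deriv_init}.

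The mass aspect identity \eqref{eq.cf_M_init} is the only nontrivial step. The plan is to start from the defining identity
\[ M^\prime = -\gamma^{\prime ab}\nabla^\prime_a Z^\prime_b - R^\prime + \tfrac{1}{2}\gamma^{\prime ac}\gamma^{\prime bd}\hat{H}^\prime_{ab}\hat{\ul{H}}^\prime_{cd} \]
and to evaluate each summand on $S_0$. The curvature term simplifies via \eqref{eq.cf_R}: at $t = 0$, $s = 1$ both the $B$- and $A$-dependent corrections vanish, while the mass contribution $2 m_{\rm S} e^{-3v}(e^{3v} - 1 - s^{-1} \mc{C}_3)$ also vanishes since $\mc{C}_3 = e^{3v} - 1$, giving $R^\prime = R$. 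For the shear cross-term, the additive pure-trace corrections in \eqref{eq.cf_H_init} and \eqref{eq.cf_Hbar_init} are killed by the trace-free projection, leaving $\hat{H}^\prime_{ab} = e^v \hat{H}_{ab}$ and $\hat{\ul{H}}^\prime_{ab} = e^{-v}\hat{\ul{H}}_{ab}$; the factors $e^v$ and $e^{-v}$ cancel in the product, which therefore agrees with its unprimed counterpart. Finally, because $\nabla^\prime_a$ agrees with $\nabla_a$ on scalars at $S_0$ and $\gamma^{\prime ab} = \gamma^{ab}$, one may use \eqref{eq.cf_Z_init} to compute $\gamma^{\prime ab}\nabla^\prime_a Z^\prime_b = \gamma^{ab}\nabla_a Z_b + \lapl v$. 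Collecting the three pieces yields $M^\prime - M = -\lapl v$.

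The main technical hazard is purely algebraic: one must carefully expand $\mc{C}_{-1}$ in \eqref{eq.cf_Hbar} and $\mc{C}_3$ in \eqref{eq.cf_R} to verify that the mass-dependent and $v$-dependent coefficients telescope as advertised, and observe that the pure-trace additions to $H^\prime$ and $\ul{H}^\prime$ are invisible to the trace-free projections appearing in \eqref{eq.renorm_maf}. Once these cancellations are confirmed, every remaining step is routine substitution.
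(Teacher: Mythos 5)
Your proposal is correct and is essentially the paper's own argument: specialize Proposition \ref{thm.cfol.renorm} to $t = 0$ (so $\mc{B}_k = e^{kv}$, $\mc{C}_k = e^{kv} - 1$, and $\gamma^\prime_0 = \gamma_0$ by \eqref{eq.cf_gammainv}), note that the pure-trace corrections are annihilated by the trace-free projections so the shear product is unchanged, that $R^\prime = R$, and that the divergence of $Z^\prime$ contributes the $\lapl v$ term in $M^\prime$. The only cosmetic point is that for $\gamma^{\prime ab} \nabla^\prime_a Z^\prime_b$ you need $\nabla^\prime = \nabla$ on $1$-forms at $S_0$, not merely on scalars; this is immediate since $\gamma^\prime_0 = \gamma_0$ (equivalently, every correction term in \eqref{eq.cf_nabla} carries a factor of $t$), which is exactly how the paper justifies this step.
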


\begin{proof}
See Appendix \ref{sect:cfol.renorm}.
\end{proof}

\subsection{Norms} \label{sect:prelim.norms}

We now describe the norms we will use throughout the paper.
Fix first a spherical cut $\Sigma \subseteq \mc{N}$ and a Riemannian metric $h$ on $\Sigma$.
Given a tensor field $F$ on $\Sigma$, we define the following geometric norms for $(\Sigma, h)$:
\begin{itemize}
\item Given any $1 \leq q \leq \infty$, we let $\| F \|_{ L^q_x (\Sigma, h) }$ denote the usual Lebesgue $L^q$-norm of $F$ over $\Sigma$, with respect to $h$.

\item In a few instances, we will need to refer to the geometric (fractional) Sobolev and Besov norms used in \cite{alex_shao:nc_inf, shao:stt}.
In particular, we let
\begin{align*}
\| F \|_{ H^s_x (\Sigma, h) }^2 &= \sum_{k \geq 0} 2^{2sk} \| P_k F \|_{ L^2_x (\Sigma, h) }^2 + \| P_{< 0} F \|_{ L^2_x (\Sigma, h) }^2 \text{,} \\
\| F \|_{ B^s_x (\Sigma, h) } &= \sum_{k \geq 0} 2^{sk} \| P_k F \|_{ L^2_x (\Sigma, h) } + \| P_{< 0} F \|_{ L^2_x (\Sigma, h) } \text{,}
\end{align*}
where the $P_k$'s and $P_{< 0}$ are \emph{geometric Littlewood-Paley operators} on $S_\tau$, based on spectral decompositions of the Laplacian.
For precise definitions and discussions on these operators, see \cite[Sect. 2.2]{alex_shao:nc_inf} or \cite[Sect. 2.3]{shao:stt}.
\footnote{Alternatively, one could also utilize the geometric Littlewood-Paley operators of \cite{kl_rod:glp}, based instead on the geometric tensorial heat flow.}
\end{itemize}

\begin{remark}
For various technical reasons, the above geometric Sobolev and Besov norms were essential to the results of \cite{alex_shao:nc_inf, shao:stt}.
Here, though, we will only require, in a few instances, some isolated facts regarding these norms.
These primarily include certain product and elliptic estimates found in \cite{shao:stt}.
\end{remark}

\begin{remark}
When $\Sigma = S_t$, then unless otherwise stated, the norms will by default be with respect to the renormalized metric $\gamma_t$, that is, $\| F \|_{ L^q_x (S_t) } := \| F \|_{ L^q_x (S_t, \gamma_t) }$.
Similarly, given a change of foliations as in Section \ref{sect:prelim.cfol}, then whenever $\Sigma = S^\prime_{t^\prime}$, our conventions dictate that $\| F \|_{ L^q_x (S^\prime_{t^\prime}) } := \| F \|_{ L^q_x (S^\prime_{t^\prime}, \gamma^\prime_{t^\prime}) }$.
\end{remark}

Next, we consider analogous iterated norms over all of $\mc{N}$:
\begin{itemize}
\item Given a horizontal tensor field $\Psi$ on $\mc{N}$, along with $1 \leq p, q \leq \infty$, we let
\[ \| \Psi \|_{ L^{p, q}_{t, x} } = \| \Psi \|_{ L^{p, q}_{t, x} (\gamma) } \text{,} \qquad \| \Psi \|_{ L^{q, p}_{x, t} } = \| \Psi \|_{ L^{q, p}_{x, t} (\gamma) } \]
be the iterated Lebesgue norms over $\mc{N}$.
In general, the subscript ``$x$" indicates integrals with respect to the spheres $(S_t, \gamma_t)$, while ``$t$" refers to integration over the parameter $t$, relative to the measure $dt$.
In an $L^{p, q}_{t, x}$-norm, one takes first the $L^q_x$-norms on the $S_t$'s and then the $L^p$-norm in $t$.
For an $L^{q, p}_{x, t}$-norm, one integrates first in $t$ and then over the spheres.
\footnote{For more explicit formulas, see \cite[Sect. 3.3]{alex_shao:nc_inf}, as well as \cite{shao:stt}.}

\item In a few instances, we will need to consider iterated integrals over only a part of $\mc{N}$.
Given spherical cuts $\Sigma$ and $\Sigma^\prime$ of $\mc{N}$, we let
\[ \| \cdot \|_{ L^{p, q}_{t, x} (\Sigma, \Sigma^\prime) } = \| \cdot \|_{ L^{p, q}_{t, x} (\Sigma, \Sigma^\prime, \gamma) } \text{,} \qquad \| \cdot \|_{ L^{q, p}_{x, t} (\Sigma, \Sigma^\prime) } = \| \cdot \|_{ L^{q, p}_{x, t} (\Sigma, \Sigma^\prime, \gamma) } \]
denote the aforementioned $L^{p, q}_{t, x}$- and $L^{q, p}_{x, t}$-norms, but only over the region of $\mc{N}$ that lies between $\Sigma$ and $\Sigma^\prime$.
Similarly, we define
\[ \| \cdot \|_{ L^{p, q}_{t, x} (\Sigma, S_1) } = \| \cdot \|_{ L^{p, q}_{t, x} (\Sigma, S_1, \gamma) } \text{,} \qquad \| \cdot \|_{ L^{q, p}_{x, t} (\Sigma, S_1) } = \| \cdot \|_{ L^{q, p}_{x, t} (\Sigma, S_1, \gamma) } \text{,} \]
representing norms over the region of $\mc{N}$ that lies above $\Sigma$.

\item We will also require the iterated Besov norms
\[ \| \Psi \|_{ B^{p, 0}_{t, x} } = \| \Psi \|_{ B^{p, 0}_{t, x} (\gamma) } = \sum_{k \geq 0} \| P_k \Psi \|_{ L^{p, 2}_{t, x} } + \| P_{< 0} \Psi \|_{ L^{p, 2}_{t, x} } \text{,} \]
where $1 \leq p \leq \infty$, and where the $P_k$'s and $P_{< 0}$ are the Littlewood-Paley operators on the $S_t$'s.
In particular, we have for any $t$ that
\[ \| \Psi \|_{ B^0_x (S_t) } \leq \| \Psi \|_{ B^{\infty, 0}_{t, x} } \text{.} \]
Again, this norm was used more extensively in \cite{alex_shao:nc_inf, shao:stt}.
\end{itemize}

\begin{remark}
Given a change of foliations as in Section \ref{sect:prelim.cfol}, then by our conventions, the iterated Lebesgue norms with respect to the new foliation are denoted
\[ \| \Psi \|_{ L^{p, q}_{t^\prime, x} } = \| \Psi \|_{ L^{p, q}_{t^\prime, x} (\gamma^\prime) } \text{,} \qquad \| \Psi \|_{ L^{q, p}_{x, t^\prime} } = \| \Psi \|_{ L^{q, p}_{x, t^\prime} (\gamma^\prime) } \text{.} \]
\end{remark}

Finally, we recall that given two norms on vector spaces $X$ and $Y$, we can define a corresponding ``intersection" norm on $X \cap Y$ by
\[ \| A \|_{ X \cap Y } := \| A \|_X + \| A \|_Y \text{.} \]
For example, we will consider norms of the form $L^{\infty, 2}_{x, t} \cap L^{4, \infty}_{x, t}$ in this paper.

\subsection{Control of the Null Geometry} \label{sect:prelim.ncinf}

It is useful to recall at this point the parts of the main theorem in \cite{alex_shao:nc_inf} which are relevant to this work.
For this purpose, we state here the following abridged version of \cite[Thm. 5.1]{alex_shao:nc_inf}:

\begin{theorem} \label{thm.nc_renorm}
Fix $0 \leq m_{\rm S} < 1/2$, and assume the following on $\mc{N} = \bigcup_{0 \leq t < 1} S_t$:
\begin{itemize}
\item The area of $(S_0, \gamma_0)$ is $4 \pi$.

\item The following curvature flux bounds hold on $\mc{N}$:
\begin{equation} \label{eq.renorm_ass_flux} \| A \|_{ L^{2, 2}_{t, x} } + \| B \|_{ L^{2, 2}_{t, x} } + \| R \|_{ L^{2, 2}_{t, x} } + \| \ul{B} \|_{ L^{2, 2}_{t, x} } \leq \Gamma \text{,} \end{equation}

\item The following initial value bounds hold on $S_0$:
\begin{align}
\label{eq.renorm_ass_init} \| \trace H \|_{ L^\infty_x (S_0) } + \| H \|_{ H^{1/2}_x (S_0) } + \| Z \|_{ H^{1/2}_x (S_0) } &\leq \Gamma \text{,} \\
\notag \| \ul{H} \|_{ B^0_x (S_0) } + \| \nabla ( \trace H ) \|_{ B^0_x (S_0) } + \| M \|_{ B^0_x (S_0) } &\leq \Gamma \text{.}
\end{align}
\end{itemize}
Then, for sufficiently small $\Gamma \ll 1$, depending on the geometry of $(S_0, \gamma_0)$,
\begin{align}
\label{eq.renorm_est} \| \trace H \|_{ L^{\infty, \infty}_{t, x} } + \| H \|_{ L^{\infty, 2}_{x, t} \cap L^{4, \infty}_{x, t} } + \| Z \|_{ L^{\infty, 2}_{x, t} \cap L^{4, \infty}_{x, t} } &\lesssim \Gamma \text{,} \\
\notag \| \nabla_t H \|_{ L^{2, 2}_{t, x} } + \| \nabla H \|_{ L^{2, 2}_{t, x} } + \| \nabla_t Z \|_{ L^{2, 2}_{t, x} } + \| \nabla Z \|_{ L^{2, 2}_{t, x} } &\lesssim \Gamma \text{,} \\
\notag \| \nabla_t \nabla (\trace H) \|_{ L^{2, 1}_{x, t} } + \| \nabla_t M \|_{ L^{2, 1}_{x, t} } + \| \nabla_t \ul{H} \|_{ L^{2, 2}_{t, x} } &\lesssim \Gamma \text{,} \\
\notag \| \nabla (\trace H) \|_{ L^{2, \infty}_{x, t} \cap B^{\infty, 0}_{t, x} } + \| M \|_{ L^{2, \infty}_{x, t} \cap B^{\infty, 0}_{t, x} } + \| \ul{H} \|_{ L^{2, \infty}_{x, t} \cap B^{\infty, 0}_{t, x} } &\lesssim \Gamma \text{,}
\end{align}
where the constants of the inequalities depend on the geometry of $(S_0, \gamma_0)$.
\footnote{See \cite[Thm. 5.1]{alex_shao:nc_inf} for the precise dependence on the geometry of $(S_0, \gamma_0)$.}
Also,
\begin{equation} \label{eq.renorm_est_curv} \| \mc{K} - 1 \|_{ H^{-1/2}_x (S_t) } \lesssim \| \trace \ul{H} \|_{ L^2_x (S_t) } + (1 - t) \Gamma \text{.} \end{equation}
\end{theorem}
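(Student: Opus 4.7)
The plan is to prove Theorem \ref{thm.nc_renorm} by a long continuity/bootstrap argument on $\mc{N}$ carried out entirely in the renormalized system $(S_t, \gamma_t)$, along the lines developed in \cite{kl_rod:cg}, but adapted to handle the infinite extent of $\mc{N}$. The infinite-extent issue is precisely what is neutralized by the change of variable $t = 1 - s^{-1}$ and the renormalizations \eqref{eq.renorm_ricci}--\eqref{eq.renorm_maf}: after these, the problem is set on a fixed cylinder $[0,1) \times \Sph^2$ whose metrics $\gamma_t$ stay uniformly equivalent to a fixed round metric. One postulates that all the norms on the left of \eqref{eq.renorm_est} are bounded by $C\Gamma$ on a subinterval $[0, t_\ast) \subseteq [0, 1)$, for some large constant $C$, and then aims to recover them with $\tfrac{C}{2}\Gamma$, which closes the bootstrap.

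The recovery uses the renormalized null structure equations from \cite[Prop. 4.2]{alex_shao:nc_inf}: each of $\trace H$, $\hat{H}$, $Z$, $\ul{H}$, $M$ satisfies a transport equation of the schematic form $\nabla_t \Phi = \mc{Q}(\Phi, \Phi) + \mc{C}$, where $\mc{Q}$ is quadratic in already-controlled Ricci coefficients and $\mc{C}$ is a linear combination of a renormalized curvature component ($A$, $B$, $R$, or $\ul{B}$) and lower-order Ricci products. Integrating from $t = 0$ along null generators, using the initial bounds \eqref{eq.renorm_ass_init}, the flux bound \eqref{eq.renorm_ass_flux}, and Gronwall, gives the $L^{\infty, 2}_{x, t}$ and $L^{2, 2}_{t, x}$ estimates in \eqref{eq.renorm_est}; the sharper Besov-type bounds in $L^{2, \infty}_{x, t} \cap B^{\infty, 0}_{t, x}$ are obtained by combining these transport estimates with the geometric Littlewood-Paley calculus of \cite{shao:stt}. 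The top-derivative bounds on $\nabla H$ and $\nabla Z$ additionally feed the Codazzi identities into the Hodge-elliptic estimates of \cite{shao:stt} on each $(S_t, \gamma_t)$, gaining one derivative at the cost of a curvature term controlled by the flux.

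The Gauss-curvature bound \eqref{eq.renorm_est_curv} is then an algebraic consequence. Indeed, under $\gamma_t = s^{-2} \mind_t$ one has $\mc{K}(S_t, \gamma_t) = s^2 \mc{K}(\mc{S}_s, \mind_s)$; substituting the renormalizations \eqref{eq.renorm_ricci}--\eqref{eq.renorm_curv} into the Gauss equation
\[ \mc{K}_{\mind} = -\rho + \tfrac{1}{2} \hat{\chi} \cdot \hat{\ul{\chi}} - \tfrac{1}{4} \trase \chi \cdot \trase \ul{\chi} \text{,} \]
using $s^{-1} = 1 - t$, and collecting terms produces an identity of the form
\[ \mc{K} - 1 = -\tfrac{1}{2} \trace \ul{H} + (1 - t) \, E \text{,} \]
where $E$ is a linear combination of $\real R - 2 m_{\rm S}$, $\hat{H} \cdot \hat{\ul{H}}$, $\trace H \cdot \trace \ul{H}$, and $\trace H$, with coefficients bounded uniformly in $t$. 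Each term of $E$ is controlled in $L^2_x(S_t)$ by $\lesssim \Gamma$ (pointwise in $t$, thanks to the $L^\infty$-control of $\trace H$ and the transport-improved pointwise-in-$t$ estimates on $R$), and the continuous inclusion $L^2_x \hookrightarrow H^{-1/2}_x$ yields $\|(1 - t) E\|_{H^{-1/2}_x(S_t)} \lesssim (1 - t) \Gamma$, which is exactly \eqref{eq.renorm_est_curv}.

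The hard part is unquestionably closing the bootstrap at the $L^2$-curvature regularity uniformly up to $t = 1$. At this regularity neither the Sobolev embedding $H^1(\Sph^2) \hookrightarrow L^\infty$ nor the standard multiplication theorems are available, so the quadratic terms $\mc{Q}(\Phi, \Phi)$ must be handled via the sharp trace, product, and Bernstein estimates developed in \cite{shao:stt}, based on geometric Littlewood-Paley projectors adapted to the evolving spheres $(S_t, \gamma_t)$. A key technical point is verifying that the implicit constants in those estimates depend only on the geometry of $(S_0, \gamma_0)$, which is the payoff of having renormalized so that the $\gamma_t$ remain uniformly equivalent to $\gamma_0$ for all $t \in [0, 1)$.
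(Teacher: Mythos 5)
First, a point of comparison: this paper does not prove Theorem \ref{thm.nc_renorm} at all --- it is quoted as an abridged version of \cite[Thm.~5.1]{alex_shao:nc_inf}, so the ``proof'' here is by citation. Your proposal is a high-level outline of the strategy of that companion paper (renormalize to the cylinder, bootstrap the Ricci coefficients through the renormalized null structure equations, close at $L^2$-curvature regularity with the trace/product/Besov and Hodge machinery of \cite{shao:stt}). As an outline it is consistent with that route, but it is not a proof: the step you yourself identify as the hard one --- closing the bootstrap uniformly up to $t=1$ at this regularity, including the bilinear/trace estimates for the quadratic terms and the Besov propagation for $\ul{H}$, $M$, $\nabla(\trace H)$ --- is entirely deferred, so nothing checkable is added beyond the statement of the strategy.

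There is also a concrete flaw in your derivation of \eqref{eq.renorm_est_curv}. Your algebraic identity $\mc{K}-1=-\tfrac12\trace\ul{H}+(1-t)E$ is fine, but your claim that every term of $E$ is bounded in $L^2_x(S_t)$ \emph{pointwise in $t$} is not: the flux hypothesis \eqref{eq.renorm_ass_flux} controls $R$ only in $L^{2,2}_{t,x}$, and no ``transport-improved pointwise-in-$t$'' $L^2_x$ bound on a curvature component is available at this regularity --- note that \eqref{eq.renorm_est} contains no such estimate, and the impossibility of restricting curvature to a sphere in $L^2$ is precisely why the conclusion is stated in $H^{-1/2}_x$ rather than $L^2_x$. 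Likewise $\hat{H}\cdot\hat{\ul{H}}$ is only in $L^{4/3}_x$ pointwise in $t$ (from $\hat{H}\in L^{4,\infty}_{x,t}$, $\hat{\ul{H}}\in L^{2,\infty}_{x,t}$), not $L^2_x$. The correct route, visible in the paper's own display \eqref{Gauss}, is to eliminate $\rho$ (i.e.\ $R$) using the mass aspect function \eqref{eq.renorm_maf}, so that the error consists of $(1-t)$ times $M$, $\operatorname{div}Z$, and lower-order Ricci products; $M$ and $Z$ do enjoy uniform-in-$t$ control ($M\in L^{2,\infty}_{x,t}$, $Z\in L^{4,\infty}_{x,t}$), and the rough terms $\operatorname{div}Z$ and $\hat{H}\cdot\hat{\ul{H}}$ are exactly what the $H^{-1/2}_x$ norm is designed to absorb. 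As written, your estimate of $E$ would not close.
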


One important consequence of Theorem \ref{thm.nc_renorm} is that, under these assumptions, certain regularity properties of the geometry of the initial sphere $(S_0, \gamma_0)$ are propagated to all the $(S_t, \gamma_t)$'s.
\footnote{In \cite{alex_shao:nc_inf, shao:stt}, this phenomenon was made precise via the \ass{r0}{}, \ass{r1}{}, and \ass{r2}{} conditions.}
Here, we briefly describe some of these properties, and we summarize their most important consequences.

Fix now a coordinate system on $S_0$, and consider the induced transported coordinates on the $S_t$'s.
In these coordinates, we can make the following observations:
\begin{itemize}
\item For a component $\gamma_{ab}$ of $\gamma$ in these coordinates, its rate of change with respect to $t$ is precisely $\partial_t \gamma_{ab} = 2 H_{ab}$ (see Section \ref{sect:prelim.fol} or \cite[Sect. 4.4]{alex_shao:nc_inf}).
Since $H$ remains small by \eqref{eq.renorm_est}, the metrics $\gamma_t$ vary little with respect to $t$, and hence the $\gamma_{ab}$'s are uniformly bounded.

\item Similarly, letting $\partial_c \gamma_{ab}$ denote a \emph{coordinate} derivative of $\gamma_{ab}$, one can show that $\partial_t \partial_c \gamma_{ab}$ is controlled by $\nabla H$, along with less dangerous lower-order terms.
Since $\nabla H$ is controlled in $L^{2, 2}_{t, x}$, it follows that the Christoffel symbols associated with transported coordinates are uniformly controlled in $L^2_x$.

\item A similar analysis (see, e.g., \cite[Proposition 4.5]{shao:stt}) yields analogous control for the volume forms $\epsilon_t$ associated with the $\gamma_t$'s.
\end{itemize}
By standard methods, we can use the above estimates to derive the following:

\begin{corollary} \label{thm.nc_renorm.norms}
Let $\Phi_{S_t}: S_t \rightarrow S_0$ be the transport map defined in Section \ref{sect:prelim.fol}.
Under the assumptions of Theorem \ref{thm.nc_renorm}, if $F$ is a tensor field on $S_t$, then
\begin{equation} \label{eq.norm_compare} \| F \|_{ L^q_x (S_t) } \simeq \| \Phi_{S_t}^\ast (F) \|_{ L^q_x (S_0) } \text{,} \qquad 1 \leq q \leq \infty \text{.} \end{equation}
In particular, if $| S_t |$ is the area of $(S_t, \gamma_t)$, then
\begin{equation} \label{eq.area_compare} | S_t | \simeq 4 \pi \text{.} \end{equation}
\end{corollary}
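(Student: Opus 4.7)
The plan is to reduce everything to a pointwise comparison of the metrics $\gamma_t$ and $(\Phi_{S_t})^*\gamma_0$ on $S_0$, performed in the transported coordinate system. Fix a coordinate atlas on $S_0$ and propagate it along the null generators, so that $\Phi_{S_t}$ becomes the identity on coordinate values. In these coordinates, the identity $\partial_t\gamma_{ab}=2H_{ab}$ (recorded just before the corollary, and in \cite[Sect.\ 4.4]{alex_shao:nc_inf}) and the fundamental theorem of calculus in $t$ give, pointwise in $x$,
\[ \gamma_{ab}(t,x)-\gamma_{ab}(0,x)=2\int_0^t H_{ab}(\tau,x)\,d\tau. \]
The estimate $\|H\|_{L^{\infty,2}_{x,t}}\lesssim\Gamma$ from \eqref{eq.renorm_est} controls $H(\cdot,x)$ in $L^2_t$ for every fixed $x$; Cauchy--Schwarz on the finite interval $[0,1]$ then upgrades this to a uniform pointwise bound $\int_0^1|H_{ab}(\tau,x)|\,d\tau\lesssim\Gamma$. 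Hence $|\gamma_{ab}(t,x)-\gamma_{ab}(0,x)|\lesssim\Gamma$ uniformly in $(t,x)$.

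Now, since $\gamma_0$ is a fixed smooth Riemannian metric on the compact sphere $S_0$, its components in the chosen atlas are uniformly bounded, and so are the eigenvalues of $\gamma_{0,ab}$ (regarded as symmetric matrices). For $\Gamma$ sufficiently small the same holds for $\gamma_{t,ab}$, with constants depending only on the geometry of $(S_0,\gamma_0)$, and one obtains the quadratic-form comparability
\[ c\,\gamma_0(V,V)\le\gamma_t(V,V)\le C\,\gamma_0(V,V) \]
for every tangent vector $V$, uniformly in $(t,x)$. Dualizing, the inverse metrics $\gamma_t^{-1}$ and $\gamma_0^{-1}$ are likewise comparable, which in turn yields the pointwise comparability $|F|_{\gamma_t}\simeq|\Phi_{S_t}^*F|_{\gamma_0}$ for any tensor field $F$ on $S_t$ (the tensor indices referring to transported coordinate frames, which are identified by $\Phi_{S_t}$). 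Taking determinants, the volume forms satisfy $dV_{\gamma_t}\simeq\Phi_{S_t}^*dV_{\gamma_0}$ pointwise with the same constants.

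Combining these two pointwise comparisons and integrating produces
\[ \|F\|_{L^q_x(S_t)}^q=\int_{S_t}|F|_{\gamma_t}^q\,dV_{\gamma_t}\simeq\int_{S_0}|\Phi_{S_t}^*F|_{\gamma_0}^q\,dV_{\gamma_0}=\|\Phi_{S_t}^*F\|_{L^q_x(S_0)}^q \]
for every $1\le q<\infty$, and the $q=\infty$ case follows directly from the pointwise comparisons. This is exactly \eqref{eq.norm_compare}. Setting $F\equiv 1$ immediately yields $|S_t|\simeq|S_0|=4\pi$, which is \eqref{eq.area_compare}.

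The only delicate point is the step that converts the iterated norm bound $\|H\|_{L^{\infty,2}_{x,t}}\lesssim\Gamma$ into a pointwise integrated bound on $H$; this is the substantive use of Theorem \ref{thm.nc_renorm}, and everything else is a transported-coordinate bookkeeping argument using the smallness of $\Gamma$ together with the fact that $(S_0,\gamma_0)$ has fixed, controlled geometry. No estimates on Christoffel symbols or volume-form derivatives are needed for this corollary; those enter only when comparing higher-regularity norms, which is not what is claimed here.
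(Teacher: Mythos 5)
Your argument is correct and is essentially the paper's own (the paper only sketches the proof in the discussion preceding the corollary): transported coordinates, the identity $\partial_t \gamma_{ab} = 2 H_{ab}$, the bound $\| H \|_{ L^{\infty, 2}_{x, t} } \lesssim \Gamma$ from \eqref{eq.renorm_est} integrated in $t$, and the resulting pointwise comparability of the metrics, inverse metrics, and volume forms, with the Christoffel/derivative control indeed not needed at this Lebesgue-norm level. The one point left implicit (in the paper as well) is that bounding the coordinate components $H_{ab}$ by $| H |_{\gamma_t}$ already presupposes the comparability of $\gamma_t$ with $\gamma_0$, so strictly one closes the estimate by a standard bootstrap/continuity argument in $t$ using the smallness of $\Gamma$; with that caveat your proof is complete.
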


In other words, for Lebesgue norms, the choice of metric with respect to which we take these norms is unimportant.
For additional details, see \cite{alex_shao:nc_inf, kl_rod:cg, shao:stt}.

\subsubsection{Limits at Infinity}

In addition to uniform control of the connection coefficients on $\mc{N}$, one can also show via Theorem \ref{thm.nc_renorm} that limits of these same quantities exist at infinity.
First, we must make precise what such limits mean, as we are comparing tensor fields on different spheres with different geometries.
We say that a family of smooth spherical cuts $\fol{\Sigma}{y} \subseteq \mc{N}$, $y \in [0, 1)$, is \emph{going to infinity} iff
\[ \lim_{ y \nearrow 1 } \inf_{ \omega \in \fol{\Sigma}{y} } t (\omega) = 1 \text{.} \]

\begin{definition} \label{def.conv_inf}
Consider a family $\fol{A}{y}$, $y \in [0, 1)$, of tensor fields over a corresponding family of spherical cuts $\fol{\Sigma}{y}$ going to infinity, as well as a tensor field $A^1$ over $S_1$.
We say that the $\fol{A}{y}$'s converge in $L^q_x$ to $A^1$ iff
\footnote{The choice of the initial metric $\gamma_0$ is extraneous, as one obtains an equivalent definition if $\gamma_0$ is replaced by another Riemannian metric on $S_0$.}
\[ \lim_{ y \nearrow 1 } \| \Phi_{ \fol{\Sigma}{y} }^\ast (\fol{A}{y}) - \Phi_{ S_1 }^\ast (A^1) \|_{ L^q_x (S_0, \gamma_0) } = 0 \text{.} \]
\end{definition}

We can now adapt the discussion following Theorem \ref{thm.nc_renorm} to show that $\gamma_t$ has a limit as $t \nearrow 1$.
Due to the $L^{\infty, 2}_{x, t}$-bound for $H$ and the observation that $H$ captures the variation of $\gamma_t$ in the $t$-direction, it follows that the $\gamma_t$'s are Cauchy in $L^\infty_x$ as $t \nearrow 1$.
By a similar argument with $\nabla H$, we can see that the first (coordinate) derivatives of $\gamma$ are Cauchy in $L^2_x$ as $t \nearrow 1$.
As a result, we conclude that the $\gamma_t$'s converge to a limiting metric $\gamma_1$, both ``in $L^\infty_x$ and in $H^1_x$".

By similar arguments, one obtains limits for $H$, $Z$, $\ul{H}$, and $M$.
These follow from the integral bounds for $\nabla_t H$, $\nabla_t Z$, $\nabla_t \ul{H}$, and $\nabla_t M$ in \eqref{eq.renorm_est}.

\begin{corollary} \label{thm.nc_renorm.limit.rc}
Under the assumptions of Theorem \ref{thm.nc_renorm}, we have that $\trace H_t$ converges in $L^\infty_x$ to a function on $S_1$.
Furthermore, the quantities $H_t$, $Z_t$, $\ul{H}_t$, $\nabla (\trace H_t)$ and $M_t$ converge in $L^2_x$ to tensor fields on $S_1$.
\end{corollary}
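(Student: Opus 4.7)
The plan is to pull each horizontal tensor field back to the fixed initial sphere $S_0$ via the transport map $\Phi_{S_t}: S_t \to S_0$ of Section \ref{SphCuts}, and verify Cauchy-ness of the pullback families as $t \nearrow 1$. Fix a coordinate chart on $S_0$ with associated transported coordinates on every $S_t$; in these coordinates the components of $\Phi_{S_t}^\ast \Psi$ on $S_0$ coincide with the components of $\Psi$ on $S_t$. Differentiating in $t$ therefore yields
\[
\partial_t \paren{ \Phi_{S_t}^\ast \Psi }_{u_1 \dots u_l} = \Phi_{S_t}^\ast \paren{ \mf{L}_t \Psi }_{u_1 \dots u_l} \text{,}
\]
and the relation between $\mf{L}_t$ and $\nabla_t$ from Section \ref{sect:prelim.fol} rewrites this as $\Phi_{S_t}^\ast (\nabla_t \Psi)$ plus algebraic contractions between $H$ and $\Psi$.

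To prove the $L^2_x$-convergence statements, it suffices to show, for each $\Psi \in \brac{ H, Z, \ul{H}, M, \nabla (\trace H) }$, that the right-hand side lies in $L^{2, 1}_{x, t}$ over $\mc{N}$, so that the Minkowski estimate
\[
\| \Phi_{S_{t'}}^\ast \Psi - \Phi_{S_t}^\ast \Psi \|_{ L^2_x (S_0, \gamma_0) } \leq \int_t^{t'} \| \partial_s ( \Phi_{S_s}^\ast \Psi ) \|_{ L^2_x (S_0, \gamma_0) } \, ds
\]
tends to zero as $t, t' \nearrow 1$. Theorem \ref{thm.nc_renorm} provides either an $L^{2, 2}_{t, x}$- or an $L^{2, 1}_{x, t}$-bound on $\nabla_t \Psi$; on the finite interval $[0, 1)$ the former embeds into the latter by Cauchy--Schwarz in $t$. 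Corollary \ref{thm.nc_renorm.norms} then allows us to freely pass between $\gamma_t$- and $\gamma_0$-based Lebesgue norms of the pullbacks. The quadratic $H \cdot \Psi$ corrections are handled by pairing $\| H \|_{ L^{4, \infty}_{x, t} }$ against the uniform $L^{\infty, 2}_{x, t}$ or $L^{2, \infty}_{x, t}$ control of $\Psi$ from \eqref{eq.renorm_est} by H\"older in $t$. The Cauchy property in $L^2_x (S_0, \gamma_0)$ follows, and the limits, transported back via $\Phi_{S_1}$, define the claimed tensor fields on $S_1$.

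For the $L^\infty_x$-convergence of $\trace H$, the same pullback scheme applies to the scalar $\trace H$, and what must now be verified is that $\partial_t (\Phi_{S_t}^\ast \trace H)$ lies in $L^{1, \infty}_{t, x}$. Here we invoke the renormalized Raychaudhuri-type equation in \cite[Prop. 4.2]{alex_shao:nc_inf}, which expresses $\nabla_t (\trace H)$ schematically as $- | \hat H |^2$ plus terms linear in $\trace H$ and $\hat H$ (with $s^{-1}$-weights). The linear and lower-order pieces are uniformly controlled in $L^\infty_x$ by $\| \trace H \|_{ L^{\infty, \infty}_{t, x} } \lesssim \Gamma$ together with the Besov bound $\| \nabla ( \trace H ) \|_{ B^{\infty, 0}_{t, x} } \lesssim \Gamma$ and the Sobolev-type embedding on the $(S_t, \gamma_t)$'s established in \cite{shao:stt}. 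The expected main obstacle is the shear term $| \hat H |^2$: the sharp $L^{4, \infty}_{x, t}$-bound alone does not place it in $L^{1, \infty}_{t, x}$, so one must bootstrap using the $L^{2, 2}_{t, x}$-bound for $\nabla H$ together with a sphere-wise Sobolev step in \cite{shao:stt} to upgrade the pointwise control to $L^1_t L^\infty_x$. Once this integrability is in hand, the same Minkowski argument as above yields the required $L^\infty_x (S_0, \gamma_0)$ Cauchy property for $\Phi_{S_t}^\ast \trace H$, completing the corollary.
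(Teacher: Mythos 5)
Your argument for the $L^2_x$-statements is correct and is essentially the paper's own: pull back to $S_0$ by the transport maps, observe that in transported coordinates $\partial_t$ of the pullback is $\nabla_t \Psi$ plus $H \otimes \Psi$ contractions, and feed in the bounds on $\nabla_t H$, $\nabla_t Z$, $\nabla_t \ul{H}$ (in $L^{2,2}_{t,x}$) and on $\nabla_t M$, $\nabla_t \nabla(\trace H)$ (in $L^{2,1}_{x,t}$) from \eqref{eq.renorm_est}, using Corollary \ref{thm.nc_renorm.norms} to pass between $\gamma_t$- and $\gamma_0$-based norms; the Cauchy property then follows exactly as you describe.

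The genuine gap is in the $L^\infty_x$-convergence of $\trace H$. To place the shear term of the Raychaudhuri equation in $L^{1,\infty}_{t,x}$ you need $\hat{H} \in L^{2,\infty}_{t,x}$, and you propose to get this from $\| \nabla H \|_{L^{2,2}_{t,x}}$ via a sphere-wise Sobolev step. This step fails: the spheres are two-dimensional, so $H^1_x$ does not embed into $L^\infty_x$, and no interpolation with the available $L^{4,\infty}_{x,t}$- and $L^{\infty,2}_{x,t}$-bounds yields pointwise control of $\hat{H}$ — at the $L^2$-curvature-flux level the shear is simply not bounded pointwise (this is the difficulty inherited from \cite{kl_rod:bdc, kl_rod:cg}). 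Note that $\| H \|_{L^{\infty,2}_{x,t}}$ controls $\sup_x \int_0^1 |\hat{H}|^2 \, dt$, which is strictly weaker than the quantity $\int_0^1 \sup_x |\hat{H}|^2 \, dt$ you would need. The correct handling of the quadratic term is to integrate the equation along each fixed null generator, using the $L^{\infty,2}_{x,t}$-bound pointwise in $x$; even then, the uniform-in-$x$ Cauchy property requires the tails $\sup_x \int_t^1 |\hat{H}|^2 \, ds$ to vanish as $t \nearrow 1$, which does not follow from finiteness of the flux alone and is part of what \cite[Cor.~5.2]{alex_shao:nc_inf} establishes — the present paper does not reprove this but cites it. So for the $L^\infty_x$-part you should either argue along the generators and quote (or reprove) the relevant tail estimate from \cite{alex_shao:nc_inf}, or weaken the claim; the Sobolev upgrade as written is false. (A minor additional point: the lower-order terms in the evolution equation for $\trace H$ do not involve $\nabla(\trace H)$, so the Besov bound you invoke there is not needed.)
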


For further details and proofs regarding limits at infinity, see \cite[Cor. 5.2]{alex_shao:nc_inf}.

\subsection{Small Changes of Foliations} \label{sect:prelim.cfol.small}

We now connect Theorem \ref{thm.nc_renorm} to the notions introduced in Section \ref{sect:prelim.cfol}.
Assume, as usual, two (renormalized) foliations of $\mc{N}$,
\[ \mc{N} = \bigcup_{0 \leq t < 1} (S_t, \gamma_t) = \bigcup_{0 \leq t^\prime < 1} (S^\prime_{t^\prime}, \gamma^\prime_{t^\prime}) \text{,} \]
where the latter foliation is obtained from the former via the distortion function $v$.
Suppose we are in the setting of Theorem \ref{thm.nc_renorm}, so that the geometry of $\mc{N}$ in the $t$-foliation is uniformly controlled.
If $v$ is similarly small, then the geometry in the new $t^\prime$-foliation should be similarly controlled.
The goal here is to make this statement precise through a number of estimates.

For instance, the following proposition states that for small enough $v$, Theorem \ref{thm.nc_renorm} also applies directly to the $t^\prime$-foliation of $\mc{N}$.

\begin{proposition} \label{thm.renorm_cfol}
Assume the setting of Theorem \ref{thm.nc_renorm}, and consider a change of foliation corresponding to the distortion function $v$.
Assume, moreover, that
\begin{equation} \label{eq.renorm_cfol_ass} \| \nabla^2 v \|_{ B^{\infty, 0}_{t, x} \cap L^{2, \infty}_{x, t} } + \| \nabla v \|_{ L^{\infty, \infty}_{t, x} } + \| v \|_{ L^{\infty, \infty}_{t, x} } \lesssim \Gamma \text{.} \end{equation}
If $\Gamma$ is sufficiently small, then all the conclusions of Theorem \ref{thm.nc_renorm} also hold with respect to the $t^\prime$-foliation, that is, in the $\gamma^\prime$-$t^\prime$-covariant system.
In particular,
\begin{align}
\label{eq.renorm_est_cfol} \| \trace^\prime H^\prime \|_{ L^{\infty, \infty}_{t^\prime, x} } + \| H^\prime \|_{ L^{\infty, 2}_{x, t^\prime} \cap L^{4, \infty}_{x, t^\prime} } + \| Z^\prime \|_{ L^{\infty, 2}_{x, t^\prime} \cap L^{4, \infty}_{x, t^\prime} } &\lesssim \Gamma \text{,} \\
\notag \| \nabla^\prime_{t^\prime} H^\prime \|_{ L^{2, 2}_{t^\prime, x} } + \| \nabla^\prime H^\prime \|_{ L^{2, 2}_{t^\prime, x} } + \| \nabla^\prime_{t^\prime} Z^\prime \|_{ L^{2, 2}_{t^\prime, x} } + \| \nabla^\prime Z^\prime \|_{ L^{2, 2}_{t^\prime, x} } &\lesssim \Gamma \text{,} \\
\notag \| \nabla^\prime_{t^\prime} \nabla^\prime (\trace^\prime H^\prime) \|_{ L^{2, 1}_{x, t^\prime} } + \| \nabla^\prime_{t^\prime} M^\prime \|_{ L^{2, 1}_{x, t^\prime} } + \| \nabla^\prime_{t^\prime} \ul{H}^\prime \|_{ L^{2, 2}_{t^\prime, x} } &\lesssim \Gamma \text{,} \\
\notag \| \nabla^\prime (\trace^\prime H^\prime) \|_{ L^{2, \infty}_{x, t^\prime} \cap B^{\infty, 0}_{t^\prime, x} } + \| M^\prime \|_{ L^{2, \infty}_{x, t^\prime} \cap B^{\infty, 0}_{t^\prime, x} } + \| \ul{H}^\prime \|_{ L^{2, \infty}_{x, t^\prime} \cap B^{\infty, 0}_{t^\prime, x} } &\lesssim \Gamma \text{.}
\end{align}
Furthermore, if $\mc{K}^\prime$ denotes the Gauss curvatures of the $( S^\prime_{t^\prime}, \gamma^\prime_{t^\prime} )$'s, then
\begin{equation} \label{eq.renorm_est_curv_cfol} \| \mc{K}^\prime - 1 \|_{ H^{-1/2}_x ( S^\prime_{t^\prime} ) } \lesssim \| \trace^\prime \ul{H}^\prime \|_{ L^2_x ( S^\prime_{t^\prime} ) } + (1 - t^\prime) \Gamma \text{.} \end{equation}
\end{proposition}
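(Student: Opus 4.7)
The cleanest route is to verify that the hypotheses of Theorem~\ref{thm.nc_renorm} hold for the primed foliation and then invoke that theorem directly. At the initial sphere we have $S^\prime_0 = S_0$ with $\gamma^\prime_0 = \gamma_0$ (since $s = s^\prime = 1$ on $\mc{S}$), so the area normalization is automatic, and the conclusions \eqref{eq.renorm_est_cfol} and \eqref{eq.renorm_est_curv_cfol} are precisely \eqref{eq.renorm_est} and \eqref{eq.renorm_est_curv} restated in primed form. The entire work is therefore in checking that the curvature flux bound and the initial value bound of Theorem~\ref{thm.nc_renorm} are satisfied in the $t^\prime$-foliation, with the same small constant $\Gamma$.

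\textbf{Norm compatibility.} First I would observe that under \eqref{eq.renorm_cfol_ass}, the coefficients $\mc{B}_k = 1 + s^{-1}\mc{C}_k$ from \eqref{eq.Ck} satisfy $\mc{B}_k = 1 + O(\Gamma)$ and $\mc{C}_k = O(\Gamma)$ uniformly on $\mc{N}$, so \eqref{eq.cf_gammainv} gives the pointwise equivalence $\gamma^\prime \simeq \gamma$ with constants $1 + O(\Gamma)$. Similarly \eqref{eq.cf_t} yields $t^\prime - t = O(\Gamma)$ and $dt^\prime/dt = 1 + O(\Gamma)$, since $v$ has no $t$-dependence (recall $\nabla_t v \equiv 0$ because $v$ is constant along null generators). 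Consequently, the iterated Lebesgue and Besov norms built from $(\gamma^\prime, t^\prime)$ are equivalent to those built from $(\gamma, t)$ up to universal factors, and I may freely interchange them in the estimates below.

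\textbf{Verifying the two hypotheses.} For the initial value bounds \eqref{eq.renorm_ass_init} in the primed foliation, I would apply Corollary~\ref{thm.cfol.init}: each of $H^\prime$, $Z^\prime$, $\ul{H}^\prime$, $\nabla^\prime(\trace^\prime H^\prime)$, and $M^\prime$ at $S_0$ equals the corresponding unprimed quantity plus an explicit polynomial in $v$ and its first two derivatives. The unprimed contributions are small by the hypothesis \eqref{eq.renorm_ass_init} of Theorem~\ref{thm.nc_renorm}; the $v$-contributions are small by \eqref{eq.renorm_cfol_ass}, with $\lapl v \in B^0_x(S_0)$ handled via the $B^{\infty,0}_{t,x}$-bound on $\nabla^2 v$, and with the non-linear terms controlled by the $H^{1/2}_x$ and $B^0_x$ product estimates from \cite{shao:stt}. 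For the curvature flux bound I would use \eqref{eq.cf_A}--\eqref{eq.cf_Bbar}: each primed curvature component is a near-identity multiple of the unprimed component, plus correction terms of the schematic form $t^j (\nabla v)^j \cdot (\text{curvature})$ with $j \geq 1$. The principal term is handled by \eqref{eq.renorm_ass_flux} together with the preceding norm equivalence, and each correction is handled by H\"older's inequality, pairing the $L^{\infty, \infty}_{t, x}$-bounds on $v, \nabla v$ against the $L^{2,2}_{t,x}$-bound on the relevant curvature component, with the factor $t \in [0,1)$ kept uniformly bounded.

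\textbf{Main obstacle.} The principal difficulty is not conceptual but combinatorial bookkeeping: \eqref{eq.cf_H}--\eqref{eq.cf_Bbar} contain quadratic and cubic terms in $\nabla v$ paired with each curvature component, and one must verify that every such term lies in an integrable class with $O(\Gamma)$-norm against the primed measure. A representative delicate step is the $2t \nabla_{ab} v$ contribution to $\ul{H}^\prime$ in \eqref{eq.cf_Hbar}, which must ultimately be absorbed at the $B^{\infty,0}_{t^\prime, x} \cap L^{2,\infty}_{x, t^\prime}$-level demanded by \eqref{eq.renorm_est_cfol}; this is precisely why we have assumed $\nabla^2 v \in B^{\infty, 0}_{t, x} \cap L^{2, \infty}_{x, t}$ in \eqref{eq.renorm_cfol_ass}. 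Once these flux and initial bounds are verified with constant $\lesssim \Gamma$, Theorem~\ref{thm.nc_renorm} applied to the primed foliation yields \eqref{eq.renorm_est_cfol}, and \eqref{eq.renorm_est_curv_cfol} is just \eqref{eq.renorm_est_curv} restated in the new foliation.
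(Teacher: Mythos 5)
Your proposal is correct and follows essentially the same route as the paper: verify the flux bound via \eqref{eq.cf_A}--\eqref{eq.cf_Bbar} and the initial bounds via Corollary \ref{thm.cfol.init} (using the product estimates of \cite{shao:stt}), note the comparability of the $t$- and $t^\prime$-norms under \eqref{eq.renorm_cfol_ass}, and then apply Theorem \ref{thm.nc_renorm} in the primed foliation. The only slight imprecision is in your ``main obstacle'': the $2t\nabla_{ab}v$ term in \eqref{eq.cf_Hbar} need not be absorbed at the $B^{\infty,0}_{t^\prime,x}$-level of the conclusions (those come for free from Theorem \ref{thm.nc_renorm} once its hypotheses are checked); the $B^0_x$-control of $\nabla^2 v$ enters only through the initial-sphere quantities, most notably $M^\prime = M - \lapl v$ on $S_0$.
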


\begin{proof}
See Appendix \ref{sect:estimates.cfol.small}.
\end{proof}

\begin{remark}
Let $k$ be an integer satisfying $|k| \leq 3$, and suppose $|v|$ is sufficiently small everywhere on $\mc{N}$.
Then, we have the trivial bounds
\begin{equation} \label{eq.renorm_cfol_exp} e^{k v} \simeq 1 \text{,} \qquad | e^{k v} - 1 | \lesssim | v | \text{.} \end{equation}
In particular, this implies bounds for the coefficients $\mc{B}_k$ and $\mc{C}_k$ defined in \eqref{eq.Ck}:
\begin{equation} \label{eq.renorm_cfol_C} \mc{B}_k \simeq 1 \text{,} \qquad | \mc{C}_k | \lesssim | v | \text{.} \end{equation}
We will use these observations repeatedly in various upcoming estimates.
\end{remark}

Next, recall any $t^\prime$-horizontal field $\Psi^\prime$ induces a corresponding $t$-horizontal field, also denoted $\Psi^\prime$, via the projection \eqref{eq.cf_X}.
Thus, we can make sense of measuring $\Psi^\prime$ with respect to the $t$-foliation by taking a $\gamma$-$t$-norm of the $t$-horizontal induced field.
In particular, we can consider $\gamma$-$t$-norms of $H^\prime$, $Z^\prime$, $B^\prime$, $\nabla^\prime Z^\prime$, etc.

Assuming \eqref{eq.renorm_cfol_ass} for the moment, we observe the following:
\begin{itemize}
\item By the identity \eqref{eq.cf_gammainv}, along with \eqref{eq.renorm_cfol_exp} and \eqref{eq.renorm_cfol_C}, we see that corresponding horizontal frames in the $t$- and $t^\prime$-foliations are comparable.
\footnote{More specifically, a $\gamma^\prime$-orthonormal frame corresponds to a $\gamma$-orthogonal frame that is ``almost" orthonormal.
The same observation also holds in the reverse direction.}

\item As a result of the above, we conclude that $L^{p, q}_{t, x}$- and $L^{p, q}_{t^\prime, x}$-norms (i.e., with respect to the $t$- and $t^\prime$-foliations) of corresponding horizontal fields are comparable.
The same observation holds for $L^{q, p}_{x, t}$ and $L^{q, p}_{x, t^\prime}$-norms.
\end{itemize}
These observations link Proposition \ref{thm.renorm_cfol} to the subsequent proposition, which estimate the usual $t^\prime$-horizontal quantities ($H^\prime$, $A^\prime$, etc.) in terms of the $t$-foliation.
Such estimates will be especially useful throughout Section \ref{sect:convergence}.

\begin{proposition} \label{thm.renorm_cfol_bound}
Assume the hypotheses of Theorem \ref{thm.nc_renorm}, and consider a change of foliation corresponding to the distortion function $v$.
Assume in addition that \eqref{eq.renorm_cfol_ass} holds, with $\Gamma$ sufficiently small.
If we consider $H^\prime$, $Z^\prime$, $A^\prime$, $B^\prime$, $\nabla^\prime H^\prime$, $\nabla^\prime_{t^\prime} Z^\prime$ as $t$-horizontal fields, then we have the following bounds:
\begin{align}
\label{eq.renorm_cfol_bound} \| \trace^\prime H^\prime \|_{ L^{\infty, \infty}_{t, x} } + \| H^\prime \|_{ L^{\infty, 2}_{x, t} \cap L^{4, \infty}_{x, t} } + \| Z^\prime \|_{ L^{\infty, 2}_{x, t} \cap L^{4, \infty}_{x, t} } + \| \ul{H}^\prime \|_{ L^{2, \infty}_{x, t} } &\lesssim \Gamma \text{,} \\
\notag \| \nabla^\prime_{t^\prime} H^\prime \|_{ L^{2, 2}_{t, x} } + \| \nabla^\prime H^\prime \|_{ L^{2, 2}_{t, x} } + \| \nabla^\prime_{t^\prime} Z^\prime \|_{ L^{2, 2}_{t, x} } + \| \nabla^\prime Z^\prime \|_{ L^{2, 2}_{t, x} } + \| \nabla^\prime_{t^\prime} \ul{H}^\prime \|_{ L^{2, 2}_{t, x} } &\lesssim \Gamma \text{,} \\
\notag \| A^\prime \|_{ L^{2, 2}_{t, x} } + \| B^\prime \|_{ L^{2, 2}_{t, x} } + \| R^\prime \|_{ L^{2, 2}_{t, x} } + \| \ul{B}^\prime \|_{ L^{2, 2}_{t, x} } &\lesssim \Gamma \text{.}
\end{align}
\end{proposition}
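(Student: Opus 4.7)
The strategy is to deduce this proposition from Proposition \ref{thm.renorm_cfol}, the transformation formulas of Proposition \ref{thm.cfol.renorm}, and the curvature flux hypothesis of Theorem \ref{thm.nc_renorm}, once the equivalence between $t$- and $t^\prime$-foliation iterated Lebesgue norms in the small-$v$ regime is made rigorous.

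The first step is to establish this norm equivalence. From \eqref{eq.cf_gammainv} together with the bounds \eqref{eq.renorm_cfol_exp}--\eqref{eq.renorm_cfol_C} implied by the smallness assumption \eqref{eq.renorm_cfol_ass}, the metrics $\gamma$ and $\gamma^\prime$ are pointwise comparable on identified frames, so for any horizontal tensor $\Psi$ one has $|\Psi|_\gamma \simeq |\Psi|_{\gamma^\prime}$, and $dV_{\gamma^\prime} \simeq dV_\gamma$ in common transported coordinates. Differentiating \eqref{eq.cf_t} along a null generator (on which $\nabla_t v \equiv 0$) and using $\partial_t s = s^2$, one verifies that $\partial t^\prime/\partial t = 1 + O(|v|)$, so that the measures $dt$ and $dt^\prime$ are comparable along each generator. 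Parametrizing $\mc{N}$ by $(t, x)$ with $x \in S_0$ the transported coordinate and changing variables then yields
\[ \| \Psi \|_{ L^{p,q}_{t, x} } \simeq \| \Psi \|_{ L^{p,q}_{t^\prime, x} } \text{,} \qquad \| \Psi \|_{ L^{q,p}_{x, t} } \simeq \| \Psi \|_{ L^{q,p}_{x, t^\prime} } \text{,} \qquad 1 \leq p, q \leq \infty \text{,} \]
for any identified pair of horizontal fields.

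Given this equivalence, the first two lines of \eqref{eq.renorm_cfol_bound} follow at once: Proposition \ref{thm.renorm_cfol} provides each of the stated bounds in the $\gamma^\prime$-$t^\prime$ system, and the equivalence converts them into the desired $\gamma$-$t$ norms. For the curvature bounds in the third line of \eqref{eq.renorm_cfol_bound}, one instead argues directly from the transformation laws. The identity \eqref{eq.cf_A} gives $A^\prime = \mc{B}_2 A$ with $\mc{B}_2 \simeq 1$, so the flux assumption \eqref{eq.renorm_ass_flux} immediately yields $\|A^\prime\|_{L^{2,2}_{t,x}} \lesssim \Gamma$. The formulas \eqref{eq.cf_B}--\eqref{eq.cf_Bbar} express $B^\prime$, $R^\prime$, $\ul{B}^\prime$ as polynomials in $v$, $\nabla v$, $s^{-1}$, and the original $A$, $B$, $R$, $\ul{B}$, with smooth bounded coefficients; placing $v$ and $\nabla v$ into $L^{\infty,\infty}_{t,x}$ via \eqref{eq.renorm_cfol_ass} and pairing against the $L^{2,2}_{t,x}$-bounds from \eqref{eq.renorm_ass_flux}, each correction term contributes at most $\Gamma$ or $\Gamma^2 \lesssim \Gamma$. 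The Schwarzschild correction $m_{\rm S}(e^{3v} - 1 - s^{-1}\mc{C}_3)$ appearing in \eqref{eq.cf_R} is pointwise $\lesssim |v|$, hence $\lesssim \Gamma$ in $L^{2,2}_{t,x}$ over the compact parameter range.

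The principal technical point is the norm equivalence in Step 1, namely the Jacobian bound $\partial t^\prime/\partial t \simeq 1$, where one must carefully control the contribution of the $s^{-1}$ and $\mc{C}_{-1}$ factors in \eqref{eq.cf_t} uniformly as $t \nearrow 1$; once this is in place, the remaining arguments are essentially a mechanical unpacking of Proposition \ref{thm.cfol.renorm} combined with Proposition \ref{thm.renorm_cfol}.
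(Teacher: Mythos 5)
Your proposal is correct and follows essentially the same route as the paper: the appendix proof likewise obtains the curvature line directly from the transformation formulas \eqref{eq.cf_A}--\eqref{eq.cf_Bbar} together with \eqref{eq.renorm_cfol_ass} and H\"older, and deduces the remaining bounds by applying Theorem \ref{thm.nc_renorm} in the $t^\prime$-foliation (Proposition \ref{thm.renorm_cfol}) and invoking the comparability of $t$- and $t^\prime$-foliation norms asserted in Section \ref{sect:prelim.cfol.small}. The only difference is cosmetic: you make the norm comparability explicit via the Jacobian $\partial t^\prime / \partial t \simeq 1$ (noting that the blanket claim for all $(p,q)$ is only needed, and only straightforward, for the $L^{2,2}_{t,x}$, $L^{\infty,\infty}_{t,x}$, and $x$-outer norms actually appearing in \eqref{eq.renorm_cfol_bound}), whereas the paper treats this comparability as an observation following from \eqref{eq.cf_gammainv}, \eqref{eq.renorm_cfol_exp}, and \eqref{eq.renorm_cfol_C}.
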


\begin{proof}
See Appendix \ref{sect:estimates.cfol.small}.
\end{proof}

Propositions \ref{thm.renorm_cfol} and \ref{thm.renorm_cfol_bound} will be proved simultaneously in Appendix \ref{sect:estimates.cfol.small}.

\subsection{Strategy of the Proof} \label{sect:prelim.thm}

We close this section with an outline of the proof of the main result of this paper, Theorem \ref{the.thm}.
In particular, we relate parts of Theorem \ref{the.thm} to the renormalized settings discussed in this section.

The first observation is that the hypotheses of Theorem \ref{the.thm} are equivalent to those of Theorem \ref{thm.nc_renorm}.
This is a consequence of the transformation from the physical to the renormalized setting (see Section \ref{sect:prelim.fol}), as well as its inverse.
Thus, one can replace the assumptions of Theorem \ref{the.thm} with those of Theorem \ref{thm.nc_renorm}.
For more details on relating physical and renormalized versions of estimates, see \cite[Sect. 5]{alex_shao:nc_inf}.

From here, the proof of Theorem \ref{the.thm} consists of two main components.

\subsubsection{Construction of Asymptotically Flat Spheres}

The first component is that of constructing a family of spheres $\fol{\Sigma}{y} \subset \mc{N}$, $y \in [0, 1)$, going to infinity as $y \nearrow 1$, for which the area-normalized induced metrics become asymptotically round.
(Later, we will control on these $\fol{\Sigma}{y}$'s the physically relevant quantities, i.e., those related to the Bondi energy and the rate of energy loss.)
The $\fol{\Sigma}{y}$'s are defined as level spheres of a corresponding family of  geodesic foliations of $\mc{N}$, as described in Section \ref{sect:prelim.cfol}.
In addition, we establish estimates for the distortion functions $\fol{v}{y}$ associated with these refoliations, which will be essential later for demonstrating the convergence of various physically relevant quantities.

As we will be dealing with objects in the $\fol{v}{y}$-foliation of $\mc{N}$ for various $y \in [0, 1)$, we will adopt in the remainder of the paper the following convention.
Objects defined in the $\fol{v}{y}$-foliation of $\mc{N}$, either in the physical or the renormalized setting, will be denoted with a superscript $y$.
Objects without such a superscript $y$ will be understood to be with respect to the original foliation of $\mc{N}$.
For example, the finite parameter corresponding to the change of foliation given by $\fol{v}{y}$ is denoted $\fol{t}{y}$.
\footnote{In general, superscripts arguments in the notation will refer to a specific choice of a refoliation of $\mc{N}$ (e.g., $\fol{H}{y}$ is a renormalized Ricci coefficient in the $\fol{v}{y}$-foliation), while subscript arguments will refer to a restriction to a level sphere (e.g. $\fol{H}{y}_\tau$ is the restriction of $\fol{H}{y}$ to $\fol{t}{y} = \tau$).}

The results of this first component of the proof of Theorem \ref{the.thm} are summarized in the subsequent lemma, which will be proved in Section \ref{sect:distortion}.

\begin{lemma} \label{thm.distortion}
Assume the hypotheses of Theorem \ref{thm.nc_renorm} hold, with $\Gamma$ sufficiently small.
There is a 1-parameter family of (smooth) distortion functions $\fol{v}{y}$, $y \in [0, 1)$, such that if $\fol{s}{y}$ is the new affine parameter defined via \eqref{eq.cf_s}, then considering the renormalized $\fol{t}{y}$-foliation relative to $\fol{s}{y}$, the following conclusions hold:
\begin{itemize}
\item For each $y \in [0, 1)$, let $\fol{\Sigma}{y} = \fol{S}{y}_y$, i.e., the level sphere $\fol{t}{y} = y$, and let $\fol{h}{y} = \fol{\gamma}{y}_y$ denote the restriction of $\fol{\gamma}{y}$ to $\fol{\Sigma}{y}$.
Then, the spheres $(\fol{\Sigma}{y}, \fol{h}{y})$ become asymptotically round, in the sense that
\begin{equation} \label{eq.asympt_round} \lim_{y \nearrow 1} \operatorname{Area} (\fol{\Sigma}{y}, \fol{h}{y}) = 4 \pi \text{,} \qquad \lim_{y \nearrow 1} \| \fol{\mc{K}}{y} - 1 \|_{ H^{-1/2}_x (\fol{\Sigma}{y}, \fol{h}{y}) } = 0 \text{,} \end{equation}
where $\fol{\mc{K}}{y}$ denotes the Gauss curvature of $(\fol{\Sigma}{y}, \fol{h}{y})$.

\item \emph{In the original $t$-foliation}, the $\fol{v}{y}$'s satisfy the following uniform bounds:
\begin{align}
\label{eq.distortion_bound} \| \nabla_t \nabla^2 \fol{v}{y} \|_{ L^{2, 2}_{t, x} } + \| \nabla_t \nabla \fol{v}{y} \|_{ L^{\infty, 2}_{x, t} } &\lesssim \Gamma \text{,} \\
\notag \| \nabla^2 \fol{v}{y} \|_{ B^{\infty, 0}_{t, x} \cap L^{2, \infty}_{x, t} } + \| \nabla \fol{v}{y} \|_{ L^{\infty, \infty}_{t, x} } + \| \fol{v}{y} \|_{ L^{\infty, \infty}_{t, x} } &\lesssim \Gamma \text{,}
\end{align}

\item Fix arbitrary exponents $1 \leq p < 2$ and $2 \leq q < \infty$.
Then, \emph{in the original $t$-foliation}, the $\fol{v}{y}$'s satisfy the following convergence properties:
\begin{align}
\label{eq.distortion_conv} \lim_{y_1, y_2 \nearrow 1} [ \| \nabla_t \nabla^2 ( \fol{v}{y_2} - \fol{v}{y_1} ) \|_{ L^{p, 2}_{x, t} } + \| \nabla_t \nabla ( \fol{v}{y_2} - \fol{v}{y_1} ) \|_{ L^{q, 2}_{x, t} } ] &= 0 \text{,} \\
\notag \lim_{y_1, y_2 \nearrow 1} [ \| \nabla^2 ( \fol{v}{y_2} - \fol{v}{y_1} ) \|_{ L^{p, \infty}_{x, t} } + \| \nabla ( \fol{v}{y_2} - \fol{v}{y_1} ) \|_{ L^{q, \infty}_{x, t} } + \| \fol{v}{y_2} - \fol{v}{y_1} \|_{ L^{\infty, \infty}_{t, x} } ] &= 0 \text{.}
\end{align}

\item In addition, the following convergence property holds:
\begin{equation} \label{eq.distortion_conv_ex} \lim_{ y_1, y_2 \nearrow 1 } \| \nabla^2 ( \fol{v}{y_2} - \fol{v}{y_1} ) \|_{ L^{2, \infty}_{x, t} ( \fol{\Sigma}{y_1}, S_1 ) } = 0 \text{.} \end{equation}
\end{itemize}
\end{lemma}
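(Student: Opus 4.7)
My strategy is guided by the heuristic in Section \ref{sect:intro.outline}: a change of geodesic foliation acts as a conformal transformation of the limiting metric at infinity, so constructing asymptotically round cuts amounts to solving a uniformization-type problem along the cone. To identify the defining PDE, I would apply Proposition \ref{thm.renorm_cfol} to the $\fol{v}{y}$-foliation (once its hypotheses are verified); the estimate \eqref{eq.renorm_est_curv_cfol} then shows that the roundness \eqref{eq.asympt_round} follows provided the trace of $\fol{\ul{H}}{y}$ with respect to $\fol{\gamma}{y}$ vanishes in $L^2_x$ on $\fol{\Sigma}{y}$ as $y \nearrow 1$, since the remaining term $(1-y)\Gamma$ then automatically tends to zero. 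Combining the transformation laws \eqref{eq.cf_Hbar} and \eqref{eq.cf_gammainv} and tracing, and retaining only the terms that survive as $s \nearrow \infty$ (the $s^{-1}$-subleading pieces are harmless near infinity), the condition that this refoliated trace vanish on $\fol{\Sigma}{y}$ reduces to a semilinear elliptic equation for $\fol{v}{y}$ on the near-round sphere $S_y$ of schematic form
\[ 2 y \lapl \fol{v}{y} + 4 \fol{v}{y} = -\trace \ul{H}|_{S_y} + Q(\fol{v}{y}, \nabla \fol{v}{y}) \text{,} \]
whose $y = 1$ limit is precisely the linearization of the standard uniformization equation on $(S_1, \gamma_1)$.

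I would solve this PDE by a fixed-point iteration in $H^2_x(S_0)$, pulling everything back to $(S_0, \gamma_0)$ via the transport maps of Section \ref{sect:prelim.fol} (on which the Lebesgue norms are uniformly equivalent by Corollary \ref{thm.nc_renorm.norms}). The linearization $2y \lapl + 4$ is elliptic and, at $y = 1$, acquires a three-dimensional kernel spanned by the $\ell = 1$ spherical harmonics; this kernel reflects the Moebius translation freedom of uniformization (equivalently, the choice of center of mass in a Bondi frame) and must be eliminated by an explicit normalization on the $\ell = 1$ mode of $\fol{v}{y}$ that is stable as $y \nearrow 1$. With this gauge fixed, the operator has a uniformly bounded inverse from $L^2_x$ to $H^2_x$. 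Since Theorem \ref{thm.nc_renorm} provides $\| \ul{H} \|_{L^{2, \infty}_{x,t}} \lesssim \Gamma$, the right-hand side is of size $\Gamma$ in $L^2_x$ uniformly in $y$, and a standard contraction yields a small solution $\fol{v}{y}$ of norm $\lesssim \Gamma$ in $H^2_x$. The remaining bounds in \eqref{eq.distortion_bound}, and in particular the Besov bound $\| \nabla^2 \fol{v}{y} \|_{B^{\infty, 0}_{t, x}}$, follow by commuting the equation with $\nabla_t$ and bootstrapping in the Besov scale, using $\| \ul{H} \|_{B^{\infty, 0}_{t, x}} + \| \nabla_t \ul{H} \|_{L^{2, 2}_{t, x}} \lesssim \Gamma$ from Theorem \ref{thm.nc_renorm}.

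For the Cauchy statements \eqref{eq.distortion_conv} and \eqref{eq.distortion_conv_ex}, I would subtract the equations for $\fol{v}{y_1}$ and $\fol{v}{y_2}$; the difference obeys an analogous elliptic equation whose source is controlled by $\trace \ul{H}|_{S_{y_2}} - \trace \ul{H}|_{S_{y_1}}$ together with error terms involving the distortions themselves and the coefficient difference $y_2 - y_1$. By Corollary \ref{thm.nc_renorm.limit.rc}, $\trace \ul{H}_t$ is $L^2_x$-Cauchy as $t \nearrow 1$, so the family $\{\fol{v}{y}\}$ is Cauchy in $H^2_x$, yielding \eqref{eq.distortion_conv}. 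The asymmetry between the $L^p_t$ for $p < 2$ in \eqref{eq.distortion_conv} and the improved $L^2_t$ on $[\fol{\Sigma}{y_1}, S_1]$ in \eqref{eq.distortion_conv_ex} tracks the fact that $\trace \ul{H}$ is only uniformly controlled in $L^{\infty, 2}_{t, x}$ globally across $\mc{N}$ (so integrating in $t$ over $[0, 1)$ costs a fractional exponent), while on the restricted region near infinity one has the sharper square-integrable-in-$t$ Cauchy tail. With \eqref{eq.distortion_bound} in place, Proposition \ref{thm.renorm_cfol} applies to each $\fol{v}{y}$-foliation, \eqref{eq.renorm_est_curv_cfol} becomes available, and \eqref{eq.asympt_round} follows by combining the smallness of the refoliated trace (by construction of $\fol{v}{y}$) with $(1-y)\Gamma \to 0$; the area statement is then immediate from Corollary \ref{thm.nc_renorm.norms} and the uniform smallness of $\fol{v}{y}$.

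The principal obstacle, as flagged in Section \ref{sect:intro.outline}, is precisely the low regularity: the natural candidate $\fol{v}{1}$ solving the limiting uniformization equation lies only in $H^2$, too weak to invoke Proposition \ref{thm.renorm_cfol} directly on the $\fol{v}{1}$-foliation. The approximating family circumvents this because each finite-$y$ equation is uniformly elliptic once the $\ell = 1$ kernel is eliminated, and estimates pass to the limit via the Cauchy property rather than by working with the low-regularity limit directly. The subsidiary technical hurdle is the uniform-in-$y$ treatment of the $\ell = 1$ modes: they must be pinned down by a normalization stable under $y \nearrow 1$, which is natural because they correspond to the Moebius translation freedom in uniformization, i.e.\ to the choice of origin for the Bondi linear momentum in the main theorem.
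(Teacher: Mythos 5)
Your reduction is the same as the paper's: use \eqref{eq.renorm_est_curv_cfol} to reduce asymptotic roundness to killing $\fol{\trace}{y}\fol{\ul{H}}{y}$ on $\fol{\Sigma}{y}$, trace \eqref{eq.cf_Hbar} modulo $s^{-1}$-errors, and arrive at a uniformization-type equation for $\fol{v}{y}$ on $(S_y,\gamma_y)$ whose source is essentially $-\tfrac12\trace\ul{H}|_{S_y}$. The divergence comes at the solvability step, and it is there that your argument has a genuine gap. You propose to solve the (quasi-linearized) equation by contraction in $H^2_x$ after ``eliminating'' the $\ell=1$ kernel of $\lapl+2$ by a normalization on the $\ell=1$ mode of $\fol{v}{y}$. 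But gauge-fixing the unknown does not remove the degeneracy of the \emph{equation}: if you project the fixed-point problem onto the orthogonal complement of the first harmonics, the $\ell=1$ component of the equation becomes an unsatisfied compatibility condition, and there is no reason the $\ell=1$ part of $\trace\ul{H}|_{S_y}$ (or of the quadratic remainder) vanishes. The near-kernel, which degenerates like $(1-y)$ on the $\ell=1$ modes, reflects the M\"obius orbit of solutions of the genuine uniformization problem; handling it requires either a Lyapunov--Schmidt argument that uses the conformal group action to kill the residual, or what the paper actually does: solve the full nonlinear equation $\ddot{\lapl}\fol{v}{y}_2+e^{2\fol{v}{y}_2}=\ddot{\mc{K}}_y$ by an explicit Christodoulou--Klainerman-style construction (Lemma \ref{thm.unif}, Appendix \ref{sect:distortion.unif}), in which the conformal freedom is broken by the explicit choice of nearly antipodal poles and an approximate stereographic factor. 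That construction is also what yields uniform smallness $\|\fol{v}{y}_2\|_{L^\infty}\lesssim\Gamma$ and the continuous dependence on $(h,\mc{K}_h)$ used to get the Cauchy property as $y\nearrow 1$; in your scheme both of these hinge on the uniform invertibility you have not actually secured.

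Two further structural differences are worth noting. First, the paper's $\fol{v}{y}$ is a composition $\fol{v}{y}_1+\fol{v}{y}_2$ preceded by an auxiliary factor $u$ (solving $\lapl u = s^{-1}\operatorname{div}Z$, discarded at infinity) and a Bieri-type Poisson step $\fol{v}{y}_1$: these smoothings raise the Gauss curvature from $H^{-1/2}$ to $B^0$ and then to $L^\infty$, which is exactly what makes the abstract uniformization lemma applicable and what produces the $B^0_x(S_y)$ control of $\lapl\fol{v}{y}$ needed for the crucial bounds $\|\nabla^2\fol{v}{y}\|_{B^{\infty,0}_{t,x}}$ and $\|\nabla\fol{v}{y}\|_{L^{\infty,\infty}_{t,x}}$ in \eqref{eq.distortion_bound}; your plan of ``commuting with $\nabla_t$ and bootstrapping'' does not engage with this (the $\fol{v}{y}$ are $t$-independent; the $t$-dependence of the norms is handled in the paper by transport lemmas via the commutator with $H$ and $\nabla H$, Lemmas \ref{thm.est_v}--\ref{thm.est_v_ex}). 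Second, your explanation of the $p<2$ restriction in \eqref{eq.distortion_conv} and of the refined estimate \eqref{eq.distortion_conv_ex} is off: these arise from the transport of $\nabla^2(\fol{v}{y_2}-\fol{v}{y_1})$ off $S_{y_1}$, where the error $\nabla H\otimes\nabla(\fol{v}{y_2}-\fol{v}{y_1})$ forces a H\"older loss globally but is small on the region above $\fol{\Sigma}{y_1}$ because $\|\nabla H\|_{L^{2,1}_{x,t}(S_{y_1},S_1)}\lesssim(1-y_1)^{1/2}\Gamma$, not from any limitation on the integrability of $\trace\ul{H}$ in $t$.
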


Returning to the condition \eqref{rounding} in Theorem \ref{the.thm}, we must consider the \emph{physical} metric on $\fol{\Sigma}{y}$, i.e., the level set $\fol{s}{y} = (1 - y)^{-1}$.
Letting $\mind_{\fol{\Sigma}{y}}$ be the induced metric on $\fol{\Sigma}{y}$, and $r_{\fol{\Sigma}{y}}$ its area radius, then the first part of \eqref{eq.asympt_round} implies that
\begin{equation} \label{eq.asympt_round_cor} \lim_{y \nearrow 1} \frac{ \operatorname{Area} (\fol{\Sigma}{y}, \mind_{\fol{\Sigma}{y}} ) }{ ( \fol{s}{y} )^2 } = 4 \pi \text{,} \qquad \lim_{y \nearrow 1} \frac{ r_{\fol{\Sigma}{y}} }{ \fol{s}{y} } = 1 \text{.} \end{equation}
Moreover, since
\[ r_{\fol{\Sigma}{y}}^{-2} \cdot \mind_{\fol{\Sigma}{y}} = (\fol{s}{y})^2 r_{\fol{\Sigma}{y}}^{-2} \cdot \fol{h}{y} \text{,} \qquad r_{\fol{\Sigma}{y}}^2 \cdot \mc{K} (\fol{\Sigma}{y}, \mind_{\fol{\Sigma}{y}}) = r_{\fol{\Sigma}{h}}^2 ( \fol{s}{y} )^{-2} \cdot \mc{K} (\fol{\Sigma}{y}, \fol{h}{y}) \text{,} \]
then it follows from \eqref{eq.asympt_round} and \eqref{eq.asympt_round_cor} that
\[ \lim_{y \nearrow 1} \| r_{\fol{\Sigma}{y}}^2 \mc{K} (\fol{\Sigma}{y}, \mind_{\fol{\Sigma}{y}}) - 1 \|_{ H^{-1/2}_x (\fol{\Sigma}{y}, r_{\fol{\Sigma}{y}}^{-2} \cdot \mind_{\fol{\Sigma}{y}}) } = 1 \text{.} \]
This proves the asymptotic roundness property of \eqref{rounding}.

\subsubsection{Convergence of Physical Quantities}

It remains to show that the physically relevant quantities---namely, $\fol{M}{y}$, $\fol{Z}{y}$, and $\fol{\ul{H}}{y}$ on $\fol{\Sigma}{y}$---converge at infinity; this is the second main component of the proof of Theorem \ref{the.thm}.
The basic ideas are simple, as they are analogous to that of establishing limits at infinity in a single foliation of $\mc{N}$ as a consequence of Theorem \ref{thm.nc_renorm} (see Section \ref{sect:prelim.ncinf} and \cite[Cor. 5.2]{alex_shao:nc_inf} for details).
In practice, though, the process is complicated by the fact that we are now comparing objects from different foliations of $\mc{N}$.
\footnote{Again, much of the technical difficulty arises from the need to work with a \emph{$1$-parameter family} of changes of foliations.
See the discussion in Section \ref{sect:intro.outline} for details.}

The results of this part of the proof of Theorem \ref{the.thm} are summarized in the subsequent lemma, which will be proved in Section \ref{sect:convergence}.

\begin{lemma} \label{thm.convergence}
Let the $\fol{v}{y}$'s and $\fol{\Sigma}{y}$'s, $y \in [0, 1)$, be from Lemma \ref{thm.distortion}.
\footnote{In fact, we only require a family of $\fol{v}{y}$'s satisfying \eqref{eq.distortion_bound}-\eqref{eq.distortion_conv_ex}; the asymptotic roundness property \eqref{eq.asympt_round} plays no role in the proof of Lemma \ref{thm.convergence}.}
As before, consider the $\fol{s}{y}$- and $\fol{t}{y}$-foliations of $\mc{N}$ with respect to the $\fol{v}{y}$'s.
Then:
\begin{itemize}
\item The renormalized mass aspect functions $\fol{M}{y}$, restricted to $(\fol{\Sigma}{y}, \fol{h}{y})$, converge in $L^1_x$ to a limit $M^1$ as $y \nearrow 1$, in the sense of Definition \ref{def.conv_inf}.
Moreover,
\footnote{Recall $\Phi_{S_1}$, defined in Section \ref{sect:prelim.ncinf}, is the transport map along the null generators of $\mc{N}$.}
\begin{equation} \label{eq.limit_M} \| \Phi_{S_1}^\ast (M^1) \|_{ L^1_x (S_0) } \lesssim \Gamma \text{.} \end{equation}

\item The renormalized torsions $\fol{Z}{y}$ and conjugate null second fundamental forms $\fol{\ul{H}}{y}$, restricted to $(\fol{\Sigma}{y}, \fol{h}{y})$, converge in $L^2_x$ to limits $Z^1$ and $\ul{H}^1$, respectively, as $y \nearrow 1$, in the  sense of Definition \ref{def.conv_inf}.
Furthermore,
\begin{equation} \label{eq.limit_ZHbar} \| \Phi_{S_1}^\ast (Z^1) \|_{ L^2_x (S_0) } \lesssim \Gamma \text{,} \qquad \| \Phi_{S_1}^\ast (\ul{H}^1) \|_{ L^2_x (S_0) } \lesssim \Gamma \text{.} \end{equation}
\end{itemize}
\end{lemma}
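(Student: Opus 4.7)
First, since the distortion functions $\fol{v}{y}$ satisfy the hypothesis \eqref{eq.renorm_cfol_ass} uniformly in $y$ by \eqref{eq.distortion_bound}, Proposition \ref{thm.renorm_cfol} applied to each $\fol{v}{y}$ yields, uniformly in $y \in [0,1)$, the estimates
\[ \|\fol{M}{y}\|_{L^{2,\infty}_{x,\fol{t}{y}}} + \|\fol{Z}{y}\|_{L^{\infty,2}_{x,\fol{t}{y}}} + \|\fol{\ul{H}}{y}\|_{L^{2,\infty}_{x,\fol{t}{y}}} + \|\fol{\nabla}{y}_{\fol{t}{y}}\fol{M}{y}\|_{L^{2,1}_{x,\fol{t}{y}}} + \|\fol{\nabla}{y}_{\fol{t}{y}}\fol{Z}{y}\|_{L^{2,2}_{\fol{t}{y},x}} + \|\fol{\nabla}{y}_{\fol{t}{y}}\fol{\ul{H}}{y}\|_{L^{2,2}_{\fol{t}{y},x}} \lesssim \Gamma. \]
Restricting to $\fol{\Sigma}{y}$ and pulling back by $\Phi_{\fol{\Sigma}{y}}$ gives, via Corollary \ref{thm.nc_renorm.norms}, uniform $L^1_x(S_0)$ bounds for $\fol{M}{y}|_{\fol{\Sigma}{y}}$ and uniform $L^2_x(S_0)$ bounds for $\fol{Z}{y}|_{\fol{\Sigma}{y}}, \fol{\ul{H}}{y}|_{\fol{\Sigma}{y}}$. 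Consequently \eqref{eq.limit_M} and \eqref{eq.limit_ZHbar} will follow once the convergence is established; the bulk of the argument is this convergence.

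I will prove that $\{\Phi_{\fol{\Sigma}{y}}^\ast (\fol{M}{y}|_{\fol{\Sigma}{y}})\}$ is a Cauchy family in $L^1_x(S_0,\gamma_0)$, and analogously for $\fol{Z}{y}, \fol{\ul{H}}{y}$ in $L^2_x$. For $y_1<y_2$ close to $1$, decompose
\[ \fol{M}{y_2}|_{\fol{\Sigma}{y_2}} - \fol{M}{y_1}|_{\fol{\Sigma}{y_1}} = \paren{\fol{M}{y_2}|_{\fol{\Sigma}{y_2}} - \fol{M}{y_2}|_{\fol{\Sigma}{y_1}}} + \paren{\fol{M}{y_2}|_{\fol{\Sigma}{y_1}} - \fol{M}{y_1}|_{\fol{\Sigma}{y_1}}}, \]
viewing $\fol{M}{y_1}$ on $\fol{\Sigma}{y_1}$ as a $\fol{t}{y_2}$-horizontal tensor via the identification of Section \ref{sect:prelim.cfol}, with the analogous decomposition for $\fol{Z}{y}, \fol{\ul{H}}{y}$.

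The first bracket compares one and the same renormalized field on two different cuts inside the $\fol{v}{y_2}$-foliation. By \eqref{eq.cf_t} applied to the transition distortion $\fol{v}{y_2}-\fol{v}{y_1}$ together with the $L^\infty$-smallness in \eqref{eq.distortion_bound}, the cut $\fol{\Sigma}{y_1}$ is sandwiched in the $\fol{t}{y_2}$-slab $\brac{|\fol{t}{y_2} - y_1| \lesssim (1-y_1)\Gamma}$. Integrating $\fol{\nabla}{y_2}_{\fol{t}{y_2}} \fol{M}{y_2}$ along the null generators between the two cuts bounds the bracket in $L^1_x(S_0)$ by a tail integral of $\fol{\nabla}{y_2}_{\fol{t}{y_2}} \fol{M}{y_2}$ over a $\fol{t}{y_2}$-interval shrinking to $\brac{1}$ as $y_1,y_2 \nearrow 1$, which vanishes by the $L^{2,1}_{x,\fol{t}{y_2}}$-tail bound. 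An identical argument using $\|\fol{\nabla}{y_2}_{\fol{t}{y_2}}\fol{Z}{y_2}\|_{L^{2,2}_{\fol{t}{y_2},x}}$ and $\|\fol{\nabla}{y_2}_{\fol{t}{y_2}}\fol{\ul{H}}{y_2}\|_{L^{2,2}_{\fol{t}{y_2},x}}$ together with Cauchy--Schwarz in $\fol{t}{y_2}$ handles $\fol{Z}{y},\fol{\ul{H}}{y}$ in $L^2_x(S_0)$.

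For the second bracket, the transition from the $\fol{v}{y_1}$-foliation to the $\fol{v}{y_2}$-foliation is itself a change of geodesic foliation of $\mc{N}$ with distortion function $\fol{v}{y_2}-\fol{v}{y_1}$, since $L^{y_2} = e^{\fol{v}{y_2}-\fol{v}{y_1}} L^{y_1}$. Applying Proposition \ref{thm.cfol.renorm} to this transition (with the $\fol{v}{y_1}$-foliation playing the role of the ``unprimed'' frame) and substituting into the defining expression \eqref{eq.renorm_maf} for the mass aspect expands $\fol{M}{y_2} - \fol{M}{y_1}$, and analogously the differences for $Z$ and $\ul{H}$, into a sum of explicit monomials in the renormalized geometric quantities of the $\fol{v}{y_1}$-foliation multiplied by $\fol{v}{y_2}-\fol{v}{y_1}$ or one of its first two covariant derivatives. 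The geometric factors are uniformly controlled by Propositions \ref{thm.renorm_cfol} and \ref{thm.renorm_cfol_bound} applied to the $\fol{v}{y_1}$-foliation, while the factors involving $\fol{v}{y_2}-\fol{v}{y_1}$ and $\nabla(\fol{v}{y_2}-\fol{v}{y_1})$ vanish by \eqref{eq.distortion_conv} after Hölder. The \emph{main obstacle} is the monomial arising from $\fol{\nabla}{y_2}_a \fol{Z}{y_2}_b$ in $\fol{M}{y_2}$ that contains the Hessian difference $\nabla^2(\fol{v}{y_2}-\fol{v}{y_1})$: only $L^2_x$-control is available for this Hessian on each individual cut, and it must be evaluated on $\fol{\Sigma}{y_1}$, which is not pinned at a fixed $t$-value. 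This is precisely the situation that the tailored convergence \eqref{eq.distortion_conv_ex} is designed to address, supplying a uniform-in-$t$ vanishing of $\|\nabla^2(\fol{v}{y_2}-\fol{v}{y_1})\|_{L^2_x}$ on every cut lying between $\fol{\Sigma}{y_1}$ and $S_1$; paired against an $L^\infty_x$-bounded geometric factor this delivers the required $L^1_x$-convergence. Once both brackets are controlled, the Cauchy property yields the limits $M^1, Z^1, \ul{H}^1$, and the bounds \eqref{eq.limit_M} and \eqref{eq.limit_ZHbar} follow by passing to the limit in the uniform estimates from the first step.
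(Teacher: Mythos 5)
Your treatment of $\fol{Z}{y}$ and $\fol{\ul{H}}{y}$ is essentially the paper's own argument: compare the two quantities on the single cut $\fol{\Sigma}{y_1}$ via the transformation laws \eqref{eq.cf_Z}, \eqref{eq.cf_Hbar} (this is Lemma \ref{thm.limits_lem} together with \eqref{eq.distortion_conv_ex}), and absorb the mismatch of cuts by integrating the transport derivative over the slab between $\fol{\Sigma}{y_1}$ and $\fol{\Sigma}{y_2}$ (Lemma \ref{thm.limit_lem_ZHbar}). That part is fine, because \eqref{eq.cf_Z} and \eqref{eq.cf_Hbar} involve only $Z$, $H$, $\ul{H}$ and derivatives of the transition distortion, all of which carry sup-in-$t$ control, so restriction to a cut is harmless.

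The gap is in your second bracket for $M$. You propose to expand $\fol{M}{y_2}-\fol{M}{y_1}$ \emph{on the cut} $\fol{\Sigma}{y_1}$ by inserting Proposition \ref{thm.cfol.renorm} into \eqref{eq.renorm_maf}, and you claim the resulting geometric factors are uniformly controlled by Propositions \ref{thm.renorm_cfol} and \ref{thm.renorm_cfol_bound}. They are not: the transformation of $M$ at a general cut brings in naked curvature and transport terms restricted to that cut. Concretely, \eqref{eq.cf_R} produces terms of the schematic form $(e^{-3w}\mc{B}_3-1)\,R^{y_1}$, $s^{-1}t\,\nabla w\cdot B^{y_1}$, $s^{-1}t^2\,(\nabla w)^2 A^{y_1}$, and \eqref{eq.cf_nabla} applied to the divergence of $Z$ produces $s^{-1}t\,\nabla w\cdot\nabla_t Z$ (with $w=\fol{v}{y_2}-\fol{v}{y_1}$). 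The factors $R^{y_1}$, $B^{y_1}$, $A^{y_1}$, $\nabla_t Z$ are controlled only in flux-type, $t$-integrated norms ($L^{2,2}_{t,x}$), so they admit no uniform bound on an individual cut $\fol{\Sigma}{y_1}$; the uniform smallness of $w$, $\nabla w$ cannot compensate a factor that may blow up along the sequence of cuts, so the product need not tend to zero in $L^1_x(\fol{\Sigma}{y_1})$. (By contrast, the Hessian term you single out as the ``main obstacle'' is actually the benign one, handled exactly as you say by \eqref{eq.distortion_conv}/\eqref{eq.distortion_conv_ex}.) This is precisely why the paper avoids the change-of-foliation formula for $M$ away from $S_0$: at $S_0$ all the $t$-weighted curvature terms vanish and one has the clean identity \eqref{eq.cf_M_init}, $M^\prime=M-\lapl v$; the comparison at a far-out cut is then achieved not by a pointwise formula but by Lemma \ref{thm.limit_lem_M}, i.e.\ by pulling both mass aspects back to $S_0$ along the generators and estimating the difference of transport derivatives $\fol{\nabla}{y_2}_{\fol{t}{y_2}}\fol{M}{y_2}-\fol{\nabla}{y_1}_{\fol{t}{y_1}}\fol{M}{y_1}$ in the $t$-integrated norm $L^{1,1}_{t,x}$ via the evolution equation \eqref{eq.structure_renorm_evd}, where the flux bounds on $B$, $R$ and the $L^{2,2}$ bounds on $\nabla Z$, $\nabla_t Z$ do suffice. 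To repair your argument for $M$ you would either have to exhibit cancellations making the cut-wise transformation formula curvature-free (none are shown, and none are expected), or switch to the paper's $S_0$-plus-evolution strategy.
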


The conclusions \eqref{eq.limit_M} and \eqref{eq.limit_ZHbar} can be connected to \eqref{conclns.bondi}-\eqref{conclns.loss} by inverting the renormalization process.
First of all, by letting $\vind_{\fol{\Sigma}{y}}$ and $\fol{\omega}{y}$ denote the volume forms associated with $\mind_{\fol{\Sigma}{y}}$ and $\fol{h}{y}$, respectively, we obtain
\begin{align*}
| m_{\rm H} (\fol{\Sigma}{y}) - m_{\rm S} | &\leq \int_{ \fol{\Sigma}{y} } \left| \frac{ r_{\fol{\Sigma}{y}} }{8 \pi} \fol{\mu}{y} - \frac{1}{ 4 \pi r_{\fol{\Sigma}{y}}^2 } m_{\rm S} \right| d \vind_{ \fol{\Sigma}{y} } \\
&= \int_{ \fol{\Sigma}{y} } \left| \frac{ r_{\fol{\Sigma}{y}} }{ 8 \pi \fol{s}{y} } \fol{M}{y} + \frac{ r_{\fol{\Sigma}{y}} }{ 4 \pi \fol{s}{y} } m_{\rm S} - \frac{ (\fol{s}{y})^2 }{ 4 \pi r_{\fol{\Sigma}{y}}^2 } m_{\rm S} \right| d \fol{\omega}{y} \text{.}
\end{align*}
Recall that by \eqref{eq.norm_compare} and Proposition \ref{thm.renorm_cfol}, integrals with respect to $\fol{h}{y}$ and $\gamma_0$ differ very little.
Thus, combining the above with \eqref{eq.asympt_round_cor} and \eqref{eq.limit_M} results in \eqref{conclns.bondi}.

Finally, the second limit in \eqref{eq.asympt_round_cor} implies that the limits of the $\fol{Z}{y}$'s and $\fol{\ul{H}}{y}$'s along the $\fol{\Sigma}{y}$'s coincide with the quantities $\mc{Z}$ and $\Xi$ at $S_1$ from \eqref{ang.mom} and \eqref{mass.loss}.
Thus, \eqref{conclns.loss} and \eqref{conclns.ang} follow immediately from \eqref{eq.limit_ZHbar}.

This completes the proof of Theorem \ref{the.thm}.

\section{Convergence Estimates} \label{sect:convergence}

This section is dedicated to the proof of Lemma \ref{thm.convergence}, the second main component in the proof of Theorem \ref{the.thm}.
We show that given a family $\fol{v}{y}$, $y \in [0, 1)$, of distortion functions, corresponding to changes of foliations, satisfying the properties \eqref{eq.distortion_bound}-\eqref{eq.distortion_conv_ex}, the physically relevant quantities---$\fol{M}{y}$ and $\fol{\ul{H}}{y}$, restricted to $\fol{\Sigma}{y} = \fol{S}{y}_y$---converge in the appropriate norms as $y \nearrow 1$.
As mentioned before, these limits are related to the Bondi energy and the rate of mass loss.

\subsection{Difference Estimates} \label{sect:convergence.diff}

Assume now the hypotheses of Lemma \ref{thm.convergence}.
The main analytical tools we will need in proving Lemma \ref{thm.convergence} are the following:
\begin{itemize}
\item Estimates for connection and curvature quantities in the $\fol{t}{y}$-foliations, uniform in $y$ and in terms of the original foliation.

\item Cauchy estimates for the \emph{differences} between corresponding connection and curvature quantities in two \emph{different} refoliations of $\mc{N}$.
\end{itemize}
Note the first class of estimates are consequences of Proposition \ref{thm.renorm_cfol_bound}.
The upcoming development hence focuses on the remaining difference estimates.
The techniques involved are analogous to those in the proof of Proposition \ref{thm.renorm_cfol_bound}, with the main difference being that we must compare two refoliations of $\mc{N}$ simultaneously.

Before stating and proving the main estimates, let us first clarify the meaning of these aforementioned differences.
Fix $y_1, y_2 \in [0, 1)$, and consider two tensor fields $\fol{\Psi}{y_1}$ and $\fol{\Psi}{y_2}$, horizontal with respect to the $\fol{t}{y_1}$- and $\fol{t}{y_2}$-foliations, respectively.
From the development in Section \ref{sect:prelim.cfol}, we can, using \eqref{eq.cf_X}, consider both $\fol{\Psi}{y_1}$ and $\fol{\Psi}{y_2}$ as \emph{$t$-horizontal} tensor fields.
With this identification, we can now make sense of the difference $\fol{\Psi}{y_2} - \fol{\Psi}{y_1}$, as a $t$-horizontal field.
One important example will be the difference $\fol{\ul{H}}{y_2} - \fol{\ul{H}}{y_1}$ between corresponding $\ul{H}$'s in two refoliations.

With the above conventions in mind, we state our main Cauchy estimates:

\begin{lemma} \label{thm.limits_lem}
The following Cauchy properties hold:
\footnote{Recall $\fol{\epsilon}{y}_t$ represents the volume form associated with $\fol{\gamma}{y}_t$.}
\begin{align}
\label{eq.limits_lem_conv_pre} \lim_{y_1, y_2 \nearrow 1} \| \fol{t}{y_2} - \fol{t}{y_1} \|_{ L^{\infty, \infty}_{t, x} } &= 0 \text{,} \\
\notag \lim_{y_1, y_2 \nearrow 1} [ \| \fol{\gamma}{y_2} - \fol{\gamma}{y_1} \|_{ L^{\infty, \infty}_{t, x} } + \| \fol{\epsilon}{y_2} - \fol{\epsilon}{y_1} \|_{ L^{\infty, \infty}_{t, x} } ] &= 0 \text{,} \\
\notag \lim_{y_1, y_2 \nearrow 1} [ \| \fol{(\gamma^{-1})}{y_2} - \fol{(\gamma^{-1})}{y_1} \|_{ L^{\infty, \infty}_{t, x} } + \| \fol{(\epsilon^{-1})}{y_2} - \fol{(\epsilon^{-1})}{y_1} \|_{ L^{\infty, \infty}_{t, x} } ] &= 0 \text{.}
\end{align}
In the above, $\fol{(\gamma^{-1})}{y}$ and $\fol{(\epsilon^{-1})}{y}$ refer to the contravariant metric duals of $\fol{\gamma}{y}$ and $\fol{\epsilon}{y}$, respectively.
Furthermore, given any $1 \leq p < 2$ and $2 \leq q < \infty$, we have that
\begin{align}
\label{eq.limits_lem_conv} \lim_{y_1, y_2 \nearrow 1} [ \| \fol{H}{y_2} - \fol{H}{y_1} \|_{ L^{q, 2}_{x, t} \cap L^{2, \infty}_{x, t} } + \| \fol{Z}{y_2} - \fol{Z}{y_1} \|_{ L^{q, 2}_{x, t} \cap L^{2, \infty}_{x, t} } ] &= 0 \text{,} \\
\notag \lim_{y_1, y_2 \nearrow 1} \| \fol{\ul{H}}{y_2} - \fol{\ul{H}}{y_1} \|_{ L^{p, \infty}_{x, t} } &= 0 \text{,} \\
\notag \lim_{y_1, y_2 \nearrow 1} [ \| \fol{A}{y_2} - \fol{A}{y_1} \|_{ L^{p, 2}_{x, t} } + \| \fol{B}{y_2} - \fol{B}{y_1} \|_{ L^{p, 2}_{x, t} } + \| \fol{R}{y_2} - \fol{R}{y_1} \|_{ L^{p, 2}_{x, t} } ] &= 0 \text{,} \\
\notag \lim_{y_1, y_2 \nearrow 1} [ \| \fol{\nabla}{y_2} \fol{H}{y_2} - \fol{\nabla}{y_1} \fol{H}{y_1} \|_{ L^{p, 2}_{x, t} } + \| \fol{\nabla}{y_2} \fol{Z}{y_2} - \fol{\nabla}{y_1} \fol{Z}{y_1} \|_{ L^{p, 2}_{x, t} } ] &= 0 \text{.}
\end{align}
Also, recalling that $\fol{\Sigma}{y} = \fol{S}{y}_y$, we have the refined limit
\begin{equation} \label{eq.limits_lem_conv_ex} \lim_{y_1, y_2 \nearrow 1} \| \fol{\ul{H}}{y_2} - \fol{\ul{H}}{y_1} \|_{ L^{2, \infty}_{x, t} (\fol{\Sigma}{y_1}, S_1) } = 0 \text{.} \end{equation}
\end{lemma}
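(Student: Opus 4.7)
The overall strategy is to identify every primed field with a $t$-horizontal field via the projection \eqref{eq.cf_X} and then use the transformation formulas of Proposition \ref{thm.cfol.renorm} to express each Cauchy difference as a polynomial in $\fol{v}{y_i}$, $\nabla \fol{v}{y_i}$, $\nabla^2 \fol{v}{y_i}$ whose coefficients involve fixed background quantities in the original $t$-foliation.  The latter are uniformly controlled by Theorem \ref{thm.nc_renorm}, while the $\fol{v}{y_i}$'s are uniformly small and satisfy the Cauchy properties \eqref{eq.distortion_conv}-\eqref{eq.distortion_conv_ex}.  All of the convergence statements then reduce to Hölder-type product estimates, where the small factor is always some difference $\fol{v}{y_2} - \fol{v}{y_1}$ or a derivative thereof.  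Throughout, I would use the coefficient bounds \eqref{eq.renorm_cfol_exp}-\eqref{eq.renorm_cfol_C} to convert $\mc{B}_k$- and $\mc{C}_k$-differences into $v$-differences.

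For \eqref{eq.limits_lem_conv_pre}, the identities \eqref{eq.cf_t} and \eqref{eq.cf_gammainv} express $\fol{t}{y_i}$ and $\fol{\gamma}{y_i}$ as smooth functions of $\fol{v}{y_i}$ applied to fixed objects, so the differences are bounded pointwise by $|\fol{v}{y_2} - \fol{v}{y_1}|$; the final limit in \eqref{eq.distortion_conv} finishes this step, and the corresponding volume form and inverse metric estimates follow from determinant and cofactor expansions.  For the connection coefficient differences $\fol{H}{y_2} - \fol{H}{y_1}$ and $\fol{Z}{y_2} - \fol{Z}{y_1}$ in \eqref{eq.limits_lem_conv}, the formulas \eqref{eq.cf_H} and \eqref{eq.cf_Z} reveal each as a sum of terms of the schematic form ($v$-difference) $\cdot$ (background $H$, $Z$, $\nabla v$, $\gamma$), and Hölder against the uniform bounds \eqref{eq.renorm_est} and \eqref{eq.distortion_bound} places the result in $L^{q,2}_{x,t} \cap L^{2,\infty}_{x,t}$.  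The curvature differences admit analogous factorizations via \eqref{eq.cf_A}-\eqref{eq.cf_R}, but now the background $A$, $B$, $R$ sit only in $L^{2,2}_{t,x}$; pairing these with $L^{\infty,\infty}$-small $v$-differences gives $L^{2,2}_{t,x}$-smallness, and Minkowski then exchanges the order of integration at the cost of dropping from $p = 2$ to any $p < 2$, yielding the claimed $L^{p,2}_{x,t}$ Cauchy property.

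The derivative differences in \eqref{eq.limits_lem_conv} are handled by first substituting \eqref{eq.cf_nabla} into each $\fol{\nabla}{y_i} \fol{H}{y_i}$ (and analogously for $\fol{Z}{y_i}$) to reduce to $\nabla(\fol{H}{y_2} - \fol{H}{y_1})$ plus cross-terms involving $\nabla \fol{v}{y_i}$ and $\nabla_t \fol{H}{y_i}$.  The leading term reuses the $v$-difference decomposition above, now with the $L^{2,2}_{t,x}$-bound on $\nabla H$ from \eqref{eq.renorm_est}; the cross-terms pair $\nabla \fol{v}{y_i}$ in $L^{\infty,\infty}_{t,x}$ against $\nabla_t H$ in $L^{2,2}_{t,x}$, again giving $L^{p,2}_{x,t}$-smallness.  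For $\fol{\ul{H}}{y_2} - \fol{\ul{H}}{y_1}$, formula \eqref{eq.cf_Hbar} depends on $\nabla^2 v$, for which \eqref{eq.distortion_conv} only provides $L^{p,\infty}_{x,t}$-Cauchy control with $p<2$; this forces the weaker target norm in the global statement.  The refined limit \eqref{eq.limits_lem_conv_ex} on the slab above $\fol{\Sigma}{y_1}$ uses the sharper convergence \eqref{eq.distortion_conv_ex}, which upgrades $\nabla^2(\fol{v}{y_2} - \fol{v}{y_1})$ to $L^{2,\infty}_{x,t}(\fol{\Sigma}{y_1}, S_1)$, so the same estimate localized to this slab recovers $L^{2,\infty}_{x,t}$ for the $\ul{H}$ difference.

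The principal obstacles are: (i) tracking the index identifications via \eqref{eq.cf_X} consistently while algebraically manipulating fields from two distinct foliations, so that all pointwise estimates occur in a single ($t$-)frame; (ii) the low regularity of $\nabla^2 v$, which is the source of the $p < 2$ restriction in the $\ul{H}$-estimate on all of $\mc{N}$ and only relaxes on the slab near infinity thanks to \eqref{eq.distortion_conv_ex}; and (iii) managing the cross-terms generated by \eqref{eq.cf_nabla} in the derivative estimates, where the products $\nabla \fol{v}{y_i} \cdot \nabla_t \fol{H}{y_i}$ must be decomposed so that each factor lands in a norm for which either a small-Cauchy bound from \eqref{eq.distortion_conv} or a uniform bound from \eqref{eq.renorm_est} is available.
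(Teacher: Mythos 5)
Your overall strategy coincides with the paper's: identify all primed fields as $t$-horizontal fields via \eqref{eq.cf_X}, expand each Cauchy difference through the transformation formulas of Proposition \ref{thm.cfol.renorm}, and close with H\"older, using \eqref{eq.renorm_est}, \eqref{eq.distortion_bound} for uniform factors and \eqref{eq.distortion_conv}--\eqref{eq.distortion_conv_ex} for the small factors. Your treatment of \eqref{eq.limits_lem_conv_pre}, of $\fol{H}{y_2}-\fol{H}{y_1}$, $\fol{Z}{y_2}-\fol{Z}{y_1}$, of $\fol{\ul{H}}{y_2}-\fol{\ul{H}}{y_1}$, and of the refined limit \eqref{eq.limits_lem_conv_ex} matches the paper's proof. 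However, there are two places where your stated mechanism does not actually produce the claimed smallness.

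First, for the curvature differences you assert that the $L^{2,2}_{t,x}$ background curvature is always paired with ``$L^{\infty,\infty}$-small $v$-differences,'' and that the restriction to $p<2$ comes from a Minkowski exchange of the order of integration. Neither is right: at equal exponents the exchange $L^{2,2}_{t,x}\leftrightarrow L^{2,2}_{x,t}$ is free (Fubini), so no loss arises there; the genuine loss comes from the terms in \eqref{eq.cf_B}--\eqref{eq.cf_R} carrying $\nabla(\fol{v}{y_2}-\fol{v}{y_1})$, which by \eqref{eq.distortion_conv} is Cauchy only in $L^{q,\infty}_{x,t}$ for \emph{finite} $q$, not in $L^{\infty,\infty}_{t,x}$. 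H\"older in $x$ of such a factor against $A,B\in L^{2,2}_{t,x}$ is exactly what forces $1/p=1/q'+1/2>1/2$; your argument, as written, does not cover these terms at all. Second, and more seriously, for $\fol{\nabla}{y_2}\fol{Z}{y_2}-\fol{\nabla}{y_1}\fol{Z}{y_1}$ you claim the cross-terms generated by \eqref{eq.cf_nabla} are handled by pairing $\nabla\fol{v}{y_i}$ in $L^{\infty,\infty}_{t,x}$ against $\nabla_t H$ (or $\nabla_t Z$) in $L^{2,2}_{t,x}$. That product is merely \emph{bounded} by $\Gamma^2$; it does not tend to zero as $y_1,y_2\nearrow 1$, because neither factor is a difference. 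To extract smallness one must split the cross-term as $\nabla(\fol{v}{y_2}-\fol{v}{y_1})\cdot\nabla_t\fol{Z}{y_2}+\nabla\fol{v}{y_1}\cdot\nabla_t(\fol{Z}{y_2}-\fol{Z}{y_1})$, and then prove two additional facts that your outline omits: a Cauchy estimate for $\nabla_t(\fol{Z}{y_2}-\fol{Z}{y_1})$ (and likewise for $H$) in $L^{p,2}_{x,t}$, obtained by differentiating the expansion of the difference coming from \eqref{eq.cf_Z} and using the $\nabla_t\nabla(\fol{v}{y_2}-\fol{v}{y_1})$ bounds in \eqref{eq.distortion_conv}, together with a uniform $L^{2,2}_{t,x}$ bound on $\nabla_t\fol{Z}{y_2}$ \emph{as a $t$-horizontal field} (note Proposition \ref{thm.renorm_cfol_bound} bounds $\fol{\nabla}{y}_{\fol{t}{y}}\fol{Z}{y}$, not $\nabla_t\fol{Z}{y}$). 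This is precisely the intermediate step \eqref{eql.limits_lem_final} in the paper, and without it (or an equivalent full re-expansion in which every term visibly carries a $v$-difference) the derivative estimates in \eqref{eq.limits_lem_conv} do not close.
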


\begin{remark}
All the estimates in Lemma \ref{thm.limits_lem} are in terms of the $t$-foliation.
\end{remark}

\begin{remark}
Note that if $y_1$ and $y_2$ are sufficiently close to $1$, so that $| \fol{v}{y_2} - \fol{v}{y_1} |$ is small, then for any integer $k$ satisfying $|k| \leq 3$, we have
\begin{equation} \label{eq.renorm_cfol_exp_diff} | e^{k \fol{v}{y_2}} - e^{ k \fol{v}{y_1} } | \lesssim | \fol{v}{y_2} - \fol{v}{y_1} | \text{.} \end{equation}
Moreover, if $\fol{\mc{B}}{y}_k$ and $\fol{\mc{C}}{y}_k$ denote the coefficients in \eqref{eq.Ck}, with $v$ replaced by $\fol{v}{y}$, then
\begin{equation} \label{eq.renorm_cfol_C_diff} | \fol{\mc{B}}{y_2}_k - \fol{\mc{B}}{y_1}_k | \lesssim | \fol{v}{y_2} - \fol{v}{y_1} | \text{,} \qquad | \fol{\mc{C}}{y_2}_k - \fol{\mc{C}}{y_1}_k | \lesssim | \fol{v}{y_2} - \fol{v}{y_1} | \text{.} \end{equation}
We will use these observations repeatedly throughout the proof of Lemma \ref{thm.limits_lem}.
\end{remark}

For convenience, we will also adopt in the proof of Lemma \ref{thm.limits_lem} the abbreviations
\[ \mf{d} \Psi = \fol{\Psi}{y_2} - \fol{\Psi}{y_1} \text{,} \qquad \mf{d} \nabla \Psi = \fol{\nabla}{y_2} \fol{\Psi}{y_2} - \fol{\nabla}{y_1} \fol{\Psi}{y_1} \text{.} \]
For example, by these conventions,
\[ \mf{d} H = \fol{H}{y_2} - \fol{H}{y_1} \text{,} \qquad \nabla \mf{d} v = \nabla \fol{v}{y_2} - \nabla \fol{v}{y_1} \text{,} \qquad \mf{d} \nabla Z = \fol{\nabla}{y_2} \fol{Z}{y_2} - \fol{\nabla}{y_1} \fol{Z}{y_1} \text{.} \]

\subsubsection{Proof of Lemma \ref{thm.limits_lem}}

Throughout, we can assume both $y_1$ and $y_2$ to be sufficiently close to $1$.
The key is to use the change of foliation formulas of Appendix \ref{sect:cfol.renorm}, along with \eqref{eq.renorm_est} and the limits \eqref{eq.distortion_bound}-\eqref{eq.distortion_conv_ex}.

Recalling \eqref{eq.cf_t} and applying \eqref{eq.distortion_bound}, \eqref{eq.distortion_conv}, \eqref{eq.renorm_cfol_exp_diff}, and \eqref{eq.renorm_cfol_C_diff}, we estimate
\[ \lim_{y_1, y_2 \nearrow 1} \| \mf{d} t \|_{ L^{\infty, \infty}_{t, x} } \lesssim \lim_{y_1, y_2 \nearrow 1} \| \mf{d} v \|_{ L^{\infty, \infty}_{t, x} } = 0 \text{.} \]
Similarly, for the metric, we can use \eqref{eq.cf_gammainv} in order to write
\[ ( \fol{\gamma}{y_2} )_{ab} - ( \fol{\gamma}{y_1} )_{ab} = f_1 \cdot \gamma_{ab} \text{,} \qquad ( \fol{\gamma}{y_2} )^{ab} - ( \fol{\gamma}{y_1} )^{ab} = f_{-1} \cdot \gamma^{ab} \text{,} \]
where $f_1$ and $f_{-1}$ are scalar functions on $\mc{N}$ satisfying $| f_1 | \lesssim | \mf{d} v |$ and $| f_{-1} | \lesssim | \mf{d} v |$.
As a result, the limits for $\mf{d} \gamma$ and $\mf{d} \gamma^{-1}$ follow.
As the estimates for the volume forms are analogous, this completes the proof of \eqref{eq.limits_lem_conv_pre}.

The proof for \eqref{eq.limits_lem_conv} is similar.
First, we expand $\fol{Z}{y_1}$ and $\fol{Z}{y_2}$'s using \eqref{eq.cf_Z}, and we note that the only difference between these expansions is the presence of $\fol{v}{y_1}$ and $\fol{v}{y_2}$.
Thus, each term in the expansion of $\mf{d} Z$ must be a product of the following:
\begin{itemize}
\item A difference of $v$: either $\mf{d} v$ or $\nabla \mf{d} v$.

\item A quantity in the original foliation, i.e., either $Z$ or $H$.

\item Instances of $\fol{v}{y_1}$ and $\fol{v}{y_2}$, not appearing as a difference.
\end{itemize}
These isolated instances of $\fol{v}{y_i}$'s can be controlled using \eqref{eq.distortion_bound}.
To be more specific, a more careful look at \eqref{eq.cf_Z}, along with H\"older's inequality, yields
\begin{align*}
\| \mf{d} Z \|_{ L^{q, 2}_{x, t} \cap L^{2, \infty}_{x, t} } &\lesssim \| \mf{d} v \|_{ L^{\infty, \infty}_{t, x} } \| Z \|_{ L^{\infty, 2}_{x, t} \cap L^{4, \infty}_{x, t} } + ( \| \nabla \mf{d} v \|_{ L^{q^\prime, \infty}_{x, t} } + \| \mf{d} v \|_{ L^{\infty, \infty}_{t, x} } ) \\
&\qquad + ( \| \nabla \mf{d} v \|_{ L^{q^\prime, \infty}_{x, t} } + \| \mf{d} v \|_{ L^{\infty, \infty}_{t, x} } ) \| H \|_{ L^{\infty, 2}_{x, t} \cap L^{4, \infty}_{x, t} } \text{,}
\end{align*}
where $2 < q^\prime < \infty$ is sufficiently large.
Recalling \eqref{eq.renorm_est} and \eqref{eq.distortion_conv} results in the limit for $\mf{d} Z$ in \eqref{eq.limits_lem_conv}.
The limit for $\mf{d} H$ is proved similarly using \eqref{eq.cf_H}, but is easier.

For $\mf{d} \ul{H}$, we apply \eqref{eq.cf_Hbar} and \eqref{eq.distortion_bound} to obtain, with $q^\prime$ as before,
\begin{align*}
\| \mf{d} \ul{H} \|_{ L^{p, \infty}_{x, t} } &\lesssim \| \mf{d} v \|_{ L^{\infty, \infty}_{t, x} } \| \ul{H} \|_{ L^{2, \infty}_{x, t} } + ( \| \nabla^2 \mf{d} v \|_{ L^{p, \infty}_{x, t} } + \| \nabla \mf{d} v \|_{ L^{q^\prime, \infty}_{x, t} } + \| \mf{d} v \|_{ L^{\infty, \infty}_{t, x} } ) \\
&\qquad + ( \| \nabla \mf{d} v \|_{ L^{q^\prime, \infty}_{x, t} } + \| \mf{d} v \|_{ L^{\infty, \infty}_{t, x} } ) ( \| Z \|_{ L^{4, \infty}_{x, t} } + \| H \|_{ L^{4, \infty}_{x, t} } ) \text{.}
\end{align*}
The curvature coefficients can be similarly bounded.
For example, by \eqref{eq.cf_B},
\begin{align*}
\| \mf{d} B \|_{ L^{p, 2}_{x, t} } &\lesssim \| \mf{d} v \|_{ L^{\infty, \infty}_{t, x}} \| B \|_{ L^{2, 2}_{t, x} } + ( \| \nabla \mf{d} v \|_{ L^{q^\prime, \infty}_{x, t} } + \| \mf{d} v \|_{ L^{\infty, \infty}_{t, x} } ) \| A \|_{ L^{2, 2}_{t, x} } \text{.}
\end{align*}
Analogous bounds can be derived for $A$ and $R$.
Applying \eqref{eq.renorm_est} and \eqref{eq.distortion_conv} in the same manner as before, we obtain the desired limits for $\ul{H}$, $A$, $B$, and $R$.

It remains to establish the limits for $\mf{d} \nabla H$ and $\mf{d} \nabla Z$; we prove the latter here, as the former is similar but easier.
By \eqref{eq.cf_nabla}, for sufficiently large $2 < q^\prime < \infty$,
\begin{align*}
\| \mf{d} \nabla Z \|_{ L^{p, 2}_{x, t} } &\lesssim \| \nabla \mf{d} Z \|_{ L^{p, 2}_{x, t} } + \| \nabla \fol{v}{y_1} \|_{ L^{\infty, \infty}_{t, x} } \| \nabla_t \mf{d} Z \|_{ L^{p, 2}_{x, t} } + \| \nabla \mf{d} v \|_{ L^{q^\prime, \infty}_{x, t} } \| \nabla_t \fol{Z}{y_2} \|_{ L^{2, 2}_{x, t} } \\
&\qquad + \| \nabla \fol{v}{y_1} \|_{ L^{\infty, \infty}_{t, x} } ( 1 + \| H \|_{ L^{\infty, 2}_{x, t} } ) \| \mf{d} Z \|_{ L^{2, \infty}_{x, t} } \\
&\qquad + \| \nabla \mf{d} v \|_{ L^{q^\prime, \infty}_{x, t} } ( 1 + \| H \|_{ L^{\infty, 2}_{x, t} } ) \| \fol{Z}{y_2} \|_{ L^{4, \infty}_{x, t} } \text{.}
\end{align*}
From \eqref{eq.renorm_est}, \eqref{eq.renorm_cfol_bound}, and \eqref{eq.distortion_bound}, we see that the last two terms on the right-hand side tend to zero as $y_1, y_2 \nearrow 1$.
As a result, we need only prove that
\begin{equation} \label{eql.limits_lem_final} \lim_{y_1, y_2 \nearrow 1} [ \| \nabla \mf{d} Z \|_{ L^{p, 2}_{x, t} } + \| \nabla_t \mf{d} Z \|_{ L^{p, 2}_{x, t} } ] = 0 \text{,} \qquad \| \nabla_t \fol{Z}{y_2} \|_{ L^{2, 2}_{t, x} } \lesssim \Gamma \text{.} \end{equation}

For this, we again expand $\mf{d} Z$ using \eqref{eq.cf_Z}, and we apply $\nabla$ and $\nabla_t$ to the result.
Applying H\"older's inequality and \eqref{eq.distortion_bound} to eliminate isolated $\fol{v}{y_i}$'s, we have
\begin{align*}
\| \nabla \mf{d} Z \|_{ L^{p, 2}_{x, t} } &\lesssim \| \mf{d} v \|_{ L^{\infty, \infty}_{t, x} } \| \nabla Z \|_{ L^{2, 2}_{t, x} } + ( \| \nabla \mf{d} v \|_{ L^{q^\prime, \infty}_{x, t} } + \| \mf{d} v \|_{ L^{\infty, \infty}_{t, x} } ) \| Z \|_{ L^{\infty, 2}_{x, t} } \\
&\qquad + ( \| \nabla \mf{d} v \|_{ L^{q^\prime, \infty}_{x, t} } + \| \mf{d} v \|_{ L^{\infty, \infty}_{t, x} } ) \| \nabla H \|_{ L^{2, 2}_{t, x} } \\
&\qquad + ( \| \nabla^2 \mf{d} v \|_{ L^{p, 2}_{x, t} } + \| \nabla \mf{d} v \|_{ L^{q^\prime, \infty}_{x, t} } + \| \mf{d} v \|_{ L^{\infty, \infty}_{t, x} } ) \| H \|_{ L^{\infty, 2}_{x, t} } \\
&\qquad + \| \nabla^2 \mf{d} v \|_{ L^{p, \infty}_{x, t} } + \| \nabla \mf{d} v \|_{ L^{q^\prime, \infty}_{x, t} } + \| \mf{d} v \|_{ L^{\infty, \infty}_{t, x} } \text{,} \\
\| \nabla_t \mf{d} Z \|_{ L^{p, 2}_{x, t} } &\lesssim \| \mf{d} v \|_{ L^{\infty, \infty}_{t, x} } \| \nabla_t Z \|_{ L^{2, 2}_{t, x} } + ( \| \nabla \mf{d} v \|_{ L^{q^\prime, \infty}_{x, t} } + \| \mf{d} v \|_{ L^{\infty, \infty}_{t, x} } ) \| \nabla_t H \|_{ L^{2, 2}_{t, x} } \\
&\qquad + ( \| \nabla_t \nabla \mf{d} v \|_{ L^{q^\prime, 2}_{x, t} } + \| \mf{d} v \|_{ L^{\infty, \infty}_{t, x} } ) \| H \|_{ L^{4, \infty}_{x, t} } \\
&\qquad + \| \nabla_t \nabla \mf{d} v \|_{ L^{q^\prime, 2}_{x, t} } + \| \mf{d} v \|_{ L^{\infty, \infty}_{t, x} } \text{.}
\end{align*}
Applying \eqref{eq.renorm_est} and \eqref{eq.distortion_conv} to the above proves the limit in \eqref{eql.limits_lem_final}.
The remaining bound for $\nabla_t \fol{Z}{y_2}$ follows from a similar estimate as the above for $\nabla_t \mf{d} Z$.
This completes the proof of \eqref{eql.limits_lem_final}, and hence \eqref{eq.limits_lem_conv}.

Finally, for \eqref{eq.limits_lem_conv_ex}, we proceed like the estimate for $\mf{d} \ul{H}$ in \eqref{eq.limits_lem_conv}:
\begin{align*}
\| \mf{d} \ul{H} \|_{ L^{2, \infty}_{x, t} (\fol{\Sigma}{y_1}, S_1) } &\lesssim \| \mf{d} v \|_{ L^{\infty, \infty}_{t, x} } ( 1 + \| \ul{H} \|_{ L^{2, \infty}_{x, t} } ) + ( \| \nabla^2 \mf{d} v \|_{ L^{2, \infty}_{x, t} (\fol{\Sigma}{y_1}, S_1) } + \| \nabla \mf{d} v \|_{ L^{q^\prime, \infty}_{x, t} } ) \\
&\qquad + ( \| \nabla \mf{d} v \|_{ L^{q^\prime, \infty}_{x, t} } + \| \mf{d} v \|_{ L^{\infty, \infty}_{t, x} } ) ( \| Z \|_{ L^{4, \infty}_{x, t} } + \| H \|_{ L^{4, \infty}_{x, t} } ) \text{.}
\end{align*}
The only difference is we restricted the norms to the region above $\fol{\Sigma}{y_1}$.
Although the second term on the right-hand side can no longer be controlled using \eqref{eq.distortion_conv}, because of the restriction to the shrinking region, this term will still go to zero as $y_1, y_2 \nearrow 1$ due to \eqref{eq.distortion_conv_ex}.
Thus, we have established \eqref{eq.limits_lem_conv_ex}, and hence Lemma \ref{thm.limits_lem}.

\subsection{The Bondi Energy} \label{sect:convergence.bondi}

We are now ready to establish the limit \eqref{eq.limit_M} involving the Hawking masses.
This is the most difficult limit, since we lack a tidy formula for how $M$ transforms under changes of foliations.
\footnote{Although such a formula can be derived, it is easier to avoid doing so.}
To work around this, we observe that a tidy transformation formula for $M$ does exist \emph{at $S_0$}; see \eqref{eq.cf_M_init}.

As usual, let $\fol{M}{y}_y$ denote the restriction of $\fol{M}{y}$ to $\fol{\Sigma}{y} = \fol{S}{y}_y$.
Recalling Definition \ref{def.conv_inf}, to show that the $\fol{M}{y}_y$'s converge in $L^1_x$, it suffices to prove
\begin{equation} \label{eq.limit_goal_M} \lim_{y_1, y_2 \nearrow 1} {\bf L}_M = 0 \text{,} \qquad {\bf L}_M = \int_{ S_0 } | \Phi_{ \fol{\Sigma}{y_2} }^\ast ( \fol{M}{y_2}_{y_2} ) - \Phi_{ \fol{\Sigma}{y_1} }^\ast ( \fol{M}{y_1}_{y_1} ) | d \epsilon_0 \text{.} \end{equation}
To convert \eqref{eq.limit_goal_M} into estimates that we have, we resort to the following lemma:

\begin{lemma} \label{thm.limit_lem_M}
The following estimate holds:
\begin{align}
\label{eq.limit_lem_M} {\bf L}_M &\lesssim \| \fol{\nabla}{y_2}_{ \fol{t}{y_2} } \fol{M}{y_2} - \fol{\nabla}{y_1}_{ \fol{t}{y_1} } \fol{M}{y_1} \|_{ L^{1, 1}_{t, x} } + \| \fol{\nabla}{y_2}_{ \fol{t}{y_2} } \fol{M}{y_2} \|_{ L^{1, 1}_{\fol{t}{y_2}, x} (\fol{\Sigma}{y_1}, \fol{\Sigma}{y_2}) } \\
\notag &\qquad + \| \fol{M}{y_2} - \fol{M}{y_1} \|_{ L^1_x (S_0) } \text{.}
\end{align}
\end{lemma}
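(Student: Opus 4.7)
My plan is to exploit the fact that $\fol{M}{y_i}$ is a scalar, so $\fol{\nabla}{y_i}_{\fol{t}{y_i}}\fol{M}{y_i}=\mf{L}_{\fol{t}{y_i}}\fol{M}{y_i}$, and to reduce the estimate to a pointwise identity on $S_0$ by running the fundamental theorem of calculus along each null generator of $\mc{N}$. Concretely, for $\omega\in S_0$, letting $P^{(y_i)}_\omega:=\Phi_{\fol{\Sigma}{y_i}}^{-1}(\omega)$ and parametrizing the generator through $\omega$ by $\fol{t}{y_i}$, I have
\[ \fol{M}{y_i}(P^{(y_i)}_\omega) = \fol{M}{y_i}_0(\omega) + \int_0^{y_i} \left(\fol{\nabla}{y_i}_{\fol{t}{y_i}}\fol{M}{y_i}\right)\!(\omega,\tau)\,d\tau, \]
where $(\omega,\tau)$ denotes the point on the generator through $\omega$ with $\fol{t}{y_i}=\tau$.

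Assuming without loss of generality $y_1\le y_2$, I subtract these identities for $i=1,2$ and split the integral for $i=2$ into the common range $[0,y_1]$ and the residual $[y_1,y_2]$ (in $\fol{t}{y_2}$), which yields
\begin{align*}
\Phi_{\fol{\Sigma}{y_2}}^\ast(\fol{M}{y_2}_{y_2})(\omega) - \Phi_{\fol{\Sigma}{y_1}}^\ast(\fol{M}{y_1}_{y_1})(\omega) &= [\fol{M}{y_2}_0 - \fol{M}{y_1}_0](\omega) + {\bf I}_{\rm com}(\omega) + {\bf I}_{\rm rem}(\omega),
\end{align*}
with ${\bf I}_{\rm com}(\omega)$ the integral over $[0,y_1]$ of the difference $\fol{\nabla}{y_2}_{\fol{t}{y_2}}\fol{M}{y_2}-\fol{\nabla}{y_1}_{\fol{t}{y_1}}\fol{M}{y_1}$ and ${\bf I}_{\rm rem}(\omega)$ the integral of $\fol{\nabla}{y_2}_{\fol{t}{y_2}}\fol{M}{y_2}$ alone over the slab between $\fol{\Sigma}{y_1}$ and $\fol{\Sigma}{y_2}$ in $\fol{t}{y_2}$. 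Taking absolute values, applying the triangle inequality, and integrating over $(S_0,\epsilon_0)$ bounds ${\bf L}_M$ by the sum of three contributions. The initial-value piece is exactly $\|\fol{M}{y_2}-\fol{M}{y_1}\|_{L^1_x(S_0)}$, and the $|{\bf I}_{\rm rem}|$ contribution is, by Fubini, precisely the $L^{1,1}_{\fol{t}{y_2},x}(\fol{\Sigma}{y_1},\fol{\Sigma}{y_2})$-norm of $\fol{\nabla}{y_2}_{\fol{t}{y_2}}\fol{M}{y_2}$, matching the second and third terms of \eqref{eq.limit_lem_M}.

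For the $|{\bf I}_{\rm com}|$ contribution, I reparametrize each generator by the original $t$ using the Jacobian $J_{y_i}=d\fol{t}{y_i}/dt=e^{\fol{v}{y_i}}/\fol{\mc{B}}{y_i}_2$ and apply Fubini to recast the result as an integral over $\mc{N}$ in the measure $dt\,d\epsilon_0(\omega)$; by Corollary \ref{thm.nc_renorm.norms} this measure is uniformly comparable with the $(t,\gamma_t)$-measure defining $L^{1,1}_{t,x}$, and by \eqref{eq.renorm_cfol_exp}--\eqref{eq.renorm_cfol_C} each $J_{y_i}$ is uniformly close to $1$. This identifies the main contribution with $\|\fol{\nabla}{y_2}_{\fol{t}{y_2}}\fol{M}{y_2}-\fol{\nabla}{y_1}_{\fol{t}{y_1}}\fol{M}{y_1}\|_{L^{1,1}_{t,x}}$, up to uniformly bounded multiplicative constants. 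The main technical obstacle is the genuine mismatch $J_{y_2}-J_{y_1}$, of size $\lesssim\|\fol{v}{y_2}-\fol{v}{y_1}\|_{L^{\infty,\infty}_{t,x}}$, which when paired against $\fol{\nabla}{y_1}_{\fol{t}{y_1}}\fol{M}{y_1}$ produces a residual error of the schematic form $\|\fol{v}{y_2}-\fol{v}{y_1}\|_\infty\cdot\|\fol{\nabla}{y_1}_{\fol{t}{y_1}}\fol{M}{y_1}\|_{L^{1,1}_{t,x}}$. Since Proposition \ref{thm.renorm_cfol} applied in the $\fol{t}{y_1}$-foliation gives $\|\fol{\nabla}{y_1}_{\fol{t}{y_1}}\fol{M}{y_1}\|_{L^{1,1}_{t,x}}\lesssim\Gamma$, this residual is uniformly bounded and can be absorbed into the first term of \eqref{eq.limit_lem_M} via the triangle inequality together with a uniform $L^{1,1}_{t,x}$-bound on $\fol{\nabla}{y_2}_{\fol{t}{y_2}}\fol{M}{y_2}$, completing the proof of \eqref{eq.limit_lem_M}.
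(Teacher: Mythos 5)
Your overall strategy is the same as the paper's: integrate $\fol{\nabla}{y_i}_{\fol{t}{y_i}}\fol{M}{y_i}$ along the null generators (using that $\fol{M}{y_i}$ is scalar), split into a ``common'' part, a far-region part, and an initial-sphere part, and use comparability of the various parametrizations and measures. However, the bookkeeping does not deliver \eqref{eq.limit_lem_M} as stated, for two reasons. First, you split the $y_2$-integral at the parameter value $\fol{t}{y_2}=y_1$, but the level set $\{\fol{t}{y_2}=y_1\}$ is $\fol{S}{y_2}_{y_1}$, which is not the cut $\fol{\Sigma}{y_1}=\fol{S}{y_1}_{y_1}$: at $\fol{t}{y}=y_1$ one has $s=1+e^{\fol{v}{y}}\tfrac{y_1}{1-y_1}$, so the two surfaces differ wherever $\fol{v}{y_2}\neq\fol{v}{y_1}$, and $\fol{S}{y_2}_{y_1}$ lies below $\fol{\Sigma}{y_1}$ on generators where $\fol{v}{y_2}<\fol{v}{y_1}$. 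Hence $\mathbf{I}_{\rm rem}$ is not ``precisely'' the $L^{1,1}_{\fol{t}{y_2},x}(\fol{\Sigma}{y_1},\fol{\Sigma}{y_2})$-norm; equivalently, after reparametrizing $\mathbf{I}_{\rm com}$ by $t$ the two $t$-ranges have different upper endpoints, and an integral of $\fol{\nabla}{y_2}_{\fol{t}{y_2}}\fol{M}{y_2}$ alone over the slab between $\fol{S}{y_2}_{y_1}$ and $\fol{\Sigma}{y_1}$ is left over, which (on the generators where that slab lies below $\fol{\Sigma}{y_1}$) is not dominated by any term on the right of \eqref{eq.limit_lem_M}. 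Second, your handling of the Jacobian mismatch (a real issue, which the paper passes over silently) does not close: the triangle inequality turns $\|\fol{v}{y_2}-\fol{v}{y_1}\|_{L^{\infty,\infty}_{t,x}}\,\|\fol{\nabla}{y_1}_{\fol{t}{y_1}}\fol{M}{y_1}\|_{L^{1,1}_{t,x}}$ into a piece controlled by the first term of \eqref{eq.limit_lem_M} plus a piece of size $\|\fol{v}{y_2}-\fol{v}{y_1}\|_{L^{\infty,\infty}_{t,x}}\,\Gamma$, and a term that is merely uniformly bounded (or even merely $o(1)$) cannot be ``absorbed'' into a right-hand side that itself tends to zero. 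Both leftovers do vanish as $y_1,y_2\nearrow1$ by \eqref{eq.distortion_conv}, so your argument would still suffice for the limit \eqref{eq.limit_goal_M}, but it does not prove the inequality as stated.

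The paper's ordering avoids the first defect: it first transports $\fol{M}{y_2}$ from $\fol{\Sigma}{y_2}$ down to the cut $\fol{\Sigma}{y_1}$ itself, via the $\fol{t}{y_2}$-covariant integral between the two cuts, which produces exactly the second term of \eqref{eq.limit_lem_M}; it then transports the difference $\fol{M}{y_2}-\fol{M}{y_1}$ from $\fol{\Sigma}{y_1}$ down to $S_0$ along both foliations, producing the first and third terms. If you adopt that split, your Jacobian discussion is confined to the segment below $\fol{\Sigma}{y_1}$, and the only remaining residue is the $\|\fol{v}{y_2}-\fol{v}{y_1}\|_{L^{\infty,\infty}_{t,x}}\,\Gamma$-type term just described, which is exactly the point at which the paper's own write-up is also informal.
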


\begin{proof}
See Appendix \ref{sect:estimates.limit}.
\end{proof}

Since the $\fol{\Sigma}{y}$'s are going to infinity, then by Proposition \ref{thm.renorm_cfol},
\[ \lim_{y_1, y_2 \nearrow 1} \| \fol{\nabla}{y_2}_{ \fol{t}{y_2} } \fol{M}{y_2} \|_{ L^{1, 1}_{\fol{t}{y_2}, x} (\fol{\Sigma}{y_1}, \fol{\Sigma}{y_2}) } = 0 \text{.} \]
Moreover, using \eqref{eq.cf_M_init} and \eqref{eq.distortion_conv},
\[ \lim_{y_1, y_2 \nearrow 1} \| \fol{M}{y_2} - \fol{M}{y_1} \|_{ L^1_x (S_0) } = \lim_{y_1, y_2 \nearrow 1} \| \lapl (v_{y_2} - v_{y_1}) \|_{ L^1_x (S_0) } = 0 \text{.} \]

Suppose in addition that
\begin{equation} \label{evol.M.corr} \lim_{y_1, y_2 \nearrow 1} \| \fol{\nabla}{y_2}_{ \fol{t}{y_2} } \fol{M}{y_2} - \fol{\nabla}{y_1}_{ \fol{t}{y_1} } \fol{M}{y_1} \|_{ L^{1, 1}_{t, x} } = 0 \text{.} \end{equation}
From this, we obtain \eqref{eq.limit_goal_M}, hence the $\fol{M}{y}_y$'s have a limit in $L^1_x$ as $y \nearrow 1$.
Since Theorem \ref{thm.nc_renorm} applies to every $\fol{v}{y}$-foliation of $\mc{N}$, courtesy of Proposition \ref{thm.renorm_cfol}, then \eqref{eq.limit_M} follows.
\footnote{Recall that norms with respect to different (renormalized) metrics and different foliations of $\mc{N}$ remain comparable due to the discussions in Sections \ref{sect:prelim.ncinf} and \ref{sect:prelim.cfol.small}.}
As a result, it remains only to establish \eqref{evol.M.corr}.

\subsubsection{Proof of \eqref{evol.M.corr}}

The strategy is to work with the evolution equation below for $\fol{M}{y}$, given in \cite[Proposition 4.2]{alex_shao:nc_inf} and valid for any $\fol{t}{y}$-foliation:
\begin{align}
\label{eq.structure_renorm_evd} \fol{\nabla}{y}_{ \fol{t}{y} } \fol{M}{y} &= - \frac{3}{2} ( \fol{\trace}{y} \fol{H}{y} ) \paren{ \fol{M}{y} + 2 m_{\rm S}} - 2 (1 - \fol{t}{y}) ( \fol{\gamma}{y} )^{ab} \fol{Z}{y}_a \fol{B}{y}_b \\
\notag &\qquad + 2 ( \fol{\gamma}{y} )^{ab} ( \fol{\gamma}{y} )^{cd} \fol{\hat{H}}{y}_{ac} \fol{\nabla}{y}_b \fol{Z}{y}_d - 2 (1 - \fol{t}{y}) ( \fol{\gamma}{y} )^{ab} ( \fol{\gamma}{y} )^{cd} \fol{\hat{H}}{y}_{ac} \fol{Z}{y}_b \fol{Z}{y}_d \\
\notag &\qquad + 2 ( \fol{\gamma}{y} )^{ab} \fol{Z}{y}_b \fol{\nabla}{y}_a ( \fol{\trace}{y} \fol{H}{y} ) + \frac{3}{2} ( \fol{\gamma}{y} )^{ab} [ (1 - \fol{t}{y}) \fol{\trace}{y} \fol{H}{y} + 2 ] \fol{Z}{y}_a \fol{Z}{y}_b \\
\notag &\qquad - \frac{1}{4} ( \fol{\gamma}{y} )^{ab} ( \fol{\gamma}{y} )^{cd} \brak{ \fol{\trace}{y} \fol{\ul{H}}{y} - 2 \paren{ 1 - \frac{2m_{\rm S}}{ \fol{s}{y} } } } \fol{\hat{H}}{y}_{ac} \fol{\hat{H}}{y}_{bd} \text{.}
\end{align}
Recalling the definition \eqref{eq.renorm_maf} and suppressing constant factors and all instances of $\fol{\gamma}{y}$, $\fol{s}{y}$, and $\fol{t}{y}$, we can rewrite \eqref{eq.structure_renorm_evd} schematically as
\begin{align}
\label{evol.again} \fol{\nabla}{y}_{ \fol{t}{y} } \fol{M}{y} &= \fol{H}{y} \cdot \fol{R}{y} + \fol{Z}{y} \cdot \fol{B}{y} + \fol{H}{y} \cdot \fol{\nabla}{y} \fol{Z}{y} + \fol{Z}{y} \cdot \fol{\nabla}{y} \fol{H}{y} + \fol{H}{y} \cdot \fol{Z}{y} \cdot \fol{Z}{y} \\
\notag &\qquad + \fol{H}{y} \cdot \fol{H}{y} \cdot \fol{\ul{H}}{y} + \fol{Z}{y} \cdot \fol{Z}{y} + \fol{H}{y} \cdot \fol{H}{y} + \fol{H}{y} \text{.}
\end{align}

For convenience, as in Section \ref{sect:convergence.diff}, we adopt the abbreviation
\footnote{Recall the conventions from Section \ref{sect:prelim.cfol} for identifying fields from different foliations of $\mc{N}$.}
\[ \mf{d} \nabla_t M = \fol{\nabla}{y_2}_{ \fol{t}{y_2} } \fol{M}{y_2} - \fol{\nabla}{y_1}_{ \fol{t}{y_1} } \fol{M}{y_1} \text{.} \]
As before, similar conventions will hold for other quantities, e.g., $\mf{d} H$ and $\mf{d} \nabla Z$.
We expand $\mf{d} \nabla_t M$ as the difference between the right-hand sides of \eqref{evol.again}, with $y = y_2$ and $y = y_1$.
Each term of this expansion will contain a factor that is a difference, e.g., $\mf{d} t$, $\mf{d} H$, $\mf{d} \nabla Z$.
These differences can be controlled using Lemma \ref{thm.limits_lem}.
The remaining factors can be controlled using Proposition \ref{thm.renorm_cfol_bound}.

For brevity, we adopt the following additional schematic notations:
\begin{itemize}
\item The symbol $\Psi^\prime$ will refer to any one of $\fol{\Psi}{y_1}$ or $\fol{\Psi}{y_2}$.
For example, we will apply this with $\Psi$ being $Z$, $B$, $\nabla H$, etc.

\item We use the symbol $\mc{A}$ to denote any one of $H$ or $Z$.

\item We use the symbol $\mc{R}$ to denote any one of $B$ and $R$.
\end{itemize}
With this in mind, the expansion of $\mf{d} \nabla_t M$ using \eqref{evol.again} yields the following bound:
\begin{align}
\label{subtract.evol} \| \mf{d} \nabla_t M \|_{ \mc{L}^{1, 1}_{t, x} } &\lesssim \| \mc{A}^\prime (\mf{d} \mc{R}) \|_{ L^{1, 1}_{t, x} } + \| (\mf{d} \mc{A}) \mc{R}^\prime \|_{ L^{1, 1}_{t, x} } + \| \mc{A}^\prime (\mf{d} \nabla \mc{A}) \|_{ L^{1, 1}_{t, x} } \\
\notag &\qquad + \| (\mf{d} \mc{A}) (\nabla \mc{A})^\prime \|_{ L^{1, 1}_{t, x} } + \| (\mf{d} \ul{H}) \mc{A}^\prime \mc{A}^\prime \|_{ L^{1, 1}_{t, x} } + \| (\mf{d} \mc{A}) \ul{H}^\prime \mc{A}^\prime \|_{ L^{1, 1}_{t, x} } \\
\notag &\qquad + \| (\mf{d} \mc{A}) \mc{A}^\prime \mc{A}^\prime \|_{ L^{1, 1}_{t, x} } + \| (\mf{d} \mc{A}) \mc{A}^\prime \|_{ L^{1, 1}_{t, x} } + \| \mf{d} \mc{A}^\prime \|_{ L^{1, 1}_{t, x} } + \mc{L} \text{,}
\end{align}
where $\mc{L}$ arises from terms involving either $\mf{d} t$ or $\mf{d} \gamma^{-1}$ (and hence no differences involving the Ricci or the curvature coefficients).
More specifically,
\begin{align*}
\mc{L} &= ( \| \mf{d} t \|_{ L^{\infty, \infty}_{t, x} } + \| \mf{d} \gamma^{-1} \|_{ L^{\infty, \infty}_{t, x} } ) \cdot \mc{L}^\prime \text{,} \\
\mc{L}^\prime &= \| \mc{A}^\prime \mc{R}^\prime \|_{ L^{1, 1}_{t, x} } + \| \mc{A}^\prime ( \nabla \mc{A} )^\prime \|_{ L^{1, 1}_{t, x} } + \| \ul{H}^\prime \mc{A}^\prime \mc{A}^\prime \|_{ L^{1, 1}_{t, x} } \\
&\qquad + \| \mc{A}^\prime \mc{A}^\prime \mc{A}^\prime \|_{ L^{1, 1}_{t, x} } + \| \mc{A}^\prime \mc{A}^\prime \|_{ L^{1, 1}_{t, x} } + \| \mc{A}^\prime \|_{ L^{1, 1}_{t, x} } \text{.}
\end{align*}

Using H\"older's inequality, Proposition \ref{thm.renorm_cfol_bound}, and \eqref{eq.limits_lem_conv_pre}, we obtain
\[ \lim_{y_1, y_2 \nearrow 1} \mc{L} = 0 \text{.} \]
As the estimates for these terms are straightforward and are easier than the remaining terms, we leave the details to the reader.

The other terms in \eqref{subtract.evol} are controlled similarly, using Proposition \ref{thm.renorm_cfol_bound} and Lemma \ref{thm.limits_lem}.
For instance, for the first two terms on the right-hand side of \eqref{subtract.evol}:
\begin{align*}
\lim_{y_1, y_2 \nearrow 1} \| \mc{A}^\prime (\mf{d} \mc{R}) \|_{ L^{1, 1}_{t, x} } &\lesssim \lim_{y_1, y_2 \nearrow 1} [ \| \mc{A}^\prime \|_{ L^{\infty, 2}_{x, t} } \| \mf{d} \mc{R} \|_{ L^{4/3, 2}_{x, t} } ] = 0 \text{,} \\
\lim_{y_1, y_2 \nearrow 1} \| (\mf{d} \mc{A}) \mc{R}^\prime \|_{ L^{1, 1}_{t, x} } &\lesssim \lim_{y_1, y_2 \nearrow 1} [ \| \mf{d} \mc{A} \|_{ L^{2, \infty}_{x, t} } \| \mc{R}^\prime \|_{ L^{2, 2}_{t, x} } ] = 0 \text{.}
\end{align*}
The third and fourth terms are bounded analogously.
For the next two terms:
\begin{align*}
\lim_{y_1, y_2 \nearrow 1} \| (\mf{d} \ul{H}) \mc{A}^\prime \mc{A}^\prime \|_{ L^{1, 1}_{t, x} } &\lesssim \lim_{y_1, y_2 \nearrow 1} [ \| \mf{d} \mc{H} \|_{ L^{4/3, \infty}_{x, t} } \| \mc{A}^\prime \|_{ L^{\infty, 2}_{x, t} }^2 ] = 0 \text{,} \\
\lim_{y_1, y_2 \nearrow 1} \| (\mf{d} \mc{A}) \ul{H}^\prime \mc{A}^\prime \|_{ L^{1, 1}_{t, x} } &\lesssim \lim_{y_1, y_2 \nearrow 1} [ \| \mf{d} \mc{A} \|_{ L^{2, \infty}_{x, t} } \| \ul{H}^\prime \|_{ L^{2, \infty}_{x, t} } \| \mc{A}^\prime \|_{ L^{\infty, 2}_{x, t} } ] = 0 \text{.}
\end{align*}
The remaining three terms are easier than the above and can be controlled in a similar manner.
This completes the proof of \eqref{evol.M.corr}.
  
\subsection{Rate of Mass Loss} \label{sect:convergence.remain}

Here, we complete the proof of Lemma \ref{thm.convergence} by establishing \eqref{eq.limit_ZHbar}.
As usual, let $\fol{Z}{y_i}_{y_i}$ and $\fol{\ul{H}}{y_i}_{y_i}$ refer to the restrictions of $\fol{Z}{y_i}$ and $\fol{\ul{H}}{y_i}$, respectively, to $\fol{\Sigma}{y_i}$.
By Definition \ref{def.conv_inf}, we must show
\begin{align}
\label{eq.limit_goal_ZHbar} \lim_{y_1, y_2 \nearrow 1} {\bf L}_Z &= \lim_{y_1, y_2 \nearrow 1} \int_{ S_0 } | \Phi_{ \fol{\Sigma}{y_2} }^\ast ( \fol{Z}{y_2}_{y_2} ) - \Phi_{ \fol{\Sigma}{y_1} }^\ast ( \fol{Z}{y_1}_{y_1} ) |^2 d \epsilon_0 = 0 \text{,} \\
\notag \lim_{y_1, y_2 \nearrow 1} {\bf L}_{\ul{H}} &= \lim_{y_1, y_2 \nearrow 1} \int_{ S_0 } | \Phi_{ \fol{\Sigma}{y_2} }^\ast ( \fol{\ul{H}}{y_2}_{y_2} ) - \Phi_{ \fol{\Sigma}{y_1} }^\ast ( \fol{\ul{H}}{y_1}_{y_1} ) |^2 d \epsilon_0 = 0 \text{.}
\end{align}
This is similar to the process in Section \ref{sect:convergence.bondi}, though it is easier due to \eqref{eq.cf_Z} and \eqref{eq.cf_Hbar}.
Because of these formulas, we can simplify the work by comparing $\fol{Z}{y_1}$ and $\fol{Z}{y_2}$ on $\fol{\Sigma}{y_1}$ rather than on $S_0$.
A similar argument holds as well for the $\fol{\ul{H}}{y_i}$'s.

For this, we use the following analogue of Lemma \ref{thm.limit_lem_M}:

\begin{lemma} \label{thm.limit_lem_ZHbar}
The following estimate holds:
\begin{align}
\label{eq.limit_lem_ZHbar} {\bf L}_Z &\lesssim \| \fol{Z}{y_2} - \fol{Z}{y_1} \|_{ L^{2, \infty}_{x, t} (\fol{\Sigma}{y_1}, S_1) }^2 + \| \fol{\nabla}{y_2}_{ \fol{t}{y_2} } \fol{Z}{y_2} \|_{ L^{2, 1}_{x, \fol{t}{y_2}} (\fol{\Sigma}{y_1}, \fol{\Sigma}{y_2}) }^2 \\
\notag &\qquad + \| \fol{H}{y_2} \|_{ L^{\infty, 2}_{x, \fol{t}{y_2}} (\fol{\Sigma}{y_1}, \fol{\Sigma}{y_2}) }^2 \| \fol{Z}{y_2} \|_{ L^{2, 1}_{x, \fol{t}{y_2}} (\fol{\Sigma}{y_1}, \fol{\Sigma}{y_2}) }^2 \text{,} \\
\notag {\bf L}_{\ul{H}} &\lesssim \| \fol{\ul{H}}{y_2} - \fol{\ul{H}}{y_1} \|_{ L^{2, \infty}_{x, t} (\fol{\Sigma}{y_1}, S_1) }^2 + \| \fol{\nabla}{y_2}_{ \fol{t}{y_2} } \fol{\ul{H}}{y_2} \|_{ L^{2, 1}_{x, \fol{t}{y_2}} (\fol{\Sigma}{y_1}, \fol{\Sigma}{y_2}) }^2 \\
\notag &\qquad + \| \fol{H}{y_2} \|_{ L^{\infty, 2}_{x, \fol{t}{y_2}} (\fol{\Sigma}{y_1}, \fol{\Sigma}{y_2}) }^2 \| \fol{\ul{H}}{y_2} \|_{ L^{2, 1}_{x, \fol{t}{y_2}} (\fol{\Sigma}{y_1}, \fol{\Sigma}{y_2}) }^2 \text{.}
\end{align}
\end{lemma}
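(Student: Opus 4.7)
The plan is to reduce the $L^2$-integrals on $S_0$ to $L^2$-integrals on the intermediate cut $\fol{\Sigma}{y_1}$, split the integrand via a triangle inequality into a Cauchy piece controlled by Lemma \ref{thm.limits_lem} and an evolution piece, and control the latter by the fundamental theorem of calculus along the null generators of $\mc{N}$.

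First, by Proposition \ref{thm.renorm_cfol} the conclusions of Corollary \ref{thm.nc_renorm.norms} apply to the $\fol{v}{y_1}$-foliation. Hence the transport map $\Phi_{\fol{\Sigma}{y_1}}$ preserves $L^2$-norms of horizontal fields up to universal multiplicative constants, so that
\[
{\bf L}_Z \simeq \bigl\| \tilde{Z} - \fol{Z}{y_1}_{y_1} \bigr\|_{L^2_x(\fol{\Sigma}{y_1})}^2\text{,}
\]
where $\tilde{Z}$ denotes the value of $\fol{Z}{y_2}_{y_2}$, identified via \eqref{eq.cf_X} as a $t$-horizontal field, transported back from $\fol{\Sigma}{y_2}$ to $\fol{\Sigma}{y_1}$ along the null generators of $\mc{N}$. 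The analogous statement holds for ${\bf L}_{\ul{H}}$. Inserting $\fol{Z}{y_2}|_{\fol{\Sigma}{y_1}}$ and applying the triangle inequality then yields
\[
{\bf L}_Z \lesssim \bigl\| \tilde{Z} - \fol{Z}{y_2}|_{\fol{\Sigma}{y_1}} \bigr\|_{L^2_x(\fol{\Sigma}{y_1})}^2 + \bigl\| (\fol{Z}{y_2} - \fol{Z}{y_1})|_{\fol{\Sigma}{y_1}} \bigr\|_{L^2_x(\fol{\Sigma}{y_1})}^2\text{.}
\]
The second (Cauchy) term is immediately controlled by $\| \fol{Z}{y_2} - \fol{Z}{y_1} \|_{L^{2,\infty}_{x,t}(\fol{\Sigma}{y_1}, S_1)}^2$, since an $L^\infty$ in $t$ over $[y_1, 1]$ dominates the value at $t = y_1$; this accounts for the first term of \eqref{eq.limit_lem_ZHbar}.

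For the evolution piece I integrate along each null generator of $\mc{N}$. In a transported coordinate frame the relation $\mf{L}_{\fol{t}{y_2}} = \fol{\nabla}{y_2}_{\fol{t}{y_2}} + (\text{term linear in } \fol{H}{y_2})$ from Section \ref{sect:prelim.fol}, combined with the fundamental theorem of calculus, yields componentwise
\[
\bigl| \tilde{Z}_a(\omega) - \fol{Z}{y_2}_a(\omega) \bigr| \lesssim \int_{\fol{t}{y_2}(\omega)}^{y_2} \Bigl[ \bigl| \fol{\nabla}{y_2}_{\fol{t}{y_2}} \fol{Z}{y_2} \bigr| + \bigl| \fol{H}{y_2} \bigr|\,\bigl| \fol{Z}{y_2} \bigr| \Bigr]\, d\fol{t}{y_2}
\]
for each $\omega \in \fol{\Sigma}{y_1}$. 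Squaring, integrating in $x$ over $\fol{\Sigma}{y_1}$, and applying H\"older/Cauchy--Schwarz in $\fol{t}{y_2}$ to the product $\fol{H}{y_2}\cdot\fol{Z}{y_2}$ produces precisely the remaining two terms of \eqref{eq.limit_lem_ZHbar}. The argument for ${\bf L}_{\ul{H}}$ is identical, with $\fol{Z}{y}$ replaced by $\fol{\ul{H}}{y}$ throughout.

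The main technical subtlety I expect to navigate is the asymmetry of the interval of integration: the upper endpoint is the constant $y_2$, but the lower endpoint $\fol{t}{y_2}(\omega)$ varies with the null generator, since $\fol{\Sigma}{y_1}$ is a level set of $\fol{t}{y_1}$ rather than of $\fol{t}{y_2}$. This is handled by Proposition \ref{thm.renorm_cfol}, which keeps $d\fol{t}{y_2}/dt$ uniformly comparable to $1$, so that iterated norms in the two foliations are interchangeable up to universal constants and the region of integration $(\fol{\Sigma}{y_1}, \fol{\Sigma}{y_2})$ can be understood equivalently in either foliation.
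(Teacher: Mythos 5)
Your proposal is correct and follows essentially the same route as the paper: one splits via the triangle inequality into a Cauchy term at $\fol{\Sigma}{y_1}$ (dominated by the $L^{2,\infty}_{x,t}(\fol{\Sigma}{y_1},S_1)$-norm of the difference) plus a transport term from $\fol{\Sigma}{y_1}$ to $\fol{\Sigma}{y_2}$, controlled by the fundamental theorem of calculus along the null generators together with $\mf{L}_{\fol{t}{y_2}} = \fol{\nabla}{y_2}_{\fol{t}{y_2}} + (\fol{H}{y_2}\text{-terms})$, H\"older, and the comparability of norms across foliations from Proposition \ref{thm.renorm_cfol}. The only cosmetic difference is that the paper realizes the identification of tensors on different cuts by contracting against equivariantly transported vector fields rather than working componentwise in transported coordinates, which amounts to the same thing.
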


\begin{proof}
See Appendix \ref{sect:estimates.limit}.
\end{proof}

By \eqref{eq.limits_lem_conv} and \eqref{eq.limits_lem_conv_ex},
\[ \lim_{y_1, y_2 \nearrow 1} [ \| \fol{Z}{y_2} - \fol{Z}{y_1} \|_{ L^{2, \infty}_{x, t} (\fol{\Sigma}{y_1}, S_1) } + \| \fol{\ul{H}}{y_2} - \fol{\ul{H}}{y_1} \|_{ L^{2, \infty}_{x, t} (\fol{\Sigma}{y_1}, S_1) } ] = 0 \text{.} \]
Furthermore, since the $\fol{\Sigma}{y}$'s are going to infinity, the remaining terms on the right-hand side of \eqref{eq.limit_lem_ZHbar}  will also converge to zero as $y_1, y_2 \nearrow 1$, by Proposition \ref{thm.renorm_cfol}.
Combining these observations results in \eqref{eq.limit_goal_ZHbar}.
Finally, since Theorem \ref{thm.nc_renorm} applies to every $\fol{v}{y}$-foliation of $\mc{N}$ by Proposition \ref{thm.renorm_cfol}, then \eqref{eq.limit_ZHbar} follows.

\section{Construction of the Distortion Functions} \label{sect:distortion}

In this section, we prove Lemma \ref{thm.distortion}, that is, we construct the family of distortion functions $\fol{v}{y}$ used to generate the family of asymptotically round spheres.
This is the first main component of the proof of Theorem \ref{the.thm}.

\subsection{Main Ideas} \label{sect:distortion.outline}

The construction of these $\fol{v}{y}$'s is a delicate exercise due to the low regularity of the metrics $\gamma_t$.
While it is not too difficult to obtain solutions $\fol{v}{y}$ satisfying \eqref{eq.asympt_round} (we shall see this amounts essentially to solving the uniformization equation, albeit with a rough background metric), it is far more delicate to solve it in a way that guarantees the properties \eqref{eq.distortion_bound}-\eqref{eq.distortion_conv_ex}.
These estimates are indispensable in guaranteeing that the curvature fluxes on $\mc{N}$ with respect to the new $\fol{s}{y}$-foliations are still bounded and small, and that the relevant physical quantities on the resulting asymptotically round spheres $(\fol{\Sigma}{y}, \fol{h}{y})$ converge as $y \nearrow 1$.

Heuristically, the requirement associated with \eqref{eq.asympt_round} is an elliptic equation over the limiting sphere $(S_1, \gamma_1)$. 
To see this, let us suppose we are given a distortion function $v$, and with it the corresponding $t^\prime$-foliation of $\mc{N}$.
The key observation is one of the (renormalized) structure equations found in \cite[Prop. 4.2]{alex_shao:nc_inf}.
More specifically, the renormalized \emph{Gauss equation}, applied to the $t^\prime$-foliation, gives
\begin{align}
\label{Gauss} \mc{K}^\prime - 1 &= -\frac{1}{2} \trace^\prime \ul{H}^\prime + s^{\prime -1} \gamma^{\prime ab} \nabla^\prime_a Z^\prime_b - s^{-1} M^\prime + \frac{1}{2} s^{\prime -1} \paren{ 1 - \frac{2 m_{\rm S}}{s^\prime} } \trace^\prime H^\prime \\
\notag &\qquad - \frac{1}{4} s^{\prime -1} \trace^\prime H^\prime \trace^\prime \ul{H}^\prime \text{.}
\end{align}
By collecting the terms that are asymptotically vanishing, we can write \eqref{Gauss} as
\[ \mc{K}^\prime = 1 - \frac{1}{2} \trace^\prime \ul{H}^\prime + (1 - t^\prime) \mc{E}^\prime \text{,} \]
where the ``error terms" $\mc{E}^\prime$ are uniformly small in the appropriate norms due to Theorem \ref{thm.nc_renorm}.
In particular, in the (weak) limit $t^\prime \nearrow 1$, we have on $S_1$ that
\[ \mc{K}^\prime_1 = 1 - \frac{1}{2} \trace^\prime \ul{H}^\prime_1 \text{.} \]

Note the obstacle preventing $\mc{K}^\prime$ from converging to $1$ at infinity is the presence of $\trace^\prime \ul{H}^\prime$; if this can be eliminated, then the $S^\prime_{t^\prime}$'s will become asymptotically round.
We will see that, using \eqref{eq.cf_Hbar}, we can write
\[ \trace^\prime \ul{H}^\prime = \trace \ul{H} + 2 \lapl v + 2 ( e^{2 v} - 1 ) + (1 - t) \mc{E}_\ast \text{,} \]
where $\mc{E}_\ast$ is uniformly small in the appropriate norms.
Since we wish for the left-hand side to vanish, then, in the limit $t \nearrow 1$, the equation we must solve is
\begin{equation} \label{thePDE} \Delta_{\gamma_1} v + e^{2 v} = 1 - \frac{1}{2} \trace \ul{H}_1 \text{,} \end{equation}
where $\lapl_{\gamma_1}$ is the Laplacian with respect to the limiting metric $\gamma_1$ (in the $t$-foliation).

\begin{remark}
Note that \eqref{thePDE} is analogous to the differential equation arising in the uniformization theorem, where one seeks a conformal transformation $\gamma_1 \mapsto e^{2 v} \gamma_1$ to obtain a round metric on the limiting sphere $S_1$. 
That the change of foliation induces a conformal transformation of the (renormalized) metric at infinity can be seen directly from \eqref{eq.cf_gammainv}; see also the discussion in the introduction.
\end{remark}

Nonetheless, the metric $\gamma_1$ at infinity is not regular enough to attack \eqref{thePDE} directly.
Also, one may prefer families of smooth spheres converging to infinity.
For these reasons, we instead obtain this $v$ in \eqref{thePDE} indirectly as a limit of smooth distortion functions $\fol{v}{y}$, for $y \in [0, 1)$, by solving approximations of \eqref{thePDE} on $\mc{N}$. 

\subsubsection{Proof Outline}

It is important to emphasize that for each $\fol{v}{y}$, the distortion function $\fol{v}{y}$ is constructed by solving an approximating elliptic equation \emph{on the sphere $(S_y, \gamma_y)$, in the $t$-foliation.}
The precise result is stated below:

\begin{lemma} \label{thm.to.solve}
There is a family of distortion functions $\fol{v}{y}$, $y \in [0, 1)$, such that:
\begin{itemize}
\item For a fixed $y$, this $\fol{v}{y}$ satisfies on $(S_y, \gamma_y)$ the equation
\begin{equation} \label{to.solve} \lapl \fol{v}{y} + e^{2 u} e^{2 \fol{v}{y}} = 1 - \frac{1}{2} \trace \ul{H} + (1 - t) E \text{,} \end{equation}
where $u$ and $E$ are smooth functions on $\mc{N}$ satisfying
\footnote{The role of $u$ here is to absorb the low regularity of the metrics $\gamma$; see Section \ref{sect:distortion.construct}.}
\begin{equation} \label{to.solve.u} \| u \|_{ L^\infty (S_t) } \lesssim (1 - t) \Gamma \text{,} \qquad \| E \|_{ B^{\infty, 0}_{t, x} } \lesssim \Gamma \text{.} \end{equation}

\item The $\fol{v}{y}$'s satisfy the properties \eqref{eq.distortion_bound}-\eqref{eq.distortion_conv_ex}.
\end{itemize}
\end{lemma}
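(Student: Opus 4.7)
The plan is to solve \eqref{to.solve} separately on each sphere $(S_y, \gamma_y)$ and then extend the resulting $\fol{v}{y}$ to all of $\mc{N}$ by declaring $\nabla_t \fol{v}{y} \equiv 0$. By Theorem \ref{thm.nc_renorm}, the right-hand side $F_y := 1 - \tfrac{1}{2} \trase \ul{H}_y + (1-y) E_y$ is an $O(\Gamma)$ perturbation of the constant $1$ in $B^0_x(S_y)$. I would first produce $u$ on $\mc{N}$ with $|u| \lesssim (1-t)\Gamma$ as a smoothing of the rough components of $\gamma$, together with a small additive correction chosen so that the Gauss--Bonnet-type compatibility $\int_{S_y} e^{2u}\, d\epsilon_y = \int_{S_y} F_y\, d\epsilon_y$ holds for every $y$. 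Its purpose is to absorb the metric's low coordinate regularity into a smooth pointwise factor, so that $\lapl + 2e^{2u}$ effectively behaves as an elliptic operator with smooth coefficients on a near-round background.

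Once $u$ is in place, \eqref{to.solve} is a small perturbation of the Liouville equation $\lapl v + e^{2v} = 1$ on a near-round $2$-sphere. Linearizing at $v = 0$ gives $L := \lapl + 2 e^{2u}$, self-adjoint and strictly positive, hence invertible from $H^2_x(S_y)$ to $L^2_x(S_y)$ uniformly in $y$. A contraction argument in a small $H^2_x(S_y, \gamma_y)$-ball then produces a unique solution $\fol{v}{y}$ with $\|\fol{v}{y}\|_{H^2_x(S_y)} \lesssim \Gamma$, which upgrades to $B^{\infty,0}_{t,x}$- and $L^{\infty,\infty}_{t,x}$-control via the $B^0_x$-bound on $F_y$ from \eqref{eq.renorm_est}. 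After extending by $\nabla_t \fol{v}{y} \equiv 0$, Corollary \ref{thm.nc_renorm.norms} transfers the spatial bounds to every $t$. For the time-derivative part of \eqref{eq.distortion_bound}, I use that $\mf{L}_t \fol{v}{y} = 0$ and the commutation formula \eqref{eq.comm}, so that $\nabla_t \nabla^k \fol{v}{y}$ can be rewritten as a schematic product of $H$ (or its derivatives) and $\nabla^{\leq k} \fol{v}{y}$; the desired norms then follow from \eqref{eq.renorm_est}.

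For the Cauchy properties \eqref{eq.distortion_conv}, I would subtract the equations for $\fol{v}{y_1}$ and $\fol{v}{y_2}$ on a common sphere (transporting $\fol{v}{y_2}$ back to $S_{y_1}$ introduces only $o(1)$ errors via the $L^{\infty,2}_{x,t}$-bound on $H$ restricted to the shrinking strip $[\fol{t}{y_1}, \fol{t}{y_2}]$). The difference satisfies a linear elliptic equation whose right-hand side is built from (i) $\trase \ul{H}_{y_2} - \trase \ul{H}_{y_1}$, Cauchy in $L^2_x$ by Corollary \ref{thm.nc_renorm.limit.rc}; (ii) $(1-y_2)E_{y_2} - (1-y_1)E_{y_1}$, which vanishes as $y_1, y_2 \nearrow 1$ by \eqref{to.solve.u}; and (iii) transport errors from $\gamma_{y_2} - \gamma_{y_1}$ and $u_{y_2}-u_{y_1}$, also controlled by the same shrinking-strip $H$-bound. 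Elliptic $L^p$-regularity for $L$ converts these into the $L^{p,\infty}_{x,t}$- and $L^{q,\infty}_{x,t}$-Cauchy bounds on $\nabla^2(\fol{v}{y_2}-\fol{v}{y_1})$ and $\nabla(\fol{v}{y_2}-\fol{v}{y_1})$; differentiating the equation in $t$ produces the $\nabla_t$-versions. The refined estimate \eqref{eq.distortion_conv_ex} is the same argument, but with the $t$-integration restricted to the region above $\fol{\Sigma}{y_1}$.

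The principal obstacle is reconciling the low regularity of $\gamma$ with the need for uniform $H^2_x$-control of $\fol{v}{y}$: in coordinate derivatives one only has $\partial_c \gamma_{ab} \in L^2_x$, so standard $H^2$ elliptic estimates for $\lapl_\gamma$ are not directly available. The insertion of $e^{2u}$ in \eqref{to.solve} is precisely what renders the argument tractable, as it effectively replaces $\lapl_\gamma$ by the Laplacian of a smoother conformal representative, on which the elliptic machinery of \cite{shao:stt} applies. A secondary subtlety is that \eqref{eq.distortion_conv} demands several different mixed $L^{p,q}_{x,t}$-norms simultaneously; each must be extracted by a tailored combination of $L^p$-elliptic theory, Sobolev embedding on the near-round sphere, and the extra $(1-t)$-weight in the inhomogeneity, which one trades for integrability in $t$.
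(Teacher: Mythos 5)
Your plan founders at the key analytic step: the claim that the linearization $L = \lapl + 2e^{2u}$ of $v \mapsto \lapl v + e^{2u}e^{2v}$ is ``self-adjoint and strictly positive, hence invertible from $H^2_x(S_y)$ to $L^2_x(S_y)$ uniformly in $y$'' is false. With the sign convention forced by the conformal transformation law $\mc{K}_{e^{2u}\gamma} = e^{-2u}(\mc{K}_\gamma - \lapl_\gamma u)$ (which is how \eqref{to.solve} arises), $\lapl$ has spectrum $\{0,-2,-6,\dots\}$ on the round sphere, so $\lapl + 2$ annihilates the three first spherical harmonics; this kernel is exactly the infinitesimal action of the M\"obius group, which is why solutions of the uniformization equation are never unique. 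Since the $(S_y,\gamma_y)$'s are nearly round and $u$ is small, $L$ has three eigenvalues near zero uniformly in $y$, so there is no uniform inverse and the contraction argument in a small $H^2_x$-ball does not close. This is not a technicality one can wave away: it is precisely the obstruction the paper isolates, and it is why the construction there splits off a genuine uniformization step (an explicit Christodoulou--Klainerman-style stereographic construction that breaks conformal invariance, selects a particular small solution, and—crucially—depends continuously on the pair $(h,\mc{K}_h)$). Your Cauchy estimates \eqref{eq.distortion_conv}--\eqref{eq.distortion_conv_ex} inherit the same problem: you propose to treat $\fol{v}{y_2}-\fol{v}{y_1}$ via ``elliptic $L^p$-regularity for $L$,'' but with a (near-)kernel present, nearby data could be matched by solutions differing by a large conformal reparametrization, so no difference estimate is possible without first fixing a canonical solution and proving its continuous dependence on the sphere's geometry — which is the heart of the matter and is absent from your argument.

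A secondary, smaller gap is the construction of $u$. In the paper $u$ is not a generic ``smoothing'': on each $S_t$ it solves the specific Poisson equation $\lapl u = (1-t)\gamma^{ab}\nabla_a Z_b$ with zero mean, chosen because the divergence of $Z$ is the unique $H^{-1/2}_x$-regularity term in the Gauss equation; only after subtracting this exact divergence is the remainder $1-\tfrac12\trace\ul{H}+(1-t)E$ with $E$ controlled in $B^{\infty,0}_{t,x}$, and only then do the Besov elliptic estimates of \cite{shao:stt} give the $L^\infty$-bound on $\nabla\fol{v}{y}$ needed in \eqref{eq.distortion_bound}. Your proposal to take ``a smoothing of the rough components of $\gamma$ plus an additive correction enforcing a Gauss--Bonnet compatibility'' does not identify this structure (and the compatibility $\int e^{2u} = \int F_y$ is not the relevant condition for the nonlinear equation anyway). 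By contrast, the parts of your outline that do work — extending by $\nabla_t\fol{v}{y}\equiv 0$, commuting with \eqref{eq.comm} to control $\nabla_t\nabla^{\le 2}\fol{v}{y}$ by $H$, $\nabla H$, and trading the $(1-t)$-weight plus shrinking-strip bounds for the refined limit — are indeed how the transport and convergence estimates are carried out, but they presuppose the per-sphere solution theory that the kernel issue above leaves unproven.
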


\begin{proof}
See Section \ref{sect:distortion.construct}.
\end{proof}

We now show that the conclusions of Lemma \ref{thm.to.solve} imply the conclusions of Lemma \ref{thm.distortion}.
For each $\fol{v}{y}$, we consider the change of geodesic foliation associated with $\fol{v}{y}$, in particular the renormalized $\fol{t}{y}$-foliation of $\mc{N}$.
Letting $(\fol{\Sigma}{y}, \fol{h}{y})$ be the level sphere $(\fol{S}{y}_y, \fol{\gamma}{y}_y)$, then Proposition \ref{thm.renorm_cfol} and the bounds \eqref{eq.distortion_bound} for $\fol{v}{y}$ imply
\[ \| \fol{\mc{K}}{y} - 1 \|_{ H^{-1/2}_x (\fol{\Sigma}{y}, \fol{h}{y}) } \lesssim \| \fol{\trace}{y} \fol{\ul{H}}{y} \|_{ L^2_x (\fol{\Sigma}{y}, \fol{h}{y}) } + (1 - y) \Gamma \text{.} \]
Thus, to prove the second limit in \eqref{eq.asympt_round}, we need only show that
\begin{equation} \label{need.Hbartrace} \lim_{y \nearrow 1} \| \fol{\trace}{y} \fol{\ul{H}}{y} \|_{ L^2_x (\fol{\Sigma}{y}, \fol{h}{y}) } = 0 \text{.} \end{equation}

Now, we look at the transformation law \eqref{eq.cf_Hbar} for $\fol{\ul{H}}{y}$ and $\ul{H}$, and we separate all the terms on the right-hand side which vanish as $t \nearrow 1$.
Since $\mc{B}_{-1} = 1 + s^{-1} \mc{C}_{-1}$ and $t = 1 - s^{-1}$, then \eqref{eq.cf_Hbar} can be expressed in the form
\begin{align*}
\fol{\ul{H}}{y}_{ab} &= \ul{H}_{ab} + 2 \nabla_{ab} \fol{v}{y} + (e^{2 \fol{v}{y}} - 1) \gamma_{ab} + \gamma^{cd} \nabla_c \fol{v}{y} \nabla_d \fol{v}{y} \cdot \gamma_{ab} \\
\notag &\qquad - 2 \nabla_a \fol{v}{y} \nabla_b \fol{v}{y} + s^{-1} \bar{\mc{E}}_{ab} \text{.}
\end{align*}
Moreover, from \eqref{eq.renorm_est} and \eqref{eq.distortion_bound}, we can estimate $\| \bar{\mc{E}} \|_{ L^{2, \infty}_{x, t} } \lesssim \Gamma$.

Taking a trace of the above identity and recalling \eqref{eq.cf_gammainv} yields
\begin{equation} \label{eq.cf_Hbartrace} \fol{\trace}{y} \fol{\ul{H}}{y} = e^{-2v} \mc{B}_2 \trace \fol{\ul{H}}{y} = \mf{A} + s^{-1} \mc{E} \text{,} \end{equation}
where $\mf{A}$ and $\mc{E}$ satisfy
\[ \mf{A} = \trace \ul{H} + 2 \lapl \fol{v}{y} + 2 ( e^{2 \fol{v}{y}} - 1 ) \text{,} \quad \| \mc{E} \|_{ L^{2, \infty}_{x, t} } \lesssim \Gamma \text{.} \]
As a result,
\[ \| \trace^y \ul{H}^y \|_{ L^2_x (\fol{\Sigma}{y}, \fol{h}{y}) } \lesssim \| \mf{A} \|_{ L^{2, \infty}_{x, t} (\fol{\Sigma}{y}, \fol{h}{y}) } + \inf_{ P \in \fol{\Sigma}{y} } s^{-1} (P) \cdot \Gamma \text{.} \]
Since the $\fol{\Sigma}{y}$'s go to infinity, the last term on the right-hand side vanishes as $y \nearrow 1$.
Consequently, to prove \eqref{need.Hbartrace}, it remains to show
\begin{equation} \label{need.mfA} \lim_{y \nearrow 1} \| \mf{A} \|_{ L^2_x (\fol{\Sigma}{y}, \fol{h}{y}) } = 0 \text{.} \end{equation}

The key observation behind proving \eqref{need.mfA} is the equation \eqref{to.solve} satisfied by $\fol{v}{y}$ on $S_y$.
To take advantage of this, we first move our estimate to $S_y$:
\footnote{Here, we implicitly used that integral norms in the $t$- and $\fol{t}{y}$-foliations are comparable.}
\begin{align*}
\| \mf{A} \|_{ (\fol{\Sigma}{y}, \fol{h}{y}) } &\lesssim \| \mf{A} \|_{ L^2_x (S_y) } + \| \nabla_t (\trace \ul{H}) \|_{ L^{2, 1}_{x, t} (S_y, \fol{\Sigma}{y}) } + \| \nabla_t \nabla^2 \fol{v}{y} \|_{ L^{2, 1}_{x, t} (S_y, \fol{\Sigma}{y}) } \text{.}
\end{align*}
By \eqref{eq.renorm_est} and \eqref{eq.distortion_bound}, the last two terms vanish as $y \nearrow 1$.
Furthermore, by \eqref{to.solve},
\[ \mf{A} = 2 (1 - t) E + 2 e^{2 \fol{v}{y}} (1 - e^{2 u}) \]
on $S_y$.
Since $u$ converges uniformly to $0$ as $t \nearrow 1$ by \eqref{to.solve.u},
\begin{align*}
\| \mf{A} \|_{ L^2_x (S_y) } &\lesssim (1 - y) \| E \|_{ L^2_x (S_y) } + \| 1 - e^{2 u} \|_{ L^\infty (S_y) } \rightarrow 0 \text{,}
\end{align*}
as $y \nearrow 1$.
Thus, we have established \eqref{need.Hbartrace}, and the second limit in \eqref{eq.asympt_round} follows.

Finally, letting $\fol{\omega}{y}$ denote the volume form associated with $\fol{h}{y}$, we have
\begin{align*}
\operatorname{Area} (\fol{\Sigma}{y}, \fol{h}{y}) &= \int_{ \fol{\Sigma}{y} } \fol{\mc{K}}{y} d \fol{\omega}{y} - \int_{ \fol{\Sigma}{y} } ( \fol{\mc{K}}{y} - 1 ) d \fol{\omega}{y} = 4 \pi - \int_{ \fol{\Sigma}{y} } ( \fol{\mc{K}}{y} - 1 ) d \fol{\omega}{y} \text{.}
\end{align*}
The second limit in \eqref{eq.asympt_round} implies that the last term on the right-hand side vanishes as $y \nearrow 1$.
This yields the first limit in \eqref{eq.asympt_round} and proves Lemma \ref{thm.distortion}.

\subsection{Proof of Lemma \ref{thm.to.solve}} \label{sect:distortion.construct}

The proof of Lemma \ref{thm.to.solve}, i.e., the formal construction of the $\fol{v}{y}$'s, can be divided into a three-step process.
The first two steps essentially amount to preliminary smoothings of the metrics $\gamma_t$, while the last step involves carefully chosen uniformizations of the smoothed metrics.

The goal of the first two steps is to reduce matters to solving \eqref{to.solve} over a metric with $L^\infty$-bounds on its Gauss curvature.
This is accomplished by two conformal transformations which absorb the lower regularity terms in \eqref{Gauss}.
The first conformal transformation, which comes from \cite{shao:stt}, smoothes the Gauss curvature from $H^{-1/2}$ to $B^0$.
\footnote{Having $B^0$ instead of $L^2$ is important, as it ultimately results in $L^\infty$-bounds for $\nabla \fol{v}{y}$.}
The second step adapts an idea from \cite{bie_zip:stb_mink} to further smooth the curvature to $L^\infty$.
Finally, at the third step, we proceed with a uniformization, adapting an argument of Christodoulou and Klainerman in \cite{chr_kl:stb_mink}.

In the end, the $\fol{v}{y}$'s are obtained via a composition of \emph{only the last two steps}.
In particular, the impact of the first (and also least regular) smoothing vanishes at infinity and can be discarded.
\footnote{This is due to the factor $s^{-1}$ in front of $\operatorname{div} Z$ in \eqref{Gauss}.}
The remainder of the proof of Lemma \ref{thm.to.solve} is dedicated to deriving estimates and convergence properties for the $\fol{v}{y}$'s.
In particular, we obtain $L^\infty$-bounds for the $\nabla \fol{v}{y}$'s, which are essential for the main result.

\subsubsection{Step 1: The Initial Smoothing}

The first technical issue that one faces is the irregularity of the $\gamma_t$'s; in particular, the Gauss curvatures of the $\gamma_t$'s lie only in $H^{-1/2}$.
\footnote{This causes a substantial number of issues, e.g., for elliptic estimates; see \cite{kl_rod:cg, shao:stt, wang:cg, wang:cgp}.}
Thus, in this first step, we apply a conformal smoothing of the $\gamma_t$'s in order to obtain a new family $\bar{\gamma}_t$, whose Gauss curvatures $\bar{\mc{K}}_t$ are uniformly bounded in $L^2$.
This was the same process that was employed in \cite[Sect. 6]{shao:stt} in order to derive elliptic estimates for various symmetric Hodge operators in Besov norms.
That this is possible rests on the observation that the least smooth term in the right-hand side of \eqref{Gauss} is an exact divergence: $(1 - t) \gamma^{ab} \nabla_a Z_b$.

As in \cite[Sect. 6.4]{shao:stt}, we define the function $u$ on $\mc{N}$ satisfying, for every $\tau \in [0, 1)$,
\begin{equation} \label{eqn.u} \lapl u_\tau = s^{-1} \gamma_\tau^{ab} \nabla_a (Z_\tau)_b \text{,} \qquad \int_{ S_\tau } u_\tau d \epsilon_\tau = 0 \text{,} \end{equation}
where $\epsilon_\tau$ denotes the volume form associated with $\gamma_\tau$.
In other words, $u_\tau$ is the unique mean-free function solving the above Poisson equation on $(S_\tau, \gamma_\tau)$.
Define next a new family of metrics $\bar{\gamma}_\tau = e^{2 u_\tau} \gamma_\tau$ on the $S_\tau$'s.
Then, from standard formulas, the Gauss curvatures $\bar{\mc{K}}_\tau$ of the $(S_\tau, \bar{\gamma}_\tau)$'s are given by
\begin{align}
\label{eq.K_bar} \bar{\mc{K}} &= e^{-2 u} ( \mc{K} - \lapl u ) = e^{-2 u} \brak{ 1 - \frac{1}{2} \trace \ul{H} + (1 - t) E } \text{,} \\
\notag E &= - M + \frac{1}{2} [ 1 - 2 m_{\rm S} (1 - t) ] \trace \ul{H} - \frac{1}{4} \trace H \trace \ul{H} \text{.}
\end{align}

To control $u$, we apply elliptic estimates on the Poisson equation \eqref{eqn.u}, along with existing bounds for $Z$.
\footnote{While such estimates are immediate for regular background metrics, they are very delicate for the rough metrics under consideration here.
In particular, we resort to estimates in \cite{shao:stt}.}
Using a variant of \cite[Prop. 6.10]{shao:stt}, we obtain
\footnote{More specifically, we apply the proof of \cite[Eq. (6.18)]{shao:stt} individually to each $S_t$ and take advantage of the factor $s^{-1} = 1 - t$ in front of the divergence of $Z$.}
\begin{equation} \label{eq.est_u} \| u \|_{ L^\infty_x (S_t) } + \| \nabla u \|_{ L^4_x (S_t) } \lesssim (1 - t) \Gamma \text{.} \end{equation}
Moreover, \cite[Prop. 6.10]{shao:stt} implies
\[ \| \bar{\mc{K}} - 1 \|_{ L^2_x (S_\tau) } \leq \| \bar{\mc{K}} - e^{-2u} \|_{ L^2_x (S_\tau) } + \| e^{-2u} - 1 \|_{ L^2_x (S_\tau) } \lesssim \Gamma \text{.} \]
We also note that by \eqref{eq.renorm_est} and the definition of $E$ in \eqref{eq.K_bar},
\footnote{To control the term $\trace H \cdot \trace \ul{H}$ in Besov norms, we use \cite[Thm. 3.6]{shao:stt}, along with \eqref{eq.renorm_est}.}
\begin{equation} \label{Eerror.est} \| E \|_{ B^{\infty, 0}_{t, x} } + \| E \|_{ L^{2, \infty}_{x, t} } \lesssim \Gamma \text{.} \end{equation}
In particular, the above choices of $u$ and $E$ satisfy \eqref{to.solve.u}.

\begin{remark}
We note that the conformal factors $u_t$ \emph{do not} have the smoothness required of the desired $\fol{v}{y}$'s and hence \emph{will not} be built into the $\fol{v}{y}$'s.
Their purpose is to absorb the term of least regularity in $\mc{K}$, thus producing a more regular metric, from which we can construct the desired $\fol{v}{y}$'s.
Note in particular that as $t \nearrow 1$, the $\bar{\mc{K}}_t$'s converge to $\bar{\mc{K}}_1 = \mc{K}_1 = 1 - \frac{1}{2} \trace \ul{H}_1$ at $S_1$ \emph{in $L^2_x$}.
\end{remark}

\subsubsection{Step 2: Further Smoothing}

In the next step, we generate the first part of the $\fol{v}{y}$'s.
To accomplish this task, we apply yet another conformal transformation, directly inspired by Bieri, \cite{bie_zip:stb_mink}.
Its purpose is to reduce matters to solving an analogue of \eqref{to.solve}, but with the right-hand side lying in $L^\infty_x$ rather than in $L^2_x$. 
Throughout, we let $\bar{\nabla}$ and $\bar{\lapl}$ be the Levi-Civita connection and Laplacian relative to $\bar{\gamma}$, and we let $| S_y |$ be the area of $S_y$, \emph{relative to the metric $\gamma_y$}.
Moreover, given a smooth function $f$ on $S_y$, we let $\mc{A}_y (f)$ denote its $\gamma_y$-average:
\[ \mc{A}_y (f) = | S_y |^{-1} \int_{ S_y } f d \epsilon_y \text{.} \]

Next, we solve (uniquely) on $(S_y, \gamma_y)$ the Poisson equation
\begin{equation} \label{poisson} \lapl \fol{v}{y}_1 = e^{2 u_y} \bar{\mc{K}}_y - \mc{A}_y ( e^{2 u_y} \bar{\mc{K}}_y ) \text{,} \qquad \int_{ S_y } \fol{v}{y}_1 d \epsilon_y = 0 \text{.} \end{equation}
Recalling the value of $\bar{\mc{K}}$, we can expand the equation as
\[ \lapl \fol{v}{y}_1 = - \frac{1}{2} [ \trace \ul{H}_y - \mc{A}_y ( \trace \ul{H}_y ) ] + (1 - y) [ E_y - \mc{A}_y (E_y) ] \text{.} \]
Applying \eqref{eq.renorm_est} and \eqref{Eerror.est} yields
\begin{equation} \label{poisson.est} \| \lapl \fol{v}{y}_1 \|_{ B^0_x (S_y) } \lesssim \| \trace \ul{H} \|_{ B^0_x (S_y) } + (1 - y) \| E \|_{ B^0_x (S_y) } \lesssim \Gamma \text{.} \end{equation}
Note in particular that the second derivative of $\fol{v}{y}_1$ is bounded \emph{in the Besov norm}.
This is a crucial point, as it will allow us to control $\nabla \fol{v}{y}_1$ in $L^\infty_x$.

The above argument defined $\fol{v}{y}_1$ only on $S_y$.
Next, we extend each $\fol{v}{y}_1$ to all of $\mc{N}$ by requiring it to be constant along every null generator of $\mc{N}$; in other words, we require $\nabla_t \fol{v}{y}_1 \equiv 0$.
In order to derive the full complement of estimates for the $\fol{v}{y}_1$'s from \eqref{poisson.est}, we resort to standard elliptic, Sobolev embedding, and transport estimates.
The only caveat here is the low regularity of our setting, which forces us to apply the tools developed in \cite{shao:stt}.
To avoid distracting from our main construction, we defer the details of these estimates to the appendices.

\begin{lemma} \label{thm.est_v1}
For any $y \in [0, 1)$, the following estimates hold:
\begin{align}
\label{eq.est_v1} \| \nabla_t \nabla^2 \fol{v}{y}_1 \|_{ L^{2, 2}_{t, x} } + \| \nabla_t \nabla \fol{v}{y}_1 \|_{ L^{\infty, 2}_{x, t} } &\lesssim \Gamma^2 \text{,} \\
\notag \| \nabla^2 \fol{v}{y}_1 \|_{ B^{\infty, 0}_{t, x} \cap L^{2, \infty}_{x, t} } + \| \nabla \fol{v}{y}_1 \|_{ L^{\infty, \infty}_{t, x} } + \| \fol{v}{y}_1 \|_{ L^{\infty, \infty}_{t, x} } &\lesssim \Gamma \text{.}
\end{align}
Moreover, for any $q \in (2, \infty)$ and $p = \frac{2q}{q + 2} \in (1, 2)$,
\begin{align}
\label{eq.est_v1_cauchy} \lim_{y_1, y_2 \nearrow 1} [ \| \nabla_t \nabla^2 ( \fol{v}{y_2}_1 - \fol{v}{y_1}_1 ) \|_{ L^{p, 2}_{x, t} } + \| \nabla_t \nabla ( \fol{v}{y_2}_1 - \fol{v}{y_1}_1 ) \|_{ L^{q, 2}_{x, t} } ] &= 0 \text{,} \\
\notag \lim_{y_1, y_2 \nearrow 1} [ \| \nabla^2 ( \fol{v}{y_2}_1 - \fol{v}{y_1}_1 ) \|_{ L^{p, \infty}_{x, t} } + \| \nabla ( \fol{v}{y_2}_1 - \fol{v}{y_1}_1 ) \|_{ L^{q, \infty}_{x, t} } + \| \fol{v}{y_2}_1 - \fol{v}{y_1}_1 \|_{ L^{\infty, \infty}_{t, x} } ] &= 0 \text{,} \\
\notag \lim_{y_1, y_2 \nearrow 1} \| \nabla^2 ( \fol{v}{y_2}_1 - \fol{v}{y_1}_1 ) \|_{ L^{2, \infty}_{x, t} ( \fol{S}{y_1}_{y_1}, S_1) } &= 0 \text{.}
\end{align}
\end{lemma}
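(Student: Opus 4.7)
The plan is to split the statement into three pieces: (i) elliptic estimates for $\fol{v}{y}_1$ on the base sphere $(S_y, \gamma_y)$; (ii) propagation of these bounds to all of $\mc{N}$ via transport along null generators; and (iii) a difference-equation argument for the Cauchy limits \eqref{eq.est_v1_cauchy}. For (i), I would start from the bound \eqref{poisson.est} on the right-hand side of the Poisson equation \eqref{poisson} and apply the Besov-level elliptic estimates on $(S_y, \gamma_y)$ developed in \cite{shao:stt} to obtain $\| \nabla^2 \fol{v}{y}_1 \|_{ B^0_x (S_y) } \lesssim \Gamma$. The Besov-to-$L^\infty$ embedding on the $2$-sphere then yields $\| \nabla \fol{v}{y}_1 \|_{ L^\infty_x (S_y) } \lesssim \Gamma$, and the mean-zero condition together with Poincar\'e's inequality gives $\| \fol{v}{y}_1 \|_{ L^\infty_x (S_y) } \lesssim \Gamma$.

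For (ii), the extension $\nabla_t \fol{v}{y}_1 \equiv 0$ means that in any system of transported coordinates (Section \ref{SphCuts}) the coordinate expression for $\fol{v}{y}_1$ is independent of $t$. Combined with the uniform equivalence of the metrics $\gamma_t$ and the control on their Christoffel symbols coming from Theorem \ref{thm.nc_renorm} and Corollary \ref{thm.nc_renorm.norms}, this propagates the bounds on $S_y$ to the $L^{\infty,\infty}_{t,x}$, $L^{2,\infty}_{x,t}$, and $B^{\infty,0}_{t,x}$ norms in the second line of \eqref{eq.est_v1}. For the mixed $\nabla_t$-derivatives, the commutator formula \eqref{eq.comm} together with $\nabla_t \fol{v}{y}_1 = 0$ gives, schematically, $\nabla_t \nabla \fol{v}{y}_1 \sim H \cdot \nabla \fol{v}{y}_1$ and $\nabla_t \nabla^2 \fol{v}{y}_1 \sim H \cdot \nabla^2 \fol{v}{y}_1 + \nabla H \cdot \nabla \fol{v}{y}_1$; taking $L^{2,2}_{t,x}$ and $L^{\infty,2}_{x,t}$ norms and applying H\"older against the bounds on $H$, $\nabla H$ from \eqref{eq.renorm_est} and the pointwise bounds on $\nabla \fol{v}{y}_1$, $\nabla^2 \fol{v}{y}_1$ closes the first line of \eqref{eq.est_v1}.

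For (iii), I would pull the equation for $\fol{v}{y}_1$ back to the initial sphere $S_0$ via the transport map $\Phi_{S_y}$: setting $\tilde v^y := \fol{v}{y}_1|_{S_0}$ (which, by the extension convention, determines $\fol{v}{y}_1$ everywhere on $\mc{N}$), each $\tilde v^y$ solves an elliptic equation $\lapl_{\tilde\gamma^y} \tilde v^y = \tilde F^y$ on $S_0$ with metric $\tilde \gamma^y = (\Phi_{S_y})_\ast \gamma_y$ and source $\tilde F^y = (\Phi_{S_y})_\ast F_y$. By Corollary \ref{thm.nc_renorm.limit.rc} together with the discussion of convergence of $\gamma_t$ in transported coordinates, both $\tilde\gamma^y$ and $\tilde F^y$ converge as $y \nearrow 1$. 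For $y_1 < y_2$ close to $1$ the difference $\tilde v^{y_2} - \tilde v^{y_1}$ satisfies
\[ \lapl_{ \tilde\gamma^{y_2} } ( \tilde v^{y_2} - \tilde v^{y_1} ) = ( \tilde F^{y_2} - \tilde F^{y_1} ) + ( \lapl_{ \tilde\gamma^{y_1} } - \lapl_{ \tilde\gamma^{y_2} } ) \tilde v^{y_1} \text{,} \]
whose right-hand side tends to zero in the relevant norms, and a further application of the Besov-type elliptic estimate from \cite{shao:stt} then controls $\tilde v^{y_2} - \tilde v^{y_1}$; transport in $t$ via the difference analogues of the commutator identities from (ii) delivers \eqref{eq.est_v1_cauchy}. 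The paired exponents $p = 2q/(q+2)$ arise precisely because the identity $1/p = 1/2 + 1/q$ is H\"older's inequality $L^2_x \cdot L^q_x \hookrightarrow L^p_x$, which must be applied to products of the uniformly bounded factors $H$, $\nabla H$ with the convergent quantities $\nabla(\tilde v^{y_2} - \tilde v^{y_1})$, $\nabla^2(\tilde v^{y_2} - \tilde v^{y_1})$ appearing in the transport equation for the difference. The localized estimate in the last line of \eqref{eq.est_v1_cauchy} follows from the same argument applied over a region of $\mc{N}$ whose measure shrinks as $y_1 \nearrow 1$.

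The main technical hurdle is the Besov-level elliptic estimate on $(S_y, \gamma_y)$: since the Gauss curvature there is only in $L^2$ even after the conformal smoothing of Section \ref{sect:distortion.construct}, classical elliptic regularity is inadequate and the argument genuinely depends on the low-regularity machinery of \cite{shao:stt}. A secondary subtlety is that the Cauchy exponents cannot be improved to $(p, q) = (2, \infty)$: they are forced by the fact that $H$ and $\nabla H$ are only uniformly bounded (not convergent) at the $L^2$-curvature level, so every passage through the difference transport equations must cede a small amount of integrability.
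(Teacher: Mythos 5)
Much of your outline tracks the paper's own argument: step (i) is the paper's use of the Hodge/Besov elliptic estimates of \cite{shao:stt} on $(S_y, \gamma_y)$ applied to \eqref{poisson}, and step (iii) is in substance the same comparison of the two Poisson equations (the paper compares them on $S_{y_1}$ via the covariant integral $\cint_{y_1}^{t} \nabla_t \lapl \fol{v}{y_2}_1$ rather than pushing forward to $S_0$, but the ingredients---$L^2_x$-convergence of $\trace \ul{H}$, the bound \eqref{Eerror.est} on $E$, convergence of the areas to fix the means, $L^2$ Hodge estimates and Sobolev embedding for the difference---are identical, and the exponent pairing $1/p = 1/2 + 1/q$ enters exactly as you say, through the transport estimate of Lemma \ref{thm.est_v}).

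The genuine gap is in step (ii), for the Besov bound $\| \nabla^2 \fol{v}{y}_1 \|_{ B^{\infty, 0}_{t, x} } \lesssim \Gamma$. This does \emph{not} follow from ``uniform equivalence of the $\gamma_t$'s plus $L^2_x$ control of the transported Christoffel symbols'': the $B^{\infty,0}_{t,x}$ norm is built from the geometric Littlewood--Paley projections of the evolving metrics, and the only viable route is to write $\nabla^2 \fol{v}{y}_1 |_{S_t} = \nabla^2 \fol{v}{y}_1 |_{S_y} + \cint_y^t \nabla_t \nabla^2 \fol{v}{y}_1$ and estimate the integral in $B^{\infty,0}_{t,x}$. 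Its dangerous piece, $\cint ( \nabla H \otimes \nabla \fol{v}{y}_1 )$, cannot be handled by H\"older since $\nabla H$ is only in $L^{2,2}_{t,x}$ and Besov norms do not interact with pointwise products in the naive way; the paper needs the sharp integrated product estimate \eqref{eq.trace_est_ex} together with the decomposition bound $\| \nabla H \|_{ N^{0\star}_{t,x} + B^{2,0}_{t,x} } \lesssim \Gamma$ of Proposition \ref{thm.nc_renorm_ex} (this is precisely Lemma \ref{thm.est_v_ex}), and that Besov conclusion is not decorative---it is what later verifies the hypothesis \eqref{eq.renorm_cfol_ass} for the refoliations. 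A smaller thin spot: the last limit in \eqref{eq.est_v1_cauchy} is at the $L^2_x$ level, whereas your global Cauchy argument only reaches $L^p$, $p<2$; the reason it upgrades is the partial transport estimate \eqref{eq.est_v_partial} with $q = \infty$, in which the term $\| \nabla H \|_{ L^{2,1}_{x,t} (S_{y_1}, S_1) }$ multiplies the \emph{individual} factors $\| \nabla \fol{v}{y_i}_1 \|_{ L^{\infty,\infty}_{t,x} } \lesssim \Gamma$ (not their difference) and vanishes through the factor $(1-y_1)^{1/2}$, so ``the same argument over a shrinking region'' needs this extra observation to be correct.
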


\begin{proof}
See Appendix \ref{sect:estimates.v1}.
\end{proof}

Finally, defining the metric
\[ \ddot{\gamma}_y = e^{2 \fol{v}{y}_1} \bar{\gamma}_y \]
on $S_y$, we find that its curvature $\ddot{\mc{K}}_y$ satisfies
\begin{equation} \label{new.bound} \ddot{\mc{K}}_y = e^{-2 \fol{v}{y}_1} ( \bar{\mc{K}}_y - \bar{\lapl} \fol{v}{y}_1 ) = e^{-2 \fol{v}{y}_1} e^{-2 u} \mc{A}_y ( e^{2 u} \bar{\mc{K}}_y ) \text{.} \end{equation}
$\ddot{\mc{K}}_y$ is uniformly bounded, independently of $y$, since by \eqref{eq.est_u} and \eqref{eq.est_v1},
\begin{align}
\label{eq.K_dotdot} \| \ddot{\mc{K}}_y - 1 \|_{ L^\infty_x (S_y) } &\lesssim \| e^{-2 (\fol{v}{y}_1 + u)} - 1 \|_{ L^\infty_x (S_y) } + \| \mc{A}_y ( \trace \ul{H} ) \|_{ L^\infty_x (S_y) } \\
\notag &\qquad + \| \mc{A}_y ( E ) \|_{ L^\infty_x (S_y) } \\
\notag &\lesssim \Gamma \text{.}
\end{align}

\subsubsection{Step 3: Uniformization of the Smoothed Metrics}

Now that the metrics have been smoothed as to have $L^\infty_x$-curvature, we can proceed to the third and final step of the proof of Lemma \ref{thm.to.solve}: we construct functions $\fol{v}{y}_2$ solving
\begin{equation} \label{final.unif} \ddot{\lapl}_y \fol{v}{y}_2 + e^{2 \fol{v}{y}_2} = e^{-2 (\fol{v}{y}_1 + u)} \mc{A}_y ( e^{2 u} \bar{\mc{K}}_y ) = \ddot{\mc{K}}_y \text{,} \end{equation}
\emph{on $S_y$}, where $\ddot{\lapl}_y$ denotes the Laplacian with respect to the metric $\ddot{\gamma}_y$ on $S_y$.
In other words, we solve the uniformization problem on $(S_y, \ddot{\gamma}_y)$.

While the uniformization problem itself is classical, in our current situation, we must also ensure that these $\fol{v}{y}_2$'s are uniformly small (that is, controlled by $\Gamma$) and converge appropriately as $y \nearrow 1$.
The difficulties behind these additional constraints arise from the lack of uniqueness of solutions of \eqref{final.unif} due to the conformal group on the sphere.
For this task, we resort to the subsequent abstract lemma:

\begin{lemma} \label{thm.unif}
Let $h$ be a Riemannian metric on $\Sph^2$, whose Gauss curvature satisfies
\begin{equation} \label{eq.unif_ass} \| \mc{K}_h - 1 \|_{ L^\infty_x (\Sph^2) } \lesssim \Gamma \text{.} \end{equation}
If $\Gamma$ is sufficiently small, then there exists a smooth function $v: \Sph^2 \rightarrow \R$, with
\begin{equation} \label{eq.unif_est} \| v \|_{ L^\infty_x (\Sph^2) } \lesssim \Gamma \text{,} \end{equation}
such that $\mathring{h} = e^{2 v} h$ is the round metric, with Gauss curvature identically equal to $1$.
Furthermore, $v$ can be chosen to depend continuously on the pair $(h, \mc{K}_h)$.
\end{lemma}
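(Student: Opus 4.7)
The plan is to realize the desired conformal factor as a solution of the semilinear elliptic equation
\begin{equation*}
\Delta_h v + e^{2v} = \mathcal{K}_h \qquad \text{on } \mathbb{S}^2,
\end{equation*}
obtained from the standard conformal change formula $\mathcal{K}_{e^{2v} h} = e^{-2v}(\mathcal{K}_h - \Delta_h v)$ together with the requirement $\mathcal{K}_{\mathring h} \equiv 1$. Fix a round reference metric $h_0$ with $\mathcal{K}_{h_0} \equiv 1$, let $X^1, X^2, X^3$ denote the first spherical harmonics on $(\mathbb{S}^2, h_0)$ satisfying $\Delta_{h_0} X^i = -2 X^i$, and set $\mathfrak{c} := \ker(\Delta_{h_0} + 2) = \mathrm{span}\{X^1, X^2, X^3\}$. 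The Fr\'echet derivative in $v$ at $(0, h_0)$ is the self-adjoint Fredholm operator $L := \Delta_{h_0} + 2$, whose kernel and cokernel both equal $\mathfrak{c}$; this three-dimensional degeneracy reflects the action of the M\"obius-boost subgroup of the conformal group of $(\mathbb{S}^2, h_0)$ and is the main obstacle to a direct implicit function theorem argument.

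The strategy is a two-step procedure that first modifies $h$ by a small M\"obius boost to kill the cokernel obstruction, and then solves the resulting non-degenerate PDE in an orthogonal slice. Introduce a smooth three-parameter family $\{\phi_\lambda\}_{\lambda \in \mathbb{R}^3}$ of M\"obius boosts of $(\mathbb{S}^2, h_0)$ with $\phi_0 = \mathrm{id}$, whose infinitesimal generators at $\lambda = 0$ are the conformal vector fields $\nabla_{h_0} X^j$. The first step applies the implicit function theorem in $\lambda$ to the $\mathbb{R}^3$-valued map
\begin{equation*}
\Pi^i(\lambda, h) := \int_{\mathbb{S}^2} X^i \, \mathcal{K}_{\phi_\lambda^* h} \, d\mu_{\phi_\lambda^* h}, \qquad i = 1, 2, 3,
\end{equation*}
which satisfies $\Pi(0, h_0) = 0$ by the oddness of $X^i$; a direct calculation using the identities $\nabla X^i \cdot \nabla X^j = \delta_{ij} - X^i X^j$ and $\int X^i X^j \, d\mu_{h_0} = \tfrac{4\pi}{3} \delta_{ij}$ shows that $D_\lambda \Pi (0, h_0)$ is a nonzero multiple of the identity matrix, hence invertible. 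Thus for every $h$ with $\mathcal{K}_h$ close to $1$ there is a unique small $\lambda = \lambda(h)$, continuous in $(h, \mathcal{K}_h)$, such that $\tilde h := \phi_{\lambda(h)}^* h$ satisfies $\Pi(0, \tilde h) = 0$, i.e., the curvature perturbation $\mathcal{K}_{\tilde h} - 1$ becomes $L^2(h_0)$-orthogonal to $\mathfrak{c}$.

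For the second step, apply the implicit function theorem to $F(v, \tilde h) := \Delta_{\tilde h} v + e^{2v} - \mathcal{K}_{\tilde h}$ restricted to the slice $C^{2,\alpha}_\perp := \{ v \in C^{2,\alpha}(\mathbb{S}^2) : \int v X^i \, d\mu_{h_0} = 0 \text{ for } i = 1, 2, 3 \}$, with image in the corresponding $C^\alpha_\perp$. On these restricted spaces, $L$ is an isomorphism, and the preceding step ensures that $\mathcal{K}_{\tilde h} - 1$ lies in $C^\alpha_\perp$, so this produces a unique small $\tilde v \in C^{2,\alpha}_\perp$ with $\|\tilde v\|_{L^\infty} \lesssim \|\mathcal{K}_{\tilde h} - 1\|_{L^\infty} \lesssim \Gamma$, continuous in $(\tilde h, \mathcal{K}_{\tilde h})$. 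Transferring back, $v := \tilde v \circ \phi_{\lambda(h)}^{-1}$ satisfies $e^{2v} h = (\phi_{\lambda(h)}^{-1})^*(e^{2 \tilde v} \tilde h)$, which is again round by the conformal invariance of M\"obius maps, and inherits the $L^\infty$ bound \eqref{eq.unif_est} along with continuous dependence on $(h, \mathcal{K}_h)$; elliptic regularity upgrades $v$ to a smooth function whenever $h$ is smooth. The principal technical point is the non-degeneracy calculation for $D_\lambda \Pi$ in the first step and the correct identification of the M\"obius boost generators with gradients of first spherical harmonics, since the naive $\lambda$-derivative of $\phi_\lambda^* K$ at $K \equiv 1$ vanishes and one must extract the nontrivial contribution from the volume form.
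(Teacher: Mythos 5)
Your scheme (gauge-fix the M\"obius degeneracy, then apply the implicit function theorem around the round metric) is internally coherent in several places---in particular the non-degeneracy computation for $D_\lambda\Pi$ via the volume-form contribution is correct---but as written it proves a weaker statement than the lemma. Both of your IFT steps are performed at the base point $(0,h_0)$, so they only cover metrics $h$ (or $\tilde h=\phi_{\lambda(h)}^*h$) lying in a small neighborhood of the \emph{fixed} round metric $h_0$ in a strong tensor topology: you need $\Delta_{\tilde h}$ to be operator-close to $\Delta_{h_0}$ from $C^{2,\alpha}$ to $C^\alpha$ and $\mc{K}_{\tilde h}$ quantitatively close to $1$ in $C^\alpha$, not merely in $L^\infty$. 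The hypothesis of the lemma is only $\| \mc{K}_h - 1\|_{L^\infty}\lesssim\Gamma$, and this does not place $h$ anywhere near $h_0$ as a tensor: for example $h=\psi^*h_0$ for an arbitrary diffeomorphism $\psi$ of $\Sph^2$ satisfies the hypothesis with $\Gamma=0$ yet lies far outside any fixed IFT neighborhood of $h_0$, and pulling back by your three-parameter M\"obius family cannot repair this (it only quotients the conformal boosts, not general diffeomorphisms or large conformal factors). This is exactly the situation in the paper's application, where the spheres carry rough metrics known only to have Gauss curvature near $1$; accordingly the paper's proof is intrinsic, following Christodoulou--Klainerman: curvature bounds give diameter and injectivity radius control, normal polar coordinates and a Riccati comparison for the mean curvature of geodesic circles produce an approximate stereographic conformal factor, a Poisson solve absorbs the error, and one then inverts the standard stereographic projection, with continuity read off from the construction. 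Your argument would need an additional (nontrivial) reduction showing that curvature close to $1$ forces $h$, after composing with a suitable diffeomorphism \emph{and} conformal normalization, into the perturbative regime---and that step is essentially the content of the lemma.

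There is also a secondary gap inside step 2 even in the perturbative regime: $F(\cdot,\tilde h)$ does not map the slice $C^{2,\alpha}_\perp$ into $C^\alpha_\perp$, since neither the nonlinearity $e^{2v}$ nor the $\tilde h$-dependent terms preserve orthogonality to the first harmonics, so the IFT ``restricted to the slice with image in $C^\alpha_\perp$'' does not literally apply. What you can do is solve the projected equation $P_\perp F(v,\tilde h)=0$, but the solution then satisfies $\Delta_{\tilde h}v+e^{2v}-\mc{K}_{\tilde h}=\sum_i c_i X^i$, and you still owe an argument that the $c_i$ vanish. Your step-1 balancing $\Pi(0,\tilde h)=0$ is a condition on the data $\mc{K}_{\tilde h}$ (orthogonality with respect to $d\mu_{\tilde h}$), not on the residual of the nonlinear solution, and by itself it does not kill the $c_i$; closing this requires a genuine Lyapunov--Schmidt argument in which $\lambda$ and $v$ are adjusted together (using precisely the volume-form mechanism you identified), or an additional identity-type argument that is not supplied.
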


\begin{proof}
See Appendix \ref{sect:distortion.unif}.
\end{proof}

Applying Lemma \ref{thm.unif} to each $(S_y, \ddot{\gamma}_y)$, $y \in [0, 1)$, we obtain functions $\fol{v}{y}_2$ on $S_y$ satisfying \eqref{final.unif} as well as the following estimate:
\begin{equation} \label{eq.est_v2_pre} \| \fol{v}{y}_2 \|_{ L^\infty_x (S_y) } \lesssim \Gamma \text{,} \qquad y \in [0, 1) \text{.} \end{equation}
Like for the $\fol{v}{y}_1$'s, we extend the $\fol{v}{y}_2$'s to $\mc{N}$ by the condition $\nabla_t \fol{v}{y}_2 \equiv 0$.

Since the $\gamma_t$'s converge as $t \nearrow 1$ (see Section \ref{sect:prelim.ncinf}), since the $\fol{v}{y}_1$'s converge as $y \nearrow 1$ by \eqref{eq.est_v1_cauchy}, and since $u$ converges to zero at infinity by \eqref{eq.est_u}, it follows that $\ddot{\gamma}_y$, restricted to $S_y$, also converges (uniformly) as $y \nearrow 1$.
Furthermore, from \eqref{new.bound}, along with Corollary \ref{thm.nc_renorm.limit.rc}, \eqref{eq.est_u}, and \eqref{eq.est_v1_cauchy}, we see that $\ddot{\mc{K}}_y$, again restricted to $S_y$, converges uniformly as $y \nearrow 1$.
Consequently, by the continuous dependence statement in Lemma \ref{thm.unif}, the $\fol{v}{y}_2$'s must also converge as $y \nearrow 1$.
In other words, as functions defined on all of $\mc{N}$, the $\fol{v}{y}_2$'s satisfy the Cauchy property
\begin{equation} \label{eq.est_v2_cauchy_pre} \lim_{y_1, y_2 \nearrow \infty} \| \fol{v}{y_2}_2 - \fol{v}{y_1}_2 \|_{ L^{\infty, \infty}_{t, x} } = 0 \text{.} \end{equation}

Finally, rewriting \eqref{final.unif} as
\begin{equation} \label{eqn.v_2} \Delta \fol{v}{y}_2 = e^{2 (u + \fol{v}{y}_1)} \ddot{\Delta} \fol{v}{y}_2 = e^{2 (u + \fol{v}{y}_1)} ( \ddot{\mc{K}} - e^{2 \fol{v}{y}_2} ) \text{,} \end{equation}
and applying \eqref{eq.est_v2_pre}, \eqref{eq.est_v2_cauchy_pre}, and the usual elliptic, embedding, and transport estimates (at low regularities, via \cite{shao:stt}), we derive the full set of bounds for the $\fol{v}{y}_2$'s.
The proof, given in Appendix \ref{sect:estimates.v2}, is analogous to that for the $\fol{v}{y}_1$'s.

\begin{lemma} \label{thm.est_v2}
For any $y \in [0, 1)$, the following estimates hold:
\begin{align}
\label{eq.est_v2} \| \nabla_t \nabla^2 \fol{v}{y}_2 \|_{ L^{2, 2}_{t, x} } + \| \nabla_t \nabla \fol{v}{y}_2 \|_{ L^{\infty, 2}_{x, t} } &\lesssim \Gamma^2 \text{,} \\
\notag \| \nabla^2 \fol{v}{y}_2 \|_{ B^{\infty, 0}_{t, x} \cap L^{2, \infty}_{x, t} } + \| \nabla \fol{v}{y}_2 \|_{ L^{\infty, \infty}_{t, x} } + \| \fol{v}{y}_2 \|_{ L^{\infty, \infty}_{t, x} } &\lesssim \Gamma \text{.}
\end{align}
Moreover, for any $q \in (2, \infty)$ and $p = \frac{2q}{q + 2} \in (1, 2)$,
\begin{align}
\label{eq.est_v2_cauchy} \lim_{y_1, y_2 \nearrow 1} [ \| \nabla_t \nabla^2 ( \fol{v}{y_2}_2 - \fol{v}{y_1}_2 ) \|_{ L^{p, 2}_{x, t} } + \| \nabla_t \nabla ( \fol{v}{y_2}_2 - \fol{v}{y_1}_2 ) \|_{ L^{q, 2}_{x, t} } ] &= 0 \text{,} \\
\notag \lim_{y_1, y_2 \nearrow 1} [ \| \nabla^2 ( \fol{v}{y_2}_2 - \fol{v}{y_1}_2 ) \|_{ L^{p, \infty}_{x, t} } + \| \nabla ( \fol{v}{y_2}_2 - \fol{v}{y_1}_2 ) \|_{ L^{q, \infty}_{x, t} } + \| \fol{v}{y_2}_2 - \fol{v}{y_1}_2 \|_{ L^{\infty, \infty}_{t, x} } ] &= 0 \text{,} \\
\notag \lim_{y_1, y_2 \nearrow 1} \| \nabla^2 ( \fol{v}{y_2}_2 - \fol{v}{y_1}_2 ) \|_{ L^{2, \infty}_{x, t} (\fol{S}{y_1}_{y_1}, S_1) } &= 0 \text{.}
\end{align}
\end{lemma}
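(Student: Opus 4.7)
\emph{Approach.} The plan is to mirror the proof of Lemma \ref{thm.est_v1}, replacing the Poisson equation \eqref{poisson} with the nonlinear uniformization equation \eqref{eqn.v_2}. The essential inputs are the $L^\infty$ bound \eqref{eq.est_v2_pre}, the Cauchy property \eqref{eq.est_v2_cauchy_pre}, the extension condition $\nabla_t \fol{v}{y}_2 \equiv 0$, and the previously established bounds and Cauchy properties for $u$, $\fol{v}{y}_1$, $\ddot{\mc{K}}$, and the Ricci coefficient $H$.

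\emph{Elliptic estimates on $S_y$, then transport to all of $\mc{N}$.} Starting from $\|\fol{v}{y}_2\|_{L^\infty_x(S_y)} \lesssim \Gamma$ and \eqref{eqn.v_2}, the right-hand side $e^{2(u+\fol{v}{y}_1)}(\ddot{\mc{K}}_y - e^{2\fol{v}{y}_2})$ is a product of uniformly bounded quantities, hence lies in $B^0_x(S_y)$ with norm $\lesssim \Gamma$ by \eqref{eq.est_u}, \eqref{eq.est_v1}, and \eqref{eq.K_dotdot}. Elliptic Besov estimates from \cite{shao:stt} then yield $\|\nabla^2 \fol{v}{y}_2\|_{B^0_x(S_y)} \lesssim \Gamma$, and Sobolev embedding gives $\|\nabla \fol{v}{y}_2\|_{L^\infty_x(S_y)} \lesssim \Gamma$. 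To propagate these bounds to all of $\mc{N}$, the commutation identity \eqref{eq.comm} combined with $\nabla_t \fol{v}{y}_2 \equiv 0$ produces the transport equation $\nabla_t \nabla_a \fol{v}{y}_2 = -H_a{}^c \nabla_c \fol{v}{y}_2$, and a similar transport equation for $\nabla^2 \fol{v}{y}_2$ whose forcing is schematically $H \cdot \nabla^2 \fol{v}{y}_2 + \nabla H \cdot \nabla \fol{v}{y}_2$. Integrating in $t$ and invoking the bounds of Theorem \ref{thm.nc_renorm} for $H$ and $\nabla H$ propagates the $S_y$-bounds to every $S_t$ in the norms appearing in \eqref{eq.est_v2}. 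The $\nabla_t$ estimates in \eqref{eq.est_v2} then follow directly from the same transport identities together with H\"older's inequality.

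\emph{Cauchy estimates.} For $y_1 < y_2$, consider $w = \fol{v}{y_2}_2 - \fol{v}{y_1}_2$ on $S_{y_1}$. Using the transport equation for $\Delta \fol{v}{y_2}_2$ to carry its value from $S_{y_2}$ back to $S_{y_1}$ and then subtracting the uniformization equation for $\fol{v}{y_1}_2$ on $S_{y_1}$, we obtain an elliptic equation for $w$ whose right-hand side is bounded by $\|\fol{v}{y_2}_2 - \fol{v}{y_1}_2\|_{L^\infty_x(S_{y_1})}$ (small by \eqref{eq.est_v2_cauchy_pre}), the Cauchy seminorms of $u$ and $\fol{v}{y}_1$ on $S_{y_1}$ (small by \eqref{eq.est_u} and \eqref{eq.est_v1_cauchy}), and an $L^{p,2}_{x,t}$-remainder supported in the slab between $S_{y_1}$ and $S_{y_2}$ (small because the region shrinks). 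Elliptic estimates as above convert this into Cauchy control of $\nabla^2 w$ and $\nabla w$ on $S_{y_1}$ in the $L^{p,\infty}_x$ and $L^{q,\infty}_x$ topologies, and the transport machinery of the previous paragraph yields the full statement \eqref{eq.est_v2_cauchy}. The refined tail estimate on $(\fol{S}{y_1}_{y_1}, S_1)$ is obtained by restricting all norms above to the corresponding shrinking region, mirroring the last line of \eqref{eq.est_v1_cauchy}.

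\emph{Main obstacle.} The principal difficulty is that, unlike in Lemma \ref{thm.est_v1}, the equations for $\fol{v}{y_1}_2$ and $\fol{v}{y_2}_2$ are posed on distinct spheres $S_{y_1} \ne S_{y_2}$ with different background metrics, so the difference equation is not purely elliptic on a single sphere but couples an elliptic problem on $S_{y_1}$ to a transport contribution across the slab between $S_{y_1}$ and $S_{y_2}$. Handling this transport remainder in the borderline $L^{p,2}_{x,t}$-type norms of \eqref{eq.est_v2_cauchy}, while simultaneously tracking the nonlinear exponential $e^{2\fol{v}{y}_2}$ through the low-regularity Besov machinery of \cite{shao:stt}, is the technical heart of the argument.
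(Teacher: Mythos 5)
Your overall architecture matches the paper's: elliptic estimates for \eqref{eqn.v_2} on $(S_y,\gamma_y)$, propagation to all of $\mc{N}$ through $\nabla_t \fol{v}{y}_2 \equiv 0$ and the transport machinery (Lemmas \ref{thm.est_v} and \ref{thm.est_v_ex}), and Cauchy estimates obtained by carrying $\lapl \fol{v}{y_2}_2$ from $S_{y_2}$ back to $S_{y_1}$ across the slab, subtracting the equation for $\fol{v}{y_1}_2$, and using the $L^\infty$ Cauchy property \eqref{eq.est_v2_cauchy_pre} for the zeroth-order terms. However, one step fails as written: you assert that since $e^{2(u+\fol{v}{y}_1)}(\ddot{\mc{K}}_y - e^{2\fol{v}{y}_2})$ is a product of uniformly bounded quantities, it lies in $B^0_x(S_y)$ with norm $\lesssim \Gamma$, citing only the $L^\infty$-type bounds \eqref{eq.est_u}, \eqref{eq.est_v1}, \eqref{eq.K_dotdot}. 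In the geometric Littlewood--Paley framework the $B^0_x$-norm is an $\ell^1$-sum over frequencies of $L^2_x$-norms of the pieces $P_k F$; $L^\infty_x$ (hence $L^2_x$) control only gives square-summability, and there is no embedding $L^\infty_x \hookrightarrow B^0_x$. Since the $B^0_x$ bound on $\lapl \fol{v}{y}_2$ is precisely what produces $\| \nabla^2 \fol{v}{y}_2 \|_{B^0_x(S_y)}$ and thereby the crucial $L^\infty_x$ bound on $\nabla \fol{v}{y}_2$ (and, via Lemma \ref{thm.est_v_ex}, the $B^{\infty,0}_{t,x}$ bound in \eqref{eq.est_v2}), this is a genuine gap, not a cosmetic one.

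The repair is the two-step bootstrap the paper uses and you skip. First apply the $L^2$ Hodge estimates of \cite[Prop.~6.4]{shao:stt} to \eqref{eqn.v_2} to get $\| \nabla^2 \fol{v}{y}_2 \|_{L^2_x(S_y)} + \| \nabla \fol{v}{y}_2 \|_{L^2_x(S_y)} \lesssim \Gamma$. Then bound $\| \lapl \fol{v}{y}_2 \|_{B^0_x(S_y)}$ by the product estimate \cite[Cor.~3.7]{shao:stt}, which needs the factor $e^{2(u+\fol{v}{y}_1)}$ controlled in $L^\infty_x$ \emph{together with its gradient in $L^2_x$}, and controls $\| \ddot{\mc{K}} - e^{2\fol{v}{y}_2} \|_{B^0_x}$ through the corresponding $H^1_x$-norms: the $H^1_x$ bound on $e^{2\fol{v}{y}_2}-1$ uses the $L^2$-gradient bound just obtained, and the $H^1_x$ bound on $\ddot{\mc{K}}-1$ uses the explicit formula \eqref{new.bound}, whose gradient is expressed through $\nabla(u+\fol{v}{y}_1)$ and bounded via \eqref{eq.est_u} and \eqref{eq.est_v1}. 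With this correction your transport and Cauchy arguments go through essentially as in the paper; note only that in the Cauchy step the difference of averages $\mc{A}_{y_2}(e^{2u}\bar{\mc{K}}) - \mc{A}_{y_1}(e^{2u}\bar{\mc{K}})$ must be sent to zero using the $L^2_x$-limit of $\trace \ul{H}$ at $S_1$ and the convergence of the areas (this is what your appeal to ``Cauchy properties of $\ddot{\mc{K}}$'' tacitly requires), and that there only $L^2_x$-convergence of $\lapl(\fol{v}{y_2}_2 - \fol{v}{y_1}_2)$ on $S_{y_1}$ is needed, so the Besov issue does not recur.
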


\begin{proof}
See Appendix \ref{sect:estimates.v2}.
\end{proof}

\subsubsection{The Distortion Functions $\fol{v}{y}$}

Finally, we complete the proof of Lemma \ref{thm.to.solve} by combining the three steps described above.
Defining our desired distortion functions by $\fol{v}{y} = \fol{v}{y}_1 + \fol{v}{y}_2$, we see on $(S_y, \gamma_y)$ that
\begin{align*}
\bar{\lapl} \fol{v}{y} + e^{2 \fol{v}{y}} &= \bar{\lapl} \fol{v}{y}_1 + e^{2 \fol{v}{y}_1} ( \ddot{\lapl} \fol{v}{y}_2 + e^{2 \fol{v}{y}_2} ) \\
&= \bar{\mc{K}}_y - e^{-2 u} \mc{A}_y ( e^{2 u} \bar{\mc{K}}_y ) + e^{2 \fol{v}{y}_1} [ e^{-2 (\fol{v}{y}_1 + u)} \mc{A}_y ( e^{2 u} \bar{\mc{K}}_y ) ] \\
&= e^{-2 u} \left( 1 - \frac{1}{2} \trace \ul{H} + s^{-1} E \right) \text{,}
\end{align*}
where we also noted that $\bar{\lapl} = e^{-2 u} \lapl$.
Consequently,
\[ \lapl \fol{v}{y} + e^{2 u} e^{2 \fol{v}{y}} = e^{2 u} ( \bar{\lapl} \fol{v}{y} + e^{2 \fol{v}{y}} ) = 1 - \frac{1}{2} \trace \ul{H} + (1 - t) E \text{,} \]
which proves \eqref{to.solve}.
Furthermore, combining Lemmas \ref{thm.est_v1} and \ref{thm.est_v2} immediately yields \eqref{eq.distortion_bound}, \eqref{eq.distortion_conv}, and \eqref{eq.distortion_conv_ex}, completing the proof of Lemma \ref{thm.to.solve}.

\appendix

\section{Changes of Foliations} \label{sect:cfol}

In this Appendix, we prove the change of foliations formulas from Section \ref{sect:prelim.cfol}.

\subsection{Proof of Proposition \ref{thm.cfol.phys}} \label{sect:cfol.phys}

First of all, we observe that the conjugate null vector fields $\ul{L}^\prime$ and $\ul{L}$, for the $s^\prime$- and $s$-foliations, respectively, satisfy
\begin{equation} \label{eq.cf_Lbar} \ul{L}^\prime = e^{-v} \ul{L} + (s - 1)^2 \mind^{ab} e^{-v} \nasla_a v \nasla_b v \cdot L + 2 (s - 1) e^{-v} \gras v \text{,} \end{equation}
where $\gras v$ is the $\mind$-gradient of $v$, i.e., $\gras^a v = \mind^{ab} \nasla_b v$.
To see this, one can directly compute that right-hand side $\ul{L}^\prime$ of \eqref{eq.cf_Lbar} satisfies
\[ g ( \ul{L}^\prime, \ul{L}^\prime ) \equiv 0 \text{,} \qquad g ( \ul{L}^\prime, L^\prime ) \equiv -2 \text{,} \qquad g (\ul{L}^\prime, X^\prime ) \equiv 0 \text{.} \]

Furthermore, for convenience, we define the coefficients
\[ \mc{L}_a = (s - 1) \nasla_a v \text{,} \qquad \mc{M} = (s - 1)^2 \mind^{ab} \nasla_a v \nasla_b v \text{,} \]
which show up in the formulas \eqref{eq.cf_X} and \eqref{eq.cf_Lbar}.
Also, we let $e_a$ and $e^\prime_a$ denote the frame elements in the $s$ and $s^\prime$-foliations corresponding to the index $a$.

\subsubsection{Ricci Coefficients}

For \eqref{eq.cf_chi}, we have, by the definitions of $\chi$ and $\chi^\prime$,
\begin{align*}
\chi^\prime_{ab} &= g ( D_{ e_a^\prime } L^\prime, e_b^\prime ) = g ( D_{ e_a + \mc{L}_a L } ( e^v L ), e_b + \mc{L}_b L ) = g ( D_{ e_a } ( e^v L ), e_b ) = e^v \chi_{ab} \text{,}
\end{align*}
where we used that $L$ is normal to $\mc{N}$.
For \eqref{eq.cf_zeta}, we do similar computations:
\begin{align*}
\zeta^\prime_a &= \frac{1}{2} g ( D_{ e_a + \mc{L}_a L } ( e^v L ), e^{-v} \ul{L} + e^{-v} \mc{M} L + 2 (s - 1) e^{-v} \gras v ) \\
&= \frac{1}{2} g ( D_{ e_a } L, \ul{L} ) + \frac{1}{2} \nasla_a v \cdot g (L, \ul{L}) + (s - 1) \cdot g ( D_{ e_a } L, \gras v ) \\
&= \zeta_a + (s - 1) \mind^{bc} \nasla_b v \cdot \chi_{ac} - \nasla_a v \text{.}
\end{align*}

The process for $\ul{\chi}^\prime$ and $\ul{\chi}$ is similar, but longer:
\begin{align*}
\ul{\chi}^\prime_{ab} &= g ( D_{ e_a + \mc{L}_a L } ( e^{-v} \ul{L} ), e_b + \mc{L}_b L ) + g ( D_{ e_a + \mc{L}_a L } ( e^{-v} \mc{M} L ), e_b + \mc{L}_b L ) \\
&\qquad + 2 g ( D_{ e_a + \mc{L}_a L } [ (s - 1) e^{-v} \gras v ], e_b + \mc{L}_b L ) \\
&= I_1 + I_2 + I_3 \text{.}
\end{align*}
The simplest term to handle is $I_2$:
\[ I_2 = g ( D_{e_a} ( e^{-v} \mc{M} L ), e_b ) = e^{-v} (s - 1)^2 \mind^{cd} \nasla_c v \nasla_d v \cdot \chi_{ab} \text{.} \]
Next, for $I_1$, we expand:
\begin{align*}
I_1 &= g ( D_{e_a} ( e^{-v} \ul{L} ), e_b ) + \mc{L}_a \cdot g ( D_L ( e^{-v} \ul{L} ), e_b ) + \mc{L}_b \cdot g ( D_{e_a} ( e^{-v} \ul{L} ), L ) \\
&\qquad + \mc{L}_a \mc{L}_b \cdot g ( D_L ( e^{-v} \ul{L} ), L ) \\
&= e^{-v} \ul{\chi}_{ab} - 2 e^{-v} \mc{L}_a \cdot \zeta_b + 2 e^{-v} \mc{L}_b \nasla_a v - 2 e^{-v} \mc{L}_b \zeta_a \\
&= e^{-v} \ul{\chi}_{ab} + 2 (s - 1) e^{-v} \nasla_a v \nasla_b v - 2 (s - 1) e^{-v} ( \nasla_a v \cdot \zeta_b + \nasla_b v \cdot \zeta_a ) \text{.}
\end{align*}
Finally, for $I_3$:
\begin{align*}
I_3 &= 2 (s - 1) \cdot g ( D_{e_a} ( e^{-v} \gras v ), e_b ) + 2 e^{-v} \mc{L}_a \cdot g ( D_L [ (s - 1) \gras v ], e_b ) \\
&\qquad + 2 (s - 1) \mc{L}_b \cdot g ( D_{e_a} ( e^{-v} \gras v), L ) + 2 (s - 1) e^{-v} \mc{L}_a \mc{L}_b \cdot g ( D_L \gras v, L ) \\
&= - 2 (s - 1) e^{-v} \nasla_a v \nasla_b v + 2 (s - 1) e^{-v} \cdot g ( D_{e_a} \gras v, e_b ) + 2 e^{-v} \mc{L}_a \nasla_b v \\
&\qquad + 2 (s - 1) e^{-v} \mc{L}_a \cdot g ( D_L \gras v, e_b ) + 2 (s - 1) e^{-v} \mc{L}_b \cdot g ( D_{e_a} \gras v, L ) \\
&= - 2 (s - 1) e^{-v} \nasla_a v \nasla_b v + 2 (s - 1) e^{-v} \nasla_{ab} v + 2 e^{-v} \mc{L}_a \nasla_b v \\
&\qquad + 2 (s - 1) e^{-v} \mc{L}_a \cdot \nasla_s \nasla_b v - 2 (s - 1) e^{-v} \mind^{cd} \mc{L}_b \nasla_c v \cdot \chi_{ad} \text{,}
\end{align*}
where we used that $\nasla_a$ and $\nasla_s$ are the projections of the corresponding spacetime covariant derivatives onto the $\mc{S}_s$'s.
Since $v$ is $s$-independent, \eqref{eq.comm} yields
\[ \nasla_s \nasla_b v = - \mind^{cd} \chi_{bc} \nasla_d v \text{,} \]
and it follows that
\begin{align*}
I_3 &= 2 (s - 1) e^{-v} \nasla_{ab} v - 2 (s - 1)^2 e^{-v} \mind^{cd} \nasla_c v ( \nasla_a v \cdot \chi_{bd} + \nasla_b v \cdot \chi_{ad} ) \text{.}
\end{align*}
Combining $I_1$, $I_2$, and $I_3$ yields \eqref{eq.cf_chibar}.

\subsubsection{Curvature Coefficients}

Next, we establish \eqref{eq.cf_alpha}-\eqref{eq.cf_betabar}, which involve the curvature components.
For \eqref{eq.cf_alpha}, we have
\begin{align*}
\alpha^\prime_{ab} &= e^{2v} \cdot R ( L, e_a + \mc{L}_a L, L e_b + \mc{L}_b L ) = e^{2v} \cdot R ( L, e_a, L, e_b ) = e^{2v} \alpha_{ab} \text{.}
\end{align*}
Similarly, for $\beta^\prime$ and $\beta$, we compute
\begin{align*}
\beta^\prime_a &= \frac{1}{2} e^v \cdot R ( L, \ul{L} + \mc{M} L + 2 (s - 1) \gras v, L, e_a + \mc{L}_a L ) \\
&= \frac{1}{2} e^v \cdot R ( L, \ul{L}, L, e_a ) + (s - 1) e^v \cdot R ( L, \gras v, L, e_a ) \text{,}
\end{align*}
from which \eqref{eq.cf_beta} follows.
Moreover, \eqref{eq.cf_rho} is a consequence of the identities
\begin{align*}
\rho^\prime &= \frac{1}{4} R ( L, \ul{L} + 2 (s - 1) \gras v, L, \ul{L} + 2 (s - 1) \gras v ) \\
&= \frac{1}{4} R ( L, \ul{L}, L, \ul{L} ) + (s - 1) \cdot R ( L, \ul{L}, L, \gras v) + (s - 1)^2 \cdot R ( L, \gras v, L, \gras v ) \text{.}
\end{align*}

Next, let $\{ e_1, e_2 \}$ be a positively oriented orthonormal frame on the $\mc{S}_s$'s.
Then,
\begin{align*}
\sigma &= \frac{1}{4} {}^\star R ( L, \ul{L}, L, \ul{L} ) = - \frac{1}{2} R ( e_1, e_2, L, \ul{L} ) \text{,}
\end{align*}
by the definition of the Hodge dual; an analogous identity holds for $\sigma^\prime$.
Therefore,
\begin{align*}
\sigma^\prime &= -\frac{1}{2} R ( e_1 + \mc{L}_1 L, e_2 + \mc{L}_2 L, L, \ul{L} + \mc{M} L + 2 (s - 1) \gras v ) \\
&= - \frac{1}{2} R ( e_1, e_2, L, \ul{L} ) - \frac{1}{2} R ( L, \mc{L}_1 e_2 - \mc{L}_2 e_1, L, \ul{L} ) - (s - 1) \cdot R ( e_1, e_2, L, \gras v ) \\
&\qquad - (s - 1) \cdot R ( L, \mc{L}_1 e_2 - \mc{L}_2 e_1, L, \gras v ) \\
&= I_1 + I_2 + I_3 + I_4 \text{.}
\end{align*}
First, $I_1$ is simply $\sigma$.
Recalling the definition of $\mc{L}_a$, then
\[ I_2 = - (s - 1) \vind^{ab} \nasla_a v \cdot \beta_b \text{,} \qquad I_4 = - (s - 1)^2 \vind^{ac} \mind^{bd} \nasla_a v \nasla_b v \cdot \alpha_{cd} \text{.} \]
For $I_3$, we expand and use that $\operatorname{Ric} \equiv 0$:
\begin{align*}
I_3 &= (s - 1) \nasla_1 v \cdot R ( e_2, e_1, L, e_1 ) - (s - 1) \nasla_2 v \cdot R ( e_1, e_2, L, e_2 ) \\
&= \frac{1}{2} (s - 1) \nasla_1 v \cdot R ( e_2, L, L, \ul{L} ) - \frac{1}{2} (s - 1) \nasla_2 v \cdot R ( e_1, L, L, \ul{L} ) \\
&= - (s - 1) \vind^{ab} \nasla_a v \cdot \beta_b \text{.}
\end{align*}
Combining all the above results in \eqref{eq.cf_sigma}.

Finally, for $\ul{\beta}^\prime$ and $\ul{\beta}$, we once again expand:
\begin{align*}
\ul{\beta}^\prime_a &= \frac{1}{2} e^{-v} \cdot R ( \ul{L}, L, \ul{L} + \mc{M} L + 2 (s - 1) \gras v, e_a ) \\
&\qquad + \frac{1}{2} e^{-v} \mc{L}_a \cdot R ( \ul{L}, L, \ul{L} + \mc{M} L + 2 (s - 1) \gras v, L ) \\
&\qquad + (s - 1) e^{-v} \cdot R ( \gras v, L, \ul{L} + \mc{M} L + 2 (s - 1) \gras v, e_a ) \\
&\qquad + (s - 1) e^{-v} \mc{L}_a \cdot R ( \gras v, L, \ul{L} + \mc{M} L + 2 (s - 1) \gras v, L ) \\
&= J_1 + J_2 + J_3 + J_4 \text{.}
\end{align*}
We can then expand $J_1$ as
\begin{align*}
J_1 &= \frac{1}{2} e^{-v} \cdot R ( \ul{L}, L, \ul{L}, e_a ) + \frac{1}{2} e^{-v} \mc{M} \cdot R ( \ul{L}, L, L, e_a ) \\
&\qquad + (s - 1) e^{-v} \cdot R ( \ul{L}, L, \gras v, e_a ) \\
&= e^{-v} \ul{\beta}_a - (s - 1)^2 e^{-v} \gamma^{bc} \nasla_b v \nasla_c v \cdot \beta_a + 2 (s - 1) e^{-v} {}^\star \nasla_a v \cdot \sigma \text{.}
\end{align*}
Similar computations yield
\begin{align*}
J_2 &= 2 (s - 1) e^{-v} \nasla_a v \cdot \rho + 2 (s - 1)^2 e^{-v} \mind^{bc} \nasla_a v \nasla_b v \cdot \beta_c \text{,} \\
J_4 &= 2 (s - 1)^2 e^{-v} \mind^{bc} \nasla_a v \nasla_b v \cdot \beta_c + 2 (s - 1)^3 e^{-v} \mind^{bd} \mind^{ce} \nasla_a v \nasla_b v \nasla_c v \cdot \alpha_{de} \text{.}
\end{align*}
Finally, for the remaining term $J_3$, we decompose
\begin{align*}
J_3 &= (s - 1) e^{-v} \cdot R ( \gras v, L, \ul{L}, e_a ) + (s - 1) e^{-v} \mc{M} \cdot R ( \gras v, L, L, e_a ) \\
&\qquad + 2 (s - 1)^2 e^{-v} \cdot R ( \gras v, L, \gras v, e_a ) \\
&= J_{31} + J_{32} + J_{33} \text{.}
\end{align*}
The simplest term is $J_{32}$:
\[ J_{32} = - (s - 1)^3 e^{-v} \mind^{bd} \mind^{ce} \nasla_b v \nasla_c v \nasla_e v \cdot \alpha_{ad} \text{.} \]
Let $e_{a\star}$ denote the frame element which is not $e_a$ (i.e., $e_{a\star} = e_2$ if $e_a = e_1$, and vice versa).
With this notation, we can expand $J_{31}$ and $J_{33}$ as
\begin{align*}
J_{31} &= - (s - 1) e^{-v} \nasla_a v \cdot R ( L, e_a, \ul{L}, e_a ) - (s - 1) e^{-v} \nasla_{a\star} v \cdot R ( L, e_{a\star}, \ul{L}, e_a ) \\
&= - \frac{1}{4} (s - 1) e^{-v} \nasla_a v \cdot R ( L, \ul{L}, \ul{L}, L ) - \frac{1}{2} (s - 1) e^{-v} \nasla_{a\star} v \cdot R ( L, \ul{L}, e_{a\star}, e_a ) \\
&= (s - 1) e^{-v} \nasla_a v \cdot \rho - (s - 1) e^{-v} {}^\star \nasla_a v \cdot \sigma \text{,} \\
J_{33} &= 2 (s - 1)^2 e^{-v} \nasla_{a\star} v \nasla_a v \cdot R ( e_a, L, e_{a\star}, e_a ) \\
&\qquad + 2 (s - 1)^2 e^{-v} \nasla_{a\star} v \nasla_{a\star} v \cdot R ( e_{a\star}, L, e_{a\star}, e_a ) \\
&= 2 (s - 1)^2 e^{-v} \nasla_{a\star} v \nasla_a v \cdot \beta_{a\star} - 2 (s - 1)^2 e^{-v} \nasla_{a\star} v \nasla_{a\star} v \cdot \beta_a \\
&= 2 (s - 1)^2 e^{-v} \vind^{bc} {}^\star \nasla_a v \nasla_b v \cdot \beta_c \text{.}
\end{align*}
Finally, combining all the above, we obtain \eqref{eq.cf_betabar}.

\subsubsection{Covariant Derivatives}

It remains to prove the formula \eqref{eq.cf_nasla} for changes of covariant derivatives.
For this, we work in terms of corresponding coordinates transported from the initial sphere $\mc{S} = \mc{S}_1$, as described in Sections \ref{sect:prelim.fol} and \ref{sect:prelim.cfol}.

Let $\Samma^c_{ab}$ and $\Samma^{\prime c}_{ab}$ denote the Christoffel symbols for $\mind$ and $\mind^\prime$, with respect to these coordinates.
Since the coordinate vector fields are related via \eqref{eq.cf_X}, then
\begin{align*}
\Samma^{\prime c}_{ab} &= \frac{1}{2} \mind^{cd} ( \partial^\prime_a \mind^\prime_{db} + \partial^\prime_b \mind^\prime_{da} - \partial^\prime_d \mind^\prime_{ab} ) \\
&= \Samma^c_{ab} + \frac{1}{2} (s - 1) \mind^{cd} ( \nasla_a v \cdot L \mind_{db} + \nasla_b v \cdot L \mind_{da} - \nasla_d v \cdot L \mind_{ab} ) \\
&= \Samma^c_{ab} + (s - 1) \mind^{cd} ( \nasla_a v \cdot \chi_{db} + \nasla_b v \cdot \chi_{da} - \nasla_d v \cdot \chi_{ab} ) \text{,}
\end{align*}
since $\mf{L}_s \mind = 2 \chi$.
As a result, we see that
\begin{align*}
\nasla^\prime_a \Psi^\prime_{ u_1 \dots u_l } &= \partial^\prime_a \Psi^\prime_{ u_1 \dots u_l } - \sum_{i = 1}^l \Samma^{\prime c}_{a u_i} \Psi^\prime_{ u_1 \hat{c}_i u_l } \\
&= \partial_a \Psi^\prime_{u_1 \dots u_l} + (s - 1) \nasla_a v \cdot \mf{L}_s \Psi^\prime_{u_1 \dots u_l} - \sum_{i = 1}^l \Samma^c_{a u_i} \Psi^\prime_{u_1 \hat{c}_i u_l} \\
&\qquad - (s - 1) \mind^{cd} ( \nasla_a v \cdot \chi_{d u_i} + \nasla_{u_i} v \cdot \chi_{da} - \nasla_d v \cdot \chi_{a u_i} ) \Psi^\prime_{u_1 \hat{c}_i u_l} \text{,}
\end{align*}
where the notation $u_1 \hat{c}_i u_l$ is defined in the same manner as in \eqref{eq.comm}.
Recalling the definitions of $\nasla$ and $\nasla_s$ results in \eqref{eq.cf_nasla}, as desired.

\subsection{Proofs of Proposition \ref{thm.cfol.renorm} and Corollary \ref{thm.cfol.init}} \label{sect:cfol.renorm}

First of all, since
\[ H = \chi - s^{-1} \mind \text{,} \qquad H^\prime = \chi^\prime - s^{\prime -1} \mind^\prime \text{,} \]
\eqref{eq.cf_H} follows from \eqref{eq.cf_mind}, \eqref{eq.cf_chi}, and \eqref{eq.cf_sk}.
Similarly, for \eqref{eq.cf_Z}, we use \eqref{eq.cf_zeta}:
\footnote{Note that $\nasla$ and $\nabla$ act identically on scalar functions.}
\begin{align*}
Z^\prime_a &= s^\prime \zeta^\prime_a = e^{-v} s \mc{B}_1 [ s^{-1} Z_a + ( s - 1 ) s^{-2} \gamma^{bc} \nabla_b v \cdot H_{ac} - s^{-1} \nabla_a v ] \text{.}
\end{align*}
This immediately implies \eqref{eq.cf_Z}.
Next, since
\[ \ul{H} = s^{-1} \ul{\chi} + s^{-2} ( 1 - s^{-1} m_{\rm S} ) \mind \text{,} \qquad \ul{H}^\prime = s^{\prime -1} \ul{\chi}^\prime + s^{\prime -2} ( 1 - s^{\prime -1} m_{\rm S} ) \mind^\prime \text{,} \]
then \eqref{eq.cf_Hbar} follows from \eqref{eq.cf_mind}, \eqref{eq.cf_chibar}, and \eqref{eq.cf_sk}.

Continuing on to the curvature components, by \eqref{eq.cf_alpha} and \eqref{eq.cf_sk}, we have
\begin{align*}
A^\prime_{ab} &= s^{\prime 2} \alpha^\prime_{ab} = e^{-2v} s^2 \mc{B}_2 e^{2v} \alpha_{ab} \text{,}
\end{align*}
from which \eqref{eq.cf_A} follows.
Similarly, by \eqref{eq.cf_beta} and \eqref{eq.cf_sk},
\begin{align*}
B^\prime_a &= s^3 e^{-3v} \mc{B}_3 [ e^v \beta_a + s^{-2} (s - 1) \gamma^{bc} e^v \nabla_b v \cdot \alpha_{ac} ] \text{,}
\end{align*}
and \eqref{eq.cf_B} follows.
Finally, the identities \eqref{eq.cf_R} and \eqref{eq.cf_Bbar} are consequences of analogous computations using \eqref{eq.cf_rho}, \eqref{eq.cf_sigma}, and \eqref{eq.cf_betabar}.

For \eqref{eq.cf_nabla}, we again consider Christoffel symbols with respect to corresponding transported coordinates.
Let $\Gamma^c_{ab}$ and $\Gamma^{\prime c}_{ab}$ denote these Christoffel symbols, with respect to $\gamma$ and $\gamma^\prime$, respectively.
Since $\gamma$ is a rescaling of $\mind$ by a constant factor on each $S_t$, then $\Gamma^c_{ab}$ is equal to the corresponding Christoffel symbol $\Samma^c_{ab}$ with respect to $\mind$.
By similar reasoning, $\Gamma^{\prime c}_{ab} = \Samma^{\prime c}_{ab}$ as well, hence
\begin{align*}
\Gamma^{\prime c}_{ab} &= \Gamma^c_{ab} + s^{-2} (s - 1) \gamma^{cd} ( \nabla_a v \cdot H_{db} + \nabla_b v \cdot H_{da} - \nabla_d v \cdot H_{ab} ) \\
&\qquad + s^{-1} (s - 1) ( \nabla_a v \cdot \delta^c_b + \nabla_b v \cdot \delta^c_a - \gamma^{cd} \nabla_d v \cdot \gamma_{ab} ) \text{,}
\end{align*}
where we used the relation between $\Samma^{\prime c}_{ab}$ and $\Samma^c_{ab}$ within the proof of \eqref{eq.cf_nasla}.
Thus,
\begin{align*}
\nabla^\prime_a \Psi^\prime_{ u_1 \dots u_l } &= \partial^\prime_a \Psi^\prime_{ u_1 \dots u_l } - \sum_{i = 1}^l \Gamma^{\prime c}_{a u_i} \Psi^\prime_{ u_1 \hat{c}_i u_l } \\
&= \partial_a \Psi^\prime_{u_1 \dots u_l} + s^{-2} (s - 1) \nabla_a v \cdot \mf{L}_t \Psi^\prime_{u_1 \dots u_l} - \sum_{i = 1}^l \Gamma^c_{a u_i} \Psi^\prime_{u_1 \hat{c}_i u_l} \\
&\qquad - s^{-2} (s - 1) \gamma^{cd} \sum_{i = 1}^l ( \nabla_a v \cdot H_{d u_i} + \nabla_{u_i} v \cdot H_{da} - \nabla_d v \cdot H_{a u_i} ) \Psi^\prime_{u_1 \hat{c}_i u_l} \\
&\qquad + s^{-1} (s - 1) \sum_{i = 1}^l ( \nabla_{u_i} v \cdot \Psi^\prime_{u_1 \hat{a}_i u_l} - \gamma^{cd} \nabla_d v \cdot \gamma_{a u_i} \Psi^\prime_{u_1 \hat{c}_i u_l} ) \\
&\qquad + l s^{-1} (s - 1) \nabla_a v \cdot \Psi^\prime_{u_1 \dots u_l} \text{,}
\end{align*}
where we also applied \eqref{eq.cf_X}.
Recalling the definitions of $\nabla$ and $\nabla_t$ yields \eqref{eq.cf_nabla}.

\subsubsection{Initial Values}

Finally, to prove Corollary \ref{thm.cfol.init}, we assume now that $t = 0$.
Then, the identities \eqref{eq.cf_H_init}-\eqref{eq.cf_Hbar_init} follow immediately from \eqref{eq.cf_H}-\eqref{eq.cf_Hbar}.
Moreover, \eqref{eq.cf_H_deriv_init} follows from \eqref{eq.cf_H_init}, since $\gamma_0 = \gamma^\prime_0$, and hence $\nabla^\prime$ and $\nabla$ behave identically on $S_0 = S_0^\prime$.
Since Proposition \ref{thm.cfol.renorm} also implies that when $t = 0$,
\[ R^\prime = R \text{,} \qquad \hat{H}^\prime_{ab} = e^v \hat{H}_{ab} \text{,} \qquad \hat{\ul{H}}^\prime_{ab} = e^{-v} \hat{\ul{H}}_{ab} \text{,} \qquad \gamma^{\prime ab} \nabla^\prime_a Z^\prime_b = \gamma^{ab} \nabla_a Z_b + \lapl v \text{,} \]
combining this with the definition \eqref{eq.renorm_maf} yields \eqref{eq.cf_M_init}.

\section{Some Proofs of Estimates} \label{sect:estimates}

In this appendix, we prove some estimates needed in the proof of Theorem \ref{the.thm} which are more technical, in the sense that they require more machinery from \cite{alex_shao:nc_inf, shao:stt}.

\subsection{Additional Definitions and Results} \label{sect:estimates.norms}

For a few estimates, we will require one more addition to the general formalism described in Section \ref{sect:prelim.fol}; for simplicitly, we deal exclusively with the renormalized setting.
As in \cite{alex_shao:nc_inf, shao:stt}, for a fixed $t_0 \in [0, 1)$, we define $\cint_{t_0}^t \Psi$ to be the \emph{definite covariant ($t$-)integral from $S_{t_0}$}, i.e., the (unique) horizontal tensor field $\Psi$ which vanishes on $S_{t_0}$ and satisfies $\nabla_t \cint_{t_0}^t \Psi = \Psi$.

We will also require some additional norms, used throughout \cite{alex_shao:nc_inf}.
All definitions will be with respect to a renormalized system $(\mc{N}, \gamma)$.
\begin{itemize}
\item Define the $N^{1 i}_{t, x}$-norm on horizontal tensor fields to be the first-order Sobolev norm on $\mc{N}$, along with a measure of ``initial data":
\[ \| \Psi \|_{ N^{1 i}_{t, x} } = \| \nabla_t \Psi \|_{ L^{2, 2}_{t, x} } + \| \nabla \Psi \|_{ L^{2, 2}_{t, x} } + \| \Psi \|_{ L^{2, 2}_{t, x} } + \| \Psi \|_{ H^{1/2}_x (S_0) } \text{.} \]

\item The $N^{0 \star}_{t, x}$-norm is defined
\[ \| \Psi \|_{ N^{0 \star}_{t, x} } = \inf \{ \| \Phi \|_{ N^{1 i}_{t, x} } \mid \nabla_t \Phi = \Psi \} \text{,} \]
and measures the smallest $N^{1 i}_{t, x}$-norm of any covariant $t$-antiderivative of $\Psi$.

\item Finally, the ``sum" norm, $N^{0 \star}_{t, x} + B^{2, 0}_{t, x}$, measures the ``smallest" way in which a horizontal tensor field can be decomposed into a sum in $N^{0 \star}_{t, x}$ and $B^{2, 0}_{t, x}$:
\[ \| \Psi \|_{ N^{0 \star}_{t, x} + B^{2, 0}_{t, x} } = \inf \{ \| \Psi_1 \|_{ N^{0 \star}_{t, x} } + \| \Psi_2 \|_{ B^{2, 0}_{t, x} } \mid \Psi_1 + \Psi_2 = \Psi \} \text{.} \]
\end{itemize}
For detailed discussions behind these norms, see \cite[Sect. 3.3]{alex_shao:nc_inf}.

These decomposition norms enter our analysis via the main theorem of \cite{alex_shao:nc_inf}.
Indeed, there are some additional estimates in \cite[Thm. 5.1]{alex_shao:nc_inf} featuring these decomposition norms, which were omitted from Theorem \ref{thm.nc_renorm}:

\begin{proposition} \label{thm.nc_renorm_ex}
Assume the hypotheses of Theorem \ref{thm.nc_renorm}.
Then, in addition to \eqref{eq.renorm_est} and \eqref{eq.renorm_est_curv}, we have the following estimates:
\begin{equation} \label{eq.renorm_est_ex} \| \nabla H \|_{ N^{0 \star}_{t, x} + B^{2, 0}_{t, x} } + \| \nabla Z \|_{ N^{0 \star}_{t, x} + B^{2, 0}_{t, x} } + \| \nabla_t \ul{H} \|_{ N^{0 \star}_{t, x} + B^{2, 0}_{t, x} } \lesssim \Gamma \text{.} \end{equation}
\end{proposition}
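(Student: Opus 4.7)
The statement is explicitly identified in the paper as part of \cite[Thm.~5.1]{alex_shao:nc_inf}, so the proposal is essentially to invoke that result, but let me sketch how I would extract these three bounds from the framework of that paper rather than simply citing.

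The plan is to exhibit, for each of $\nabla H$, $\nabla Z$, and $\nabla_t \ul{H}$, an explicit splitting $\Psi = \Psi_1 + \Psi_2$ with $\Psi_1$ controlled in $N^{0\star}_{t,x}$ and $\Psi_2$ controlled in $B^{2,0}_{t,x}$. The splittings are dictated by the renormalized null structure equations of \cite[Prop.~4.2]{alex_shao:nc_inf}. For instance, the Codazzi equation for $H$ expresses $\nabla H$ schematically as $\nabla(\trace H)\cdot\gamma + \mf{curl}\,H + B + (\text{quadratic})$, and the Bianchi-type equation for $Z$ expresses $\nabla Z$ in terms of $M$, $R$, $H\cdot\ul{H}$, and $Z\cdot Z$. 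In each case, the top-order/curvature piece (the one in $B$, $R$, $\ul{B}$) has a natural covariant $t$-antiderivative, because the second Bianchi identities allow the curvature components to be written as $\nabla_t(\text{something bounded})$ modulo lower-order products. These covariant antiderivatives, together with the trace/quadratic pieces, constitute $\Psi_1 \in N^{0\star}_{t,x}$, and the identifications \eqref{eq.renorm_ass_flux}, \eqref{eq.renorm_ass_init}, and \eqref{eq.renorm_est} give the $N^{1i}_{t,x}$-bound on the antiderivative by $\Gamma$. The residual, irreducibly ``borderline'' pieces (typically nonlinear products like $\hat{H}\cdot\hat{H}$ or $Z\cdot Z$ that resist being written as $\nabla_t(\cdot)$ without losing regularity) are placed in $\Psi_2$, and these are bounded in $B^{2,0}_{t,x}$ by the product estimates in Besov spaces from \cite{shao:stt} together with the $B^{\infty,0}_{t,x}$-bounds on $\nabla(\trace H)$, $M$, and $\ul{H}$ already present in \eqref{eq.renorm_est}.

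For $\nabla_t \ul{H}$ the splitting is the most transparent: the $\ul{H}$-evolution equation has schematic form $\nabla_t \ul{H} = \ul{B} + (\text{connection quadratic terms}) + (\text{lower order})$. The curvature term $\ul{B}$ is itself of the form $\nabla_t(\cdot)$ up to quadratic corrections via the Bianchi identity; the quadratic terms in connection coefficients fall into $B^{2,0}_{t,x}$ by the product estimates, using the fact that $\ul{H}\in B^{\infty,0}_{t,x}$ and $H,Z \in L^{4,\infty}_{x,t}$ from \eqref{eq.renorm_est}. The analogous reductions for $\nabla H$ and $\nabla Z$ use the Codazzi equations in the same way.

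The main obstacle, and the reason this genuinely requires the apparatus of \cite{alex_shao:nc_inf, shao:stt} rather than elementary manipulation, is the borderline regularity of the products: several of the ``quadratic'' terms above are barely in $L^{2,2}_{t,x}$ and cannot be placed in $N^{0\star}_{t,x}$ (their candidate antiderivatives fail the $H^{1/2}_x(S_0)$-boundary estimate). It is precisely these terms that force the introduction of the $B^{2,0}_{t,x}$ component in the sum norm. Verifying that every such product indeed admits a Besov bound at the scale $\lesssim \Gamma$ reduces to an invocation of the tensorial Littlewood--Paley product estimates developed in \cite[Sect.~3]{shao:stt}, applied term-by-term to the right-hand sides of the structure equations. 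Once this case-analysis is set up, \eqref{eq.renorm_est_ex} follows directly from \eqref{eq.renorm_est} and \eqref{eq.renorm_ass_flux}, with the bounding constant absorbed into the $\lesssim$.
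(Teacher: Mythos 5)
Your proposal matches the paper: the paper's entire proof of Proposition \ref{thm.nc_renorm_ex} is a citation to \cite[Thm.~5.1]{alex_shao:nc_inf}, which is exactly what your first sentence does, and the estimates \eqref{eq.renorm_est_ex} are indeed among the conclusions stated there. Your additional sketch of how the decomposition into $N^{0\star}_{t,x}$ and $B^{2,0}_{t,x}$ pieces is produced inside that reference (structure equations supplying covariant $t$-antiderivatives for the curvature terms, Besov product estimates from \cite{shao:stt} for the borderline quadratic terms) is a reasonable account of the cited argument, but it is not required for this paper's proof, which simply invokes the earlier result.
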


\begin{proof}
See \cite[Thm. 5.1]{alex_shao:nc_inf}.
\end{proof}

The inequality \eqref{eq.renorm_est_ex}, in particular that for $\nabla H$, will be useful in the context of the following integrated product estimate from \cite[Cor. 3.10]{alex_shao:nc_inf}; see \cite{shao:stt} for details.

\begin{proposition} \label{thm.renorm_est_ex}
Assume the hypotheses of Theorem \ref{thm.nc_renorm}.
Then,
\begin{equation} \label{eq.trace_est_ex} \| \cint_0^t ( \Phi \otimes \Psi ) \|_{ B^{\infty, 0}_{t, x} } \lesssim \| \Phi \|_{ N^{0 \star}_{t, x} + B^{2, 0}_{t, x} } \| \Psi \|_{ N^{1 i}_{t, x} \cap L^{\infty, 2}_{x, t} } \text{.} \end{equation}
\end{proposition}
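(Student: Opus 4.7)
The plan is to follow the strategy in the proof of \cite[Cor.~3.10]{alex_shao:nc_inf}, which in turn rests on the sharp trace estimates of \cite{shao:stt}. Since the $N^{0\star}_{t,x} + B^{2,0}_{t,x}$-norm is an infimum over decompositions, it suffices to split the argument into two cases and control $\|\cint_0^t (\Phi \otimes \Psi)\|_{B^{\infty,0}_{t,x}}$ in terms of $\|\Phi\|_{N^{0\star}_{t,x}} \|\Psi\|_{N^{1i}_{t,x} \cap L^{\infty,2}_{x,t}}$ and $\|\Phi\|_{B^{2,0}_{t,x}} \|\Psi\|_{N^{1i}_{t,x} \cap L^{\infty,2}_{x,t}}$ separately; the full bound then follows by taking the infimum over splittings $\Phi = \Phi_1 + \Phi_2$. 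In both cases the left-hand side is expanded via the geometric Littlewood-Paley operators $P_k$ on the spheres $(S_t,\gamma_t)$, and one controls the dyadic pieces $\|P_k \cint_0^t (\Phi \otimes \Psi)\|_{L^{\infty,2}_{t,x}}$ and $\|P_{<0} \cint_0^t (\Phi \otimes \Psi)\|_{L^{\infty,2}_{t,x}}$, then sums in $k \geq 0$.

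For the $B^{2,0}_{t,x}$ piece, I would directly estimate each $P_k \cint_0^t(\Phi_2 \otimes \Psi)$ by pulling the $P_k$ inside the integral (using the $\nabla_t$-$P_k$ commutator bounds of \cite[Sect.~4]{shao:stt}, which introduce acceptable error terms controlled by $\nabla H$), applying H\"older's inequality in $t$ and $x$, and using the embedding $L^{\infty,2}_{x,t} \hookrightarrow L^{2,\infty}_{x,t}$ together with the uniform $L^{\infty}_x$-control of $\Psi$ via $N^{1i}_{t,x}$-Sobolev embedding on the $S_t$'s. The $B^{2,0}_{t,x}$-regularity of $\Phi_2$ then allows summability in $k$. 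For the $N^{0\star}_{t,x}$ piece, the key trick is integration by parts in $t$: writing $\Phi_1 = \nabla_t \tilde\Phi_1$ for some $\tilde\Phi_1$ with $\|\tilde\Phi_1\|_{N^{1i}_{t,x}} \lesssim \|\Phi_1\|_{N^{0\star}_{t,x}}$, we get
\[
\cint_0^t (\Phi_1 \otimes \Psi) = \tilde\Phi_1 \otimes \Psi - \tilde\Phi_1|_{S_0} \otimes \Psi|_{S_0} - \cint_0^t \tilde\Phi_1 \otimes \nabla_t\Psi + (\text{curvature corrections}),
\]
where the corrections arise from $[\nabla_t, P_k]$ and from acting $\nabla_t$ through the tensor product in the horizontal covariant system (cf.\ the commutation formula \eqref{eq.comm}). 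Each of the three main pieces is then bounded in $B^{\infty,0}_{t,x}$ using the $N^{1i}_{t,x}$-control of $\tilde\Phi_1$ against the $N^{1i}_{t,x} \cap L^{\infty,2}_{x,t}$-control of $\Psi$, via the bilinear Besov-Sobolev product estimates established in \cite{shao:stt}.

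The main obstacle I expect is the careful Littlewood-Paley analysis of the boundary term $\tilde\Phi_1 \otimes \Psi$ and of the bulk term $\cint_0^t \tilde\Phi_1 \otimes \nabla_t \Psi$: summing $\|P_k(\tilde\Phi_1 \otimes \Psi)\|_{L^{\infty,2}_{t,x}}$ in $k$ requires a frequency trichotomy (high-low, low-high, and high-high interactions) on the spheres $S_t$, combined with sharp Bernstein inequalities to absorb one derivative onto $\tilde\Phi_1$ or $\Psi$ while preserving the correct $L^2$-$L^\infty$ balance. The low-high regime uses $\Psi \in L^{\infty,2}_{x,t}$, the high-low regime uses $\nabla \tilde\Phi_1 \in L^{2,2}_{t,x}$, and the high-high regime requires the refined Sobolev embedding $H^{1/2}_x \hookrightarrow L^4_x$ on each sphere, together with the initial-data control of $\tilde\Phi_1|_{S_0}$ encoded in the $N^{1i}_{t,x}$-norm. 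The commutator terms from $[\nabla_t, P_k]$ and from the Lie-derivative action on tensors must be tracked carefully, but they are quadratic in $H$ and $\Psi$ and are absorbed using \eqref{eq.renorm_est}. Since this bookkeeping is developed in detail in \cite{shao:stt} and \cite[Sect.~3]{alex_shao:nc_inf}, I would in practice invoke those results rather than re-derive them, and note the estimate as a direct application of \cite[Cor.~3.10]{alex_shao:nc_inf} to the renormalized system at hand.
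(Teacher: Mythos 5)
Your proposal is correct and, in the end, takes exactly the route the paper does: the paper gives no proof of this proposition at all, simply quoting it as the integrated product estimate of \cite[Cor.~3.10]{alex_shao:nc_inf} (with the underlying trace/Besov machinery in \cite{shao:stt}), which is precisely your concluding step. Your preliminary sketch of the Littlewood--Paley and integration-by-parts-in-$t$ argument is a plausible reconstruction of what those references do, but since the paper itself offers nothing beyond the citation, there is nothing further to compare.
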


\subsection{Proof of Propositions \ref{thm.renorm_cfol} and \ref{thm.renorm_cfol_bound}} \label{sect:estimates.cfol.small}

We begin with the last inequality in \eqref{eq.renorm_cfol_bound}.
Considering $A^\prime$, $B^\prime$, $R^\prime$, $B^\prime$ as $t$-horizontal tensor fields (see Section \ref{sect:prelim.cfol}), then we must bound the right-hand sides of \eqref{eq.cf_A}-\eqref{eq.cf_Bbar} in the $L^{2, 2}_{t, x}$-norm.
This is a direct application of \eqref{eq.renorm_est}, \eqref{eq.renorm_cfol_ass}, and H\"older's inequality.
For example,
\[ \| B^\prime \|_{ L^{2, 2}_{t, x} } \lesssim \| B \|_{ L^{2, 2}_{t, x} } + \| A \|_{ L^{2, 2}_{t, x} } \lesssim \Gamma \text{,} \]
where we applied \eqref{eq.cf_B}.
Note that we used \eqref{eq.renorm_cfol_ass} and the smallness of $\Gamma$ in order to uniformly bound various instances of $v$ and $\nabla v$ within \eqref{eq.cf_B}.
The remaining coefficients $A^\prime$, $R^\prime$, $\ul{B}^\prime$ can be similarly controlled.

Recalling \eqref{eq.renorm_cfol_ass} and taking into account the discussions in Section \ref{sect:prelim.cfol.small} following Proposition \ref{thm.renorm_cfol}, we see that iterated Lebesgue norms with respect to the $t$- and $t^\prime$-foliations of $\mc{N}$ (applied to corresponding horizontal fields) are comparable.
Thus, from the $t$-foliation estimates on $A^\prime$, $B^\prime$, $R^\prime$, and $\ul{B}^\prime$, we conclude
\[ \| A^\prime \|_{ L^{2, 2}_{t^\prime, x} (\gamma^\prime) } + \| B^\prime \|_{ L^{2, 2}_{t^\prime, x} (\gamma^\prime) } + \| R^\prime \|_{ L^{2, 2}_{t^\prime, x} (\gamma^\prime) } + \| \ul{B}^\prime \|_{ L^{2, 2}_{t^\prime, x} (\gamma^\prime) } \lesssim \Gamma \text{.} \]
In other words, \eqref{eq.renorm_ass_flux} also holds in the $t^\prime$-foliation.

Next, we show that \eqref{eq.renorm_ass_init} also remains true in the $t^\prime$-foliation.
The keys are to note that $\gamma^\prime_0$ and $\gamma_0$ are identical, and to use the formulas in Proposition \ref{thm.cfol.init} to express $H^\prime$, $Z^\prime$, $\ul{H}^\prime$, and $M^\prime$ on $S^\prime_0 = S_0$ in terms of the $t$-foliation.
First, by \eqref{eq.cf_H_init}, along with \eqref{eq.renorm_ass_init} and \eqref{eq.renorm_cfol_ass}, we can estimate
\[ \| \trace^\prime H^\prime \|^\prime_{ L^\infty_x (S^\prime_0, \gamma^\prime_0) } \lesssim \| \trace H \|_{ L^\infty_x (S_0, \gamma_0) } + \| e^v - 1 \|_{ L^\infty_x (S_0, \gamma_0) } \lesssim \Gamma \text{.} \]
Furthermore, by \eqref{eq.renorm_cfol_ass}, along with the product estimates of \cite[Cor. 3.7]{shao:stt},
\begin{align*}
\| H^\prime \|_{ H^{1/2}_x (S^\prime_0, \gamma^\prime_0) } &\lesssim \| e^v H \|_{ H^{1/2}_x (S_0, \gamma_0) } + \| e^v - 1 \|_{ H^{1/2}_x (S_0, \gamma_0) } \\
&\lesssim \| H \|_{ H^{1/2}_x (S_0, \gamma_0) } + \| e^v - 1 \|_{ H^1_x (S_0, \gamma_0) } \text{.}
\end{align*}
Since the $H^1_x$-norm, defined in Section \ref{sect:prelim.norms}, is equivalent to the standard norm,
\footnote{See \cite[Sect. 2.3]{shao:stt}.}
\[ \| F \|_{ H^1_x (S_0) } \simeq \| \nabla F \|_{ L^2_x (S_0) } + \| F \|_{ L^2_x (S_0) } \text{,} \]
then \eqref{eq.renorm_ass_init} and \eqref{eq.renorm_cfol_ass} imply that
\[ \| H^\prime \|_{ H^{1/2}_x (S^\prime_0, \gamma^\prime_0) } \lesssim \Gamma \text{.} \]

By similar estimates using \eqref{eq.cf_Z_init}-\eqref{eq.cf_M_init}, we derive
\begin{align*}
\| Z^\prime \|_{ H^{1/2}_x (S^\prime_0) } &\lesssim \| Z \|_{ H^{1/2}_x (S_0) } + \| \nabla v \|_{ H^1_x (S_0) } \lesssim \Gamma \text{,} \\
\| \ul{H}^\prime \|_{ B^0_x (S^\prime_0) } &\lesssim \| e^{-v} \ul{H} \|_{ B^0_x (S_0) } + \| e^{-v} - 1 \|_{ H^1_x (S_0) } \lesssim \Gamma \text{,} \\
\| \nabla^\prime ( \trace^\prime H^\prime ) \|_{ B^0_x (S^\prime_0) } &\lesssim \| \nabla ( \trace H ) \|_{ B^0_x (S_0) } + \| \trace H \|_{ B^0_x (S_0) } + \| \nabla v \|_{ H^1_x (S_0) } \lesssim \Gamma \text{,} \\
\| M^\prime \|_{ B^0_x (S^\prime_0) } &\lesssim \| M \|_{ B^0_x (S_0) } + \| \lapl v \|_{ B^0_x (S_0) } \lesssim \Gamma \text{.}
\end{align*}
where we used \eqref{eq.renorm_ass_init}, \eqref{eq.renorm_cfol_ass}, \cite[Cor. 3.7]{shao:stt}, and the observation that the $B^0_x$-norm is bounded by the $H^1_x$-norm (see \cite[Prop. 2.2]{shao:stt}).
The preceding estimates imply that \eqref{eq.renorm_ass_init} indeed holds true with respect to the $t^\prime$-foliation.

Thus, with $\Gamma$ sufficiently small, we that the hypotheses, and hence the conclusions, of Theorem \ref{thm.nc_renorm} hold with respect to the $t^\prime$-foliation.
This completes the proof of Proposition \ref{thm.renorm_cfol}.
Appealing once again to the comparability of Lebesgue norms in the $t$- and $t^\prime$-foliations yields \eqref{eq.renorm_cfol_bound}, which proves Proposition \ref{thm.renorm_cfol_bound}.

\subsection{Proof of Lemmas \ref{thm.limit_lem_M} and \ref{thm.limit_lem_ZHbar}} \label{sect:estimates.limit}

We proceed like in \cite[Sect. 5.2]{alex_shao:nc_inf}.
While the basic ideas are simple, some extra care must be taken to state them correctly.
Fix an arbitrary bounded vector field $\bar{X}$ on $S_0$, and define the following:
\begin{itemize}
\item Extend $\bar{X}$ to a $t$-horizontal vector field $X$ on $\mc{N}$ by \emph{equivariant transport}, that is, by the condition $\mf{L}_t X \equiv 0$.
\footnote{In particular, note that $\Phi^\ast_{ S_t } ( X_t ) = \bar{X}$ for any $t$.}

\item Similarly, for each $y \in [0, 1)$, we extend $\bar{X}$ as a $\fol{t}{y}$-horizontal vector field $\fol{X}{y}$ on $\mc{N}$ by the analogous condition $\mf{L}_{ \fol{t}{y} } \fol{X}{y} \equiv 0$.
\end{itemize}
Observe in addition that, for the same reasons as for corresponding transported coordinate vector fields, $\fol{X}{y}$ and $X$ are related via \eqref{eq.cf_X}.

\subsubsection{Proof of \eqref{eq.limit_lem_ZHbar}}

For the first inequality in \eqref{eq.limit_lem_ZHbar}, it suffices to show
\begin{align*}
{\bf L}_Z^\prime &= \int_{ S_0 } | [ \Phi_{ \fol{\Sigma}{y_2} }^\ast ( \fol{Z}{y_2}_{y_2} ) - \Phi_{ \fol{\Sigma}{y_1} }^\ast ( \fol{Z}{y_1}_{y_1} ) ] (\bar{X}) |^2 d \epsilon_0 \\
&= \int_{ S_0 } | \Phi_{ \fol{\Sigma}{y_2} }^\ast [ \fol{Z}{y_2}_{y_2} ( \fol{X}{y_2}_{y_2} ) ] - \Phi_{ \fol{\Sigma}{y_1} }^\ast [ \fol{Z}{y_1}_{y_1} (\fol{X}{y_1}_{y_1} ) ] |^2 d \epsilon_0 \text{.}
\end{align*}
is controlled by the right-hand side of this inequality (with constant also depending on $X$).
As we are comparing the $\fol{Z}{y_i}$'s on different spheres $\fol{\Sigma}{y_i}$'s, the first step is to pull $\fol{Z}{y_2}$ from $\fol{\Sigma}{y_2}$ to $\fol{\Sigma}{y_1}$.
Consider points $P_i \in \fol{\Sigma}{y_i}$ which lie on a common null generator of $\mc{N}$.
Since $\mf{L}_{ \fol{t}{y_2} } \fol{X}{y_2} = 0$, it follows that
\footnote{Although the integral on the right-hand side is, technically, a covariant integral as defined in Appendix \ref{sect:estimates.norms}, since we are dealing with scalar quantities, this coincides with the usual integral.}
\[ \fol{Z}{y_2} ( \fol{X}{y_2} ) |_{ P_1 } = \fol{Z}{y_2} ( \fol{X}{y_2} ) |_{ P_2 } - [ \cint_{ \fol{t}{y_2} (P_1) }^{ \fol{t}{y_2} } ( \mf{L}_{ \fol{t}{y_2} } \fol{Z}{y_2} ) ( \fol{X}{y_2} ) ] |_{ P_2 } \text{.} \]
Moreover, since the $\fol{X}{y}$'s and $X$ are related via \eqref{eq.cf_X}, it follows that
\[ \fol{Z}{y_2} ( \fol{X}{y_2} ) |_{ P_1 } = \fol{Z}{y_2} ( \fol{X}{y_1} ) |_{ P_1 } \text{,} \]
where on the right-hand side, we treated $\fol{Z}{y_2}$ as a $\fol{t}{y_1}$-horizontal field.

Thus, combining the above and keeping in mind the comparability of all the renormalized metrics involved (see the discussion in Section \ref{sect:prelim.cfol.small}), we obtain that
\begin{align*}
{\bf L}_Z^\prime &\lesssim \int_{ S_0 } | \Phi_{ \fol{\Sigma}{y_1} }^\ast [ ( \fol{Z}{y_2}_{ \fol{\Sigma}{y_1} } - \fol{Z}{y_1}_{y_1} ) (\fol{X}{y_1}_{y_1} ) ] |^2 d \epsilon_0 + \| \mf{L}_{ \fol{t}{y_2} } \fol{Z}{y_2} \|_{ L^{2, 1}_{x, \fol{t}{y_2}} (\fol{\Sigma}{y_1}, \fol{\Sigma}{y_2}) } = I_1 + I_2 \text{,}
\end{align*}
where $\fol{Z}{y_2}_{ \fol{\Sigma}{y_1} }$ denotes the restricted of $\fol{Z}{y_2}$ to $\fol{\Sigma}{y_1}$, treated as a $\fol{t}{y_1}$-horizontal vector field.
Again, due to the comparability of all the renormalized metrics,
\[ I_1 \lesssim \| \fol{Z}{y_2} - \fol{Z}{y_1} \|_{ L^2_x (\fol{\Sigma}{y_1}, \fol{\gamma}{y_1}_{y_1}) }^2 \lesssim \| \fol{Z}{y_2} - \fol{Z}{y_1} \|_{ L^{2, \infty}_{x, t} (\fol{\Sigma}{y_1}, S_1) }^2 \text{.} \]
Furthermore, from the definition of $\nabla_{ \fol{t}{y_2} }$ and by H\"older's inequality, we can estimate
\[ I_2 \lesssim \| \fol{\nabla}{y_2}_{ \fol{t}{y_2} } \fol{Z}{y_2} \|_{ L^{2, 1}_{x, \fol{t}{y_2}} (\fol{\Sigma}{y_1}, \fol{\Sigma}{y_2}) }^2 + \| \fol{H}{y_2} \|_{ L^{\infty, 2}_{x, \fol{t}{y_2}} (\fol{\Sigma}{y_1}, \fol{\Sigma}{y_2}) }^2 \| \fol{Z}{y_2} \|_{ L^{2, 1}_{x, \fol{t}{y_2}} (\fol{\Sigma}{y_1}, \fol{\Sigma}{y_2}) }^2 \text{.} \]
This proves the first inequality in \eqref{eq.limit_lem_ZHbar}.
The remaining bound in \eqref{eq.limit_lem_ZHbar} is similarly proved by contracting the $\fol{\ul{H}}{y}$'s with two equivariantly transported vector fields.

\subsubsection{Proof of \eqref{eq.limit_lem_M}}

In this case, one has an additional convenience: since the $\fol{M}{y}$'s are scalar, we need not involve contractions with other vector fields.
First, by an analogous argument as for ${\bf L}_Z$, we obtain
\begin{align*}
{\bf L}_M &\lesssim \int_{ S_0 } | \Phi_{ \fol{\Sigma}{y_1} }^\ast ( \fol{M}{y_2}_{ \fol{\Sigma}{y_1} } - \fol{M}{y_1}_{y_1} ) | d \epsilon_0 + \| \mf{L}_{ \fol{t}{y_2} } \fol{M}{y_2} \|_{ L^{1, 1}_{\fol{t}{y_2}, x} (\fol{\Sigma}{y_1}, \fol{\Sigma}{y_2}) } = J_1 + J_2 \text{.}
\end{align*}
Since $\nabla_{ \fol{t}{y_2} }$ and $\mf{L}_{ \fol{t}{y_2} }$ act identically on scalar fields,
\[ J_2 = \| \fol{\nabla}{y_2}_{ \fol{t}{y_2} } \fol{M}{y_2} \|_{ L^{1, 1}_{\fol{t}{y_2}, x} (\fol{\Sigma}{y_1}, \fol{\Sigma}{y_2}) } \text{.} \]

To handle $J_1$, we pull $\fol{M}{y_2} - \fol{M}{y_1}$ from $\fol{\Sigma}{y_1}$ to $S_0$.
If $P_0 \in S_0$ and $P_1 \in \fol{\Sigma}{y_1}$ lie on the same null generator of $\mc{N}$, then as before,
\begin{align*}
\fol{M}{y_1} |_{ P_1 } &= \fol{M}{y_1} |_{ P_0 } - [ \cint_0^{ \fol{t}{y_1} } \fol{\nabla}{y_1}_{ \fol{t}{y_1} } \fol{M}{y_1} ] |_{ P_1 } \text{,} \\
\fol{M}{y_2} |_{ P_1 } &= \fol{M}{y_2} |_{ P_0 } - [ \cint_0^{ \fol{t}{y_2} } \fol{\nabla}{y_2}_{ \fol{t}{y_2} } \fol{M}{y_2} ] |_{ P_1 } \text{.}
\end{align*}
Therefore, we can bound
\[ J_1 \lesssim \int_{ S_0 } | \fol{M}{y_2}_0 - \fol{M}{y_1}_0 | d \epsilon_0 + \| \fol{\nabla}{y_2}_{ \fol{t}{y_2} } \fol{M}{y_2} - \fol{\nabla}{y_1}_{ \fol{t}{y_1} } \fol{M}{y_1} \|_{ L^{1, 1}_{t, x} } \text{.} \]
Combining the above completes the proof of \eqref{eq.limit_lem_M}.

\subsection{Transport Estimates} \label{sect:estimates.transport}

In Section \ref{sect:distortion.construct}, a common step is to solve for a function, say $v$, on a level sphere $(S_y, \gamma_y)$ and to then extend $v$ to $\mc{N}$ by the condition $\nabla_t v \equiv 0$.
If $v$ is bounded on $S_y$, then $v$ is trivially bounded on all of $\mc{N}$.
However, this becomes less trivial for covariant derivatives of $v$, since the connections $\nabla$ now depend on the metrics $\gamma_t$.
Here, we prove some properties stating that, in the appropriate norms, this change of metric will not affect the estimates.

\begin{lemma} \label{thm.est_v}
Assume that the hypotheses of Theorem \ref{thm.nc_renorm} hold.
Let $v$ be a smooth function on $\mc{N}$ satisfying $\nabla_t v \equiv 0$, i.e., $v$ is constant on the null generators of $\mc{N}$.
In addition, fix $q \in (2, \infty]$, $p = \frac{2q}{q + 2} \in (1, 2]$, and $y \in [0, 1)$, and assume
\begin{equation} \label{eq.est_v_ass} \| \nabla^2 v \|_{ L^p_x (S_y) } + \| \nabla v \|_{ L^q_x (S_y) } + \| v \|_{ L^\infty_x (S_y)} \lesssim D \text{,} \end{equation}
for some constant $D$.
Then, the following estimates hold for $v$ on all of $\mc{N}$: 
\begin{align}
\label{eq.est_v} \| \nabla_t \nabla^2 v \|_{ L^{p, 2}_{x, t} } + \| \nabla_t \nabla v \|_{ L^{q, 2}_{x, t} } &\lesssim \Gamma D \text{,} \\
\notag \| \nabla^2 v \|_{ L^{p, \infty}_{x, t} } + \| \nabla v \|_{ L^{q, \infty}_{x, t} } + \| v \|_{ L^{\infty, \infty}_{t, x} } &\lesssim D \text{.}
\end{align}
\end{lemma}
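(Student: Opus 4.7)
The plan is to exploit the hypothesis $\nabla_t v \equiv 0$ by deriving transport equations for $\nabla v$ and $\nabla^2 v$ along the null generators of $\mc{N}$, and then to propagate the data on $S_y$ forward (and backward) in $t$ using Grönwall-type pointwise estimates. Since $v$ is a scalar and $\mf{L}_t v = \nabla_t v = 0$, in transported coordinates $v$ is literally $t$-independent, so the bound $\|v\|_{L^\infty_x(S_y)} \lesssim D$ lifts immediately to $\|v\|_{L^{\infty,\infty}_{t,x}} \lesssim D$, yielding the last term in \eqref{eq.est_v}.

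For the first derivatives, the commutation formula \eqref{eq.comm} applied to the scalar $v$ yields
\begin{equation}
\nabla_t \nabla_a v \;=\; -\,\gamma^{cd} H_{ac} \nabla_d v,
\end{equation}
and a second application of \eqref{eq.comm} to the $1$-form $\nabla v$ gives schematically
\begin{equation}
\nabla_t \nabla^2 v \;=\; H \cdot \nabla^2 v \;+\; \nabla H \cdot \nabla v.
\end{equation}
Both are linear transport equations along the generators, with coefficients $H$ and forcing $\nabla H \cdot \nabla v$ (in the second case).

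I would then integrate these equations pointwise along each generator. Since $\|H\|_{L^{\infty,2}_{x,t}} \lesssim \Gamma$ by \eqref{eq.renorm_est}, Cauchy--Schwarz gives $\int_y^{t} |H|(t',\omega)\,dt' \lesssim \Gamma$ for a.e.\ $\omega$, so Grönwall applied to the first transport equation produces $|\nabla v|(t,\omega) \lesssim |\nabla v|(y,\omega)$. Taking $L^q_x$ on each $S_t$ and invoking Corollary \ref{thm.nc_renorm.norms} (applied with $S_y$ in place of $S_0$, which is legitimate since $\gamma_t \simeq \gamma_y$ uniformly) yields $\|\nabla v\|_{L^{q,\infty}_{x,t}} \lesssim D$. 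The same Grönwall scheme applied to the second equation, with the forcing controlled pointwise by $|\nabla H|(\cdot,\omega)\,|\nabla v|(\cdot,\omega)$, and then Hölder in space with the relation $1/p = 1/q + 1/2$, gives
\[
\|\nabla^2 v\|_{L^{p,\infty}_{x,t}} \;\lesssim\; \|\nabla^2 v\|_{L^p_x(S_y)} + \|\nabla H\|_{L^{2,2}_{x,t}} \|\nabla v\|_{L^{q,\infty}_{x,t}} \;\lesssim\; D.
\]

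Finally, the $L^{q,2}_{x,t}$ and $L^{p,2}_{x,t}$ bounds for the $t$-derivatives follow directly by taking iterated norms of the two transport equations and applying Hölder: the identity $\nabla_t \nabla v = -H \cdot \nabla v$ gives $\|\nabla_t \nabla v\|_{L^{q,2}_{x,t}} \leq \|H\|_{L^{\infty,2}_{x,t}} \|\nabla v\|_{L^{q,\infty}_{x,t}} \lesssim \Gamma D$, and the second identity gives $\|\nabla_t \nabla^2 v\|_{L^{p,2}_{x,t}}$ controlled by $\|H\|_{L^{\infty,2}_{x,t}} \|\nabla^2 v\|_{L^{p,\infty}_{x,t}} + \|\nabla H\|_{L^{2,2}_{x,t}} \|\nabla v\|_{L^{q,\infty}_{x,t}} \lesssim \Gamma D$, where in the second term the Hölder balance $1/p = 1/q + 1/2$ is exactly what is used. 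The only mild subtlety is the a priori self-referential character of the bound on $\nabla^2 v$, which is resolved in the standard way by Grönwall using the smallness of $\Gamma$ to absorb the coefficient $\|H\|_{L^{\infty,2}_{x,t}}$ in the homogeneous part. No further ingredient beyond \eqref{eq.comm}, the main curvature-flux bounds of Theorem \ref{thm.nc_renorm}, and metric comparability is required.
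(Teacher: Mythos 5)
Your proposal is correct and follows essentially the same route as the paper: the commutation formula \eqref{eq.comm} applied once and twice to $v$, pointwise integration along the null generators with the homogeneous $H$-term absorbed via the smallness of $\Gamma$ (Gr\"onwall), H\"older with the balance $1/p = 1/2 + 1/q$ against $\nabla H$ in $L^{2,1}_{x,t}$, and then the $L^{q,2}_{x,t}$/$L^{p,2}_{x,t}$ bounds for the $\nabla_t$-derivatives read off from the same identities. The only cosmetic difference is that the paper obtains the $L^{q,\infty}_{x,t}$ bound for $\nabla v$ by citing \cite[Prop.~4.12]{shao:stt}, whereas you re-derive it directly by the same transport argument.
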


\begin{proof}
The $L^{\infty, \infty}_{t, x}$-bound for $v$ is trivial, while the $L^{q, \infty}_{x, t}$-bound for $\nabla v$ follows immediately from \cite[Prop. 4.12]{shao:stt}, since $v$ is scalar.
Furthermore, since
\[ \nabla_t \nabla_a v = - \gamma^{cd} H_{ac} \nabla_d v \]
by \eqref{eq.comm}, then
\[ \| \nabla_t \nabla v \|_{ L^{q, 2}_{x, t} } \lesssim \| H \|_{ L^{\infty, 2}_{x, t} } \| \nabla v \|_{ L^{q, \infty}_{x, t} } \lesssim \Gamma D \text{.} \]

The estimates for $\nabla^2 v$ are derived analogously, although we must perform the steps manually rather than rely on \cite{shao:stt}.
First, applying \eqref{eq.comm} twice yields
\footnote{Recall the second fundamental form $k$ in the renormalized setting is precisely $H$.}
\begin{align*}
\nabla_t \nabla_{ab} v_r &= - \gamma^{cd} \nabla_a ( H_{bc} \nabla_d v_r ) - \gamma^{cd} H_{ac} \nabla_{db} v_r - \gamma^{cd} (\nabla_b H_{ac} - \nabla_c H_{ab}) \nabla_d v_r \\
&= - \gamma^{cd} ( H_{ac} \nabla_{db} v_r + H_{bc} \nabla_{da} v_r ) - \gamma^{cd} ( \nabla_a H_{bc} + \nabla_b H_{ac} - \nabla_c H_{ab} ) \nabla_d v_r \text{.}
\end{align*}
As a result, for each $\tau \in [0, 1)$ and $x \in \Sph^2$, we can bound
\begin{align*}
| \nabla^2 v | |_{ (\tau, x) } &\lesssim | \nabla^2 v | |_{ (y, x) } + | \cint^t_y ( H \otimes \nabla^2 v ) | |_{ (\tau, x) } + | \cint^t_y ( \nabla H \otimes \nabla v ) | |_{ (\tau, x) } \\
&\lesssim | \nabla^2 v | |_{ (y, x) } + \left| \int_y^\tau | H | | \nabla^2 v | |_{ (\tau^\prime, x) } d \tau^\prime \right| + \left| \int_y^\tau | \nabla H | | \nabla v | |_{ (\tau^\prime, x) } d \tau^\prime \right| \text{.}
\end{align*}
Taking a supremum over $\tau$ and then an $L^p$-norm over $x$ (and applying \eqref{eq.renorm_est}) yields
\begin{align*}
\| \nabla^2 v \|_{ L^{p, \infty}_{x, t} } &\lesssim \| \nabla^2 v \|_{ L^p_x (S_y) } + \| H \|_{ L^{\infty, 1}_{x, t} } \| \nabla^2 v \|_{ L^{p, \infty}_{x, t} } + \| \nabla H \|_{ L^{2, 1}_{x, t} } \| \nabla v \|_{ L^{q, \infty}_{x, t} } \\
&\lesssim D + \Gamma \| \nabla^2 v \|_{ L^{p, \infty}_{x, t} } + \Gamma D \text{.}
\end{align*}
Since $\Gamma$ is small, we obtain the desired estimate for $\nabla^2 v$.
Finally, we can bound
\begin{align*}
\| \nabla_t \nabla^2 v \|_{ L^{p, 2}_{x, t} } \lesssim \| H \|_{ L^{\infty, 2}_{x, t} } \| \nabla^2 v \|_{ L^{p, \infty}_{x, t} } + \| \nabla H \|_{ L^{2, 2}_{t, x} } \| \nabla v \|_{ L^{q, \infty}_{x, t} } \lesssim \Gamma D \text{.}
\end{align*}
This completes the proof of \eqref{eq.est_v}.
\end{proof}

\begin{remark}
One also can prove variants of Lemma \ref{thm.est_v}, applying over only a portion of $\mc{N}$.
In particular, given any spherical cut $\Sigma$ of $\mc{N}$, by following through most of the proof of Lemma \ref{thm.est_v}, one obtains the estimate
\begin{align}
\label{eq.est_v_partial} \| \nabla^2 v \|_{ L^{p, \infty}_{x, t} (S_y, \Sigma) } &\lesssim \| \nabla^2 v \|_{ L^p_x (S_y) } + \| H \|_{ L^{\infty, 1}_{x, t} (S_y, \Sigma) } \| \nabla^2 v \|_{ L^{p, \infty}_{x, t} (S_y, \Sigma) } \\
\notag &\qquad + \| \nabla H \|_{ L^{2, 1}_{x, t} (S_y, \Sigma) } \| \nabla v \|_{ L^{q, \infty}_{x, t} (S_y, \Sigma) } \text{.}
\end{align}
Note one can also take $\Sigma = S_1$ in \eqref{eq.est_v_partial}.
\end{remark}

We also require the following variant of Lemma \ref{thm.est_v}.

\begin{lemma} \label{thm.est_v_ex}
Assume that the hypotheses of Theorem \ref{thm.nc_renorm} hold.
Let $v$ be a smooth function on $\mc{N}$ satisfying $\nabla_t v \equiv 0$.
Fix $y \in [0, 1)$, and assume
\begin{equation} \label{eq.est_v_ex_ass} \| \nabla^2 v \|_{ B^0_x (S_y) } + \| \nabla v \|_{ L^\infty_x (S_y) } + \| v \|_{ L^\infty_x (S_y)} \lesssim D \text{,} \end{equation}
for some constant $D$.
Then, the following estimates hold for $v$ on all of $\mc{N}$:
\begin{equation} \label{eq.est_v_ex} \| \nabla^2 v \|_{ B^{\infty, 0}_{t, x} \cap L^{2, \infty}_{x, t} } + \| \nabla v \|_{ L^{\infty, \infty}_{t, x} } + \| v \|_{ L^{\infty, \infty}_{t, x} } \lesssim D \text{.} \end{equation}
\end{lemma}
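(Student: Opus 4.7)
My plan is to follow the structure of the proof of Lemma \ref{thm.est_v}, but adapted to the stronger norms on both sides. First, the $L^{\infty,\infty}_{t,x}$ bound on $v$ is immediate, since $\nabla_t v \equiv 0$ implies $v$ is constant along null generators. For $\nabla v$, I would use the commutator identity $\nabla_t \nabla_a v = -\gamma^{cd} H_{ac} \nabla_d v$, which yields along each null generator the pointwise inequality
$$|\nabla v|(t,x) \leq |\nabla v|(y,x) + \left| \int_y^t |H| |\nabla v|(t',x) \, dt' \right|.$$
Since $\|H(\cdot,x)\|_{L^1_t} \lesssim \|H(\cdot,x)\|_{L^2_t} \lesssim \Gamma$ for a.e.\ $x$ by \eqref{eq.renorm_est} together with the boundedness of the $t$-interval, applying Gronwall pointwise gives $\|\nabla v\|_{L^{\infty,\infty}_{t,x}} \lesssim \|\nabla v\|_{L^\infty_x(S_y)} \lesssim D$.

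Next, the $L^{2,\infty}_{x,t}$ bound on $\nabla^2 v$ follows by essentially repeating the proof of Lemma \ref{thm.est_v} at $p = 2$, using the pointwise transport identity for $\nabla^2 v$ derived there. The required initial data bound $\|\nabla^2 v\|_{L^2_x(S_y)} \lesssim \|\nabla^2 v\|_{B^0_x(S_y)} \lesssim D$ is a direct consequence of the Littlewood--Paley characterization of the Besov norm, since $\|F\|_{B^0_x} = \sum_k \|P_k F\|_{L^2_x} + \|P_{<0} F\|_{L^2_x}$ controls $\|F\|_{L^2_x}$. Combining this with $\|H\|_{L^{\infty,1}_{x,t}} \lesssim \Gamma$, $\|\nabla H\|_{L^{2,1}_{x,t}} \lesssim \Gamma$ from \eqref{eq.renorm_est}, and the $L^{\infty,\infty}$ bound on $\nabla v$ just established, one arrives at $\|\nabla^2 v\|_{L^{2,\infty}_{x,t}} \lesssim D + \Gamma \|\nabla^2 v\|_{L^{2,\infty}_{x,t}} + \Gamma D$, and the smallness of $\Gamma$ permits absorption.

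The hard part is the Besov bound $\|\nabla^2 v\|_{B^{\infty,0}_{t,x}} \lesssim D$, since a pointwise transport estimate is insufficient to control a frequency-localized norm. My plan is to apply each Littlewood--Paley projector $P_k$ to the transport identity for $\nabla^2 v$ and propagate the resulting evolution using the commutator decomposition
$$\nabla_t (P_k \nabla^2 v) = P_k \nabla_t \nabla^2 v + [\nabla_t, P_k] \nabla^2 v,$$
where $\nabla_t \nabla^2 v$ is schematically $H \cdot \nabla^2 v + \nabla H \cdot \nabla v$. The boundedness of $[\nabla_t, P_k]$ and the requisite product estimates in Besov spaces are furnished by the machinery developed in \cite{shao:stt}. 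The goal is a frequency-by-frequency estimate
$$\|P_k \nabla^2 v\|_{L^{\infty,2}_{t,x}} \lesssim \|P_k \nabla^2 v\|_{L^2_x(S_y)} + \Gamma \cdot \mathfrak{e}_k,$$
where the errors $\mathfrak{e}_k$ sum appropriately in $k$ so as to be absorbed; summing then yields $\|\nabla^2 v\|_{B^{\infty,0}_{t,x}} \lesssim \|\nabla^2 v\|_{B^0_x(S_y)} \lesssim D$.

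A technically simpler alternative, which avoids direct commutation with $P_k$, is to work in transported coordinates anchored at $S_y$: in these coordinates one has $\nabla^2 v(t) - \nabla^2 v(y) = -(\Gamma(t) - \Gamma(y)) \cdot \partial v$, with $\Gamma(t) - \Gamma(y) = \int_y^t \partial_t \Gamma \, dt'$ controlled through integrated bounds on $\nabla H$ from Theorem \ref{thm.nc_renorm}. A product estimate of the form $\|f g\|_{B^0_x} \lesssim \|f\|_{B^0_x} \|g\|_{L^\infty_x}$ from \cite{shao:stt}, applied together with the already-established $L^\infty$ bound on $\partial v$ and Besov control of the Christoffel difference, then furnishes the desired estimate. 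Either route depends crucially on the low-regularity Littlewood--Paley technology of \cite{alex_shao:nc_inf, shao:stt}, which is where the principal technical effort resides.
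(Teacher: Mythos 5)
Your reduction of everything except the $B^{\infty,0}_{t,x}$-bound to Lemma \ref{thm.est_v} (taking $q=\infty$, $p=2$, and using that the $B^0_x$-norm controls the $L^2_x$-norm on $S_y$) is exactly what the paper does, and that part is fine. The gap lies in the Besov step, and it is located at the one point you wave at with ``the requisite product estimates \ldots are furnished by the machinery of \cite{shao:stt}.'' Both of your routes ultimately require controlling a $t$-integral of $\nabla H$ (against bounded factors) in a Besov norm on the spheres: in the frequency-localized scheme you must sum $\| \cint_0^t P_k ( \nabla H \otimes \nabla v ) \|_{ L^{\infty,2}_{t,x} }$ over $k$, and in the transported-coordinate alternative you must bound the difference of Christoffel symbols, whose $\partial_t$ is schematically $\nabla H$, in $B^0_x$. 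But the only bound you invoke is $\| \nabla H \|_{ L^{2,2}_{t,x} } \lesssim \Gamma$ from \eqref{eq.renorm_est}, and that is not enough: H\"older in $t$ gives only $L^2_x$ control of $\int \nabla H$, uniformly in $t$, and $L^2_x$ does not control $B^0_x$ (the $\ell^1$-sum over frequencies is lossy), while a slice-by-slice product estimate of the form $\| f g \|_{ B^0_x } \lesssim \| f \|_{ B^0_x } \| g \|_{ L^\infty_x }$ would require $\nabla H \in L^1_t B^0_x$, which is likewise unavailable. There is no room in the unit $t$-interval to upgrade, so neither scheme closes as written; the errors $\mathfrak{e}_k$ you hope ``sum appropriately'' are precisely where the argument breaks.

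The paper's proof avoids arguing slice-by-slice: it writes the change of $\nabla^2 v$ as the covariant integrals $\cint_0^t ( H \otimes \nabla^2 v )$ and $\cint_0^t ( \nabla H \otimes \nabla v )$, handles the first with the integrated bilinear estimate of \cite[Thm.~5.2]{alex_shao:nc_inf}, and handles the second with the trace-type estimate \eqref{eq.trace_est_ex}, which crucially requires the decomposition-norm bound $\| \nabla H \|_{ N^{0\star}_{t,x} + B^{2,0}_{t,x} } \lesssim \Gamma$ of Proposition \ref{thm.nc_renorm_ex} --- an extra conclusion of \cite[Thm.~5.1]{alex_shao:nc_inf} beyond Theorem \ref{thm.nc_renorm} as stated. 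That decomposition exhibits $\nabla H$ as a $\nabla_t$-derivative of a controlled quantity plus a Besov-integrable remainder, so the $t$-integral can be integrated by parts; this structural input (or some substitute for it) is the missing idea in your proposal. It then closes by a bootstrap: first estimate $\| \nabla^2 v \|_{ B^0_x (S_0) } \lesssim D + \Gamma \| \nabla^2 v \|_{ B^{\infty,0}_{t,x} }$, then propagate from $S_0$ to all of $\mc{N}$ and absorb the $\Gamma \| \nabla^2 v \|_{ B^{\infty,0}_{t,x} }$ term using the smallness of $\Gamma$. Two secondary cautions: the commutator $[ \nabla_t, P_k ]$ and the $t$-dependence of the geometric Littlewood--Paley projections are themselves delicate at this regularity (the paper sidesteps them by invoking the equivalence of $B^0_x$-norms on different slices via $t$-parallel frames), so their ``boundedness'' should not be taken for granted; and in your coordinate identity the needed Besov control of the Christoffel difference is not a consequence of Theorem \ref{thm.nc_renorm} alone, for the same reason as above.
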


\begin{proof}
By Lemma \ref{thm.est_v}, the only estimate left to prove is the $B^{\infty, 0}_{t, x}$-bound for $\nabla^2 v$.
The first step is to obtain a Besov estimate for $\nabla^2 v$ at $S_0$:
\footnote{One hidden step in the estimate below is the equivalence of the $B^0_x$-norms on the various $S_t$'s.  This can be shown using special $t$-parallel frames; see \cite[Prop. 5.2]{shao:stt} and \cite[Sect. 3.5]{shao:stt}.}
\begin{align*}
\| \nabla^2 v \|_{ B^0_x (S_0) } &\lesssim \| \nabla^2 v \|_{ B^0_x (S_y) } + \| \cint^t_0 \nabla_t \nabla^2 v \|_{ B^{\infty, 0}_{t, x} } \\
&\lesssim D + \| \cint^t_0 ( H \otimes \nabla^2 v ) \|_{ B^{\infty, 0}_{t, x} } + \| \cint^t_0 ( \nabla H \otimes \nabla v ) \|_{ B^{\infty, 0}_{t, x} } \text{.}
\end{align*}
Applying the integrated product estimate from \cite[Thm. 5.2]{alex_shao:nc_inf} with \eqref{eq.renorm_est} yields
\footnote{Alternatively, one can use \eqref{eq.trace_est_ex} to arrive at the same result.}
\[ \| \cint^t_0 ( H \otimes \nabla^2 v ) \|_{ B^{\infty, 0}_{t, x} } \lesssim ( \| \nabla H \|_{ L^{2, 2}_{t, x} } + \| H \|_{ L^{\infty, 2}_{x, t} } ) \| \nabla^2 v \|_{ B^{2, 0}_{t, x} } \lesssim \Gamma \| \nabla^2 v \|_{ B^{\infty, 0}_{t, x} } \text{.} \]

Next, applying \eqref{eq.renorm_est_ex} and \eqref{eq.trace_est_ex}, we obtain
\begin{align*}
\| \cint^t_0 ( \nabla H \otimes \nabla v ) \|_{ B^{\infty, 0}_{t, x} } &\lesssim \| \nabla H \|_{ N^{0 \star}_{t, x} + B^{2, 0}_{t, x} } \| \nabla v \|_{ N^{1i}_{t, x} \cap L^{\infty, 2}_{t, x} } \\
&\lesssim \Gamma [ \| \nabla_t \nabla v \|_{ L^{2, 2}_{t, x} } + \| \nabla^2 v \|_{ L^{2, 2}_{t, x} } + \| \nabla v \|_{ L^{2, 2}_{t, x} } + \| \nabla v \|_{ H^{1/2}_x (S_0) } ] \text{.}
\end{align*}
Applying Lemma \ref{thm.est_v} yields
\[ \| \cint^t_0 ( \nabla H \otimes \nabla v ) \|_{ B^{\infty, 0}_{t, x} } \lesssim \Gamma ( \| \nabla_t \nabla v \|_{ L^{\infty, 2}_{x, t} } + \| \nabla^2 v \|_{ L^{2, \infty}_{x, t} } + \| \nabla v \|_{ L^{\infty, \infty}_{t, x} } ) \lesssim \Gamma D \text{.} \]
Combining the above, it follows that
\[ \| \nabla^2 v \|_{ B^0_x (S_0) } \lesssim D + \Gamma \| \nabla^2 v \|_{ B^{\infty, 0}_{t, x} } \text{.} \]

We can now go from $S_0$ to any $S_t$.
By a similar process as above,
\begin{align*}
\| \nabla^2 v \|_{ B^{\infty, 0}_{t, x} } &\lesssim \| \nabla^2 v \|_{ B^0_x (S_0) } + \| \cint^t_0 ( H \otimes \nabla^2 v ) \|_{ B^{\infty, 0}_{t, x} } + \| \cint^t_0 ( \nabla H \otimes \nabla v ) \|_{ B^{\infty, 0}_{t, x} } \\
&\lesssim D + \Gamma \| \nabla^2 v \|_{ B^{\infty, 0}_{t, x} } + \Gamma D \text{.}
\end{align*}
Recalling that $\Gamma$ is very small completes the proof of \eqref{eq.est_v_ex}.
\end{proof}

\subsection{Proof of Lemma \ref{thm.est_v1}} \label{sect:estimates.v1}

To control the full second derivative of $\fol{v}{y}_1$, we appeal to the Hodge estimates of \cite{shao:stt}.
More specifically, consider the Hodge operator
\[ \mc{D}_1 \xi = \gamma^{ab} \nabla_a \xi_b - i \epsilon^{ab} \nabla_a \xi_b \text{,} \]
defined on horizontal $1$-forms.
\footnote{See \cite[Sect. 2.1]{alex_shao:nc_inf} or \cite[Sect. 2.1]{shao:stt} for details; see also \cite{chr_kl:stb_mink, kl_rod:cg}.}
In particular, since $\mc{D}_1 \nabla \fol{v}{y}_1 = \lapl \fol{v}{y}_1$, applying the Hodge estimates of \cite[Sect. 6.2]{shao:stt} results in the bound
\[ \| \nabla^2 \fol{v}{y}_1 \|_{ L^2_x (S_y) } + \| \nabla \fol{v}{y}_1 \|_{ L^2_x (S_y) } \lesssim \| \lapl \fol{v}{y}_1 \|_{ L^2_x (S_y) } \lesssim \Gamma \text{.} \]
Furthermore, by Poincar\'e's inequality,
\footnote{This is, in fact, a special case of the Hodge estimates in \cite[Prop. 6.5]{shao:stt}, with operator $\mc{D}_1^\ast$.
See the remark following \cite[Prop. 6.5]{shao:stt} for further details.}
\[ \| \fol{v}{y}_1 \|_{ L^2_x (S_y) } \lesssim \| \nabla \fol{v}{y}_1 \|_{ L^2_x (S_y) } \lesssim \Gamma \text{.} \]

Similar elliptic estimates hold for Besov norms; by \cite[Thm. 6.11]{shao:stt}, we have
\footnote{By the usual manipulations described in \cite{shao:stt} (i.e., considering a foliation with an equivariantly transported horizontal metric), \cite[Thm. 6.11]{shao:stt} is also applicable to single spheres.}
\[ \| \nabla^2 \fol{v}{y}_1 \|_{ B^0_x (S_y) } \lesssim \| \lapl \fol{v}{y}_1 \|_{ B^0_x (S_y) } \lesssim \Gamma \text{.} \]
Combining this with the $L^\infty$-embeddings in \cite[Prop. 2.7, Thm. 6.11]{shao:stt} yields
\begin{equation} \label{sharp.v1} \| \nabla^2 \fol{v}{y}_1 \|_{ B^0_x (S_y) } + \| \nabla \fol{v}{y}_1 \|_{ L^\infty_x (S_y) } + \| \fol{v}{y}_1 \|_{ L^\infty_x (S_y) } \lesssim \Gamma \text{.} \end{equation}
To extend \eqref{sharp.v1} to all of $\mc{N}$, we must deal with the changing geometries of the $\gamma_t$'s.
For this, we take advantage of the transport equation $\nabla_t \fol{v}{y}_1 \equiv 0$ and apply Lemmas \ref{thm.est_v} and \ref{thm.est_v_ex} to $\fol{v}{y}_1$.
This yields all the estimates in \eqref{eq.est_v1}.

\subsubsection{Proof of \eqref{eq.est_v1_cauchy}}

For any $x \in \Sph^2$, we have
\begin{align*}
\lapl ( \fol{v}{y_2}_1 - \fol{v}{y_1}_1 ) |_{ (y_1, x) } &= \lapl \fol{v}{y_2}_1 |_{ (y_2, x) } - \lapl \fol{v}{y_1}_1 |_{ (y_1, x) } - \cint_{y_1}^t \nabla_t \lapl \fol{v}{y_2}_1 d \tau |_{ (y_2, x) } \\
&= - \frac{1}{2} [ \trace \ul{H} |_{ (y_2, x) } - \trace \ul{H} |_{ (y_1, x) } ] + \frac{1}{2} [ \mc{A}_{y_2} ( \trace \ul{H} ) - \mc{A}_{y_1} ( \trace \ul{H} ) ] \\
&\qquad + (1 - y_2) [ E |_{ (y_2, x) } - \mc{A}_{y_2} (E) ] \\
&\qquad - (1 - y_1) [ E |_{ (y_1, x) } - \mc{A}_{y_1} (E) ] - \int_{y_1}^{y_2} \nabla_t \lapl \fol{v}{y_2}_1 |_{ (\tau, x) } d \tau \\
&= ( I_1 + I_2 + I_3 + I_4 + I_5 ) |_x \text{,}
\end{align*}
where we recalled \eqref{poisson}.
The next step is to take the $L^2_x$-norm over $x$ (while recalling that all such norms over the $(S_t, \gamma_t)$'s are equivalent).

For $I_1$, since $\trace \ul{H}$ has an $L^2_x$-limit at $S_1$, it follows that
\[ \lim_{y_1, y_2 \nearrow 1} \int_{ \Sph^2 } |I_1|^2 = 0 \text{.} \]
The same holds for $I_2$ for similar reasons, along with the fact that the areas of the $S_t$'s converge to a limit as $t \nearrow 1$.
The terms $I_3$ and $I_4$ are easier, since by \eqref{Eerror.est},
\[ \lim_{y_1, y_2 \nearrow 1} \int_{ \Sph^2 } ( |I_3|^2 + |I_4|^2 ) \lesssim \lim_{y_1, y_2 \nearrow 1} [1 - \min (y_1, y_2) ]^2 \| E \|_{ L^{2, \infty}_{x, t} }^2 = 0 \text{.} \]
Finally, for $I_5$, we apply \eqref{eq.est_v1}:
\[ \lim_{y_1, y_2 \nearrow 1} \int_{ \Sph^2 } |I_5|^2 \lesssim \lim_{y_1, y_2 \nearrow 1} \| \nabla_t \nabla^2 \fol{v}{y_2}_1 \|_{ L^{2, 1}_{x, t} (S_{y_1}, S_{y_2}) }^2 = 0 \text{.} \]

As a result, we have shown that
\[ \lim_{y_1, y_2 \nearrow 1} \| \lapl ( \fol{v}{y_2}_1 - \fol{v}{y_1}_1 ) \|_{ L^2 (S_{y_1}) } = 0 \text{.} \]
Furthermore, by elliptic estimates (see \cite[Sect. 6.2]{shao:stt}),
\[ \lim_{y_1, y_2 \nearrow 1} [ \| \nabla^2 ( \fol{v}{y_2}_1 - \fol{v}{y_1}_1 ) \|_{ L^2 (S_{y_1}) } + \| \nabla ( \fol{v}{y_2}_1 - \fol{v}{y_1}_1 ) \|_{ L^2 (S_{y_1}) } ] = 0 \text{.} \]
Since $\fol{v}{y_1}_1$ and $\fol{v}{y_2}_1$ are mean-free on $(S_{y_1}, \gamma_{y_1})$ and $(S_{y_2}, \gamma_{y_2})$, respectively, then
\[ \fol{v}{y_2}_1 - \fol{v}{y_1}_1 + \mc{A}_{y_2} (\fol{v}{y_2}_1) - \mc{A}_{y_1} (\fol{v}{y_2}_1) \]
has zero mean on $(S_{y_1}, \gamma_{y_1})$.
Thus, it follows from the Poincar\'e inequality (see the remark immediately after \cite[Prop. 6.5]{shao:stt}) that
\begin{align*}
\lim_{y_1, y_2 \nearrow 1} \| \fol{v}{y_2}_1 - \fol{v}{y_1}_1 \|_{ L^2_x (S_{y_1}) } &\lesssim \lim_{y_1, y_2 \nearrow 1} \| \nabla (\fol{v}{y_2}_1 - \fol{v}{y_1}_1) \|_{ L^2_x (S_{y_1}) } \\
&\qquad + \lim_{y_1, y_2 \nearrow 1} \| \mc{A}_{y_2} (\fol{v}{y_2}_1) - \mc{A}_{y_1} (\fol{v}{y_2}_1) \|_{ L^2_x } \text{.}
\end{align*}
The last term on the right-hand side vanishes, since the areas of the $S_t$'s converge as $t \nearrow 1$.
By standard Sobolev estimates (see \cite[Prop. 2.7]{shao:stt}), we obtain
\[ \lim_{y_1, y_2 \nearrow 1} [ \| \nabla^2 (\fol{v}{y_2}_1 - \fol{v}{y_1}_1) \|_{ L^2_x (S_{y_1}) } + \| \nabla (\fol{v}{y_2}_1 - \fol{v}{y_1}_1) \|_{ L^q_x (S_{y_1}) } + \| \fol{v}{y_2}_1 - \fol{v}{y_2}_1 \|_{ L^\infty_x (S_{y_1}) } ] = 0 \text{.} \]
Applying \eqref{eq.est_v}, with $p = \frac{2q}{q + 2} < 2$, yields the first two limits in \eqref{eq.est_v1_cauchy}.

For the final limit, we first expand:
\begin{align*}
\| \nabla^2 (\fol{v}{y_2}_1 - \fol{v}{y_1}_1) \|_{ L^{2, \infty}_{x, t} (\fol{S}{y_1}_{y_1}, S_1) } &\lesssim \| \nabla^2 (\fol{v}{y_2}_1 - \fol{v}{y_1}_1) \|_{ L^{2, \infty}_{x, t} (S_{y_1}, \fol{S}{y_1}_{y_1}) } \\
&\qquad + \| \nabla^2 (\fol{v}{y_2}_1 - \fol{v}{y_1}_1) \|_{ L^{2, \infty}_{x, t} (S_{y_1}, S_1) } \\
&= J_1 + J_2 \text{.}
\end{align*}
Applying \eqref{eq.est_v_partial}, with $q = \infty$, yields
\begin{align*}
J_2 &\lesssim \| \nabla^2 (\fol{v}{y_2}_1 - \fol{v}{y_1}_1) \|_{ L^2_x (S_{y_1}) } + \| H \|_{ L^{\infty, 1}_{x, t} (S_{y_1}, S_1) } \| \nabla^2 (\fol{v}{y_2}_1 - \fol{v}{y_1}_1) \|_{ L^{2, \infty}_{x, t} (S_{y_1}, S_1) } \\
&\qquad + \| \nabla H \|_{ L^{2, 1}_{x, t} (S_{y_1}, S_1) } ( \| \nabla \fol{v}{y_2}_1 \|_{ L^{\infty, \infty}_{t, x} } + \| \nabla \fol{v}{y_1}_1 \|_{ L^{\infty, \infty}_{t, x} } ) \\
&= J_{2, 1} + J_{2, 2} + J_{2, 3} \text{.}
\end{align*}
By \eqref{eq.renorm_est}, $J_{2, 2}$ can be absorbed into the left-hand side, while the preceding arguments show that $J_{2, 1} \rightarrow 0$ as $y_1, y_2 \nearrow 1$.
For $J_{2, 3}$, we apply \eqref{eq.renorm_est} and \eqref{eq.est_v1}:
\[ \lim_{y_1, y_2 \nearrow 1} J_{2, 3} \lesssim \lim_{y_1, y_2 \nearrow 1} (1 - y_1)^\frac{1}{2} \| \nabla H \|_{ L^{2, 2}_{t, x} } \Gamma = 0 \text{.} \]
The remaining term $J_1$ is controlled analogously, completing the proof of \eqref{eq.est_v1_cauchy}.

\subsection{Proof of Lemma \ref{thm.est_v2}} \label{sect:estimates.v2}

Applying the $L^2$-estimates for the Hodge operators from \cite[Prop. 6.4]{shao:stt} to \eqref{eqn.v_2} and recalling \eqref{eq.est_u} and \eqref{eq.est_v1}, we obtain
\begin{align*}
\| \nabla^2 \fol{v}{y}_2 \|_{ L^2_x (S_y) } + \| \nabla \fol{v}{y}_2 \|_{ L^2_x (S_y) } &\lesssim \| \ddot{\mc{K}} - 1 \|_{ L^2_x (S_y) } + \| e^{2 \fol{v}{y}_2} - 1 \|_{ L^2_x (S_y) } \lesssim \Gamma \text{.}
\end{align*}
Next, applying \cite[Cor. 3.7]{shao:stt}, \eqref{eq.est_u}, \eqref{eq.est_v1}, and the above to \eqref{eqn.v_2} yields
\begin{align*}
\| \lapl \fol{v}{y}_2 \|_{ B^0_x (S_y) } &= [ \| \nabla (u + \fol{v}{y}_1) \|_{ L^2_x (S_y) } + \| u + \fol{v}{y}_1 \|_{ L^\infty_x (S_y) } ] \| \ddot{\mc{K}} - e^{2 \fol{v}{y}_2} \|_{ B^0_x (S_y) } \\
&\lesssim \| \ddot{\mc{K}} - 1 \|_{ B^0_x (S_y) } + \| e^{2 \fol{v}{y}_2} - 1 \|_{ B^0_x (S_y) } \text{.}
\end{align*}
Recalling the explicit formula \eqref{new.bound} for $\ddot{\mc{K}}$, then
\begin{align*}
\| \lapl \fol{v}{y}_2 \|_{ B^0_x (S_y) } &\lesssim \| \ddot{\mc{K}} - 1 \|_{ H^1_x (S_y) } + \| e^{2 \fol{v}{y}_2} - 1 \|_{ H^1_x (S_y) } \\
&\lesssim [ \| \nabla (u + \fol{v}{y}_1) \|_{ L^2_x (S_y) } + \| \ddot{\mc{K}} - 1 \|_{ L^\infty_x (S_y) } ] \\
\notag &\qquad + ( \| \nabla \fol{v}{y}_2 \|_{ L^2_x (S_y) } + \| \fol{v}{y}_2 \|_{ L^\infty_x (S_y) } ) \\
&\lesssim \Gamma \text{.}
\end{align*}

Thus, by \cite[Thm. 6.11]{shao:stt},
\[ \| \nabla^2 \fol{v}{y}_2 \|_{ B^0_x (S_y) } + \| \nabla \fol{v}{y}_2 \|_{ L^\infty_x (S_y) } \lesssim \| \lapl \fol{v}{y}_2 \|_{ B^0_x (S_y) } \lesssim \Gamma \text{.} \]
Combining this with \eqref{eq.est_v2_pre} yields the full set of estimates for $\fol{v}{y}_2$ on $(S_y, \gamma_y)$:
\begin{equation} \label{sharp.v2} \| \nabla^2 \fol{v}{y}_2 \|_{ B^0_x (S_y) } + \| \nabla \fol{v}{y}_2 \|_{ L^\infty_x (S_y) } + \| \fol{v}{y}_2 \|_{ L^\infty_x (S_y) } \lesssim \Gamma \text{.} \end{equation}
Applying Lemmas \ref{thm.est_v} and \ref{thm.est_v_ex} to \eqref{sharp.v2} yields \eqref{eq.est_v2}.

\subsubsection{Proof of \eqref{eq.est_v2_cauchy}}

This is similar to the proof of \eqref{eq.est_v1_cauchy}.
First, for $x \in \Sph^2$,
\begin{align*}
\lapl ( \fol{v}{y_2}_2 - \fol{v}{y_1}_2 ) |_{ (y_1, x) } &= \lapl \fol{v}{y_2}_2 |_{ (y_2, x) } - \lapl \fol{v}{y_1}_2 |_{ (y_1, x) } - \cint_{y_1}^t \nabla_t \lapl \fol{v}{y_2}_2 d \tau |_{ (y_2, x) } \\
&= [ e^{2 (u + \fol{v}{y_2}_1)} \ddot{\mc{K}} |_{ (y_2, x) } - e^{2 (u + \fol{v}{y_1}_1)} \ddot{\mc{K}} |_{ (y_1, x) } ] - \cint_{y_1}^t \nabla_t \lapl \fol{v}{y_2}_2 d \tau |_{ (y_2, x) } \\
&\qquad - [ e^{2 (\fol{v}{y_2}_1 + \fol{v}{y_2}_2)} |_{ (y_2, x) } - e^{2 (\fol{v}{y_1}_1 + \fol{v}{y_1}_2)} |_{ (y_1, x) } ] \\
&\qquad - (e^{2u} - 1) e^{2 (\fol{v}{y_2}_1 + \fol{v}{y_2}_2)} |_{ (y_2, x) } + (e^{2u} - 1) e^{2 (\fol{v}{y_1}_1 + \fol{v}{y_1}_2)} |_{ (y_1, x) } \\
&= ( I_1 + I_2 + I_3 + I_4 + I_5 ) |_x \text{,}
\end{align*}
where we recalled the equation \eqref{eqn.v_2}.
For $I_2$, we apply \eqref{eq.est_v2},
\[ \lim_{y_1, y_2 \nearrow 1} \int_{ \Sph^2 } |I_2|^2 \lesssim \lim_{y_1, y_2 \nearrow 1} \| \nabla_t \nabla^2 \fol{v}{y_2}_2 \|_{ L^{2, 1}_{x, t} (S_{y_1}, S_{y_2}) }^2 = 0 \text{,} \]
while for $I_3$, we apply \eqref{eq.est_v1_cauchy} and \eqref{eq.est_v2_cauchy_pre},
\[ \lim_{y_1, y_2 \nearrow 1} \int_{ \Sph^2 } |I_3|^2 \lesssim \lim_{y_1, y_2 \nearrow 1} ( \| \fol{v}{y_2}_1 - \fol{v}{y_1}_1 \|_{ L^{\infty, \infty}_{t, x} } + \| \fol{v}{y_2}_2 - \fol{v}{y_1}_2 \|_{ L^{\infty, \infty}_{t, x} } ) = 0 \text{.} \]
$I_4$ and $I_5$ can be controlled using \eqref{eq.est_u}, \eqref{eq.est_v1}, and \eqref{eq.est_v2}:
\[ \lim_{y_1, y_2 \nearrow 1} \int_{ \Sph^2 } ( |I_4|^2 + |I_5|^2 ) \lesssim \lim_{y_1, y_2 \nearrow 1} [ \| u \|_{ L^\infty (S_{y_2}) } + \| u \|_{ L^\infty (S_{y_1}) } ] = 0 \text{.} \]
For $I_1$, we expand the definitions of $\ddot{\mc{K}}$ and $\bar{\mc{K}}$ using \eqref{eq.K_bar} and \eqref{new.bound}:
\begin{align*}
I_1 |_x &= \mc{A}_{y_2} ( e^{2 u} \bar{\mc{K}} ) - \mc{A}_{y_1} ( e^{2 u} \bar{\mc{K}} ) \\
&= - \frac{1}{2} [ \mc{A}_{y_2} ( \trace \ul{H} ) - \mc{A}_{y_1} ( \trace \ul{H} ) ] + (1 - y_2) \mc{A}_{y_2} (E) - (1 - y_1) \mc{A}_{y_1} (E) \text{.}
\end{align*}
As discussed within Appendix \ref{sect:estimates.v1}, each term on the right-hand side converges to $0$ as $y_1, y_2 \nearrow 1$.
Therefore, combining the above, we obtain
\[ \lim_{y_1, y_2 \nearrow \infty} \| \lapl ( \fol{v}{y_2}_2 - \fol{v}{y_1}_2 ) \|_{ L^2_x (S_{y_1}) } = 0 \text{.} \]

Combining \eqref{eq.est_v2_cauchy_pre}, $L^2$-Hodge estimates (see \cite[Prop. 6.4]{shao:stt}), and Sobolev embedding estimates (see \cite[Prop. 2.7]{shao:stt}), we have for any $2 < q < \infty$ that
\[ \lim_{y_1, y_2 \nearrow 1} [ \| \nabla^2 (\fol{v}{y_2}_2 - \fol{v}{y_1}_2) \|_{ L^2_x (S_{y_1}) } + \| \nabla (\fol{v}{y_2}_2 - \fol{v}{y_1}_2) \|_{ L^q_x (S_{y_1}) } + \| \fol{v}{y_2}_2 - \fol{v}{y_2}_2 \|_{ L^\infty_x (S_{y_1}) } ] = 0 \text{.} \]
Using \eqref{eq.est_v}, we derive the first two limits in \eqref{eq.est_v2_cauchy}.
The final estimate in \eqref{eq.est_v2_cauchy} is proved in precisely the same manner as the analogous estimate for the $\fol{v}{y}_1$'s.

\section{Proof of Lemma \ref{thm.unif}} \label{sect:distortion.unif}

Here, we sketch one proof for the uniformization result in Lemma \ref{thm.unif}.
For this, we adopt a modification of the argument found in \cite[Sect. 2.4]{chr_kl:stb_mink}; in particular, we break the conformal invariance for the $2$-sphere by explicitly constructing our conformal factor $v$.
\footnote{In \cite[Sect. 2.4]{chr_kl:stb_mink}, the authors constructed uniformizing factors that were shown to be \emph{bounded}.
However, a more refined construction is better suited for observing \emph{smallness}.}
As in \cite{chr_kl:stb_mink}, the key will be to first transform $h$ into the flat metric via a conformal factor that is close to that for the stereographic projection.

\subsubsection*{Normal Coordinates}

Since $\mc{K}_h$ is uniformly near $1$ by \eqref{eq.unif_ass}, standard estimates (see \cite{ch_eb:comp}) imply the diameter $D$ and injectivity radius $\mc{R}$ of $(\Sph^2, h)$ satisfy
\[ \pi - \varepsilon \leq D \leq \pi + \varepsilon \text{,} \qquad \pi - \varepsilon \leq \mc{R} \leq \pi + \varepsilon \text{,} \qquad \varepsilon \lesssim \Gamma \text{.} \]
Thus, given a point $P \in \Sph^2$, we can consider normal polar coordinates $(\lambda_P, \varphi_P)$ in an open geodesic ball $B_P$ of radius $\pi - \varepsilon$ about $P$, so that $h$ takes the form
\[ h = d \lambda_P^2 + R^2 (\lambda_P, \varphi_P) \cdot d \varphi_P^2 \text{.} \]

\begin{remark}
In the case that $h$ is the round metric, with $\mc{K}_h \equiv 1$:
\begin{itemize}
\item If $P$ corresponds to the north pole of the sphere, then $(\lambda_P, \varphi_P)$ corresponds precisely to the spherical coordinates $(\theta, \phi)$.

\item If $P$ is the south pole of the sphere, then $(\lambda_P, \varphi_P)$ corresponds to $(\pi - \theta, \phi)$.
\end{itemize}
\end{remark}

The mean curvatures of the level circles of $\lambda_P$ are given by
\[ \mc{H}_P = R_P^{-1} \cdot \partial_{ \lambda_P } R_P \text{.} \]
Recall (see, e.g. \cite[Sect. 2.4]{chr_kl:stb_mink}) that $\mc{H}_P$ satisfies the Riccati equations
\begin{equation} \label{eq.riccati} \partial_{ \lambda_P } \mc{H}_P = - \mc{H}_P^2 - \mc{K}_h \text{,} \qquad \lim_{ \lambda_P \searrow 0 } ( \mc{H}_P - \lambda_P^{-1} ) = 0 \text{.} \end{equation}
In particular, if $\mc{K}_h$ is a positive constant $k > 0$, then
\[ \mc{H}_P = \mc{H}_{P, k} = \sqrt{k} \cdot \cot \frac{ \lambda_P }{ \sqrt{k} } \text{.} \]
Moreover, since $1 - \varepsilon^\prime \leq \mc{K} \leq 1 + \varepsilon^\prime$ for some $\varepsilon^\prime \lesssim \Gamma$, then standard comparison arguments using \eqref{eq.riccati} result in the bounds
\[ \mc{H}_{P, 1 + \varepsilon^\prime} \leq \mc{H}_P \leq \mc{H}_{P, 1 - \varepsilon^\prime} \text{.} \]
From this, it follows that
\begin{equation} \label{eq.bound_h} \lambda_P^{-1} | \mc{H}_P - \mc{H}_{P, 1} | \lesssim \Gamma \text{.} \end{equation}

In addition, we define the functions
\[ W_{P, n} = - 2 \log \sin \frac{\lambda_P}{2} \text{,} \qquad W_{P, s} = - 2 \log \cos \frac{\lambda_P}{2} \text{.} \]
Note that whenever $h$ is the round metric: if $P$ is the north/south pole on $\Sph^2$, then $W_{P, n}$/$W_{P, s}$ (resp.) is precisely the conformal factor
\[ (\theta, \phi) \mapsto - 2 \log \sin \frac{\theta}{2} \]
associated with the stereographic projection from $\Sph^2$ onto $\R^2$.
\footnote{More precisely, the specific stereographic projection we use here is that from the unit sphere about the origin in $\R^3$ onto the plane $z = -1$ in $\R^3$.}
Moreover, letting $\Delta_h$ denote the Laplacian associated with $h$, then $W_{P, n}$ and $W_{P, s}$ satisfy
\[ \Delta_h W_{P, n} = 1 - \cot \frac{\lambda_P}{2} \cdot ( \mc{H}_P - \mc{H}_{P, 1} ) \text{,} \qquad \Delta_h W_{P, s} = 1 + \tan \frac{\lambda_P}{2} \cdot ( \mc{H}_P - \mc{H}_{P, 1} ) \text{,} \]
and hence by \eqref{eq.K_dotdot} and \eqref{eq.bound_h}, we can estimate
\begin{equation} \label{eq.bound_W} | \mc{K}_h - \Delta_h W_{P, n} | \lesssim \Gamma \text{,} \qquad | \mc{K}_h - \Delta_h W_{P, s} | \lesssim \Gamma \text{,} \qquad \lambda_P \leq \frac{2 \pi}{3} \text{.} \end{equation}

\subsubsection*{Construction of the Uniformizing Factor}

We are now prepared to construct the desired factor $v$.
Fix first a pair of points $N, S \in \Sph^2$ such that
\[ \pi - \varepsilon \leq d (N, S) \leq \pi + \varepsilon \text{.} \]
The idea is to treat $N$ and $S$ as the eventual north and south poles, and to approximate the conformal factor for the stereographic projection using the functions $W_{N, n}$ and $W_{N, s}$.
Fixing a smooth cutoff function
\[ \phi: \Sph^2 \rightarrow [0, 1] \text{,} \qquad \phi = \begin{cases} 1 & \text{on the geodesic ball } B_\frac{\pi}{3} (N) \text{,} \\ 0 & \text{on the geodesic ball } B_\frac{\pi}{3} (S) \text{,} \end{cases} \]
we make the following initial guess for the approximate stereographic factor:
\footnote{In particular, we require two normal coordinate systems in our construction, since normal coordinates degenerate as one approaches the injectivity radius.}
\[ w_0 = \phi \cdot W_{N, n} + (1 - \phi) \cdot W_{S, s} \text{.} \]
Note that when $h$ is round, $w_0$ is precisely the stereographic conformal factor.

The actual conformal factor to transform $h$ to the flat metric will differ from $w_0$ by an error term.
To determine this error, we consider the function
\begin{align*}
f &= \mc{K}_h - \Delta_h w_0 = f_0 + f_1 + f_2 \text{,} \\
f_0 &= \phi \cdot ( \mc{K}_h - \Delta_h W_{N, n} ) + (1 - \phi) \cdot ( \mc{K}_h - \Delta_h W_{S, s} ) \text{,} \\
f_1 &= \partial_{\lambda_N} \phi \cdot \cot \frac{\lambda_N}{2} + \partial_{\lambda_S} \phi \cdot \tan \frac{\lambda_S}{2} \text{,} \\
f_2 &= 2 \Delta_h \phi \cdot \left[ \log \sin \frac{\lambda_N}{2} - \log \cos \frac{\lambda_S}{2} \right] \text{.}
\end{align*}
In particular, $f$ is bounded on all of $\Sph^2$.
Furthermore, the Gauss-Bonnet theorem and a divergence theorem argument as in \cite[Sect. 2.4]{chr_kl:stb_mink} imply that $f$ is mean-free.
As a result, we can solve the Poisson equation
\[ \Delta_h w_E = f \text{,} \qquad \int_{ \Sph^2 } w_E = 0 \text{.} \]
Defining now $w = w_E + w_0$, which satisfies on $\Sph^2 \setminus \{ N \}$ the equation $\Delta_h w = \mc{K}_h$, we see that $\tilde{h} := e^{2 w} h$ defines a flat metric on $\Sph^2 \setminus \{ N \}$.

Now that we are on the plane, we can return to the \emph{round} sphere by inverting the (standard) stereographic projection.
Letting $\tilde{d}_S$ denote the $\tilde{h}$-distance from $S$, the conformal factor associated with this inverse stereographic projection is
\[ \tilde{w} = - \log \left( 1 + \frac{ \tilde{d}_S^2 }{4} \right) \text{,} \]
where we treated $S$ as the origin in $\R^2$.
Therefore, if we define
\[ v = w + \tilde{w} \text{,} \]
then the metric $\mathring{h} = e^{2 v} h$ will be round, i.e., its curvature satisfies $\mc{K}_{ \mathring{h} } \equiv 1$.
In particular, $v$ satisfies the following nonlinear equations on $\Sph^2 \setminus \{ N \}$:
\begin{equation} \label{eq.unif_pde} \Delta_h v = \mc{K}_h - e^{2 v} \text{,} \qquad - \Delta_{ \mathring{h} } v = e^{2 v} \mc{K}_h - 1 \text{.} \end{equation}

\subsubsection*{Bounds on the Uniformization Factors}

Finally, we briefly sketch the proof of the bounds for $v$.
Note first that \eqref{eq.bound_W} immediately implies
\begin{equation} \label{eq.bound_fK} \| f_0 \|_{ L^\infty_x (\Sph^2) } \lesssim \Gamma \text{.} \end{equation}
For $f_1$ and $f_2$, we require the following observations:
\begin{itemize}
\item Both $f_1$ and $f_2$ are supported away from both $N$ and $S$.
\footnote{In particular, $\sin ( \lambda_N / 2)$, $\cos ( \lambda_N / 2)$, and the corresponding quantities for $\lambda_S$ are uniformly bounded from above and below in the supports of $f_1$ and $f_2$.}

\item Since $N$ and $S$ almost achieve the diameter of $(\Sph^2, h)$, it follows that $\lambda_S$ will be (uniformly) close to $\pi - \lambda_N$ in the supports of $f_1$ and $f_2$.

\item Moreover, when radial geodesics from $N$ and $S$ intersect in this region, they will point in almost opposite directions.
\footnote{This can be observed, e.g., using Toponogov's comparison theorem; see \cite{ch_eb:comp}.}
\end{itemize}
Combined, these observations imply that $f_1$ and $f_2$ are uniformly small.
A more careful quantitative analysis of this yields the estimates
\[ \| f_1 \|_{ L^\infty_x (\Sph^2) } + \| f_2 \|_{ L^\infty_x (\Sph^2) } \lesssim \Gamma \text{.} \]
This controls $f$ by $\Gamma$, and standard elliptic estimates now imply
\[ \| w_E \|_{ L^\infty_x (\Sph^2) } \lesssim \Gamma \text{.} \]

An analogue of the argument found in \cite[Sect. 2.4]{chr_kl:stb_mink} immediately yields that $w_0 + \tilde{w}$ is uniformly bounded.
To show smallness, however, we observe that $w_0$, as constructed, approximates the conformal factor for the stereographic projection, while $\tilde{w}$ is the (exact) conformal factor for the inverse stereographic projection.
A more careful accounting, using arguments similar to \cite[Sect. 2.4]{chr_kl:stb_mink} comparing $h$- and $\mathring{h}$-geodesics, yields the more precise estimate
\[ \| w_0 + \tilde{w} \|_{ L^\infty_x (\Sph^2 \setminus \{ N \}) } \lesssim \Gamma \text{.} \]

Collecting all the preceding estimates results in \eqref{eq.unif_est}; in particular, $v$ extends to a bounded function on $\Sph^2$.
Furthermore, using the nonlinear equation \eqref{eq.unif_pde} and the smoothness of $\mc{K}_h$, we can improve the regularity of $v$ and derive smoothness.

To show that $v$ depends continuously on $h$ and $\mc{K}_h$, we return to each step of its construction, and we observe that each of the components $w_E$ and $w_0 + \tilde{w}$ depends continuously on $h$ and $\mc{K}_h$.
To better sketch the main points of this argument, we let $h^\prime$ be another metric on $\Sph^2$ such that $h^\prime$ and its curvature $\mc{K}_{h^\prime}$ are uniformly very close to $h$ and $\mc{K}$, respectively.
Moreover, let $w_E^\prime$, $w_0^\prime$, $\tilde{w}^\prime$, and $v^\prime$ denote the various components obtained in the above process, but in terms of $h^\prime$.
\footnote{In particular, since $h^\prime$ is near $h$, we can choose the same pair of points $N$ and $S$ in the construction of $v^\prime$ from $h^\prime$ as for the construction of $v$ from $h$.}

The first point is that since $h^\prime$ is close to $h$, the normal coordinates $\lambda^\prime_N$ and $\lambda^\prime_S$ with respect to $h^\prime$ are similarly close to those for $h$, up to first derivatives.
\footnote{The cutoff functions $\phi$ and $\phi^\prime$ exclude the regions where these normal coordinates degenerate.}
From this, we can conclude that $w_0^\prime + \tilde{w}^\prime$, $f_1^\prime$, and $f_2^\prime$ lie uniformly close to $w_0 + \tilde{w}$, $f_1$, and $f_2$, respectively.
To similarly compare $f_0^\prime$ and $f_0$, we also require the closeness of curvatures.
Note that since $\mc{K}_{h^\prime}$ and $\mc{K}_h$ are close, the Riccati equation \eqref{eq.riccati} and its counterpart for $h^\prime$ imply that both $\mc{H}_N - \mc{H}_N^\prime$ and $\mc{H}_S - \mc{H}_S^\prime$ remain small.
Thus, by definition, $f_0^\prime$ must lie uniformly close to $f_0$.

Finally, to compare $w_E^\prime$ with $w_E$, we consider the linear elliptic equation
\begin{align*}
\Delta_h ( w_E^\prime - w_E ) &= \Delta_{h^\prime} w_E^\prime - \Delta_h w_E + ( \Delta_h - \Delta_{h^\prime} ) w_E^\prime \\
&= ( f_0^\prime - f_0 ) + ( f_1^\prime - f_1 ) + ( f_2^\prime - f_2 ) + ( \Delta_h - \Delta_{h^\prime} ) w_E^\prime \text{.} 
\end{align*}
The first three terms on the right-hand side will be small by the preceding discussion; since $(h^\prime, \mc{K}_{h^\prime})$ is close to $(h, \mc{K}_h)$, the difference of Laplacians will also be small.
Consequently, standard elliptic estimates imply that $w_E^\prime$ lies close to $w_E$.
Combining all the above, we conclude that $v^\prime - v$ is uniformly small.

\raggedright
\bibliographystyle{amsplain}
\bibliography{articles,books,misc}

\end{document}